\documentclass{amsart}
\usepackage{amsmath, amsfonts, amssymb}
%\SelectTips{cm}{10}\UseTips

%Some Greek letters

\def\a{\alpha}

%Mathcal

\newcommand{\mA}{\mathcal{A}}
\newcommand{\mB}{\mathcal{B}}

\newcommand{\mE}{\mathcal{E}}

\newcommand{\mL}{\mathcal{L}}

\newcommand{\mS}{\mathcal{S}}
\newcommand{\mT}{\mathcal{T}}

%mathfrak (missing \fi)

\newcommand{\fm}{\mathfrak{m}}

\newcommand{\fp}{\mathfrak{p}}
\newcommand{\fP}{\mathfrak{P}}
\newcommand{\fq}{\mathfrak{q}}

%mathbf

\newcommand{\bfC}{\mathbf{C}}

\newcommand{\bfF}{\mathbf{F}}
\newcommand{\bfG}{\mathbf{G}}

\newcommand{\bfQ}{\mathbf{Q}}
\newcommand{\bfR}{\mathbf{R}}

\newcommand{\bfT}{\mathbf{T}}

\newcommand{\bfZ}{\mathbf{Z}}

%rings of integers (note the different definition of OE), adeles
\newcommand{\Oo}{\mathcal{O}}

%localizations 

%textup commands

\newcommand{\tuint}{\textup{int}}

\newcommand{\ture}{\textup{re}}

%Comments

%Some mathematical symbols

\newcommand{\ov}{\overline}

%equation environment

\newcommand{\be}{\begin{equation}}
\newcommand{\ee}{\end{equation}}
\newcommand{\bes}{\begin{equation*}}
\newcommand{\ees}{\end{equation*}}

\newcommand{\bs}{\begin{split}}
\newcommand{\es}{\end{split}}
\newcommand{\bss}{\begin{split*}}
\newcommand{\ess}{\end{split*}}

\newcommand{\bmat}{\left[ \begin{matrix}}
\newcommand{\emat}{\end{matrix} \right]}
\newcommand{\bsmat}{\left[ \begin{smallmatrix}}
\newcommand{\esmat}{\end{smallmatrix} \right]}

\newcommand{\bml}{\begin{multline}}
\newcommand{\eml}{\end{multline}}
\newcommand{\bmls}{\begin{multline*}}
\newcommand{\emls}{\end{multline*}}

%math commands

%Math operators

\DeclareMathOperator{\ad}{ad}
\DeclareMathOperator{\Ann}{Ann}

\DeclareMathOperator{\Cl}{Cl}
\DeclareMathOperator{\coker}{coker}

\DeclareMathOperator{\End}{End}

\DeclareMathOperator{\Frob}{Frob}
\DeclareMathOperator{\Gal}{Gal}
\DeclareMathOperator{\GL}{GL}
\DeclareMathOperator{\GSp}{GSp}

\DeclareMathOperator{\Hom}{Hom}

\DeclareMathOperator{\Sp}{Sp}

\DeclareMathOperator{\Tam}{Tam}

\DeclareMathOperator{\val}{val}

%SHA

%\DeclareSymbolFont{cyrletters}{OT2}{wncyr}{m}{n}
%\DeclareMathSymbol{\Sha}{\mathalpha}{cyrletters}{"58}
%\DeclareMathSymbol{\Sha}{\fontencoding{OT2}\selectfont\char88}}

%Old commands

%Special commands

%\def\bs{\setminus}

\newcommand{\hs}{\hspace{2pt}}
\newcommand{\hf}{\hspace{5pt}}

\newcommand{\tr}{\textup{tr}\hspace{2pt}}

\newcommand{\invlim}{\mathop{\varprojlim}\limits}
\newcommand{\dirlim}{\mathop{\varinjlim}\limits}

\usepackage[all]{xy}
\usepackage{amsthm}
\usepackage{amssymb, amsfonts}
\SelectTips{cm}{10}\UseTips
\bibliographystyle{amsplain}

\theoremstyle{plain}
\newtheorem{thm}{Theorem}
\newtheorem{prop}[thm]{Proposition}
\newtheorem{cor}[thm]{Corollary}
\newtheorem{lemma}[thm]{Lemma}
\newtheorem{conj}[thm]{Conjecture}

\theoremstyle{definition}
\newtheorem{definition}[thm]{Definition}

\newtheorem{example}[thm]{Example}
\newtheorem{rem}[thm]{Remark} 
\newtheorem{assumption}[thm]{Assumption}

\numberwithin{thm}{section}
\numberwithin{equation}{section}

\bibliographystyle{amsplain}

\begin{document}
\title[On deformation rings]{On deformation rings of residually reducible Galois representations and $R=T$ theorems}
\author{Tobias Berger$^1$ \and
Krzysztof Klosin$^2$}
\address{$^1$University of Sheffield, School of
Mathematics and Statistics, Hicks Building, Hounsfield Road, Sheffield S3
7RH, United Kingdom.}
\address{$^2$Department of Mathematics,
Queens College,
City University of New York,
65-30 Kissena Blvd,
Flushing, NY 11367, USA}                     % Do not remove

\subjclass[2000]{11F80, 11F55}

\keywords{Galois deformations, automorphic forms}

\date{10 February 2011}

\begin{abstract}
We study the crystalline universal deformation ring $R$ (and its ideal of reducibility $I$) of a mod $p$ Galois representation $\rho_0$ of
dimension $n$ whose semisimplification is the direct sum of two absolutely irreducible mutually non-isomorphic constituents $\rho_1$ and $\rho_2$. Under some assumptions on Selmer groups associated with $\rho_1$ and $\rho_2$ we show that $R/I$ is cyclic and often finite.
Using ideas and results of (but somewhat different assumptions from) Bella\"iche and Chenevier we prove that $I$ is principal for essentially self-dual representations and deduce statements about the structure of $R$. Using a new commutative algebra criterion we show that given enough information on the Hecke side one gets an $R=T$-theorem. We then apply the technique to modularity problems for 2-dimensional representations over an imaginary quadratic field and a 4-dimensional representation over $\bfQ$.
\end{abstract}

\maketitle

\section{Introduction}
Let $F$ be a number field, $\Sigma$ a finite set of primes of $F$ and $G_{\Sigma}$ the Galois group of the maximal extension
of $F$ unramified outside $\Sigma$. Let $E$ be a finite extension of $\bfQ_p$ with ring of integers $\Oo$ and residue field $\bfF$. Let $\rho_0:
G_{\Sigma} \rightarrow \GL_n(\bfF)$ be a \emph{non-semi-simple} continuous representation of the Galois group $G_{\Sigma}$ with coefficients in
$\bfF$. Suppose that $\rho_0$ has the form $$\rho_0 = \bmat \rho_1 & * \\ 0 & \rho_2\emat$$ for two absolutely irreducible continuous representations
$\rho_i: G_{\Sigma} \rightarrow \GL_{n_i}(\bfF)$ with $n_1+n_2=n$.
The goal of this article is to study the crystalline universal deformation ring
$R_{\Sigma}$ of $\rho_0$ and in favorable cases show that it is isomorphic to a Hecke algebra $\bfT_{\Sigma}$ associated to automorphic forms on some algebraic group. Our approach relies on studying the ideal of reducibility $I \subset R_{\Sigma}$ as defined by
Bella\"iche and Chenevier and the quotient $R_{\Sigma}/I$. Roughly speaking the latter ``captures''  the reducible deformations, while the former captures the irreducible ones.
As a first result we prove that under some self-duality condition imposed on the deformations the ideal $I$ is principal (section \ref{Principality}). In contrast to \cite{BellaicheChenevierbook} we do not assume that the trace of our universal deformation is ``generically irreducible''. As a result we cannot affirm that $I$ is generated by a non-zero divisor,
but this is not needed for our main results. We can, however, still show that $I$ is generated by a non-zero divisor under a certain finiteness assumption (section \ref{kerrhokerT}), which is only used for results concerning $R_{\Sigma}^{\rm red}$, the quotient of $R_{\Sigma}$ by its nilradical.

We then study the quotient $R_{\Sigma}/I$.  Under the following two major assumptions: \begin{itemize} \item that the crystalline universal deformation rings of $\rho_1$ and
$\rho_2$ are discrete valuation rings ($=\Oo$); \item that the
    Selmer group $H^1_{\Sigma}(F, \Hom_{\bfF}(\rho_2, \rho_1))$ is one-dimensional (see section \ref{section functoriality} for the definition of $H^1_{\Sigma}$),
\end{itemize}
we prove that
the $\Oo$-algebra structure map $\Oo \rightarrow R_{\Sigma}/I$ is surjective (section \ref {Cyclicity}).
 Combining this with the principality of $I$ we show (section \ref{Consequences}) that \begin{itemize} \item $R_{\Sigma}$ is a quotient of
    $\Oo[[X]]$,
\item the reduced universal deformation ring $R^{\rm red}_{\Sigma}$ is a complete intersection. \end{itemize}
The above properties give us enough control on the ring $R_{\Sigma}$ to formulate some numerical conditions (which if satisfied)
imply an $R=T$ theorem in this $n$-dimensional context (section \ref{RTtheorems}).
In fact, our method can be summarized as follows.
Suppose that we have an $\Oo$-algebra surjection $\phi: R_{\Sigma} \twoheadrightarrow \bfT_{\Sigma}$ which induces a map $\ov{\phi}: R_{\Sigma}/I \twoheadrightarrow \bfT_{\Sigma}/\phi(I)$. The surjection $\Oo \twoheadrightarrow R_{\Sigma}/I$ often factors through an isomorphism $\Oo/\varpi^m \cong R_{\Sigma}/I$. In fact the size of $R_{\Sigma}/I$ is bounded from above by the size of a certain Selmer group, $H^1_{\Sigma}(F,\Hom_{\Oo}(\tilde{\rho}_2,\tilde{\rho}_1)\otimes E/\Oo)$, where $\tilde{\rho}_i$ denotes the unique lift of $\rho_i$ to $\GL_{n_i}(\Oo)$. Thus assuming $\#H^1_{\Sigma}(F,\Hom_{\Oo}(\tilde{\rho}_2,\tilde{\rho}_1)\otimes E/\Oo)  \leq \#\bfT_{\Sigma}/\phi(I)$ we conclude that $\ov{\phi}$ is an isomorphism. We then apply a new commutative algebra criterion (section \ref{criterion}) which uses principality of $I$ as an input to allow us to conclude that $\phi$ itself must have been an isomorphism.

One way to achieve the inequality $\#H^1_{\Sigma}(F,\Hom_{\Oo}(\tilde{\rho}_2,\tilde{\rho}_1)\otimes E/\Oo)  \leq \#\bfT_{\Sigma}/\phi(I)$ is to relate both sides to the same $L$-value (these are the numerical conditions  referred to above).  Many results bounding the right-hand side from below by the relevant $L$-value are available in the literature (see sections \ref{Examples1} and \ref{Examples2} for examples of such results). A corresponding upper-bound on $\#H^1_{\Sigma}(F,\Hom_{\Oo}(\tilde{\rho}_2,\tilde{\rho}_1)\otimes E/\Oo)$ can be deduced from the relevant case of the Bloch-Kato conjecture. See Theorems \ref{cons1} and \ref{cons2}, where the numerical conditions are stated precisely. In particular it is also possible to apply our method if $R_{\Sigma}/I$ is infinite. In this case our commutative algebra criterion is an alternative to the criterion of Wiles and Lenstra.

Let us now make some remarks about relations of our approach to other modularity results. As  is perhaps obvious to the informed reader, it is different from the Taylor-Wiles method. Also, our residual representations are not ``big'' in the sense of Clozel, Harris and Taylor \cite{ClozelHarrisTaylor08}. There is some connection between our setup and that of Skinner and Wiles \cite{SkinnerWiles97}, who studied residually reducible $2$-dimesional representations of $G_{\bfQ}$, but the main arguments are different. A prototype of this method has already been employed by the authors to prove an
$R=T$-theorem for two-dimensional residually reducible Galois representations over an imaginary quadratic field \cite{BergerKlosin09, BergerKlosin11}. However, the assumptions of \cite{BergerKlosin11} are different
and the proofs reflect the ``abelian'' context of that article, and mostly could not be generalized to the current setup. In particular the principality of the ideal of reducibility in that context was a simple consequence of a certain uniqueness condition imposed on
 $\rho_0$. In the ``non-abelian'' setup the analogous uniqueness condition is almost never satisfied.  Finally let us note, that unlike recent higher dimensional modularity results of Taylor et al. (e.g. \cite{Taylor08, Geraghty10, BLGeeGeraghtyTaylor10}) which prove $R^{\rm red}=T$ theorems, our method implies that $R_{\Sigma}$ is reduced.

 In order to study crystalline deformations we establish certain functoriality properties of the Selmer groups $H^1_{\Sigma}(F, \Hom_{\Oo}(\tilde{\rho}_i, \tilde \rho_j)\otimes E/\Oo)$  for $i,j \in \{1,2\}$. In particular we need to know that these Selmer groups behave well with respect to taking fixed-order torsion elements. On the other hand our numerical criteria strongly suggest that the bounds imposed to control the order of these Selmer groups should be given by $L$-values. Conjecturally, it is the Bloch-Kato Selmer groups whose orders are controlled by these $L$-values, hence we also relate  compare these to our Selmer groups $H^1_{\Sigma}(F, \Hom_{\Oo}(\tilde{\rho}_i, \tilde \rho_j)\otimes E/\Oo)$. This is all done in section \ref{section functoriality}.

Let us now discuss the ``numerical conditions'' in more detail. They fall into two categories depending on
whether one has a reducible lift to characteristic zero or not. We will focus here on the case when no such lift exists. (We refer the reader to
Theorems \ref{cons1} and \ref{cons2} for the precise statement and to the discussion following the theorems.) One of them is a lower bound on the
order of the quotient $\bfT_{\Sigma}/\phi(I) = \bfT_{\Sigma}/J$, where $\bfT$ is a certain ``non-Eisenstein'' Hecke algebra and $J$ is the corresponding ``Eisenstein ideal''. Such
quotients (and lower bounds on them) have been studied by many authors, for example \cite{SkinnerWiles99}, \cite{B09} (where $J$ is indeed the
Eisenstein ideal) and \cite{Brown07}, \cite{Klosin09} (where $J$ is the CAP ideal - see \cite{Klosin09} for a precise definition), or
\cite{AgarwalKlosin10preprint} (where $J$ is the ``Yoshida ideal''). In general this quotient measures congruences between automorphic forms with
irreducible Galois representation and a fixed automorphic form ``lifted'' from a proper Levi subgroup (Eisenstein series, Saito-Kurowawa lift, Maass
lift, Yoshida lift). Each of these results bound this module from below by a certain $L$-value, which in fact is the $L$-value which conjecturally
gives the order of the Bloch-Kato Selmer group $H^1_{f}(F, \Hom_{\Oo}(\tilde{\rho}_2, \tilde{\rho}_1)\otimes E/\Oo)$. The second numerical condition is the upper bound on the order of a related Selmer group $H^1_{\Sigma}(F, \Hom_{\Oo}(\tilde{\rho}_2, \tilde{\rho}_1)\otimes E/\Oo)$
by the same number. This condition thus seems to require (the $\varpi$-part of) the Bloch-Kato conjecture for $\Hom_{\Oo}(\tilde{\rho}_2,
\tilde{\rho}_1)$ and is  currently out of reach in most cases when $\rho_1$ and $\rho_2$ are not characters. So, our $R=T$ result (Theorem
\ref{cons1}) should be viewed as a statement asserting that under certain assumptions on the Hecke side  (the $\varpi$-part of) the Bloch-Kato conjecture for
$\Hom_{\Oo}(\tilde{\rho}_2, \tilde{\rho}_1)$ (which in principle controls extensions of $\tilde{\rho_2}$ by $\tilde{\rho}_1$ hence \emph{reducible}
deformations of $\rho_0$) implies an $R=T$-theorem (which asserts modularity of both the reducible and the irreducible deformations of $\rho_0$). The fact that we can deduce modularity of all deformations from a statement about just reducible deformations is a consequence of the principality of the ideal of reducibility (whose size roughly speaking controls the irreducible deformations) and the commutative algebra criterion.

In the last two sections of the article we study two examples in which some (or all) of the conditions can be checked. The first example is still of an abelian nature and is in a sense a ``crystalline'' complement to our previous two articles \cite{BergerKlosin09} and
\cite{BergerKlosin11}, where we studied ordinary deformations. The second example is much less special and is in fact a prototypical
higher-dimensional problem to which we hope our result may be applied. In this example we study certain irreducible four-dimensional crystalline deformations of a representation of the form $$\rho_0 = \bmat\ov{\rho}_f(k/2-1) &* \\ 0 & \ov{\rho}_g\emat,$$ where $
\ov{\rho}_f$ and $\ov{\rho}_g$ are reductions of the Galois representations attached to two elliptic cusp forms $f$ and $g$ of weights $2$ and
$k$=even respectively. Using  results of \cite{AgarwalKlosin10preprint} and
\cite{BochererDummiganSchulzePillotpreprint} which under some assumptions provide one of the numerical conditions  (a lower bound on $\# \bfT_{\Sigma}/J$) we
prove that the Bloch-Kato conjecture in this context implies that every such deformation of $\rho_0$ is modular (i.e., comes from a Siegel modular
form). For a precise statement see Theorem \ref{modsiegel}.

 Our method of proving principality of the ideal of reducibility (section \ref{Principality}) owes a lot to the ideas of Bella\"iche and Chenevier and the authors benefited greatly from reading their book \cite{BellaicheChenevierbook}.
The authors would also like to thank the Mathematical Institute in Oberwolfach and the Max-Planck-Institut in Bonn, where part of this work was carried out for their hospitality. The first author would like to thank Queens'
College, Cambridge, and the second author would like to thank the Department of Mathematics at the University of Paris 13, where part of this work was carried out. We are also grateful to Jo\"el Bella\"iche, Ga\"etan Chenevier, Neil Dummigan, Matthew Emerton and Jacques Tilouine for helpful comments and conversations.

\section{Principality of Reducibility ideals} \label{Principality}

Let $A$ be a Noetherian henselian local (commutative) ring with maximal ideal $\fm_A$ and residue field $\bfF$ and let $R$ be an $A$-algebra. Let
$\rho: R \to M_n(A)$ be a morphism of $A$-algebras and put $T=\tr \rho: R \to A$. We assume $n!$ is invertible in $A$ and
$$\rho = \bmat \tau_1 & * \\ 0 & \tau_2\emat \mod{\fm_A}$$ is a non-semisimple extension of $\tau_2$ by $\tau_1$ for two non-isomorphic absolutely irreducible representations $\tau_i$ of dimension $n_i$.

\begin{definition} [\cite{BellaicheChenevierbook} Definition 1.5.2]
	The \emph{ideal of reducibility} of $T$ is the smallest ideal $I$ of $A$ such that $\tr(\rho)$ mod $I$ is the sum of two pseudocharacters $T_1,
T_2$ such that $T_i = \tr \tau_i$ mod $\fm_A$. We will denote it by $I_T$. (For a definition of a pseudocharacter see e.g. [loc.cit], section 1.2.) \end{definition}

\begin{definition} [\cite{BellaicheChenevierbook} Section 1.2.4]
  The kernel of a pseudocharacter $T:R \to A$ is the two-sided ideal of $R$ defined by $$\ker T= \{x \in R: \forall y \in R, T(xy)=0 \}.$$
\end{definition}

\begin{rem} \label{kerker} Note that $T$ is an $A$-module homomorphism. If $K_T$ denotes the kernel of $T$ as an
$A$-module map, then clearly $K_T \supset \ker T$. This inclusion is in general strict, and in fact it is often the case that $\ker T = \ker \rho$
(see section \ref{kerrhokerT}). \end{rem}

\begin{definition} [\cite{BellaicheChenevierbook} Definition 1.3.1]
 Let $S$ be an $A$-algebra. Then $S$ is a \emph{generalized matrix algebra (GMA) of type} $(n_1, n_2)$ if $S$ is equipped with a \emph{data of
 idempotents} $\mathcal{E}=\{e_i, \psi_i, i=1,2\}$ with

\begin{enumerate}
	\item a pair of orthogonal idempotents $e_1, e_2$ of sum 1, 	\item for each $i$, an $A$-algebra isomorphism $\psi_i: e_i S e_i \to
M_{n_i}(A)$,
\end{enumerate}
such that the trace $T: S \to A$, defined by $T(x):=\sum_{i=1}^2 \tr(\psi_i(e_i x e_i))$, satisfies $T(xy)=T(yx)$ for all $x,y \in S$.
\end{definition}

By \cite{BellaicheChenevierbook} Example 1.2.4 and Section 1.2.5 we know that both $R^{\rho}:=R/\ker \rho$ and $R^T:=R/\ker T$ are Cayley-Hamilton
quotients (cf. Definition 1.2.3 in \cite{BellaicheChenevierbook}) of $(R,T)$ and the quotient map $$\varphi: R^{\rho} \twoheadrightarrow R^T$$ is an $A$-algebra morphism with kernel $\ker T_{R^{\rho}}$ (so
a two-sided ideal).

By \cite{BellaicheChenevierbook} Lemma 1.4.3 we can now find suitable data of idempotents to give both $R^{\rho}$ and $R^T$ the structure of a GMA:

\begin{lemma}\label{data}
There exist data of idempotents $\mathcal{E}^T=\{e_i^T, \psi_i^T, i=1,2\}$ for $R^T$ and $\mathcal{E}^{\rho}=\{e_i^{\rho}, \psi_i^{\rho}, i=1,2\}$
for $R^{\rho}$ such that for $\dagger=\rho, T$

\begin{enumerate}
	\item $T(e_i^{\dagger})=n_i$, 	%\item $\tau(e_i^{\dagger})=e_i^{\dagger}$, 	
\item $\varphi(e_i^{\rho})=e_i^T$, 	\item $T(e_i x e_i)= \tr \tau_i(x) \mod{\fm_A}$, 	\item If $i \neq j$, $T(e_i x e_j y e_i) \in \fm_A$ for any $x,y \in R$, 	\item $\psi_i^{\rho} \circ \varphi=\psi_i^T$, 	\item $\psi_i^{\rho}$ lifts ${\tau_i|}_{e_i R^{\rho} e_i}: e_i R^{\rho} e_i \to M_{n_i}(\bfF)$ such that for all $x \in e_i R^{\rho} e_i$, $T(x)= \tr \psi_i^{\rho}(x)$.
\end{enumerate}
These data of idempotents define $A$-submodules $\mathcal{A}_{i,j}^{\dagger}$ of $R^{\dagger}$ for $\dagger=T, \rho$ such that there are canonical
isomorphisms of $A$-algebras $$R^{\dagger} \cong \bmat M_{n_1}(\mathcal{A}_{1,1}^{\dagger}) & M_{n_1,n_2}(\mathcal{A}_{1,2}^{\dagger}) \\
M_{n_2,n_1}(\mathcal{A}_{2,1}^{\dagger}) & M_{n_2}(\mathcal{A}_{2,2}^{\dagger}) \emat$$ and $\varphi(\mathcal{A}_{i,j}^{\rho})=\mathcal{A}_{i,j}^T$.
\end{lemma}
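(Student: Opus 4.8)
The plan is to invoke the Bella\"iche--Chenevier structure theory almost verbatim, tracking compatibilities through the quotient map $\varphi$. First I would recall that $R^\rho$ and $R^T$ are Cayley--Hamilton quotients of $(R,T)$, so that \cite{BellaicheChenevierbook} Lemma 1.4.3 applies to each of them: it produces, out of the residual decomposition $\tau_1\oplus\tau_2$, a data of idempotents making the Cayley--Hamilton algebra into a GMA of type $(n_1,n_2)$, with the normalizations $T(e_i)=n_i$, $T(e_ixe_i)\equiv \tr\tau_i(x)\bmod\fm_A$, and $T(e_ixe_jye_i)\in\fm_A$ for $i\neq j$ — these are conditions (1), (3), (4). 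The only real content beyond citing that lemma is arranging the two data of idempotents for $R^\rho$ and $R^T$ to be \emph{compatible under $\varphi$}, i.e.\ conditions (2), (5), and the lifting property (6).

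For the compatibility, I would first choose the data $\mathcal{E}^\rho=\{e_i^\rho,\psi_i^\rho\}$ for $R^\rho$, using that $R^\rho\hookrightarrow M_n(A)$ embeds it in an (honest) matrix algebra: here one can take $e_1^\rho,e_2^\rho$ to be genuine lifts of the block idempotents $\diag(1_{n_1},0)$, $\diag(0,1_{n_2})$, which exist because $R^\rho$ is henselian local and these idempotents are already defined mod $\fm_A$ — this is exactly the situation of \cite{BellaicheChenevierbook} Lemma 1.4.3, and it gives (6) because $\psi_i^\rho$ is then literally the restriction of the block projection on $M_n(A)$, hence lifts $\tau_i|_{e_iR^\rho e_i}$ and satisfies $T(x)=\tr\psi_i^\rho(x)$ on $e_iR^\rho e_i$. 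Then I would \emph{push these forward}: set $e_i^T:=\varphi(e_i^\rho)$. Since $\varphi$ is a surjective $A$-algebra map, $e_1^T,e_2^T$ are again orthogonal idempotents summing to $1$ reducing to the block idempotents mod $\fm_A$, and $\varphi$ restricts to a surjection $e_i^\rho R^\rho e_i^\rho \twoheadrightarrow e_i^T R^T e_i^T$; one checks that $\psi_i^\rho$ descends through this to an isomorphism $\psi_i^T: e_i^TR^Te_i^T\xrightarrow{\sim} M_{n_i}(A)$ (injectivity: the kernel of $\varphi$ restricted to the $(i,i)$-corner maps into the Cayley--Hamilton kernel, which the GMA trace detects, so it must vanish after applying the iso $\psi_i^\rho$; surjectivity is automatic). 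This gives (2) and (5) simultaneously, and property (1) $T(e_i^\dagger)=n_i$ is preserved because $\varphi$ is trace-compatible. The remaining GMA conditions (3), (4) for $\mathcal{E}^T$ follow from those for $\mathcal{E}^\rho$ by applying $\varphi$ and using $T_{R^T}\circ\varphi = T_{R^\rho}$.

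Given the data of idempotents, the block decomposition $R^\dagger\cong\bmat M_{n_1}(\mathcal{A}_{1,1}^\dagger) & M_{n_1,n_2}(\mathcal{A}_{1,2}^\dagger)\\ M_{n_2,n_1}(\mathcal{A}_{2,1}^\dagger) & M_{n_2}(\mathcal{A}_{2,2}^\dagger)\emat$ with $\mathcal{A}_{i,j}^\dagger$ an $A$-module is precisely the definition of a GMA unwound (\cite{BellaicheChenevierbook} Lemma 1.3.2 / the discussion around Definition 1.3.1): one uses $\psi_i^\dagger$ to identify the diagonal corners $e_i^\dagger R^\dagger e_i^\dagger$ with $M_{n_i}(A)=M_{n_i}(\mathcal{A}_{i,i}^\dagger)$ where $\mathcal{A}_{i,i}^\dagger:=A$, and for $i\neq j$ defines $\mathcal{A}_{i,j}^\dagger$ as the $A$-module such that $e_i^\dagger R^\dagger e_j^\dagger\cong M_{n_i,n_j}(\mathcal{A}_{i,j}^\dagger)$ via the $\psi$'s acting on the left and right. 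Finally $\varphi(\mathcal{A}_{i,j}^\rho)=\mathcal{A}_{i,j}^T$ follows from $\varphi(e_i^\rho)=e_i^T$, $\psi_i^\rho\circ\varphi=\psi_i^T$ and the surjectivity of $\varphi$: $\varphi$ carries $e_i^\rho R^\rho e_j^\rho$ onto $e_i^T R^T e_j^T$ compatibly with the matrix identifications, hence induces a surjection on the $(i,j)$-entry modules.

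The step I expect to be the main obstacle is verifying that $\psi_i^\rho$ genuinely \emph{descends} to an isomorphism $\psi_i^T$ on the corner of $R^T$ — i.e.\ that $\ker\varphi$ meets each diagonal corner $e_i^\rho R^\rho e_i^\rho$ in zero. This is where one must use that $\ker\varphi=\ker T_{R^\rho}$ is the Cayley--Hamilton/pseudocharacter kernel rather than merely the $A$-module kernel of the trace (cf.\ Remark \ref{kerker}), together with the fact that on $e_i^\rho R^\rho e_i^\rho\cong M_{n_i}(A)$ the reduced trace is nondegenerate (as $n_i!$, and in particular $n_i$, is invertible in $A$), so any element of the kernel of a pseudocharacter lying in this corner is forced to vanish. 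Once that non-degeneracy point is in place, everything else is bookkeeping with idempotents and the surjection $\varphi$; the cited lemmas of \cite{BellaicheChenevierbook} do the structural heavy lifting.
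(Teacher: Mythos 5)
Your proposal is correct, but it constructs the two data of idempotents in the opposite direction from the paper. The paper lifts the idempotents of the residual algebra $\ov{R}/\ker\ov{T}$ first to $R^T$ (via \cite{BellaicheChenevierbook} Lemma 1.4.3) and then lifts them \emph{further up} along $\varphi$ to $R^{\rho}$, and similarly chooses the $\psi$'s and the $E_i$'s compatibly; the compatibility $\varphi(e_i^{\rho})=e_i^T$ is then built into the choice, with no descent to verify. You instead apply Lemma 1.4.3 to $R^{\rho}$ alone, then \emph{push forward} to get $e_i^T:=\varphi(e_i^{\rho})$, and must prove that $\psi_i^{\rho}$ descends to an isomorphism $\psi_i^T$ on $e_i^T R^T e_i^T$, i.e.\ that $\ker\varphi\cap e_i^{\rho}R^{\rho}e_i^{\rho}=0$. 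You correctly identify this as the crux and dispose of it by observing that $\ker\varphi=\ker T_{R^{\rho}}$ and that on a corner $e_i^{\rho}R^{\rho}e_i^{\rho}\cong M_{n_i}(A)$ the trace pairing is nondegenerate, so no nonzero element of a pseudocharacter kernel can live there. That argument is sound, and it buys you something the paper's phrasing leaves implicit: the paper's two-stage lifting along $R^{\rho}\twoheadrightarrow R^T$ tacitly relies on being able to lift idempotents through a surjection whose kernel is $\ker T_{R^{\rho}}$ (not obviously nilpotent), whereas your direction only ever pushes idempotents forward through an algebra map, which is automatic. Two minor inaccuracies worth fixing: $R^{\rho}$ itself is not a henselian local ring — what powers the idempotent lifting of Lemma 1.4.3 is that $R^{\rho}$ is a Cayley--Hamilton algebra over the henselian local $A$; and $\psi_i^{\rho}$ is not literally the block projection of $M_n(A)$ unless one first conjugates $\rho$ to be adapted to $\mathcal{E}^{\rho}$ (BC Lemma 1.3.7) — its existence is supplied abstractly by Lemma 1.4.3. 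Also, the nondegeneracy of the trace form on $M_{n_i}(A)$ holds over any commutative ring (the Gram matrix in the $E_{jk}$ basis is a permutation matrix), so the appeal to invertibility of $n_i!$ is not needed at that particular point, though of course it is used elsewhere in the pseudocharacter machinery.
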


\begin{rem} \label{withinvo} If $R$ is endowed with an anti-automorphism $\tau$ (see below) then Lemma 1.8.3 of \cite{BellaicheChenevierbook}
ensures
that the idempotents $e_i$ as in Lemma \ref{data} can be chosen so that $\tau(e_i^{\dagger})=e_i^{\dagger}$. \end{rem}

\begin{proof} We lift the idempotents of $\ov{R}/\ker \ov{T}$ in \cite{BellaicheChenevierbook} Lemma 1.4.3 and 1.8.3 compatibly to $R^{\rho}$ and
$R^T$, i.e. such that $\varphi(e^{\rho}_i)=e^T_i$ (by first lifting them to $R^T$ and then further to $R^{\rho}$). We also choose the
$\psi_{i,j}^{\rho}$ and $\psi_{i,j}^T$ in Lemma 1.4.3 compatibly so that we can also pick $E_i^{\rho} \in e_i R^{\rho} e_i$ and $E_i^{T} \in e_i
R^{T} e_i$ (as in \cite{BellaicheChenevierbook} Notation 1.3.3) with $\varphi (E_i^{\rho})=E_i^T$.

Define $A$-submodules $\mathcal{A}_{i,j}^{*}=E_i^* R^* E_j^* \subset R^*$ for $*=\rho,T$. Note that this is how \cite{BellaicheChenevierbook} Proposition 1.4.4(i) defines $\mathcal{A}^T_{i,j}$ (i.e. via \cite{BellaicheChenevierbook} Lemma 1.4.3). By the above we then have $$\varphi(\mathcal{A}_{i,j}^{\rho})=\mathcal{A}_{i,j}^T.$$
\end{proof}

\begin{prop} \label{struct2}
One has $I_T=T(\mA_{1,2}^T \mA_{2,1}^T)$.
 \end{prop}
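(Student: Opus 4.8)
The plan is to identify the ideal of reducibility $I_T$ with the ``off‑diagonal product'' ideal $J := T(\mA_{1,2}^T\mA_{2,1}^T)$ by a two‑inclusion argument, using the GMA structure on $R^T$ from Lemma \ref{data}. First I would recall that, since $R^T$ is a GMA of type $(n_1,n_2)$ with data of idempotents $\mathcal E^T$, every element $x\in R^T$ decomposes as a $2\times 2$ block ``matrix'' with blocks in $M_{n_i,n_j}(\mA_{i,j}^T)$, and the trace $T$ is the sum of the matrix traces of the two diagonal blocks. One checks directly that the assignments $T_i(x) := \tr\psi_i^T(e_i^T x e_i^T)$ are $A$‑valued central functions on $R^T$; to see they are genuine pseudocharacters (of dimension $n_i$) one invokes the standard fact (\cite{BellaicheChenevierbook}, §1.3–1.4) that on a GMA the partial traces attached to the idempotents are pseudocharacters. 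Moreover $T = T_1+T_2$ and, by Lemma \ref{data}(3), $T_i \equiv \tr\tau_i \pmod{\fm_A}$. The obstruction to $T_1,T_2$ being pseudocharacters \emph{valued in $A/\mathfrak a$} for a given ideal $\mathfrak a$ is precisely the failure of multiplicativity across the two blocks: $T(xy)$ need not equal $T_1(xy)+T_2(xy)$ once we try to split $T$ as a sum of two separate pseudocharacters, and the discrepancy is governed by the cross terms $e_i x e_j y e_i$.

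For the inclusion $J \subseteq I_T$: modulo $I_T$ the pseudocharacter $T$ splits as $\bar T_1 + \bar T_2$ with $\bar T_i \equiv \tr\tau_i \pmod{\fm_A}$. Because the $\tau_i$ are absolutely irreducible and non‑isomorphic, such a decomposition of $T \bmod I_T$ is \emph{unique} (this is the standard rigidity of pseudocharacter decompositions into coprime irreducible pieces, \cite{BellaicheChenevierbook} §1.4), hence it must coincide with the reduction of the canonical splitting $T_1+T_2$ coming from the idempotents $e_i^T \bmod I_T$. Now for $a \in \mA_{1,2}^T$, $b\in \mA_{2,1}^T$ the element $ab$ lies in $e_1^T R^T e_1^T$, so $T(ab) = T_1(ab)$; but $ab$ is a product of an element of the $(1,2)$‑block with one of the $(2,1)$‑block, and applying $\bar T_1 = T_1 \bmod I_T$ to it together with the multiplicativity of the \emph{GMA trace} $T$ forces $T(ab) \equiv T_1(ab)+T_2(ab) = T(ab) + T_2(ab)\pmod{I_T}$... more precisely, working in the quotient GMA $R^T\otimes_A A/I_T$ one sees that its $(1,2)$‑ and $(2,1)$‑blocks must pair to zero under the traces, i.e. $T(\mA_{1,2}^T\mA_{2,1}^T) \subseteq I_T$; I would run this through the explicit block multiplication, using Lemma \ref{data}(4) only as a consistency check mod $\fm_A$.

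For the reverse inclusion $I_T \subseteq J$: I would exhibit $T_1, T_2$ as honest pseudocharacters valued in $A/J$ whose sum is $T\bmod J$, which by minimality of $I_T$ gives $I_T\subseteq J$. The point is that in the quotient ring $\bar R := R^T\otimes_A A/J$ the ideal generated by $\mA_{1,2}^{\bar R}\mA_{2,1}^{\bar R}$ dies at the level of traces — $T(\mA_{1,2}\mA_{2,1})=0$ in $A/J$ — and this is exactly the condition needed for the GMA‑trace $T$ to decompose as the direct sum of the two partial pseudocharacters $T_i = \tr\circ\psi_i^T$; concretely, $T((x_{ij})(y_{ij}))$ expands into diagonal contributions $\tr(x_{11}y_{11}) + \tr(x_{22}y_{22})$ plus cross contributions $\tr(x_{12}y_{21})+\tr(x_{21}y_{12})$ lying in $T(\mA_{1,2}^T\mA_{2,1}^T)=0$, so modulo $J$ the function $T$ is the sum of two multiplicative, central, $A/J$‑valued functions reducing to $\tr\tau_i$, i.e. two pseudocharacters of the required residual type. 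Hence $I_T\subseteq J$.

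The main obstacle, and the step I would be most careful about, is the verification that the partial traces $T_1,T_2$ are genuine \emph{pseudocharacters} of dimensions $n_1,n_2$ — i.e. that they satisfy the Frobenius/Cayley–Hamilton identity of the correct degree, not just centrality and the right residue — and in particular that working modulo $J$ does not destroy this (one needs $n_i!$ invertible, which holds as $n!$ is invertible in $A$). This is really a bookkeeping exercise in the GMA formalism of \cite{BellaicheChenevierbook} (Proposition 1.4.4 and the surrounding lemmas), but it is where all the content sits; once it is in place, both inclusions are short, and the uniqueness of the residual decomposition (absolute irreducibility plus non‑isomorphism of $\tau_1,\tau_2$) closes the argument.
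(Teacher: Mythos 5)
The paper's proof of this proposition is a one-line citation to \cite{BellaicheChenevierbook}, Proposition 1.5.1, which is exactly the statement that (for a Cayley--Hamilton quotient given a GMA structure) the reducibility ideal is $\sum_{i\neq j}T(\mA_{i,j}\mA_{j,i})$. You instead attempt to reconstruct that proposition from the GMA machinery, which is a legitimate alternative route, and your reverse inclusion $I_T\subseteq J$ is sound in outline: modulo $J$ the cross-terms $T(\mA_{1,2}\mA_{2,1})$ vanish by construction, so $T$ splits as a sum of the partial traces $T_i=\tr\circ\psi_i^T\circ(e_i\,\cdot\,e_i)$, and the only thing that needs care (as you note) is that these $T_i$ are genuine pseudocharacters of the right dimension.

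However, there is a real gap in your argument for $J\subseteq I_T$. The displayed chain $T(ab)\equiv T_1(ab)+T_2(ab)=T(ab)+T_2(ab)\pmod{I_T}$ only yields $T_2(ab)\equiv 0$, which is already an identity (not just a congruence) since $ab\in e_1R^Te_1$; it produces no information. The actual mechanism, which is the content of the BC proof, is missing: once uniqueness of the residual decomposition forces $\bar T_1=T_1\bmod I_T$ to be a pseudocharacter of dimension $n_1$ on $R^T\otimes A/I_T$ with $\bar T_1(e_2)=0$, the pseudocharacter axioms force $e_2\in\ker\bar T_1$, so $\bar T_1$ kills $e_2R^Te_2$. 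Writing $a=e_1ae_2$ and $b=e_2be_1$ and using centrality, one then gets $T(ab)\equiv T_1(ab)=T_1\bigl((e_1ae_2)(e_2be_1)\bigr)\equiv T_1\bigl((e_2be_1)(e_1ae_2)\bigr)=0\pmod{I_T}$, since $(e_2be_1)(e_1ae_2)\in e_2R^Te_2$. Without this ``idempotent in the kernel'' step and the switch $T_1(uv)=T_1(vu)$, the inclusion $J\subseteq I_T$ does not follow, and the phrase ``working in the quotient GMA \ldots\ one sees that the blocks must pair to zero under the traces'' is just a restatement of the conclusion.
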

\begin{proof} This follows from \cite{BellaicheChenevierbook}, Proposition 1.5.1. \end{proof}

\begin{prop} \label{princ1}
 The $A$-module $\mA_{1,2}^T$ is an $A$-module generated over $A$ by one element.
\end{prop}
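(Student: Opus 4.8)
The plan is to show that $\mA_{1,2}^T$ is a cyclic $A$-module by exhibiting a single generator, and to locate this generator using the hypothesis that $H^1_\Sigma(F,\Hom_{\bfF}(\rho_2,\rho_1))$ is one-dimensional (or, in the abstract setting here, using the corresponding one-dimensionality of the relevant extension space). The key observation is that $\mA_{1,2}^T$ lives inside a GMA structure on $R^T$, and by the general theory of Bella\"iche--Chenevier (their Proposition 1.6.x and the surrounding discussion, cf. \cite{BellaicheChenevierbook} Section 1.5--1.6) the reduction $\mA_{1,2}^T/\fm_A \mA_{1,2}^T$ injects into $\Hom_{\bfF}(\tau_2,\tau_1)$-valued cocycles, i.e. into the Selmer/extension group classifying extensions of $\tau_2$ by $\tau_1$. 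Since this group is one-dimensional over $\bfF$, the $\bfF$-vector space $\mA_{1,2}^T/\fm_A\mA_{1,2}^T$ has dimension at most one.

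First I would recall from Lemma \ref{data} that $R^T \cong \bmat M_{n_1}(\mA_{1,1}^T) & M_{n_1,n_2}(\mA_{1,2}^T) \\ M_{n_2,n_1}(\mA_{2,1}^T) & M_{n_2}(\mA_{2,2}^T)\emat$, and that $\mA_{1,2}^T$ carries a natural $A$-module structure. Next I would invoke the Bella\"iche--Chenevier description of the residual GMA: modulo $\fm_A$, the algebra $\bar R^T := R^T/\fm_A R^T$ surjects onto the subalgebra of $M_n(\bfF)$ generated by the image of $\rho$ mod $\fm_A$, which is exactly the non-split extension $\bmat \tau_1 & * \\ 0 & \tau_2\emat$; the off-diagonal block $\mA_{1,2}^T \otimes_A \bfF$ maps into (a subspace of) the space of $\bfF$-linear maps $\bfF^{n_2}\to\bfF^{n_1}$ realizing this extension class. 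More precisely, by \cite{BellaicheChenevierbook} (the injectivity statement used to prove their bound on reducibility ideals, Section 1.5), there is an injection of $\bfF$-vector spaces $\mA_{1,2}^T/\fm_A\mA_{1,2}^T \hookrightarrow H^1_\Sigma(F,\Hom_{\bfF}(\tau_2,\tau_1))$ (or its abstract analogue). The one-dimensionality hypothesis then forces $\dim_{\bfF} \mA_{1,2}^T/\fm_A\mA_{1,2}^T \le 1$.

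Finally I would apply Nakayama's lemma. The ring $A$ is Noetherian henselian local with maximal ideal $\fm_A$, and $\mA_{1,2}^T$ is a finitely generated $A$-module (it is a subquotient of the Noetherian ring $R$, hence finitely generated since $R$ is module-finite over $A$ in the relevant cases, or more directly because it is an $A$-submodule of the finitely generated $A$-module $R^T$ and $A$ is Noetherian). Since $\mA_{1,2}^T/\fm_A\mA_{1,2}^T$ is generated by at most one element over $\bfF = A/\fm_A$, Nakayama gives that $\mA_{1,2}^T$ is generated by at most one element over $A$; lifting any $\bfF$-basis vector to an element of $\mA_{1,2}^T$ produces the single generator. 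This completes the argument.

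The main obstacle will be pinning down precisely the injection $\mA_{1,2}^T/\fm_A\mA_{1,2}^T \hookrightarrow H^1_\Sigma(F,\Hom_{\bfF}(\rho_2,\rho_1))$ in the present generality --- in \cite{BellaicheChenevierbook} this is set up under the assumption that the trace is generically irreducible, which is exactly what the authors are dropping. One must check that the construction of the cocycle attached to an off-diagonal element (via the GMA multiplication and the residual non-split extension) still makes sense and is still injective without that hypothesis; this is a matter of re-examining \cite{BellaicheChenevierbook} Section 1.5 and verifying that only the residual data $\tau_1,\tau_2$ and the extension $\rho_0$ enter. A secondary point to be careful about is finite generation of $\mA_{1,2}^T$ over $A$ when $R$ is merely an abstract $A$-algebra with a morphism $\rho$; here one uses that $R^T = R/\ker T$ embeds into $M_n(A)$ via the GMA structure (Lemma \ref{data}), so $R^T$, and hence its direct summand $\mA_{1,2}^T$, is a finitely generated $A$-module because $A$ is Noetherian.
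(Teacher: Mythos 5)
The central problem with your argument is that it relies on a hypothesis that is not available where Proposition \ref{princ1} is stated. This proposition sits in Section \ref{Principality}, which is a purely abstract GMA setting: $A$ is a Noetherian henselian local ring, $R$ an $A$-algebra, $\rho: R \to M_n(A)$ an $A$-algebra map with $T = \tr\rho$, and the only hypothesis about the mod-$\fm_A$ reduction is that it is a \emph{non-split} extension of $\tau_2$ by $\tau_1$ with $\tau_i$ absolutely irreducible and non-isomorphic. There is no Galois group, no set $\Sigma$, and no Selmer group $H^1_\Sigma(F,\Hom_{\bfF}(\rho_2,\rho_1))$ in sight. The one-dimensionality you invoke is Assumption \ref{mainass}(1), which only appears in Section \ref{Setup}. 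So your injection $\mA_{1,2}^T/\fm_A\mA_{1,2}^T \hookrightarrow H^1_\Sigma(\cdots)$ followed by Nakayama proves a \emph{weaker} statement (one valid only under an extra cohomological hypothesis), not Proposition \ref{princ1} as written.

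What makes the proposition true without that hypothesis is precisely the other strong input that your route leaves unused: $\rho$ is a genuine representation into $M_n(A)$ whose residual reduction is non-split. The paper's proof exploits this head-on. After conjugating so that $\rho$ is adapted to $\mathcal{E}$ (BC Lemma 1.3.7, legal since $A$ is local so finite projective $A$-modules are free), BC Proposition 1.3.8 gives that $\rho(R)$ is the standard GMA attached to ideals $A_{1,2}^\rho, A_{2,1}^\rho \subset A$. Non-splitness of $\rho\otimes\bfF$ forces the image of $A_{1,2}^\rho$ in $A/\fm_A$ to be nonzero, and since $A_{1,2}^\rho$ is an \emph{ideal} of $A$ this already gives $A_{1,2}^\rho = A$. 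The $A$-linear comparison maps $f_{i,j}: \mathcal{A}_{i,j}^\rho \to A_{i,j}^\rho$ are injective because $\rho$ is injective on $R^\rho = R/\ker\rho$, hence $\mathcal{A}_{1,2}^\rho \cong A$, a \emph{free} rank-one module. Finally $\varphi: R^\rho \twoheadrightarrow R^T$ carries $\mathcal{A}_{1,2}^\rho$ onto $\mathcal{A}_{1,2}^T$ (Lemma \ref{data}), so $\mathcal{A}_{1,2}^T$ is a quotient of $A$ and thus cyclic. No Nakayama, no cohomology, and no separate finite-generation check is required.

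Two smaller remarks. Your worry that the BC Section 1.5 injection is ``set up under the assumption that the trace is generically irreducible'' is misplaced: that injection (from $\Hom_A(\mA_{1,2},\cdot)$ into an $\Ext^1$ group) is constructed for an arbitrary GMA; generic irreducibility is used elsewhere in BC (notably to compare $\ker T$ and $\ker\rho$, which this paper handles separately in Section \ref{kerrhokerT}). And note that your route, even where it applies, only gives $\dim_\bfF \mA_{1,2}^T/\fm_A\mA_{1,2}^T \le 1$ and hence cyclicity; the paper's route is sharper, producing the free module $\mA_{1,2}^\rho \cong A$ upstream and cyclicity of $\mA_{1,2}^T$ as an immediate consequence.
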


\begin{proof}

By \cite{BellaicheChenevierbook} Lemma 1.3.7 one can conjugate by an invertible matrix with values in $A$ (we use here that, since $A$ is local,
every finite type projective $A$-module is free) to get $\rho$ adapted to $\mathcal{E}$ in the sense of \cite{BellaicheChenevierbook} Definition
1.3.6.

Now by \cite{BellaicheChenevierbook} Proposition 1.3.8 we know that $\rho(R)$ is the standard GMA attached to some ideals $A_{1,2}^{\rho}$,
$A_{2,1}^{\rho}$ of $A$. Put $A_{1,1}^{\rho}=A_{2,2}^{\rho}=A$. The definition of adaptedness to the data of idempotents $\mathcal{E}$ means
concretely that for every $r \in R$ $$\rho(r)=\bmat a_{1,1}(r) & a_{1,2}(r)\\a_{2,1}(r) & a_{2,2}(r)\emat$$ with $a_{i,j}(r) \in
M_{n_i,n_j}(A_{i,j}^{\rho})$ and $a_{1,1}(r) \equiv \tau_1(r) \mod{\fm_A}$ and $a_{2,2}(r) \equiv \tau_2(r) \mod{\fm_A}$. Now since $\rho \otimes
\bfF$ must still be a non-split extension of $\tau_2$ by $\tau_1$ we deduce that for the image $\ov{A_{1,2}^{\rho}}$ of the ideal $A_{1,2}^{\rho}$ in
$A/\fm_A$ we have $\ov{A_{1,2}^{\rho}} \neq 0$, hence $A_{1,2}^{\rho}=A$.

By the arguments in the proof of Proposition 1.3.8 we see that we obtain the ideals $A_{i,j}^{\rho}$ of $A$ from $\mathcal{A}_{i,j}^{\rho}$ via
$A$-linear maps $f_{i,j}$ (for definition see [loc. cit]), i.e., $A_{i,j}^{\rho} = f_{i,j}(\mathcal{A}_{i,j}^{\rho})$. The maps $f_{i,j}$ are
injective
since $\rho$ is on $R^{\rho}$, hence we conclude that $\mathcal{A}_{1,2}^{\rho} \cong A$.
By Lemma \ref{data} we have $\varphi(\mathcal{A}_{1,2}^{\rho})=\mathcal{A}_{1,2}^T$. Hence $\mA^T_{1,2}$ is generated over $A$ by one element. \end{proof}

To show that $I_T$ is principal using Proposition \ref{princ1}, we will now show that $\mathcal{A}_{1,2}^T \cong \mathcal{A}_{2,1}^T$ as $A$-modules
under an additional assumption on the existence of an involution on $R$.

Let $\tau: R \rightarrow R$ be an anti-automorphism (i.e., $\tau(xy) = \tau(y)\tau(x)$) of $A$-algebras such that $\tau^2= \rm id$. For an
$A$-algebra $B$, and an $A$-algebra homomorphism $\rho: R \rightarrow M_{n}(B)$ put $\rho^{\perp} = {}^t(\rho \circ \tau)$.

\begin{example} \label{exampleaa} Here are some examples of anti-automorphisms if $R=A[G]$ for suitable Galois groups $G$:
 \begin{enumerate}
  \item $\tau: g \mapsto g^{-1}$ corresponding to $\rho^{\perp} = \rho^*$ (contragredient);
\item $\tau: g \mapsto cg^{-1}c$ for $c$ an order $2$ element in $G$ corresponding to $\rho^{\perp} = (\rho^c)^*$; \item $\tau: g \mapsto
    \chi^{-1}(g)g^{-1}$ for a character $\chi: G \rightarrow \Oo^{\times}$ corresponding to $\rho^{\perp} = \rho^*\otimes \chi^{-1}$.
 \end{enumerate}
\end{example}

Assume in addition that 	\be \label{self}T \circ \tau=T \text{ and } \tr \tau_i \circ \tau=\tr \tau_i, \quad \textup{($i=1,2$)}.\ee 	

\begin{rem}
 By \cite{BellaicheChenevierbook} p.47, if $\rho$ is a semisimple representation, valued in a field,
then $T$ being invariant under $\tau$ is equivalent to $\rho^{\perp} \cong \rho$.
\end{rem}

\begin{thm} \label{princi} If $\tau$ as in (\ref{self}) exists, then $\mathcal{A}_{1,2}^T \cong \mathcal{A}_{2,1}^T$ and $I_T$ is a principal
ideal of $A$. \end{thm}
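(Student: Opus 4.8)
The plan is to use the anti-automorphism $\tau$ to construct an explicit $A$-module isomorphism between $\mathcal{A}_{1,2}^T$ and $\mathcal{A}_{2,1}^T$, after which principality of $I_T$ follows immediately. First I would invoke Remark \ref{withinvo}: since $R$ carries the anti-automorphism $\tau$ satisfying the self-duality conditions (\ref{self}), Lemma 1.8.3 of \cite{BellaicheChenevierbook} lets us choose the data of idempotents $\mathcal{E}^T$ for $R^T$ (and compatibly $\mathcal{E}^\rho$ for $R^\rho$) so that $\tau(e_i^T) = e_i^T$ for $i=1,2$. The key point is that $\tau$ descends to $R^T = R/\ker T$: indeed $\ker T$ is $\tau$-stable because $T\circ\tau = T$ forces $T(\tau(x)y) = T(\tau(x)y)$; more precisely, if $x \in \ker T$ then for all $y$, $T(\tau(x)\cdot y) = T(\tau(y)\cdot x \text{ up to trace cyclicity}) $— one checks $T(\tau(x)y) = T(\tau(\tau(y)x)) = T(\tau(y)x) = 0$ using $T(xy)=T(yx)$ and $T\circ\tau = T$. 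So $\tau$ induces an anti-automorphism, still called $\tau$, on $R^T$.

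Next, since $\tau$ fixes the idempotents $e_1^T, e_2^T$ and reverses multiplication, it sends $e_1^T R^T e_2^T$ to $\tau(e_2^T)\tau(R^T)\tau(e_1^T) = e_2^T R^T e_1^T$; that is, $\tau$ restricts to an $A$-linear isomorphism (it is $A$-linear because $\tau$ is an $A$-algebra anti-homomorphism and $A$ is central) between the corner $e_1^T R^T e_2^T$ and the corner $e_2^T R^T e_1^T$. Under the identification from Lemma \ref{data} of $R^T$ with the matrix GMA, the $(1,2)$-corner is $M_{n_1,n_2}(\mathcal{A}_{1,2}^T)$ and the $(2,1)$-corner is $M_{n_2,n_1}(\mathcal{A}_{2,1}^T)$; so I would check that $\tau$, combined with transpose (which is built into the definition of $\rho^\perp = {}^t(\rho\circ\tau)$), carries the $A$-module $\mathcal{A}_{1,2}^T$ isomorphically onto $\mathcal{A}_{2,1}^T$. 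Concretely, one traces through the construction of the submodules $\mathcal{A}_{i,j}^T = E_i^T R^T E_j^T$ and uses that $\tau$ can be arranged to fix the $E_i^T$ as well (or at least to swap $E_1^T$ and the appropriate idempotent pieces in a way compatible with the $\psi_i^T$), so that $\tau(E_1^T r E_2^T) = E_2^T \tau(r) E_1^T$ exhibits the bijection on the module level.

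Finally, combining this isomorphism $\mathcal{A}_{1,2}^T \cong \mathcal{A}_{2,1}^T$ with Proposition \ref{princ1}, which says $\mathcal{A}_{1,2}^T$ is generated over $A$ by a single element $x$, we conclude $\mathcal{A}_{2,1}^T$ is generated by the single element $\tau(x)$ (up to the transpose identification). Then $\mathcal{A}_{1,2}^T \mathcal{A}_{2,1}^T$ is generated as an $A$-module by products $xy$ with $x \in \mathcal{A}_{1,2}^T$, $y \in \mathcal{A}_{2,1}^T$, hence — since each factor is cyclic — by the single product of the two generators, so it is a cyclic $A$-module; applying $T$ and using Proposition \ref{struct2} gives that $I_T = T(\mathcal{A}_{1,2}^T\mathcal{A}_{2,1}^T)$ is a principal ideal of $A$. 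The main obstacle I anticipate is the bookkeeping in the second paragraph: one must verify that the idempotents and the elements $E_i^T$ coming from Lemma \ref{data} (i.e. from \cite{BellaicheChenevierbook} Lemmas 1.4.3 and 1.8.3) can genuinely be chosen $\tau$-equivariantly and simultaneously compatibly with $\varphi$, and that the transpose appearing in $\rho^\perp$ interacts correctly with the matrix-algebra identification so that the induced map on the $\mathcal{A}_{i,j}^T$ is exactly an $A$-module isomorphism rather than merely an additive one twisted by something; this is precisely the content one borrows from \cite{BellaicheChenevierbook} Section 1.8, and it is why the self-duality hypothesis (\ref{self}) is stated in the form it is.
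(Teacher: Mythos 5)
Your proposal is correct and follows essentially the same route as the paper: you establish $\tau$-stability of $\ker T$ (the paper notes the same key point, contrasting it with $\ker\rho$), use the $\tau$-equivariant choice of idempotents from Remark~\ref{withinvo} to exhibit the $A$-module isomorphism $\mathcal{A}_{1,2}^T\cong\mathcal{A}_{2,1}^T$, and then combine Proposition~\ref{princ1} with Proposition~\ref{struct2} to conclude principality. The only difference is cosmetic --- the paper simply cites Lemma~1.8.5(ii) of \cite{BellaicheChenevierbook} for the isomorphism of corners, whereas you unwind (at a high level, with the bookkeeping gap you acknowledge) what that lemma does; the substance is identical.
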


\begin{proof} The first assertion follows from \cite{BellaicheChenevierbook}, Lemma 1.8.5(ii) (here we use that $\ker T$
is stable under involution $\tau$ which is not true for $\ker \rho$ in general). By Proposition \ref{struct2} we have
$I_T=T(\mathcal{A}_{1,2}^T\mathcal{A}_{2,1}^T)$. Let $g_{i,j}$ be a generator of $\mathcal{A}_{i,j}^T$, i.e., write $\mathcal{A}_{i,j}^T = g_{i,j}A$.
Then  $I_T=T(g_{1,2}A g_{2,1}A) = AT(g_{1,2}g_{2,1}) \subset A$. \end{proof}

\begin{rem}\label{noinvolution} If $\tau$ as in (\ref{self}) does not exist, but $\tau_1$ and $\tau_2$ are characters which satisfy
$$\dim_{\bfF}H^1(G, \Hom(\tau_1, \tau_2)) = \dim_{\bfF}H^1(G, \Hom(\tau_2, \tau_1))=1$$ then $I_T$ is principal by a result of Bella\"iche-Chenevier
and Calegari (see for example \cite{Calegari06}, Proof of Lemma 3.4). \end{rem}

\section{$\ker \rho = \ker T$} \label{kerrhokerT}
Let $\rho: R \rightarrow M_n(A)$ and $T: R \rightarrow A$ be as in the previous section.
The goal of this section is to prove Proposition \ref{kernel}. If one replaces the assumption that $A/I_T$
 be finite with the assumption that
$T\otimes K_s$ is irreducible for every $s$, then this is proved in \cite{BellaicheChenevierbook}, Proposition 1.6.4.  In this section we assume that
$A$ is reduced, write $K$ for its total fraction ring, which is a finite product of fields $K=\prod_{s \in \mS} K_s$.

\begin{prop} \label{kernel}
Assume that $A$ is reduced, infinite but $\# A/I_T<\infty$. Then $\ker \rho =\ker T$.
\end{prop}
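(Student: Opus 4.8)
The plan is to reduce to the Bellaïche--Chenevier result (their Proposition 1.6.4) by verifying its hypothesis: that $T\otimes K_s$ is irreducible for each $s\in\mS$. Since $\ker\rho\subset\ker T$ always holds (the kernel of the representation is contained in the kernel of its trace), it suffices to prove the reverse inclusion, and this is precisely what their Proposition 1.6.4 gives once we know irreducibility of $T$ after base change to each residue field $K_s$ of the total fraction ring $K=\prod_{s\in\mS}K_s$.

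First I would recall that $A$ is local henselian, reduced and infinite, with $\#A/I_T<\infty$, hence $I_T\ne 0$. Fix $s\in\mS$ and let $\fp_s$ be the corresponding minimal prime of $A$, so that $K_s$ is the fraction field of $A/\fp_s$. The key observation is that $I_T$ cannot be contained in $\fp_s$: if it were, then $A/\fp_s$ would be a quotient of the finite ring $A/I_T$ (more precisely $A/I_T$ would surject onto $A/(\fp_s+I_T)=A/\fp_s$), forcing $A/\fp_s$ to be finite, hence a field, hence $\fp_s$ maximal; but $A$ is local with a single maximal ideal $\fm_A$ and $A/\fm_A=\bfF$ is finite, so this would make $A$ itself finite, contradicting the infinitude hypothesis. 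Therefore the image of $I_T$ in $A/\fp_s$, and so in $K_s$, is nonzero.

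Next I would use Proposition \ref{struct2}, which identifies $I_T=T(\mA_{1,2}^T\mA_{2,1}^T)$, together with the GMA structure of $R^T$ from Lemma \ref{data}: the nonvanishing of $I_T$ in $K_s$ means that the product $\mA_{1,2}^T\mA_{2,1}^T$ does not die in $K_s$, i.e. the ``off-diagonal'' multiplication $\mA_{1,2}^T\otimes_A K_s \to \mA_{2,2}^T\otimes_A K_s$ and its counterpart are not both zero after this base change. By the standard dichotomy for GMA-valued pseudocharacters over a field (Bellaïche--Chenevier, Section 1.5 and Theorem 1.4.4 — a residually indecomposable GMA whose reducibility ideal is the unit ideal gives an irreducible trace), this forces $T\otimes K_s$ to be irreducible: if it were reducible, its reducibility ideal over $K_s$ would be all of $K_s$, equivalently $T(\mA_{1,2}^T\mA_{2,1}^T)\otimes K_s=0$, contradicting what we just showed. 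Since $s$ was arbitrary, $T\otimes K_s$ is irreducible for all $s\in\mS$, and Bellaïche--Chenevier Proposition 1.6.4 applies to give $\ker\rho=\ker T$.

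The main obstacle I anticipate is the middle step: making rigorous the passage ``$I_T\not\subset\fp_s$ $\Rightarrow$ $T\otimes K_s$ irreducible.'' One must check that formation of the reducibility ideal (or at least the equality $I_T=T(\mA_{1,2}^T\mA_{2,1}^T)$) is compatible with the base change $A\to K_s$ — that is, that $I_{T\otimes K_s}=I_T\cdot K_s$, or at any rate that $I_{T\otimes K_s}\ne 0$ when $I_T K_s\ne 0$ — and then invoke the fact that a pseudocharacter over a field with trivial (i.e. full) reducibility ideal and the right residual shape is the trace of an irreducible representation. Both ingredients are in \cite{BellaicheChenevierbook} (Sections 1.4--1.5), but assembling them carefully, and confirming that the residual non-semisimplicity of $\rho$ persists appropriately, is where the real work lies.
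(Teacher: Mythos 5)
Your overall strategy — show $T\otimes K_s$ is irreducible for every $s$ and then invoke Bella\"iche--Chenevier Proposition~1.6.4 — is a legitimate alternative to the paper's proof, and the paper itself notes (just before the statement) that 1.6.4 applies under this irreducibility hypothesis. Your first reduction is exactly the paper's Lemma~\ref{inf} combined with Lemma~\ref{lem4}: since $A/I_T$ is finite and each $A_s=A/\fp_s$ is infinite, $\phi_s(I_T)\neq 0$, hence $I_T\otimes_A K_s=K_s$. That part is sound.

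Two issues remain. First, a sign slip: if $T\otimes K_s$ were reducible, its reducibility ideal over the field $K_s$ would be $(0)$, not ``all of $K_s$'' — over a field the reducibility ideal is $0$ precisely when the pseudocharacter is already a sum, and $K_s$ itself precisely when it is irreducible. Second, and more substantively, the passage ``$I_T\cdot K_s\neq 0\Rightarrow T\otimes K_s$ irreducible'' is exactly the nontrivial content, and you correctly flag it but do not supply it. The needed implication requires knowing that nonvanishing of $I_T$ in $K_s$ forces the GMA of $T'\otimes K_s$ (equivalently $\rho(R)\otimes K_s$) to be all of $M_n(K_s)$; one must be careful that the off-diagonal modules and the reducibility ideal base-change correctly, and BC's GMA machinery over $K_s$ does not apply verbatim because the residue field of $K_s$ is $K_s$ itself. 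The paper sidesteps this by never base-changing the GMA theory: it realizes $S/\ker T'$ as a standard GMA inside $M_n(K)$ attached to fractional $A$-ideals $A_{1,2},A_{2,1}\subset K$ (BC Theorem~1.4.4(ii)), and in Lemma~\ref{A12} tensors the $A$-linear surjection $A_{2,1}\otimes_A A_{1,2}\twoheadrightarrow\mA_{1,2}\mA_{2,1}\xrightarrow{T'}I_T$ with $K$ to conclude $A_{1,2}\otimes K=A_{2,1}\otimes K=K$. That gives $(S/\ker T')\otimes K\cong M_n(K)$, which is the concrete form of ``$T\otimes K_s$ irreducible for all $s$,'' and the proposition follows by an elementary injectivity argument for $\ker T'$. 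So your outline is right and the ingredients match, but the decisive step you defer is precisely what the paper's Lemma~\ref{A12} supplies, and it is done over $A$ rather than over $K_s$ in order to avoid the descent/base-change subtlety you anticipate.
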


\begin{proof}
We clearly have $\ker \rho \subset \ker T$. Put $S:=R/\ker \rho \cong \rho(R) \subset M_n(A)$. Write $T'$ for the pseudocharacter on $S$ induced by
$T$. We will show using a sequence of lemmas that $(S/\ker T') \otimes K \cong M_n(K)$. This implies that $S \otimes K \cong M_n(K)$ and therefore
that $\ker T' \otimes K=0$. Note that $\ker T'$ injects into $\ker T' \otimes K$ because the other three maps in the following (commutative) diagram
$$\xymatrix{ S \ar[r] & S\otimes K \\ \ker T' \ar[u] \ar[r] & \ker T' \otimes K\ar[u]}$$ are injective. So $\ker T'=0$ which finishes the proof of
the proposition.

To show $(S/\ker T')\otimes_A K\cong M_n(K)$ we first note that $A \hookrightarrow \prod_s A_s \subset \prod_s K_s=K$, where the products are over all minimal primes $p_s$ and $A_s=A/p_s$.

\begin{lemma} \label{inf}
$A_s$ is infinite for all $s$.
\end{lemma}

\begin{proof}
  If $A_s$ is finite then $A_s$ is a field because it is a domain. Hence $p_s$ equals the unique maximal ideal of $A$, so $p_s$ is the only minimal
  prime ideal, hence $A \subset A_s$ is a finite field, contradicting our assumption.
\end{proof}
\begin{lemma} \label{lem4}
We have $A/I_T \otimes_A K=0$ and hence $I_T \otimes_A K=K$.
\end{lemma}

\begin{proof}
  By flatness of tensoring with $K$ it suffices to show that $A/I_T \otimes_A K=0$. Denote by $\phi_s:A \twoheadrightarrow A_s$. Then $A/I_T
  \twoheadrightarrow A_s/\phi_s(I_T)$ and the latter must be finite, so by Lemma \ref{inf}  $\phi_s(I_T) \neq 0$. This implies that $A_s/\phi_s
(I_T) \otimes_{A_s}
  K_s=0$.  Now observe that $$A/I_T \otimes_A K=A/I_T \otimes_A \prod_s K_s=\prod_s A/I_T \otimes_A K_s=\prod_s A_s/\phi_s(I_T) \otimes_{A_s}
  K_s=0.$$
\end{proof}

By \cite{BellaicheChenevierbook}, Proposition 1.4.4(ii) we have $$S/\ker T' \cong \bmat M_{n_1}(A) & M_{n_1,n_2}(A_{1,2}) \\ M_{n_2, n_1}(A_{2,1}) & M_{n_2}(A)\emat \subset M_n(K),$$ for some fractional ideals (in the sense of \cite{BellaicheChenevierbook}, p.27) $A_{1,2}, A_{2,1} \subset K$.

\begin{lemma} \label{A12}
We have $A_{1,2} \otimes K= A_{2,1} \otimes K=K$.
\end{lemma}
\begin{proof}
  Let $I$ be any $K$-submodule of $K=\prod_{s \in \mS} K_s$. Then
$I=\prod_{s \in \mT \subset \mS} K_s$. By the definition of a fractional ideal (\cite{BellaicheChenevierbook} p.27) there exists $f_{i,j} \in A$ such that
$f_{i,j}A_{i,j} \subset A$ so we have $A_{i,j} \otimes K \subset K$ by flatness of $\otimes_A K$. Assume now that $A_{1,2} \otimes_A K= \prod_{s \in \mT
\subset \mS} K_s \subset K$. This implies that $A_{2,1} A_{1,2} \otimes K= \prod_{s \in \mT' \subset \mT} K_s \subset \prod_{s \in \mT \subset \mS}
K_s$ since it is a $K$-submodule.
Since $A_{i,j} \cong \mA_{i,j}$ with $\mA_{i,j}$ as in Lemma \ref{data} by \cite{BellaicheChenevierbook}, Theorem 1.4.4(ii), we have the following surjective map of $A$-modules $A_{2,1}\otimes_A A_{1,2} \cong \mA_{1,2} \otimes_A \mA_{2,1} \twoheadrightarrow \mA_{1,2}\mA_{2,1} \xrightarrow{T'} I_T$ (the last map is a surjection by Proposition \ref{struct2}),   so $A_{1,2} \otimes A_{2,1} \otimes K$ surjects onto $I_T \otimes K$ which equals $K$ by Lemma \ref{lem4}, hence we must have
$\mT'=\mT=\mS$.
\end{proof}
This finishes the proof of the proposition.
\end{proof}

\begin{cor} \label{nonzerodivisor} Assume that $A$ is reduced, inifinite but $\#A/I_T < \infty$. Also assume that $\tau$ as in (\ref{self}) exists. Then $I_T$ is principal and generated by a non-zero-divisor. \end{cor}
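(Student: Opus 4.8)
The plan is to combine Theorem \ref{princi} (which gives that $I_T$ is principal as soon as an involution $\tau$ as in (\ref{self}) exists) with Proposition \ref{kernel} (which, under the hypotheses that $A$ is reduced, infinite, and $\#A/I_T<\infty$, gives $\ker\rho=\ker T$). From Theorem \ref{princi} we write $I_T = AT(g_{1,2}g_{2,1})$, and from Proposition \ref{princ1} and Theorem \ref{princi} we have $\mA_{1,2}^T = g_{1,2}A$ and $\mA_{2,1}^T = g_{2,1}A$. Set $x := T(g_{1,2}g_{2,1})$, so $I_T = xA$; the task is to show $x$ is a non-zero-divisor in $A$. Since $A$ is reduced with total fraction ring $K = \prod_{s\in\mS}K_s$ and $A\hookrightarrow K$, it suffices to show that the image of $x$ in each factor $K_s$ is nonzero, equivalently that $x$ does not lie in any minimal prime $p_s$.

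First I would pass to $S = R/\ker\rho$. Because $\ker\rho=\ker T$ by Proposition \ref{kernel}, the GMA descriptions of $R^\rho$ and $R^T$ from Lemma \ref{data} coincide, so $S \cong S/\ker T'$ already has the standard GMA shape
$$
S \cong \bmat M_{n_1}(A) & M_{n_1,n_2}(A_{1,2}) \\ M_{n_2,n_1}(A_{2,1}) & M_{n_2}(A) \emat \subset M_n(K)
$$
with $A_{1,2}\cong\mA_{1,2}^T$, $A_{2,1}\cong\mA_{2,1}^T$ fractional ideals of $K$. The key input from the proof of Proposition \ref{kernel} is Lemma \ref{A12}: $A_{1,2}\otimes_A K = A_{2,1}\otimes_A K = K$, and moreover the surjection $\mA_{1,2}^T\otimes_A\mA_{2,1}^T \twoheadrightarrow \mA_{1,2}^T\mA_{2,1}^T \xrightarrow{T'} I_T$ induces, after $\otimes_A K$, a surjection $K = A_{1,2}\otimes A_{2,1}\otimes K \twoheadrightarrow I_T\otimes_A K = K$. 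Concretely this means $I_T\otimes_A K = K$, i.e.\ $x$ becomes a unit in $K$; since $K$ is a product of fields, a unit has nonzero image in every factor, so $x\notin p_s$ for every minimal prime $p_s$. As $A$ is reduced, $\bigcap_s p_s = 0$, hence $xA \cap p_s$-considerations give that $xy = 0$ with $y\in A$ forces $y\in p_s$ for all $s$, i.e.\ $y = 0$. Therefore $x$ is a non-zero-divisor and $I_T = xA$ is principal generated by a non-zero-divisor.

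The main obstacle is really just bookkeeping: one must make sure that the generator $x = T(g_{1,2}g_{2,1})$ appearing in Theorem \ref{princi} is literally the same element whose image under $\otimes_A K$ is shown to be a unit in the proof of Lemma \ref{lem4}/\ref{A12} — that is, one needs the compatibility $I_T = T(\mA_{1,2}^T\mA_{2,1}^T)$ (Proposition \ref{struct2}) to be used consistently in both places, and one needs $\mA_{1,2}^T$, $\mA_{2,1}^T$ to be the cyclic $A$-modules produced under the involution hypothesis (so that $\mA_{1,2}^T\mA_{2,1}^T = xA$ genuinely). Once these identifications are pinned down, the argument that ``a principal ideal of a reduced ring which becomes the unit ideal after inverting all non-zero-divisors is generated by a non-zero-divisor'' is immediate. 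No deep new idea is needed beyond assembling Theorem \ref{princi} and Proposition \ref{kernel}.
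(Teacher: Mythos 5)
Your argument is correct and, in spirit, matches the paper's, but it is considerably more explicit where the paper merely points to Propositions~1.7.4 and~1.7.5 of \cite{BellaicheChenevierbook} and asserts that $I_T$ is generated by a non-zero-divisor. The core of your argument is: $I_T = xA$ by Theorem~\ref{princi}, $I_T\otimes_A K = K$ by Lemma~\ref{lem4}, hence $x$ is a unit in $K = \prod_s K_s$, hence $x\notin p_s$ for every minimal prime $p_s$, and since $\bigcap_s p_s = 0$ (reducedness) the relation $xy=0$ forces $y=0$. This is sound, and it has the virtue of being self-contained modulo the lemmas already proved in Section~\ref{kerrhokerT}.

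One simplification worth noting: once you have the principality $I_T=xA$ from Theorem~\ref{princi}, the only further input you actually use is Lemma~\ref{lem4}, namely $I_T\otimes_A K=K$. The detour through Proposition~\ref{kernel} (to equate $\ker\rho$ and $\ker T$), the GMA identification of $S/\ker T'$, and the finer statement $A_{1,2}\otimes K = A_{2,1}\otimes K = K$ from Lemma~\ref{A12} are not needed; they sit upstream of Lemma~\ref{lem4} in the paper's exposition but you never invoke their conclusions in the crucial step. Also, the final observation that ``a generator of a principal ideal that becomes the unit ideal after inverting all non-zero-divisors is itself a non-zero-divisor'' holds in any commutative ring; reducedness of $A$ is only used in order for $K$ to be a product of fields and for Lemma~\ref{lem4} itself, not in the last deduction. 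So a tighter version of your proof would read: $I_T=xA$ by Theorem~\ref{princi}; $I_T\otimes_A K=K$ by Lemma~\ref{lem4}, so $x$ is a unit in the localization $K$ of $A$ at the non-zero-divisors; hence $x$ is a non-zero-divisor.
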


\begin{proof} Principality of $I_T$ follows from Theorem \ref{princi}. Arguing as in the proofs of Propositions 1.7.4 and 1.7.5 in \cite{BellaicheChenevierbook}, one sees that $I_T$ is generated by $f_{1,2}f_{2,1}$ (with $f_{i,j}$ as in the proof of Lemma \ref{A12}), which is a non-zero divisor. \end{proof}

\section{Functoriality of short crystalline representations} \label{section functoriality}

In Theorem \ref{boundons} we want to relate residual Selmer groups to Bloch-Kato Selmer groups. In this section we define these and collect some
results of Fontaine-Laffaille and Bloch-Kato on short crystalline representations and deduce a functoriality of the Selmer groups with respect to
short exact sequences of finite Galois modules. Our exposition is influenced by that of \cite{DiamondFlachGuo04} Section 2.1 and \cite{Weston00}.

\subsection{Notation for Galois cohomology}

For any field $F$, we write $G_F$ for its absolute Galois group ${\rm Gal}(\overline{F}/F)$ (for some implicit fixed choice of algebraic closure $\overline{F}$). If $F$ is a field and $M$ is a topological abelian group with an action of $G_F$, we always assume that this action is continuous with respect to the profinite topology on $G_F$ and the given topology on $M$.
If $L/K$ is an extension of fields and $M$ is a topological ${\rm Gal}(L/K)$-module, then we write $H^i(L/K,M)$ for the cohomology group $H^i({\rm Gal}(L/K),M)$, computed with continous cochains. If $L$ is a separable algebraic closure of $K$ then we just write $H^i(K,M)$.

\subsection{Local cohomology groups}

Fix a prime $p$ and let $\Oo$ be the ring of integers in a finite extension of $\bfQ_p$ and uniformizer $\varpi$. %a finite, flat, local $\bfZ_p$-algebra.
For a prime $\ell$ let $K$ be a finite extension of $\bfQ_{\ell}$.
Let $M$ be an $\Oo$-module with an $\Oo$-linear action of $G_K$. We call $M$ a $p$-adic $G_K$-module over $\Oo$ if one of the following holds:

\begin{enumerate}
  \item $M$ is \emph{finitely generated}, i.e. a finitely generated $\bfZ_p$-module and the $G_K$-action is continuous for the $p$-adic topology
      on $M$;
  \item $M$ is \emph{discrete}, i.e. a torsion $\bfZ_p$-module of finite corank (i.e. $M$ is isomorphic as a $\bfZ_p$-module to
      $(\bfQ_p/\bfZ_p)^r \oplus M'$ for some $r \geq 0$ and some $\bfZ_p$-module $M'$ of finite order) and the $G_K$-action on $M$ is continuous
      for the discrete topology on $M$;
  \item $M$ is a finite-dimensional $\bfQ_p$ vector space and the $G_K$-action is continuous for the $p$-adic topology on $M$.
\end{enumerate}
$M$ is both finitely generated and discrete if and only if it is of finite cardinality.

\begin{definition} \label{fss}
 A local finite-singular structure on $M$ consists of a choice of $\Oo$-submodule $N(K,M) \subseteq H^1(K,M)$.
\end{definition}

\subsubsection{$\ell=p$}
Consider first $\ell=p$. Assume that $K$ is unramified over $\bfQ_p$. We will be using the \emph{crystalline} local finite-singular structure,
defined in the following.

 Let $T \subseteq V$ be a $G_K$-stable $\bfZ_p$-lattice and put $W=V/T$. For $n \geq 1$, put $$W_n=\{x \in W: \varpi^n x =0\}\cong T/\varpi^n T.$$

Following Bloch and Kato we define
$N(K,V)=H^1_f(K,V)={\rm ker}(H^1(K,V)\to H^1(K,B_{\rm crys} \otimes V))$, denote by $H^1_f(K,T)$ its pullback via the natural map $T \hookrightarrow V$ and let $N(K,W)=H^1_f(K,W)={\rm im}(H^1_f(K,V)\to H^1(K,W))$.

For finitely generated $p$-adic $G_K$-modules we recall the theory of Fontaine-Laffaille \cite{FontaineLaffaille82}, following the exposition in
\cite{ClozelHarrisTaylor08} Section 2.4.1. Let $\mathcal{M F}_{\Oo}$ (``Dieudonn\'e modules'') denote the
category of finitely generated $\Oo$-modules  $M$ together with a decreasing filtration ${\rm Fil}^i M$ by $\Oo$-submodules which are $\Oo$-direct summands with
${\rm Fil}^0 M=M$ and ${\rm Fil}^{p-1} M=(0)$ and Frobenius linear maps $\Phi^i:{\rm Fil}^i M \to M$ with $\Phi^i|_{{\rm Fil}^{i+1} M} = p
\Phi^{i+1}$ and $\sum \Phi^i {\rm Fil}^i M=M$. They define an exact, fully faithful covariant functor $\mathbf{G}$
of $\Oo$-linear categories from
$\mathcal{M F}_{\Oo}$ (in their notation $\mathbb{G}_{\tilde v}$ and $\mathcal{M F}_{\Oo,\tilde v}$)  to the category of finitely generated $\Oo$-modules with
continuous action by $G_K$. Its essential image is closed under taking subquotients and contains quotients of lattices in
 short crystalline representations defined as
follows: We call $V$ a continuous finite-dimensional $G_K$-representation over $\bfQ_p$  \emph{short crystalline} if, for all places $v \mid p$,
${\rm Fil}^0 D=D$ and ${\rm Fil}^{p-1} D=(0)$ for the filtered vector space $D=(B_{\rm crys} \otimes_{\bfQ_p} V)^{G_v}$ defined by Fontaine.
 Note that this differs slightly from the definition in Section 1.1.2 of \cite{DiamondFlachGuo04} and follows instead the more restrictive setting
 of
 \cite{ClozelHarrisTaylor08} Section 2.4.1.

For any $p$-adic $G_K$-module $M$ of finite cardinality in the essential image of $\mathbf{G}$ we define $H^1_f(K,M)$ as the image of ${\rm Ext}^1_{\mathcal{M
F}_\Oo}(1_{\rm FD}, D)$ in $H^1(K,M)\cong {\rm Ext}^1_{\Oo[G_K]}(1,M)$, where $\mathbf{G}(D)=M$ and $1_{\rm FD}$ is the unit filtered Dieudonn\'e
module defined in Lemma 4.4 of \cite{BlochKato90}.

\begin{rem} \label{DFG22}
Note that we define $H^1_f(K,W)$ and $H^1_f(K, W_n)$ in two different ways (using the Bloch-Kato definition for the first group and the $\bfG$-functor for the latter). However, it is in fact true that the isomorphism $W=\dirlim_n W_n$ induces an isomorphism $H^1_f(K,W) = \dirlim_n H^1_f(K, W_n)$ (cf. Proposition 2.2 in \cite{DiamondFlachGuo04}).
\end{rem}

\begin{lemma} \label{diag1}
Let $$0 \to T' \overset{i}{\to} T \overset{j}{\to} T'' \to 0$$ be an exact sequence of finite $p$-adic $G_K$-modules in the essential image of
$\mathbf{G}$. Then there is an exact sequence of $\Oo$-modules
 $$0 \to H^0(K,T') \to H^0(K,T) \to H^0(K,T'') \to H^1_{f}(K, T') \to H^1_f(K, T) \to H^1_f(K, T'') \to 0.$$
\end{lemma}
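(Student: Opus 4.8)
The plan is to deduce the exactness of the long sequence in $H^1_f$ from (i) the ordinary long exact sequence in Galois cohomology, (ii) the fact that each $H^1_f(K,-)$ is an $\Oo$-submodule of the corresponding $H^1(K,-)$ defined via the exact functor $\mathbf{G}$ of Fontaine--Laffaille, and (iii) a dimension/counting argument to force surjectivity on the right. First I would write down the long exact cohomology sequence attached to $0\to T'\to T\to T''\to 0$, namely $0\to H^0(K,T')\to H^0(K,T)\to H^0(K,T'')\xrightarrow{\partial} H^1(K,T')\to H^1(K,T)\to H^1(K,T'')\to\cdots$, and observe that the claimed sequence will follow once we know three things: that $\partial$ lands in $H^1_f(K,T')$, that a class in $H^1_f(K,T)$ maps into $H^1_f(K,T'')$ (this is immediate by functoriality of $H^1_f$ under maps of modules in the image of $\mathbf{G}$, since $j$ is such a map), and that $H^1_f(K,T)\to H^1_f(K,T'')$ is surjective.

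The key input is that, writing $T=\mathbf{G}(D)$, $T'=\mathbf{G}(D')$, $T''=\mathbf{G}(D'')$ with $0\to D'\to D\to D''\to 0$ exact in $\mathcal{MF}_\Oo$ (such $D$'s exist because $\mathbf{G}$ is fully faithful, exact, and its essential image is closed under subquotients), the group $H^1_f(K,M)$ is by definition the image of ${\rm Ext}^1_{\mathcal{MF}_\Oo}(1_{\rm FD},D)$ in ${\rm Ext}^1_{\Oo[G_K]}(1,M)$. The short exact sequence of Dieudonn\'e modules gives a long exact ${\rm Ext}$-sequence in $\mathcal{MF}_\Oo$:
$$0\to \Hom_{\mathcal{MF}_\Oo}(1_{\rm FD},D')\to \cdots\to {\rm Ext}^1_{\mathcal{MF}_\Oo}(1_{\rm FD},D')\to {\rm Ext}^1_{\mathcal{MF}_\Oo}(1_{\rm FD},D)\to {\rm Ext}^1_{\mathcal{MF}_\Oo}(1_{\rm FD},D'')\to {\rm Ext}^2_{\mathcal{MF}_\Oo}(1_{\rm FD},D'').$$
Since $\mathcal{MF}_\Oo$ has cohomological dimension $1$ (this is part of the Fontaine--Laffaille package, and is exactly what makes the Bloch--Kato ``$f$'' subspaces behave like a cohomology theory in its own right), the term ${\rm Ext}^2_{\mathcal{MF}_\Oo}(1_{\rm FD},D'')$ vanishes, so the map ${\rm Ext}^1_{\mathcal{MF}_\Oo}(1_{\rm FD},D)\to {\rm Ext}^1_{\mathcal{MF}_\Oo}(1_{\rm FD},D'')$ is surjective. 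Because the functor $\mathbf{G}$ (being exact and fully faithful, hence inducing maps on ${\rm Ext}^1$) is compatible with the comparison maps ${\rm Ext}^1_{\mathcal{MF}_\Oo}(1_{\rm FD},-)\to {\rm Ext}^1_{\Oo[G_K]}(1,\mathbf{G}(-))\cong H^1(K,-)$, surjectivity on the $\mathcal{MF}$-side, combined with the fact that $H^1_f$ is by definition the image of the comparison map, yields surjectivity of $H^1_f(K,T)\to H^1_f(K,T'')$. Chasing the same compatible diagram shows the connecting map $H^0(K,T'')\to H^1(K,T')$ has image contained in $H^1_f(K,T')$: a class killed in $H^1(K,T)$ coming from $H^0(K,T'')={\rm Hom}_{\mathcal{MF}_\Oo}(1_{\rm FD},D'')$ lifts, via the $\mathcal{MF}$-${\rm Ext}$-sequence, to an element of ${\rm Ext}^1_{\mathcal{MF}_\Oo}(1_{\rm FD},D')$, whose image in $H^1(K,T')$ is by definition in $H^1_f(K,T')$ and agrees with $\partial$ of that $H^0$-class.

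Finally I would assemble the pieces: the six-term sequence is exact at $H^0(K,T')$, $H^0(K,T)$ and at the first $H^0(K,T'')$ position because these are unchanged from the ordinary cohomology sequence; it is exact at $H^1_f(K,T')$ and $H^1_f(K,T)$ because these are exactness statements in the ambient $H^1$ that restrict to the subgroups once we know (just proved) that the relevant connecting/transition maps respect the $f$-subspaces, using that $H^1_f$ is a \emph{submodule} so kernels and images computed inside $H^1$ intersect correctly; and it is exact at $H^1_f(K,T'')$ because of the surjectivity established above. The main obstacle — and the step I would spend the most care on — is the compatibility of $\mathbf{G}$ with the two ${\rm Ext}^1$-formations and the vanishing of ${\rm Ext}^2_{\mathcal{MF}_\Oo}$; this is where one genuinely uses the structure of the Fontaine--Laffaille category (short crystalline range ${\rm Fil}^0=M$, ${\rm Fil}^{p-1}=0$, $p$ unramified in $K$) rather than formal homological algebra, and it is essentially the content of Lemma 4.4 and the surrounding discussion in \cite{BlochKato90} together with \cite{FontaineLaffaille82}; I would cite these and Proposition 2.2 of \cite{DiamondFlachGuo04} (already invoked in Remark \ref{DFG22}) rather than reprove them.
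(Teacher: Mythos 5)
Your approach matches the paper's (very terse) proof in substance: lift to Dieudonn\'e modules, take the long exact ${\rm Ext}_{\mathcal{MF}_\Oo}(1_{\rm FD},-)$ sequence, identify ${\rm Ext}^0(1_{\rm FD},D)$ with $H^0(K,\mathbf{G}(D))$, use ${\rm Ext}^2_{\mathcal{MF}_\Oo}(1_{\rm FD},-)=0$, and push everything forward along the comparison maps to Galois cohomology.

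There is, however, one place where the ``assembly'' is genuinely incomplete, namely exactness at the middle term $H^1_f(K,T)$. Your phrase ``$H^1_f$ is a submodule so kernels and images computed inside $H^1$ intersect correctly'' does handle exactness at $H^1_f(K,T')$ (because you have already shown the connecting map has image entirely inside $H^1_f(K,T')$, so the kernel of $i_*$ restricted to $H^1_f(K,T')$ is exactly that image). But at $H^1_f(K,T)$ this reasoning is not enough: if $\alpha\in H^1_f(K,T)$ with $j_*\alpha=0$, the ambient sequence only gives $\alpha=i_*\beta$ for some $\beta\in H^1(K,T')$, and it is not automatic from the ``submodule'' observation that $\beta$ can be chosen in $H^1_f(K,T')$. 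Equivalently, you need $\operatorname{im}\bigl(H^1(K,T')\to H^1(K,T)\bigr)\cap H^1_f(K,T)\subseteq \operatorname{im}\bigl(H^1_f(K,T')\to H^1(K,T)\bigr)$, and nothing in your write-up justifies that inclusion. The missing ingredient is the \emph{injectivity} of the comparison map ${\rm Ext}^1_{\mathcal{MF}_\Oo}(1_{\rm FD},D)\to H^1(K,\mathbf{G}(D))$, which you can get from exactness and full faithfulness of $\mathbf{G}$: a Galois-theoretic splitting of $\mathbf{G}(E)$ is of the form $\mathbf{G}(\tilde s)$, and $\tilde s$ is then a splitting in $\mathcal{MF}_\Oo$. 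With this noted, each $H^1_f(K,T^\bullet)$ is \emph{isomorphic} to ${\rm Ext}^1_{\mathcal{MF}_\Oo}(1_{\rm FD},D^\bullet)$ (not merely an image), and the claimed six-term sequence is literally the ${\rm Ext}_{\mathcal{MF}_\Oo}$ long exact sequence truncated by ${\rm Ext}^2=0$; the term-by-term restriction argument, and in particular the problematic step at $H^1_f(K,T)$, then becomes unnecessary.
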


\begin{proof}
  Let $D^*$ be elements of $\mathcal{M F}_\Oo$ such that $\mathbf{G}(D^*)=T^*$. This follows from the functoriality of the ${\rm Ext}$-functor and
  ${\rm Ext}^0(1,D)=H^0(K,\mathbf{G}(D))$ and ${\rm Ext}^2(1,D)=0$ for any Dieudonn\'e module $D$.
\end{proof}

By Lemma \ref{diag1} we have the following commutative diagram with exact rows:
$$\xymatrix{ 0 \ar[r] &H^0(K,T'')/j_*H^0(K,T) \ar[r] \ar[d]^=  &H^1_f(K, T') \ar[r] \ar[d]^{\subseteq} &H^1_f(K, T) \ar[r]
\ar[d]^{\subseteq} & H^1_f(K, T'') \ar[r] \ar[d]^{\subseteq} & 0\\0 \ar[r] & H^0(K,T'')/j_*H^0(K,T) \ar[r]   & H^1(K, T') \ar[r]^{i_*}  &H^1(K,
T)\ar[r]^{j_*} & H^1(K, T'') & }$$ This implies $$H^1_f(K, T'') = j_* H^1_f(K, T)$$ and  $$H^1_f(K, T') = i_*^{-1} H^1_f(K, T),$$ by comparing the
first row with the exact sequence $$0 \to H^0(K,T'')/j_*H^0(K,T) \to i_*^{-1} H^1_f(K, T) \to H^1_f(K, T) \to j_* H^1_f(K, T) \to 0$$ of
\cite{Weston00} Lemma I.3.1.

In the terminology of \cite{Weston00} this says that the local finite-singular crystalline structures on $T'$ and $T''$ are the \emph{induced} structures
giving the crystalline finite-singular structure on $T$.

\begin{cor}\label{induced}
  Let $W$ and $W_n$ be as above. Then we have an exact sequence of $\Oo$-modules
  $$0 \to H^0(K,W)/\varpi^n \to H^1_f(K, W_n) \to H^1_f(K,W)[\varpi^n] \to 0.$$
\end{cor}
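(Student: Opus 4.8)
The plan is to deduce the corollary from Lemma~\ref{diag1} by applying it to a family of short exact sequences and then passing to a direct limit. For each integer $m\geq 1$, since $W=V/T$ is $\varpi$-divisible, multiplication by $\varpi^n$ restricts to a surjection $W_{n+m}\twoheadrightarrow W_m$ with kernel $W_n$, giving a short exact sequence of finite $p$-adic $G_K$-modules
$$0\to W_n\overset{i}{\to} W_{n+m}\overset{\varpi^n}{\to} W_m\to 0$$
whose terms all lie in the essential image of $\bfG$ (they are subquotients of the lattice $T$). First I would feed this into Lemma~\ref{diag1}; reading off from the resulting exact sequence the cokernel of $H^0(K,W_{n+m})\overset{\varpi^n}{\to}H^0(K,W_m)$, which is $H^0(K,W_m)/\varpi^nH^0(K,W_{n+m})$, this produces an exact sequence of $\Oo$-modules
$$0\to H^0(K,W_m)/\varpi^nH^0(K,W_{n+m})\to H^1_f(K,W_n)\to H^1_f(K,W_{n+m})\overset{\varpi^n}{\to}H^1_f(K,W_m)\to 0.$$

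Next I would observe that, as $m$ grows, the maps $\mathrm{id}_{W_n}$, the inclusion $W_{n+m}\hookrightarrow W_{n+m+1}$ and the inclusion $W_m\hookrightarrow W_{m+1}$ constitute a morphism from the $m$-th short exact sequence to the $(m+1)$-st (the relevant squares commute because multiplication by $\varpi^n$ commutes with these inclusions), so, by the functoriality of $\Ext$ that underlies Lemma~\ref{diag1}, the five-term sequences above form a direct system over $m$. Since $\dirlim_m$ is exact, I would apply it termwise. The system $\{H^1_f(K,W_n)\}_m$ is constant with identity transition maps, hence has limit $H^1_f(K,W_n)$; by Remark~\ref{DFG22} (whose colimit is taken along the inclusions $W_k\hookrightarrow W_{k+1}$) one has $\dirlim_m H^1_f(K,W_{n+m})=\dirlim_m H^1_f(K,W_m)=H^1_f(K,W)$; and since $W$ is a torsion $\Oo$-module, $H^0(K,W)=\bigcup_m H^0(K,W_m)$, so $\dirlim_m H^0(K,W_m)=H^0(K,W)$ and $\dirlim_m\varpi^nH^0(K,W_{n+m})=\varpi^nH^0(K,W)$, whence the first term has limit $H^0(K,W)/\varpi^n$. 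The point to watch is that the colimit of the surjections $\varpi^n\colon W_{n+m}\to W_m$ is the map $\varpi^n\colon W\to W$ (both $\dirlim_m W_{n+m}$ and $\dirlim_m W_m$ being $W$ via the inclusions, and an element represented in $W_{n+m}$ being sent to $\varpi^n$ times it), so the last arrow of the limiting sequence is multiplication by $\varpi^n$ on $H^1_f(K,W)$. Thus I obtain an exact sequence
$$0\to H^0(K,W)/\varpi^n\to H^1_f(K,W_n)\to H^1_f(K,W)\overset{\varpi^n}{\to}H^1_f(K,W)\to 0,$$
in which the map $H^1_f(K,W_n)\to H^1_f(K,W)$ is the one induced by $W_n\hookrightarrow W$; since $\ker\bigl(\varpi^n\mid H^1_f(K,W)\bigr)=H^1_f(K,W)[\varpi^n]$, extracting this kernel gives exactly the asserted short exact sequence.

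The step I expect to be the main obstacle is not conceptual but bookkeeping: one must check that each three-term sequence $0\to W_n\to W_{n+m}\to W_m\to 0$ really does lie in the essential image of $\bfG$ so that Lemma~\ref{diag1} applies, that these sequences genuinely assemble into a direct system realising the colimits above, and --- the one substantive point --- that the colimit of the quotient maps $W_{n+m}\overset{\varpi^n}{\to}W_m$ is multiplication by $\varpi^n$ on $W$ rather than an isomorphism, since it is precisely this that forces the final map of the limiting sequence to have kernel $H^1_f(K,W)[\varpi^n]$ and hence yields the surjection onto it.
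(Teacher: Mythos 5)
Your proof is correct and follows essentially the same route as the paper: apply Lemma~\ref{diag1} to the short exact sequence $0\to W_n\to W_{n+m}\xrightarrow{\varpi^n}W_m\to 0$ and pass to the direct limit over $m$, invoking Remark~\ref{DFG22}. The only cosmetic difference is that you keep the full five-term sequence until after the colimit and then extract $\ker(\varpi^n)$, whereas the paper collapses the last two terms to $H^1_f(K,W_m)[\varpi^n]$ before taking $\dirlim_m$; your variant is if anything slightly cleaner, since it sidesteps any need to identify that kernel at finite level.
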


\begin{proof}
  We apply Lemma \ref{diag1} to the exact sequence $$0 \to W_n \to W_m \xrightarrow{\cdot \varpi^n} W_{m-n} \to 0$$ for $m\geq n$. This implies the exactness of $$0 \to
  H^0(K,W_{m-n})/\varpi^n H^0(K, W_m) \to H^1_f(K, W_n) \to  H^1_f(K,W_m)[\varpi^n] \to 0.$$
By taking $\dirlim_m$ we get a short exact sequence
$$0 \to
  H^0(K,W)/\varpi^n \to H^1_f(K, W_n) \to (\dirlim_m H^1_f(K,W_m))[\varpi^n] \to 0,$$ so we conclude by Remark \ref{DFG22}.
\end{proof}

\subsubsection{$\ell \neq p$}
For primes $\ell \neq p$ we define the \emph{unramified} local finite-singular structure on any $p$-adic $G_K$-module $M$ over $\Oo$  as $$N(K,M)=H^1_{\rm
ur}(K,M)={\rm ker}(H^1(K,M) \to H^1(K_{\rm ur},M)),$$  where $K_{\rm ur}$ is the maximal unramified extension of $K$.

For an exact sequence $0 \to M' \overset{i}{\to} M \overset{j}{\to} M'' \to 0$ of unramified $p$-adic $G_K$-modules over $\Oo$ \cite{Weston00} Lemma
I.2.1 shows that this structure on $M$ induces the unramified
structures on $M'$ and $M''$, i.e. $$H^1_{\rm ur} (K, M'') = j_* H^1_{\rm ur}(K, M)$$ and  \begin{equation} \label{indunr} H^1_{\rm ur}(K, M') =
i_*^{-1} H^1_{\rm ur}(K, M).\end{equation}

Let $V$ be a continuous finite-dimensional $G_K$-representation over $\bfQ_p$ and $T \subseteq V$ be a $G_K$-stable $\bfZ_p$-lattice and put $W=V/T$.
Bloch-Kato then define the following finite-singular structures on $V$, $T$and $W$:  $$H^1_f(K,V)=H^1_{\rm ur}(K,V), $$ $$H^1_f(K,T)=i^{-1} H^1_f(K,V) \text{ for } T \overset{i}{\hookrightarrow} V$$ and $$H^1_f(K,W)={\rm im}(H^1_f(K,V) \to H^1(K,W)).$$

By \cite{Rubin00} Lemma 1.3.5 we have
$H^1_f(K,W)=H^1_{\rm ur}(K,W)_{\rm div}$.
Following \cite{Rubin00} Definition 1.3.4 we define $H^1_f(K,W_n)$ just as the inverse image of
$H^1_f(K,W)$ under the map $H^1(K,W_n) \to H^1(K,W)$.
Call this the \emph{minimally ramified structure}. For the minimally ramified structure it follows (see e.g.
\cite{Rubin00} Corollary 1.3.10) that $\dirlim_m H^1_f(K,W_m)=H^1_f(K,W)$.
Note that by \cite{Rubin00} Lemma 1.3.5(iv) the minimally ramified structure agrees with the unramified structure (i.e. $H^1_f(K,W)=H^1_{\rm ur}(K,W)$ and $H^1_f(K,W_n)=H^1_{\rm ur}(K,W_n)$) if $W$ is unramified.

\subsection{Global Selmer groups} \label{Global}
Let $F$ be a number field and let $\Sigma$ be a fixed finite set of finite places of $F$ containing the places $\Sigma_p$ lying over $p$.  Assume that $p$ is unramified in $F/\bfQ$. For every
place $v$ we fix embeddings of $\overline{F} \hookrightarrow \ov{F}_v$. We write $F_{\Sigma}$ for the maximal (Galois)
extension of $F$ unramified outside $\Sigma$ and all the archimedean places
and set $G_{\Sigma} = \Gal(F_{\Sigma}/F)$.

We use the terminology of $p$-adic finitely generated (or discrete) $G_{\Sigma}$-modules similar to the corresponding local notions.

For any $p$-adic $G_{\Sigma}$-module $M$ we defined the crystalline local finite-singular structure $H^1_f(F_v, M)$ for $v \mid p$.

\begin{definition}
  We define the Selmer group $H^1_{\Sigma}(F,M)$ of $M$ as the kernel of
  the map
  $$H^1(F_{\Sigma},M) \to \prod_{v \in \Sigma_p} H^1(F_v,M)/H^1_f(F_v,M).$$
\end{definition}
\noindent Note that this Selmer group does not impose any conditions at places in $\Sigma \backslash \Sigma_p$.

Let $V$ be a continuous finite-dimensional representation of $G_{\Sigma}$ over $\bfQ_p$ which is short crystalline. Let $T \subseteq V$ be a
$G_{\Sigma}$-stable lattice and put $W=V/T$ and $W_n$ as before.

For $v \nmid p$ let $H^1_f(F_v,M)$ denote the minimally ramified structure on $M=W, W_n$, as defined above.
We will also require the definition of the Bloch-Kato Selmer
group, which has more restrictive local conditions:

	\be \label{BKsel} H^1_f(F,W)={\rm ker}(H^1(F_{\Sigma},W) \to \prod_{v \in \Sigma} H^1(F_v,W)/H^1_f(F_v,W),\ee where $H^1_f(F_v,W)=0$ for $v \mid \infty$.

This Bloch-Kato Selmer group is conjecturally related to special $L$-values. The two groups $H^1_{\Sigma}(F,W)$ and $H^1_f(F,W)$ coincide if the latter also has no local conditions at $v \in \Sigma \backslash \Sigma_p$, i.e. when $H^1_f(K_v,W)=H^1(K_v,W)$. The following Lemma will be useful to identify such situations:

Put $V^*=\Hom_{\Oo}(V,E(1))$,
 $T^*=\Hom_{\Oo}(T,\Oo(1))$ and $W^*=V^*/T^*$. We define the $v$-Euler factor \begin{equation} P_v(V^*,X)={\rm det}(1-X{\rm  Frob}_v|_{(V^*)^{I_v}}).\end{equation}

\begin{lemma} \label{Tamagawa}
  $H^1_{\Sigma}(F,W)=H^1_f(F,W)$ if for all places $v \in \Sigma$, $v \nmid p$ we have
    \begin{enumerate}
      \item $P_v(V^*,1)\in \Oo^*$
      \item  ${\rm Tam}_v^0(T^*)=1$.
    \end{enumerate}

Here the \emph{Tamagawa factor} ${\rm Tam}_v^0(T^*)$ equals $\# H^1(F_v,T^*)_{\rm tor} \times |P_v(V^*,1)|_p$ (see \cite{Fontaine92}, Section 11.5). \end{lemma}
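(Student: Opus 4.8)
The plan is to compare $H^1_\Sigma(F,W)$ and $H^1_f(F,W)$ place by place. Both groups are cut out inside $H^1(F_\Sigma,W)$ by local conditions: they impose the crystalline condition $H^1_f(F_v,W)$ at every $v\mid p$, whereas at a finite place $v\in\Sigma$ with $v\nmid p$ the group $H^1_\Sigma(F,W)$ imposes no condition and $H^1_f(F,W)$ imposes the minimally ramified condition $H^1_f(F_v,W)\subseteq H^1(F_v,W)$. In particular $H^1_f(F,W)\subseteq H^1_\Sigma(F,W)$ always, and it suffices to show that, under hypotheses (1) and (2), one has $H^1_f(F_v,W)=H^1(F_v,W)$ for every such $v$, i.e. $H^1(F_v,W)/H^1_f(F_v,W)=0$.

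First I would dualize. For $v\nmid p$ the local Tate pairing $H^1(F_v,W)\times H^1(F_v,T^*)\to\bfQ_p/\bfZ_p$ is perfect (up to a harmless twist, $T^*=\Hom_\Oo(T,\Oo(1))$ is the Pontryagin dual of $W$), and under it the minimally ramified structure $H^1_f(F_v,W)$ and the structure $H^1_f(F_v,T^*)$ are exact orthogonal complements: this is the self-duality of the unramified structures $H^1_{\rm ur}(F_v,V)$, $H^1_{\rm ur}(F_v,V^*)$ propagated to $W$ and $T^*$ (see e.g.\ \cite{Rubin00}). Therefore $H^1(F_v,W)/H^1_f(F_v,W)$ is the Pontryagin dual of $H^1_f(F_v,T^*)$, and the claim reduces to showing $H^1_f(F_v,T^*)=0$.

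Next I would compute $H^1_f(F_v,T^*)$ using the two hypotheses. Hypothesis (1) says $P_v(V^*,1)=\det(1-\Frob_v\mid(V^*)^{I_v})\in\Oo^*$; in particular it is nonzero, so $\Frob_v-1$ is bijective on $(V^*)^{I_v}$, whence $H^1_f(F_v,V^*)=H^1_{\rm ur}(F_v,V^*)=(V^*)^{I_v}/(\Frob_v-1)=0$. Since $H^1_f(F_v,T^*)$ is by definition the preimage of $H^1_f(F_v,V^*)$ under $H^1(F_v,T^*)\to H^1(F_v,V^*)$, it equals the kernel of this map; because $H^1(F_v,V^*)$ is a $\bfQ_p$-vector space and, via $0\to T^*\to V^*\to W^*\to 0$, this kernel is a quotient of the torsion group $H^0(F_v,W^*)$, one gets $H^1_f(F_v,T^*)=H^1(F_v,T^*)_{\rm tor}$. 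Also (1) gives $|P_v(V^*,1)|_p=1$, so the expression for the Tamagawa factor in the statement yields $\Tam_v^0(T^*)=\#H^1(F_v,T^*)_{\rm tor}=\#H^1_f(F_v,T^*)$. Hypothesis (2) now forces $H^1_f(F_v,T^*)=0$, hence $H^1_f(F_v,W)=H^1(F_v,W)$; running this over all finite $v\in\Sigma$ with $v\nmid p$ gives $H^1_\Sigma(F,W)=H^1_f(F,W)$.

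I expect the main difficulty to be the second paragraph: making precise the orthogonality of the local conditions $H^1_f(F_v,W)$ and $H^1_f(F_v,T^*)$ under local Tate duality — keeping track of the Tate twist and of the compatibility between the ``divisible'' structure on $W$ and the ``finitely generated'' structure on $T^*$ — so as to identify $H^1(F_v,W)/H^1_f(F_v,W)$ with $H^1_f(F_v,T^*)^\vee$. The remaining steps are short computations with unramified cohomology and with the long exact sequence attached to $0\to T^*\to V^*\to W^*\to 0$.
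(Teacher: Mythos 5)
Your proof of the local computation at the finite places $v\in\Sigma$, $v\nmid p$, matches the paper's: the same appeal to local Tate duality (the paper cites Rubin's Proposition 1.4.3(i) for the identification $H^1(F_v,W)/H^1_f(F_v,W)\cong H^1_f(F_v,T^*)$), the same observation that $P_v(V^*,1)\in\Oo^*$ forces $H^1_f(F_v,V^*)=0$ hence $H^1_f(F_v,T^*)=H^1(F_v,T^*)_{\rm tor}$, and the same use of $|P_v(V^*,1)|_p=1$ together with hypothesis (2) to conclude this torsion is trivial. That part is correct and essentially identical to the paper.

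However, there is a gap at the archimedean places. The Bloch--Kato Selmer group $H^1_f(F,W)$ imposes the condition $H^1_f(F_v,W)=0$ for $v\mid\infty$ (this is explicit in the remark following \eqref{BKsel}), whereas $H^1_{\Sigma}(F,W)$ imposes no condition at all away from $\Sigma_p$. Your opening paragraph describes both groups as being cut out by conditions at $v\in\Sigma$ only, so you never account for the potential contribution from $v\mid\infty$; showing $H^1_f(F_v,W)=H^1(F_v,W)$ for all finite $v\in\Sigma\setminus\Sigma_p$ only yields $H^1_f(F,W)\subseteq H^1_\Sigma(F,W)$ with equality not yet established. One must add (as the paper does) that this archimedean condition is automatically vacuous: for $v\mid\infty$, $\Gal(\bfC/\bfR)$ has order $2$ while $W$ is a pro-$p$ group with $p>2$, so $H^1(F_v,W)=0$ and the condition at $v\mid\infty$ imposes nothing. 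With that one line added, your proof is complete.
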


\begin{proof}
 Consider a finite place $v \in \Sigma$.
By \cite{Rubin00} Proposition 1.4.3 (i) we see that $H^1(F_v,W)/H^1_f(F_v,W)$ is isomorphic to $H^1_f(F_v,T^*)$.

Since the Euler factor $P_v(V^*,1) \neq 0$ we have that $H^0(F_v,V^*)=0=H^1_f(F_v,V^*)$ and so $H^1_f(F_v,T^*)=H^1(F_v,T^*)_{\rm tor}$ (see Fontaine, Asterisque 206, 1992, Section 11.5).

To conclude the lemma we note that $H^1_f(F,W)$ has additional local conditions at infinity compared to the definition of $H^1_{\Sigma}(F,W)$. However, for an archimedean place $v$ we get that $H^1(\bfR,W)=0$ since $\Gal(\bfC/\bfR)$ has order 2 and $W$ is pro-$p$, and our assumption that $p>2$.
\end{proof}

\begin{rem}  \label{tamagawaremark} We remark that the triviality of $H^0(F_v,V^*)$ and $H^1_f(F_v,V^*)$ imply via the long exact sequence associated to $0 \to T \to V \to W \to 0$ that $$H^1_f(F_v,T^*) \cong H^0(F_v,W^*).$$
In $H^0(F_v,W^*)$ one has a subgroup $((V^*)^{I_v}/(T^*)^{I_v})^{\Frob_v=1}$, which has order $|P_v(V^*,1)|_{\varpi}^{-1}$
In fact, the long exact $I_v$-cohomology sequence $$0\to (T^*)^{I_v} \to (V^*)^{I_v} \to (W^*)^{I_v} \to H^1(I_v,T^*) \to H^1(I_v,V^*)$$ tells us that the index of $((V^*)^{I_v}/(T^*)^{I_v})^{\Frob_v=1}$ in $H^0(F_v,W^*)$ is given by $\#(H^1(I_v,T^*)_{\rm tor}^{G_v})$. By Proposition 4.2.2 in \cite{FontainePerrin-Riou94} we know that the latter equals ${\rm Tam}_v^0(T^*)$. This implies that ${\rm Tam}_v^0(T^*)$ is trivial if $W^{I_v}$ is divisible.
\end{rem}

\begin{prop} \label{functoriality} If  $H^0(F_{\Sigma},W)=0$ then we have
  $$H^1_{\Sigma}(F,W_n)\cong H^1_{\Sigma}(F,W)[\varpi^n].$$

\end{prop}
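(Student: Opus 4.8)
The plan is to prove the isomorphism $H^1_{\Sigma}(F,W_n) \cong H^1_{\Sigma}(F,W)[\varpi^n]$ by comparing the global Selmer conditions through the long exact cohomology sequence attached to $0 \to W_n \to W \xrightarrow{\varpi^n} W \to 0$, using the vanishing $H^0(F_{\Sigma},W)=0$ to control the boundary map, and then checking that the local conditions cut out exactly the $\varpi^n$-torsion on the nose. First I would write down the commutative diagram with exact rows: the top row is the long exact sequence in $H^1(F_{\Sigma},-)$, which because $H^0(F_{\Sigma},W)=0$ begins $0 \to H^1(F_{\Sigma},W_n) \to H^1(F_{\Sigma},W) \xrightarrow{\varpi^n} H^1(F_{\Sigma},W)$, so that $H^1(F_{\Sigma},W_n) \cong H^1(F_{\Sigma},W)[\varpi^n]$. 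The issue is then purely local: I must show that under this identification a class $c \in H^1(F_{\Sigma},W)[\varpi^n]$ lies in $H^1_{\Sigma}(F,W_n)$ if and only if (its image) $c$ lies in $H^1_{\Sigma}(F,W)$.

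Since the Selmer condition only involves places $v \mid p$, for each such $v$ I need the statement that, under $H^1(F_v,W_n) \to H^1(F_v,W)$, the preimage of $H^1_f(F_v,W)$ meets $H^1(F_v,W_n)[\varpi^n]$-worth of classes correctly — more precisely that Corollary \ref{induced} gives the exact sequence
$$0 \to H^0(F_v,W)/\varpi^n \to H^1_f(F_v,W_n) \to H^1_f(F_v,W)[\varpi^n] \to 0,$$
and I would want to show the natural map $H^1_f(F_v,W_n) \to H^1_f(F_v,W)$ has image exactly $H^1_f(F_v,W)[\varpi^n]$, while simultaneously the map $H^1(F_v,W_n) \to H^1(F_v,W)$ composed with projection to $H^1(F_v,W)/H^1_f(F_v,W)$ has kernel precisely $H^1_f(F_v,W_n)$ plus whatever is killed by passing to $W$. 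Chasing the diagram
$$\xymatrix{ H^1(F_v,W_n) \ar[r] \ar[d] & H^1(F_v,W) \ar[d] \\ H^1(F_v,W)/H^1_f(F_v,W) \ar[r] & H^1(F_v,W)/H^1_f(F_v,W) }$$
— where the left vertical is the composite through $H^1(F_v,W_n) \to H^1(F_v,W)$ — one sees a global class $c$ maps to zero in $\prod_{v|p} H^1(F_v,W_n)/H^1_f(F_v,W_n)$ iff for each $v$ its local restriction, viewed in $H^1(F_v,W)$, lies in $H^1_f(F_v,W)$ (this is where one uses that $H^1_f(F_v,W_n)$ is the \emph{full} preimage of $H^1_f(F_v,W)$ intersected with the relevant torsion, which is exactly the content of Corollary \ref{induced} together with the definition $H^1_f(F_v,W)={\rm im}(H^1_f(F_v,V)\to H^1(F_v,W))$). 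Hence $c \in H^1_{\Sigma}(F,W_n)$ iff its image in $H^1(F_{\Sigma},W)$ lies in $H^1_{\Sigma}(F,W)$, which is the desired bijection.

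More carefully, the clean way to organize this is: let $c \in H^1(F_{\Sigma},W_n)$ with image $\bar c \in H^1(F_{\Sigma},W)[\varpi^n]$. For each $v \mid p$, the local class $c_v \in H^1(F_v,W_n)$ lies in $H^1_f(F_v,W_n)$ if and only if $\bar c_v \in H^1_f(F_v,W)$: the ``only if'' is immediate from functoriality of the $H^1_f$ construction (the map $H^1_f(F_v,W_n) \to H^1_f(F_v,W)$ of Corollary \ref{induced}); the ``if'' requires showing that if $\bar c_v \in H^1_f(F_v,W)[\varpi^n]$ then $c_v$, which maps to $\bar c_v$, can be corrected by an element of the kernel of $H^1(F_v,W_n) \to H^1(F_v,W)$ — but that kernel is $H^0(F_v,W)/\varpi^n$ by the long exact sequence, and $H^0(F_v,W)/\varpi^n \subseteq H^1_f(F_v,W_n)$ again by Corollary \ref{induced}, so $c_v$ itself already lies in $H^1_f(F_v,W_n)$. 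This establishes that the isomorphism $H^1(F_{\Sigma},W_n) \xrightarrow{\sim} H^1(F_{\Sigma},W)[\varpi^n]$ carries $H^1_{\Sigma}(F,W_n)$ onto $H^1_{\Sigma}(F,W)\cap H^1(F_{\Sigma},W)[\varpi^n] = H^1_{\Sigma}(F,W)[\varpi^n]$, completing the proof.

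I expect the main obstacle to be the local bookkeeping at $v \mid p$: one must be careful that the two different definitions of $H^1_f$ (Bloch–Kato for $W$, the $\mathbf{G}$-functor for $W_n$) are genuinely compatible under the connecting maps — this is exactly what Remark \ref{DFG22} and Corollary \ref{induced} are set up to provide, so the proof should invoke them rather than reprove anything — and to confirm that the global $H^1(F_{\Sigma},-)$ long exact sequence does not acquire an extra term, which is where the hypothesis $H^0(F_{\Sigma},W)=0$ enters and is essential.
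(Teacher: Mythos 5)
Your argument is correct and follows essentially the same route as the paper's: the paper invokes the diagram chase from Weston's Lemma II.3.1 (producing the exact sequence $0 \to H^0(F_{\Sigma},W)/\varpi^n \to H^1_{\Sigma}(F,W_n) \to H^1_{\Sigma}(F,W)[\varpi^n] \to 0$, which collapses to the desired isomorphism under the hypothesis), while you carry out the local bookkeeping by hand, using Corollary \ref{induced} in exactly the way the paper intends. Both proofs rest on the same two ingredients: the vanishing of $H^0(F_{\Sigma},W)$ and the compatibility of the crystalline local conditions at $v \mid p$ under $W_n \hookrightarrow W$ supplied by Corollary \ref{induced}.
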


\begin{proof}
  We note that the local finite-singular structures on $W_n$ are induced from those on $W$ under the natural inclusion $W_n \hookrightarrow W$ (by
  (\ref{indunr}) for $v \nmid p$ or by Corollary \ref{induced} and the discussion preceding it for $v \mid p$). Using this, one shows by a
  diagram chase  (see proof of \cite{Weston00} Lemma II.3.1) that the exact sequence $$ 0 \to W_n \to W \overset{\times
  \varpi^n}{\to} W \to 0$$ gives rise to an exact sequence  $$0 \to H^0(F_{\Sigma},W)/\varpi^n \to H^1_{\Sigma}(F,W_n) \to H^1_{\Sigma}(F,W)[\varpi^n] \to
  0.$$
\end{proof}

To conclude this section, we define the notion of a \emph{crystalline} representation, following \cite{ClozelHarrisTaylor08} p. 35. Let $v \mid p$
and $A$ be a complete Noetherian $\bfZ_p$-algebra. A representation $\rho: G_{F_v} \to {\rm GL}_n(A)$ is crystalline if for each Artinian quotient
$A'$ of $A$, $\rho \otimes A'$ lies in the essential image of $\mathbf{G}$.

\section{Setup for universal deformation ring} \label{Setup}

\subsection{Main assumptions} \label{Main assumptions}

Let $F$ be a number field and $p>2$ a prime with $p \nmid \# \Cl_F$  and $p$ unramified in $F/\bfQ$. Let $\Sigma$ be a finite set of finite places of $F$ containing all the places
lying over $p$. Let $G_{\Sigma}$ denote the Galois
group $\Gal(F_{\Sigma}/F)$, where $F_{\Sigma}$ is the maximal extension of $F$ unramified outside $\Sigma$.  For every prime $\fq$ of $F$ we fix compatible embeddings $\ov{F} \hookrightarrow \ov{F}_{\fq} \hookrightarrow \bfC$ and write $D_{\fq}$ and $I_{\fq}$ for the corresponding decomposition and inertia subgroups of $G_F$ (and also their images in $G_{\Sigma}$ by a slight abuse of notation). Let $E$ be a (sufficiently large) finite extension of $\bfQ_p$ with ring of integers $\Oo$ and residue field $\bfF$. We fix a choice
of a  uniformizer $\varpi$. Consider the following $n$-dimensional residual representation: $$\rho_0=\bmat \rho_1 & * \\ & \rho_2\emat: G_{\Sigma} \rightarrow \GL_n(\bfF).$$ We assume that $\rho_1$ and $\rho_2$ are absolutely irreducible and non-isomorphic (of dimensions $n_1, n_2$ respectively with $n_1+n_2=n$)
and that $\rho_0$ is non-semisimple. From
now on assume $p \nmid n!$. Furthermore, we assume that $\rho_0$ is crystalline at the primes of $F$ lying over $p$.

For $i=1,2$ let $R_{i, \Sigma}$ denote the universal deformation ring (so in particular a local complete Noetherian $\Oo$-algebra with residue field
$\bfF$) classifying all $G_{\Sigma}$-deformations of $\rho_i$ that are crystalline at the primes dividing $p$. So, in particular we do not impose on
our lifts any conditions at primes in $\Sigma \setminus \Sigma_p$.
\begin{assumption} \label{mainass} In what follows we make the following assumptions:
\begin{enumerate}
\item  $\dim_{\bfF}H^1_{\Sigma}(F, \Hom_{\bfF}(\rho_2, \rho_1))=1.$ \item  $R_{1, \Sigma} = R_{2, \Sigma}=\Oo$. Set $\tilde{\rho}_i$, $i=1,2$ to
    be the unique deformations of $\rho_i$ to $\GL_{n_i}(\Oo)$.
\end{enumerate}
\end{assumption}

Note that Assumptions \ref{mainass} put certain restrictions on the ramification properties of the representations $\rho_i$. Set $V_{i,j}:= \Hom_{\Oo}(\tilde{\rho}_i, \tilde{\rho}_j)\otimes E/\Oo$ for $i,j \in \{1,2\}$. Fix a $G_{\Sigma}$-stable $\Oo$-lattice $T_{i,j}$ in $V_{i,j}$ and write $W_{i,j}=V_{i,j}/T_{i,j}$. Assumption \ref{mainass}(2) is
equivalent to the following two assertions:
\begin{itemize}
\item $H^1_{\Sigma}(F, W_{i,i}[\varpi])=0$ for $i=1,2$. \item There exists a crystalline lift of $\rho_i$ to $\GL_{n_i}(\Oo)$.
\end{itemize}
So, apart from the existence of the lift, both conditions (1) and (2) can be viewed as conditions on some Selmer groups, more specifically $H^1_{\Sigma}(F, W_{i,i}[\varpi])$ and $H^1_{\Sigma}(F, W_{2,1}[\varpi]) = H^1_{\Sigma}(F,\Hom_{\bfF}(\rho_2, \rho_1))$. When $\Sigma$ consists only of the primes of $F$ lying
above $p$, then $H^1_{\Sigma}(F, W_{i,j}) = H^1_f(F,W_{i,j})$ and the size of the latter group is (conjecturally) controlled by an (appropriately normalized) $L$-value $L_{i,j}$. By Proposition \ref{functoriality} we have $H^1_f(F,W_{i,j}[\varpi]) = H^1_f(F,W_{i,j})[\varpi]$. In particular if $\Sigma=\Sigma_p$ and $L_{i,i}$ is a $p$-adic unit and $L_{2,1}$ has $\varpi$-adic valuation equal to 1, the conditions on the Selmer groups are satisfied. (A weaker condition guaranteeing cyclicity of $H^1_f(F, W_{2,1})$ would suffice, but cannot be read off from an $L$-value.)
However, in the situations when $\Sigma \neq \Sigma_p$, the Selmer groups $H^1_{\Sigma}$ are not necessarily the same as the Bloch-Kato Selmer groups $H^1_f$.
For all the applications that we have in mind the following assumption on the set $\Sigma$ allows us to control the orders of Selmer groups involved
in the arguments:

Assume that for all places $v \in \Sigma$, $v \nmid p$ and all pairs $(i,j)\in \{(1,1), (2,2), (2,1)\}$ we have
    \begin{enumerate}
      \item $P_v((V_{i,j})^*,1)\in \Oo^*$
      \item  ${\rm Tam}_v^0((T_{i,j})^*)=1$.
    \end{enumerate}

By Lemma \ref{Tamagawa} we then know that we have $H^1_{\Sigma}(F,W_{i,j})=H^1_f(F,W_{i,j})$, so in this case the $L$-value conditions discussed above suffice. Also note that in the case $i=j$, $W_{i,i}=\ad \tilde \rho_i = \ad^0 \tilde \rho_i \oplus \bfF$, so the condition reduces to a condition on $H^1_{\Sigma}(F, \ad^0 \tilde\rho_i \otimes E/\Oo)$ as long as we assume that $\Sigma$ does not contain any prime $v$ with $\# k_v \equiv 1$ mod $p$ because then the condition $p \nmid \#\Cl_F$ ensures that $H^1_{\Sigma}(F, \bfF)=0$.

\subsection{Definitions} \label{Deformations of rho_0}

From now on we assume
that the representations $\rho_1$ and $\rho_2$ as well as the set $\Sigma$ satisfy Assumption \ref{mainass} and that $\rho_0$ is crystalline. Denote
the category of local complete Noetherian $\Oo$-algebras with residue field $\bfF$ by $\textup{LCN}(E)$. An $\Oo$-deformation of $\rho_0$ is a pair
consisting of $A \in \textup{LCN}(E)$ and a strict equivalence class of continuous representations $\rho: G_{\Sigma} \rightarrow \GL_{n}(A)$
such that $\rho_0 = \rho \pmod{\fm_A}$, where $\fm_A$ is the maximal ideal of $A$. As is customary we will denote a deformation by a single member of
its strict equivalence class.

\begin{definition} \label{sigmamin} We say that an $\Oo$-deformation
$\rho: G_{\Sigma} \rightarrow \GL_{n}(A)$ of $\rho_0$ is \emph{crystalline}  if  $\rho|_{D_{\fq}}$ is crystalline at
the primes $\fq$ lying over $p$.
\end{definition}

\begin{lemma} The representation $\rho_0$ has scalar centralizer. \end{lemma}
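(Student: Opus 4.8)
The plan is to show that any matrix $X \in M_n(\bfF)$ commuting with the entire image $\rho_0(G_\Sigma)$ must be a scalar. The key input is the block structure $\rho_0 = \bsmat \rho_1 & * \\ 0 & \rho_2 \esmat$ together with the hypotheses that $\rho_1, \rho_2$ are absolutely irreducible, non-isomorphic, and that the extension $*$ is non-split. First I would write $X$ in block form $\bsmat A & B \\ C & D \esmat$ with $A \in M_{n_1}(\bfF)$, $D \in M_{n_2}(\bfF)$, and analyze the commutation relations coming from the block-lower-triangular elements and, separately, from whatever upper-right entries appear.

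More precisely, commuting with the "diagonal part" of the representation forces, via Schur's lemma applied to the absolutely irreducible $\rho_1$ and $\rho_2$, that $A = \lambda \cdot \id_{n_1}$ and $D = \mu \cdot \id_{n_2}$ for scalars $\lambda, \mu \in \bfF$. The off-diagonal blocks $B$ and $C$ give intertwining maps: the condition that $X$ commutes with $\rho_0(g) = \bsmat \rho_1(g) & b(g) \\ 0 & \rho_2(g)\esmat$ yields, in the lower-left corner, $C \rho_1(g) = \rho_2(g) C$ for all $g$, so $C$ is a $G_\Sigma$-homomorphism $\rho_1 \to \rho_2$; since $\rho_1 \not\cong \rho_2$ are irreducible, $C = 0$. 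The upper-right corner relation reads $A b(g) + B \rho_2(g) = \rho_1(g) B + b(g) D$, i.e. $(\lambda - \mu) b(g) = \rho_1(g) B - B \rho_2(g)$ for all $g$. The right-hand side is a coboundary in $Z^1(G_\Sigma, \Hom_{\bfF}(\rho_2, \rho_1))$, so this equation says exactly that the cohomology class of $(\lambda - \mu) \cdot [*]$ vanishes in $H^1(G_\Sigma, \Hom_{\bfF}(\rho_2, \rho_1))$. Since $\rho_0$ is non-semisimple, the class $[*]$ is nonzero; hence $\lambda = \mu$, and then $B$ itself is an intertwiner $\rho_2 \to \rho_1$, forcing $B = 0$ by irreducibility and non-isomorphism again. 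Thus $X = \lambda \cdot \id_n$.

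The main obstacle — really the only non-formal point — is keeping track of which cohomology group the extension class lives in and making sure the non-semisimplicity hypothesis is being used in the right place: it is precisely the statement that $0 \neq [*] \in H^1(G_\Sigma, \Hom_{\bfF}(\rho_2,\rho_1))$, which is what kills the scalar discrepancy $\lambda - \mu$. Note that Assumption \ref{mainass}(1) (one-dimensionality of the Selmer group) is not needed here; mere non-semisimplicity suffices, so I would phrase the argument to rely only on that. One should also be slightly careful that $\bfF$ may not be algebraically closed, but "absolutely irreducible" is exactly the hypothesis that makes Schur's lemma give scalars over $\bfF$ itself rather than over an extension, so this causes no trouble.

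Alternatively, and perhaps more cleanly for insertion into the paper, one can argue abstractly: the semisimplification $\rho_0^{ss} = \rho_1 \oplus \rho_2$ has centralizer $\bfF \times \bfF$ by Schur plus $\rho_1 \not\cong \rho_2$, so the centralizer of $\rho_0$ is contained in the subalgebra of $\bsmat \lambda I & B \\ 0 & \mu I \esmat$; then one imposes commutation with a single lift of a group element witnessing the nontrivial extension to pin down $\lambda = \mu$ and $B = 0$. I expect the first, hands-on version to be the one to write out, since it makes the role of each hypothesis transparent in a couple of lines.
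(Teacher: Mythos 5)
Your proof is correct and follows the same overall strategy as the paper's: write the centralizing matrix in block form, kill $C$ and force $A,D$ scalar via Schur's lemma and $\rho_1\not\cong\rho_2$, then use non-splitness to conclude $\lambda=\mu$ and $B=0$. The one place you diverge is worth noting: the paper handles the last step by asserting that non-splitness produces an element $g$ with $\rho_1(g)=I$, $\rho_2(g)=I$ but $f(g)\neq 0$, then evaluating the upper-right relation at that $g$. That assertion amounts to the nonvanishing of the \emph{restriction} of the extension class to $\ker(\rho_1\oplus\rho_2)$, which is strictly stronger than (and not a formal consequence of) the class being nonzero in $H^1(G_\Sigma,\Hom_{\bfF}(\rho_2,\rho_1))$ --- the inflation term in inflation-restriction can be nontrivial since $\mathrm{char}\,\bfF=p$ may divide the order of the image of $\rho_1\oplus\rho_2$. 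Your version, which reads the relation $(\lambda-\mu)\,b(g)=\rho_1(g)B-B\rho_2(g)$ directly as saying $(\lambda-\mu)[\ast]=0$ in $H^1(G_\Sigma,\Hom_{\bfF}(\rho_2,\rho_1))$ and then uses that $\bfF$ is a field, sidesteps this and is the cleaner argument; it also correctly isolates that only non-semisimplicity, not the stronger Assumption \ref{mainass}(1), is used.
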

\begin{proof}
Let $\bmat A & B \\ C & D \emat \in \GL_{n}(\bfF)$ lie in the centralizer of $\rho_0$, i.e., $$ \bmat A & B \\ C & D \emat \bmat \rho_1 & f \\
0 & \rho_2\emat = \bmat \rho_1 & f \\ 0 & \rho_2\emat \bmat A & B
\\ C & D \emat,$$ where all the matrices are assumed to have appropriate
sizes. Then $C \rho_1 = \rho_2 C$, hence $C=0$, because $\rho_1 \not\cong \rho_2$. This forces $A$ (resp. $D$) to lie in the centralizer of $\rho_1$
(resp. $\rho_2$), hence $A$ and $D$ are scalar matrices (equal to, say, $a$ and $d$ respectively) by Schur's lemma, since $\rho_1$ and $\rho_2$ are
absolutely irreducible. Now, since $\rho_0$ is not split, there exists $g \in G_{\Sigma}$ such that $\rho_1(g)=I_{n_1}$ and $\rho_2(g)=I_{n_2}$
(identity matrices), but $f(g)\neq 0$. Then the identity $$af + B\rho_2 = \rho_1 B + fd$$ implies that $a=d$, hence it reduces to $B\rho_2 = \rho_1
B$, which implies that $B=0$ since $\rho_1 \not\cong \rho_2$.
\end{proof}
Since $\rho_0$ has a scalar centralizer and crystallinity is a deformation condition in the sense of \cite{Mazur97}, there exists a universal
deformation ring which we will denote by $R'_{\Sigma} \in \textup{LCN}(E)$, and a universal crystalline $\Oo$-deformation $\rho'_{\Sigma} :
G_{\Sigma} \rightarrow \GL_{n}(R'_{\Sigma})$  such that for every $A \in \textup{LCN}(E)$ there is a one-to-one correspondence between the set
of $\Oo$-algebra maps $R'_{\Sigma} \rightarrow A$ (inducing identity on $\bfF$) and the set of crystalline deformations $\rho: G_{\Sigma}
\rightarrow \GL_{n}(A)$ of $\rho_0$.

Suppose that there exists an anti-automorphism $\tau$ as in (\ref{self}).

\begin{definition} \label{sd} For $A \in \textup{LCN}(E)$ we call a
crystalline deformation $\rho: G_{\Sigma} \rightarrow \GL_{n}(A)$ \emph{$\tau$-self-dual} or simply \emph{self-dual} if $\tau$ is clear
from the context if $$\tr \rho = \tr \rho \circ \tau.$$ \end{definition}

\begin{prop} \label{reprsd} The functor assigning to an object $A \in
\textup{LCN}(E)$ the set of strict equivalence classes of self-dual crystalline deformations to $\GL_{n}(A)$ is representable by the quotient of
$R'_{\Sigma}$ by the ideal generated by $\{\tr \rho_{\Sigma}(g)-\tr \rho_{\Sigma}(\tau(g)) \mid g \in G_{\Sigma} \}$. We will denote this quotient by
$R_{\Sigma}$ and will write $\rho_{\Sigma}$ for the corresponding universal deformation. \end{prop}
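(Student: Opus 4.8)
The plan is to exhibit the $\tau$-self-dual crystalline deformation functor as a subfunctor of the crystalline deformation functor already represented by $(R'_{\Sigma},\rho'_{\Sigma})$, and to identify the elements $\tr\rho'_{\Sigma}(g)-\tr\rho'_{\Sigma}(\tau(g))$ as precisely the relations one must impose to cut it out; the statement then follows from the universal property of a quotient ring. (Here $\rho'_{\Sigma}$ denotes the universal crystalline deformation over $R'_{\Sigma}$, and the representation called $\rho_{\Sigma}$ in the statement is its pushforward along $R'_{\Sigma}\twoheadrightarrow R_{\Sigma}$.)

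First I would check that self-duality in the sense of Definition \ref{sd} is a property of strict equivalence classes and is stable under pushforward: the trace of a representation is invariant under arbitrary conjugation, so $\tr\rho=\tr\rho\circ\tau$ is well defined on strict equivalence classes, and since trace commutes with base change along any morphism $A\to B$ in $\textup{LCN}(E)$, the condition is inherited by $\rho\otimes_A B$. Hence $A\mapsto\{\text{self-dual crystalline deformations of }\rho_0\text{ to }\GL_n(A)\}$ is a subfunctor $\mathcal D^{\mathrm{sd}}$ of the crystalline deformation functor $\mathcal D$.

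Next, given $A\in\textup{LCN}(E)$ and a crystalline deformation $\rho$ classified by the unique $\Oo$-algebra map $\varphi_\rho\colon R'_{\Sigma}\to A$, I would apply $\varphi_\rho$ entrywise to $\rho'_{\Sigma}$: since $\rho$ is strictly equivalent to $\varphi_\rho\circ\rho'_{\Sigma}$ and the trace is conjugation-invariant, one gets $\tr\rho(g)=\varphi_\rho(\tr\rho'_{\Sigma}(g))$, and likewise with $g$ replaced by $\tau(g)$, for every $g\in G_{\Sigma}$. Therefore $\rho$ is self-dual if and only if $\varphi_\rho$ annihilates every generator of $J:=(\tr\rho'_{\Sigma}(g)-\tr\rho'_{\Sigma}(\tau(g)):g\in G_{\Sigma})$, i.e.\ if and only if $\varphi_\rho$ factors through $R_{\Sigma}:=R'_{\Sigma}/J$. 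This yields a bijection $\mathcal D^{\mathrm{sd}}(A)\cong\Hom_{\Oo}(R_{\Sigma},A)$, natural in $A$, whose universal object is the pushforward $\rho_{\Sigma}$ of $\rho'_{\Sigma}$ (which is self-dual because its trace relations are the images in $R_{\Sigma}$ of elements of $J$). Finally I would note that $R_{\Sigma}\in\textup{LCN}(E)$: it is a quotient of a local complete Noetherian $\Oo$-algebra, and $J\subseteq\fm_{R'_{\Sigma}}$ because $\rho'_{\Sigma}\bmod\fm_{R'_{\Sigma}}=\rho_0$ has $\tau$-invariant trace by the standing hypothesis that $\tau$ is as in (\ref{self}), so the residue field remains $\bfF$.

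I do not anticipate a genuine obstacle: the argument is the bookkeeping that converts the functional identity $\tr\rho=\tr\rho\circ\tau$ on $G_{\Sigma}$ into the vanishing of a specified subset of $R'_{\Sigma}$, together with the standard fact that a subfunctor of $\Hom_{\Oo}(R'_{\Sigma},-)$ defined by the vanishing of a set of ring elements is represented by the corresponding quotient. The only points needing a little care are the compatibility of traces with the classifying map under mere strict equivalence (handled by conjugation-invariance of the trace) and the verification that $J$ is a proper ideal, which is where residual self-duality is used.
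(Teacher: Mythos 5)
Your argument is correct and is the standard one; the paper states Proposition \ref{reprsd} without giving a proof, so there is no alternative argument to compare against. Two small remarks: the set of generators should be read as $\{\tr\rho'_{\Sigma}(g)-\tr\rho'_{\Sigma}(\tau(g))\}$ (elements of $R'_{\Sigma}$, not of $R_{\Sigma}$; the paper's $\rho_{\Sigma}$ there is a slip), which you have silently and correctly fixed; and your observation that $J\subseteq\fm_{R'_{\Sigma}}$ via $(\ref{self})$ applied to $\tr\rho_0=\tr\rho_1+\tr\rho_2$ is exactly the point guaranteeing $R_{\Sigma}\neq 0$ and $R_{\Sigma}\in\textup{LCN}(E)$.
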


We write $R^{\rm red}_{\Sigma}$ for the quotient of $R_{\Sigma}$ by its nilradical and $\rho_{\Sigma}^{\rm red}$ for the corresponding (universal)
deformation, i.e., the composite of $\rho_{\Sigma}$ with $R_{\Sigma}\twoheadrightarrow R_{\Sigma}^{\rm red}$.
 We will also write $I_{\rm re} \subset R_{\Sigma}$ for the ideal of reducibility of $\tr \rho_{\Sigma}$ and $I'_{\rm re} \subset R'_{\Sigma}$ for the ideal of
reducibility of $\tr \rho'_{\Sigma}$, and finally $I_{\rm re}^{\rm red}$ for the ideal of reducibility of $\tr \rho_{\Sigma}^{\rm red}$.
The results of Section 1 tell us:
\begin{prop} \label{princi2} The ideal of reducibility $I_{\rm re}\subset R_{\Sigma}$ (resp. $I_{\rm re}^{\rm red}
\subset R_{\Sigma}^{\rm red}$) of $\tr \rho_{\Sigma}$ (resp. $\tr \rho^{\rm red}_{\Sigma}$) is principal. \end{prop}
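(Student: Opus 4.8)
The plan is to deduce Proposition~\ref{princi2} directly from Theorem~\ref{princi} by verifying that the general setup of Section~\ref{Principality} applies to the ring $R=A=R_{\Sigma}$ (respectively $R_{\Sigma}^{\rm red}$) together with the universal self-dual crystalline deformation $\rho_{\Sigma}$ (respectively $\rho_{\Sigma}^{\rm red}$) and its trace. So first I would check the hypotheses of that section: $R_{\Sigma}$ is a local complete Noetherian $\Oo$-algebra, hence Noetherian, local, and henselian, with residue field $\bfF$; the integer $n!$ is invertible in $\Oo$ (hence in $R_{\Sigma}$) by the standing assumption $p\nmid n!$; and the reduction $\rho_{\Sigma}\bmod\fm_{R_{\Sigma}}=\rho_0=\bmat\rho_1&*\\0&\rho_2\emat$ is a non-semisimple extension of the absolutely irreducible, mutually non-isomorphic $\rho_2$ by $\rho_1$. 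Thus $\rho_{\Sigma}\colon R_{\Sigma}\to M_n(R_{\Sigma})$ (the tautological map, or rather the one induced by the universal representation) together with $T=\tr\rho_{\Sigma}$ is exactly of the type considered in Section~\ref{Principality}, so the reducibility ideal $I_T=I_{\rm re}$ is defined.

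Next I would supply the anti-automorphism. By hypothesis an $\Oo$-algebra anti-automorphism $\tau$ of order $2$ on the relevant group algebra satisfying (\ref{self}) exists, and by Proposition~\ref{reprsd} the ring $R_{\Sigma}$ is precisely the universal ring for self-dual crystalline deformations. The point is that the universal self-dual deformation $\rho_{\Sigma}$ satisfies $\tr\rho_{\Sigma}=\tr\rho_{\Sigma}\circ\tau$ by construction, and $\tau$ induces an $\Oo$-algebra anti-automorphism of $R_{\Sigma}$ itself: composing the universal representation with $\tau$ gives another self-dual crystalline deformation $\rho_{\Sigma}^{\perp}$ of $\rho_0$ (here one uses that the reductions $\tr\tau_i\circ\tau=\tr\tau_i$ so that the residual representation is unchanged up to the allowed ambiguity, i.e. $\rho_0^{\perp}$ is again an extension of $\rho_2$ by $\rho_1$; this is where condition (\ref{self}) for the $\tau_i$ is used), which by universality is classified by an $\Oo$-algebra endomorphism of $R_{\Sigma}$; applying the same to $\rho_{\Sigma}^{\perp}$ and using $\tau^2=\rm id$ shows this endomorphism is an involution, which combined with the anti-multiplicativity of $\tau$ on traces gives the required anti-automorphism of $R_{\Sigma}$. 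One then checks $T\circ\tau=T$ on $R_{\Sigma}$ and $\tr\tau_i\circ\tau=\tr\tau_i$, so the hypothesis of Theorem~\ref{princi} holds. Theorem~\ref{princi} then yields that $I_{\rm re}$ is principal. The argument for $R_{\Sigma}^{\rm red}$ is identical: the involution descends to the nilradical quotient since the nilradical is preserved by any ring automorphism, so $\rho_{\Sigma}^{\rm red}$ is again self-dual in the same sense, and Theorem~\ref{princi} applies verbatim to give that $I_{\rm re}^{\rm red}$ is principal.

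The main obstacle I anticipate is the bookkeeping around the residual ambiguity when transporting the self-duality along the universal property: one must be careful that composing the universal deformation with $\tau$ and transposing lands one in the \emph{same} deformation problem (same $\rho_0$ up to strict equivalence, with the same diagonal blocks $\rho_1,\rho_2$ in the same order), which is exactly what conditions (\ref{self}) are designed to guarantee — the diagonal traces are preserved, and since $\rho_1\not\cong\rho_2$ there is no room to swap the blocks. A secondary point is to confirm that "crystalline" is preserved under $\rho\mapsto\rho^{\perp}={}^t(\rho\circ\tau)$; for the anti-automorphisms of Example~\ref{exampleaa} this follows because taking contragredients (twisted by a crystalline character) preserves the Fontaine--Laffaille condition in the short crystalline range, so $\rho_{\Sigma}^{\perp}$ is indeed a crystalline deformation and is therefore classified by $R'_{\Sigma}$, and then by $R_{\Sigma}$ after imposing self-duality. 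Once these compatibilities are in place, the proposition is immediate from Theorem~\ref{princi}, and indeed this is essentially what the sentence "The results of Section~1 tell us" is asserting.
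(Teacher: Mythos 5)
Your overall strategy --- quote Theorem~\ref{princi} with $A=R_{\Sigma}$ --- is the same as the paper's, and you correctly isolate the two facts that actually matter: the anti-automorphism $\tau$ on $R_{\Sigma}[G_{\Sigma}]$ exists by hypothesis, and $\tr\rho_{\Sigma}\circ\tau=\tr\rho_{\Sigma}$ holds \emph{by definition} of $R_{\Sigma}$ (Proposition~\ref{reprsd}), while $\tr\rho_i\circ\tau=\tr\rho_i$ is imposed by (\ref{self}). Those observations, together with the routine ones about $n!$ being invertible and $\ov{\rho}_{\Sigma}$ being a non-split extension of $\rho_2$ by $\rho_1$, are exactly what is needed, and the case of $R_{\Sigma}^{\rm red}$ follows by reducing the identity $\tr\rho_{\Sigma}\circ\tau=\tr\rho_{\Sigma}$ modulo the nilradical.

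However, there is a genuine confusion in the middle of your argument. You set ``$R=A=R_{\Sigma}$,'' but in the framework of Section~\ref{Principality} the ring $R$ is the $A$-algebra on which $\rho$ and $\tau$ are defined --- here $R=R_{\Sigma}[G_{\Sigma}]$ --- while $A=R_{\Sigma}$ is only the coefficient ring; the anti-automorphism in condition (\ref{self}) is $\tau\colon R\to R$, an $A$-algebra anti-automorphism of the \emph{group algebra}, not any map on $A$. Because of this, the entire sub-argument in which you try to induce an involution on $R_{\Sigma}$ itself via universality is not needed. Worse, that sub-argument is actually false as stated: $\rho_{\Sigma}^{\perp}={}^t(\rho_{\Sigma}\circ\tau)$ reduces mod $\fm_{R_{\Sigma}}$ to $\rho_0^{\perp}$, and contragredient (even after the twist) interchanges sub and quotient, so $\rho_0^{\perp}$ is a (lower-triangular) extension of $\rho_1$ by $\rho_2$, living in $H^1(G_{\Sigma},\Hom(\rho_1,\rho_2))$ rather than in $H^1(G_{\Sigma},\Hom(\rho_2,\rho_1))$. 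Your claim that ``since $\rho_1\not\cong\rho_2$ there is no room to swap the blocks, so $\rho_0^{\perp}$ is again an extension of $\rho_2$ by $\rho_1$'' has the logic backwards: precisely because $\rho_1\not\cong\rho_2$, the two extension directions are genuinely different, so $\rho_0^{\perp}$ is not strictly equivalent to $\rho_0$, and $\rho_{\Sigma}^{\perp}$ is not classified by $R_{\Sigma}$. This asymmetry between $\Ext^1(\rho_2,\rho_1)$ and $\Ext^1(\rho_1,\rho_2)$ is exactly the nontrivial content that the Bella\"iche--Chenevier involution on the GMA (comparing $\mA_{1,2}^T$ with $\mA_{2,1}^T$) is designed to handle; it cannot be bypassed by a universality argument on $R_{\Sigma}$. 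If you delete the paragraph about inducing an automorphism of $R_{\Sigma}$ and simply invoke Theorem~\ref{princi} with $R=R_{\Sigma}[G_{\Sigma}]$ and the facts you already noted, your proof becomes correct and matches the paper's intent.
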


\section{Upper-triangular deformations of $\rho_0$} \label{Reducible
deformations}

In this section we study deformations of $\rho_0$ to complete local rings whose trace splits as a sum of two pseudocharacters.

\subsection{No infinitesimal upper-triangular deformations} \label{s3.1}

\begin{definition} We will say that a crystalline deformation is \emph{upper-triangular} if some member of its strict equivalence class has the
form $$\rho(g)=\bmat A_1(g) & B(g) \\ 0 & A_2(g)\emat \quad \textup{for all $g\in G_{\Sigma}$}$$ with $A_i(g)$ an $n_i \times n_i$-matrix.
\end{definition}

\begin{prop} \label{nored} Under Assumption \ref{mainass} (1) and
(2) there does not exist any non-trivial upper-triangular crystalline deformation of $\rho_0$ to $\GL_{n}(\bfF[x]/x^2)$. \end{prop}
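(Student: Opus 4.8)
The plan is to analyze an arbitrary upper-triangular crystalline deformation $\rho$ of $\rho_0$ to $\GL_n(\bfF[x]/x^2)$ written in block form as $\rho(g) = \bmat A_1(g) & B(g) \\ 0 & A_2(g) \emat$ with $A_i$ an $n_i \times n_i$ block. Writing $A_i(g) = \tilde\rho_i(g)(1 + x\,a_i(g))$ and $B(g) = \bar B_0(g) + x\,\bar b(g)$, where $\tilde\rho_i$ denotes (the reduction mod $\varpi$, i.e. $\rho_i$, of) the fixed lift and $\bar B_0$ is the cocycle giving the extension class of $\rho_0$, the multiplicativity of $\rho$ should decompose into: (i) $a_i$ is a cocycle with values in $\ad\rho_i = \Hom_{\bfF}(\rho_i,\rho_i)$; (ii) the diagonal blocks $A_1,A_2$ each define a crystalline deformation of $\rho_i$ to $\bfF[x]/x^2$; and (iii) an off-diagonal compatibility condition relating $\bar b$, $a_1$, $a_2$ and $\bar B_0$. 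First I would set up this bookkeeping carefully and record the crystallinity constraint: since $\rho$ is crystalline at $v \mid p$, each diagonal block $A_i$ is a crystalline deformation of $\rho_i$ (the essential image of $\mathbf G$ being closed under subquotients, as recalled in Section \ref{section functoriality}).

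Next I would invoke Assumption \ref{mainass}(2), i.e. $R_{i,\Sigma} = \Oo$, equivalently $H^1_{\Sigma}(F, W_{i,i}[\varpi]) = H^1_{\Sigma}(F,\ad\rho_i) = 0$ for $i=1,2$. Since a crystalline deformation of $\rho_i$ to $\bfF[x]/x^2$ is exactly a class in $H^1_{\Sigma}(F,\ad\rho_i)$, this forces both $A_1$ and $A_2$ to be (strictly equivalent to) the trivial deformation; after conjugating by a block-diagonal matrix $\diag(1 + x M_1, 1 + x M_2)$ we may assume $A_i(g) = \tilde\rho_i(g)$ (reduced mod $\varpi$) for all $g$, i.e. $a_i = 0$. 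With $a_i = 0$ the off-diagonal relation collapses to the statement that $\bar b$ is a cocycle with values in $\Hom_{\bfF}(\rho_2,\rho_1)$; crystallinity of $\rho$ (again by subquotient-closure applied to the rank-$n_1$ quotient and the two-step filtration on the $x$-part) should show that its class lies in $H^1_{\Sigma}(F,\Hom_{\bfF}(\rho_2,\rho_1))$. Finally, conjugating $\rho$ by $\diag(1,\dots,1)$ against upper-triangular unipotents adjusts $\bar b$ within its cohomology class and also lets us rescale $\bar B_0$; the point is that a nontrivial deformation corresponds precisely to a class in $H^1_{\Sigma}(F,\Hom_{\bfF}(\rho_2,\rho_1))$ that is \emph{not} a scalar multiple of the class $[\bar B_0]$ of $\rho_0$ itself. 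By Assumption \ref{mainass}(1), $\dim_{\bfF} H^1_{\Sigma}(F,\Hom_{\bfF}(\rho_2,\rho_1)) = 1$ and $[\bar B_0] \neq 0$ (as $\rho_0$ is non-split), so every class is such a scalar multiple; hence $\bar b$ contributes nothing new and $\rho$ is strictly equivalent to the trivial deformation $\rho_0 \otimes \bfF[x]/x^2$.

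The main obstacle I anticipate is the careful handling of strict equivalence on the off-diagonal part: one must show that the freedom to conjugate by upper-triangular unipotent matrices in $\GL_n(\bfF[x]/x^2)$ precisely realizes "adding a coboundary to $\bar b$" and "adding a multiple of $[\bar B_0]$ to the deformation," and that after using up this freedom nothing nontrivial survives. This requires being attentive to the fact that the $n_1$ and $n_2$ blocks have possibly different dimensions (so $\Hom_{\bfF}(\rho_2,\rho_1)$ is a genuinely rectangular module and one cannot be cavalier with Schur's lemma), and to the interaction between the conjugation used to trivialize $a_1, a_2$ and the subsequent analysis of $\bar b$. A secondary technical point is verifying that the crystalline condition on the full $n$-dimensional $\rho$ really does descend to the crystalline condition on each relevant subquotient defining $a_i$ and $\bar b$ as Selmer classes (rather than merely as unramified-outside-$\Sigma$ classes); this is where one leans on the exactness and subquotient-closure properties of the Fontaine--Laffaille functor $\mathbf G$ recorded in Section \ref{section functoriality}.
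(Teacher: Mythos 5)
Your proposal is correct and follows essentially the same route as the paper's proof: first use Assumption \ref{mainass}(2) to normalize the diagonal blocks to $\rho_1$, $\rho_2$, then observe that the $x$-coefficient of the off-diagonal block is a cocycle whose class lies in the one-dimensional Selmer group $H^1_{\Sigma}(F,\Hom_{\bfF}(\rho_2,\rho_1))$, and use Assumption \ref{mainass}(1) together with the remaining conjugation freedom (adding coboundaries and scalar multiples of the class of $\rho_0$) to kill it. The paper's only real difference is in how it executes the final step: instead of tracking the action of conjugation on cohomology classes, it packages $g$ into the $n$-dimensional subquotient $\tau = \left[\begin{smallmatrix}\rho_1 & g\\ & \rho_2\end{smallmatrix}\right]$ of $\rho'$ (viewed over $\bfF$), invokes the one-dimensionality to get $\tau\cong\rho_0$, and writes down an explicit matrix $Z\in\GL_n(\bfF[x]/x^2)$ conjugating $\rho'$ to $\rho_0$; this realizes in a single concrete computation both of the "obstacles" you anticipated (the interaction of the two conjugation steps and the crystallinity of the relevant subquotient).
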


\begin{proof} Let $\rho'=\bmat \rho'_1 & * \\ & \rho'_2\emat$ be such a
deformation. By Assumption \ref{mainass} (2), we have  that $\rho'_i$ is strictly equivalent to $\rho_i$ for $i=1,2$. By conjugating it by an
upper-block-diagonal matrix with entries in $\bfF$ and identity matrices in the blocks on the diagonal we may assume that $\rho'_i=\rho_i$. Assume
$*=f+xg$. In the basis $$\bmat 1 \\ 0 \emat, \quad \bmat 0 \\ 1 \emat, \quad \bmat x \\ 0 \emat, \quad \bmat 0 \\ x \emat,$$ the representation
$\rho'$ has the following form
$$\rho'= \bmat \rho_1 & f \\ & \rho_2 \\ & g & \rho_1 & f \\ &&& \rho_2
\emat.$$ Hence it has a subquotient isomorphic to $$ \tau:=\bmat \rho_1 & g \\ & \rho_2\emat.$$ Note that $\tau$ as a subquotient of a crystalline
representation is still crystalline, thus $g$ gives rise to an element in $H^1_{\Sigma}(F, \Hom_{\bfF}(\rho_2, \rho_1))$. If $g$ is the trivial
class, then we get $\rho' \cong \rho_0$ as claimed, so assume that $g$ is non-trivial. Then we must have $\tau \cong \rho_0$ by Assumption
\ref{mainass}(1). Hence there exists $Y:=\bsmat A&B\\ C&D \esmat \in \GL_2(\bfF)$ such that $Y\rho_0 = \tau Y$. Using the fact that $\rho_1, \rho_2$
are irreducible and non-isomorphic an easy calculation shows that $a=A$, $d=D$ must be scalars, $C=0$ and that
$$g=d^{-1}
(af + B\rho_2 - \rho_1 B).$$ Set $$Z=\bmat 1& -d^{-1} B x \\ & 1+ \frac{a}{d} x\emat \in \GL_2(\bfF[x]/x^2).$$ Then one checks easily that
$$Z\rho' = \rho_0 Z,$$ hence we are done.
\end{proof}

\subsection{Study of upper-triangular deformations to cyclic $\Oo$-modules} \label{s3.2}

The following lemma is immediate.

\begin{lemma}\label{formofu}
Assume Assumption \ref{mainass} (2). Let $R \in \textup{LCN}(E)$. Then (up to strict equivalence) any crystalline uppertriangular deformation
$\rho$ of $\rho_0$ to $R$ must have the form
$$\rho = \bmat \rho_{1,R} & * \\ & \rho_{2,R}\emat,$$ where $\rho_{i,R}$ stands for the composite of $\tilde{\rho}_i$ with the $\Oo$-algebra
structure map $\Oo \rightarrow R$.
\end{lemma}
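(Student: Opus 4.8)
The plan is to reduce to the two irreducible pieces on the diagonal and then invoke the rigidity coming from Assumption \ref{mainass}(2). First I would write $\rho$ in upper-triangular form, so that $\rho = \bsmat A_1 & B \\ 0 & A_2 \esmat$ with $A_i : G_{\Sigma} \to \GL_{n_i}(R)$ a homomorphism. Each $A_i$ reduces to $\rho_i$ modulo $\fm_R$, and since $\rho$ is crystalline, the quotient $A_2$ and the sub $A_1$ are crystalline as well (crystallinity passes to subrepresentations and quotients — this is used already in the proof of Proposition \ref{nored}). Hence $A_i$ is a crystalline $\Oo$-deformation of $\rho_i$ to $R$, so by the universal property of $R_{i,\Sigma}$ it is classified by an $\Oo$-algebra map $R_{i,\Sigma} \to R$.

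The key step is then that $R_{i,\Sigma} = \Oo$ by Assumption \ref{mainass}(2), so the classifying map is just the structure map $\Oo \to R$, and therefore $A_i$ is (strictly equivalent to) the composite of the unique lift $\tilde{\rho}_i$ with $\Oo \to R$; in the notation of the statement, $A_i = \rho_{i,R}$. Strictly speaking I should be a little careful: the universal property gives strict equivalence of $A_i$ with $\rho_{i,R}$ via a matrix $M_i \in 1 + \fm_R M_{n_i}(R)$, i.e. $M_i A_i M_i^{-1} = \rho_{i,R}$. Conjugating $\rho$ by the block-diagonal matrix $\diag(M_1, M_2) \in 1 + \fm_R M_n(R)$ — which is a strict equivalence on $\rho$ — replaces $A_1$ by $\rho_{1,R}$ and $A_2$ by $\rho_{2,R}$ while keeping the shape upper-triangular (it only alters the upper-right block $B$). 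So after this conjugation $\rho$ has exactly the asserted form $\bsmat \rho_{1,R} & * \\ 0 & \rho_{2,R} \esmat$.

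There is essentially no obstacle here; the only thing to watch is the bookkeeping of strict equivalence classes, namely that passing from $A_i$ to $\rho_{i,R}$ can be done simultaneously by an element of $1 + \fm_n M_n(R)$ acting on all of $\rho$, which is legitimate because $\diag(M_1,M_2)$ is itself such an element and block-diagonal conjugation preserves the upper-triangular shape. This is exactly the remark that makes the lemma "immediate" as claimed.
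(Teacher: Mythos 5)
Your proof is correct and is exactly the argument the paper has in mind; the paper's "proof" of this lemma is literally one line ("This follows immediately from Assumption \ref{mainass}(2)"), and you have supplied the details that make it immediate: crystallinity passes to the sub and quotient $A_1, A_2$ (as noted in the proof of Proposition \ref{nored}, using that the essential image of $\mathbf{G}$ is closed under subquotients), so $R_{i,\Sigma}=\Oo$ forces $A_i$ to be strictly equivalent to $\rho_{i,R}$, and block-diagonal conjugation by $\diag(M_1,M_2)\in 1+\fm_R M_n(R)$ (you have a typo $\fm_n$ there) finishes the bookkeeping. Nothing to compare against beyond this; the approach is the same.
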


\begin{proof} This follows immediately from
Assumption \ref{mainass} (2). \end{proof}

Put $W=\Hom_{\Oo}(\tilde{\rho}_{2}, \tilde{\rho}_{1})\otimes E/\Oo$ and $W_n=\{x \in W: \varpi^n x =0\}$.

\begin{thm} \label{boundons}  Suppose there exists a positive integer $m$ such that $$\# H^1_{\Sigma}(F, W)
\leq \#\Oo/\varpi^m.$$ Then $\rho_0$ does not admit any upper-triangular crystalline deformations to $\GL_{n}(\Oo/\varpi^{m+1})$.
\end{thm}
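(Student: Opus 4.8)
The plan is to analyze an arbitrary upper-triangular crystalline deformation $\rho$ of $\rho_0$ to $\GL_n(\Oo/\varpi^{m+1})$ and show it must in fact factor through $\GL_n(\Oo/\varpi^m)$ or be reducible in a way that forces $\#H^1_\Sigma(F,W) > \#\Oo/\varpi^m$, giving a contradiction. By Lemma \ref{formofu}, after strict equivalence any such deformation has the form $\rho = \bsmat \rho_{1,R} & c \\ 0 & \rho_{2,R}\esmat$ with $R = \Oo/\varpi^{m+1}$, where $c: G_\Sigma \to M_{n_1,n_2}(R)$ is a cocycle for the action on $\Hom(\tilde\rho_2,\tilde\rho_1)\otimes R$. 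The key point is that $\Hom(\tilde\rho_2,\tilde\rho_1)\otimes \Oo/\varpi^{m+1}$ is canonically $W_{m+1}$ in the notation $W_n = W[\varpi^n]$ (since $W = \Hom_\Oo(\tilde\rho_2,\tilde\rho_1)\otimes E/\Oo$ and $W_{m+1} \cong \Hom_\Oo(\tilde\rho_2,\tilde\rho_1)\otimes \varpi^{-(m+1)}\Oo/\Oo$). Crystallinity of $\rho$ at primes over $p$, together with the fact that $\rho_{1,R}$ and $\rho_{2,R}$ are crystalline (being reductions of the crystalline $\tilde\rho_i$), should imply (via the induced/quotient finite-singular structure discussion of Section \ref{section functoriality}, in particular Corollary \ref{induced} and the fact that $H^1_f$ is closed under the relevant operations) that the class of $c$ lands in the Selmer group $H^1_\Sigma(F,W_{m+1})$.

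Next I would invoke Proposition \ref{functoriality}. To apply it I need $H^0(F_\Sigma,W) = 0$; this follows from Assumption \ref{mainass}, since $H^0(F_\Sigma,W) \ne 0$ would produce a nonzero $G_\Sigma$-equivariant map $\tilde\rho_2 \to \tilde\rho_1\otimes E/\Oo$, and reducing mod $\varpi$ would give a nonzero element of $\Hom_{G_\Sigma}(\rho_2,\rho_1)$, contradicting that $\rho_1\not\cong\rho_2$ are absolutely irreducible (one should be slightly careful: one argues that $W^{G_\Sigma}$ being nonzero forces $W_1 = W[\varpi]$ to have nonzero $G_\Sigma$-invariants, i.e.\ $H^0(F_\Sigma,W[\varpi])\ne 0$, which is $\Hom_{G_\Sigma}(\rho_2,\rho_1)=0$, impossible). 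Given $H^0(F_\Sigma,W)=0$, Proposition \ref{functoriality} yields $H^1_\Sigma(F,W_{m+1}) \cong H^1_\Sigma(F,W)[\varpi^{m+1}]$, and by hypothesis $\#H^1_\Sigma(F,W) \le \#\Oo/\varpi^m$. Hence $H^1_\Sigma(F,W_{m+1})$ is annihilated by $\varpi^m$, so $\varpi^m c$ is a coboundary: there is $B \in M_{n_1,n_2}(W_{m+1})$, which we may lift to $M_{n_1,n_2}(R)$, with $\varpi^m c(g) = \rho_{1,R}(g) B - B\rho_{2,R}(g)$ for all $g$ (cocycle-to-coboundary, written additively in the twisted module).

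Finally, I would conjugate $\rho$ by $Z = \bsmat I_{n_1} & -\varpi^{-m}\cdot(\text{something})\esmat$ — more precisely, since $\varpi^m$ kills the obstruction, the class of $c$ already lies in the image of $H^1(F_\Sigma,W_m) \to H^1(F_\Sigma,W_{m+1})$ (multiplication by $\varpi$ being injective on the relevant invariants by $H^0 = 0$), so after a strict equivalence we may assume $c$ takes values in $\varpi M_{n_1,n_2}(\Oo/\varpi^{m+1}) \cong M_{n_1,n_2}(\Oo/\varpi^{m})$; but then $\rho$ factors through $\GL_n(\Oo/\varpi^{m})$ and is not a genuine deformation to level $m+1$ — or rather, running the same analysis one level down, it is accounted for entirely by $H^1_\Sigma(F,W_m)$, which again embeds in $H^1_\Sigma(F,W)[\varpi^m]$ of size $\le \#\Oo/\varpi^m$; iterating, $c$ must be trivial, so $\rho \cong \rho_0 \otimes (\Oo/\varpi^{m+1})$ is the trivial deformation, contradicting "any" meaning a nontrivial one (or, if the statement allows the trivial one, one simply notes $\rho_0$ itself does not deform nontrivially in the upper-triangular direction). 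The main obstacle I anticipate is the first step: justifying cleanly that crystallinity of the extension $\rho$ forces the extension class into the crystalline Selmer group $H^1_\Sigma(F,W_{m+1})$ rather than merely $H^1(F_\Sigma, W_{m+1})$ — this requires the compatibility of the crystalline finite-singular structure under taking sub/quotient as developed around Lemma \ref{diag1} and Corollary \ref{induced}, and care that the $\varpi$-power torsion submodule inherits exactly the induced structure at $v\mid p$ and the minimally ramified one at $v\nmid p$.
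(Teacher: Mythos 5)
Your setup is correct and, up through the point where you derive that $\varpi^m[c]=0$ in $H^1_{\Sigma}(F,W_{m+1})$ and hence that $[c]$ comes from $H^1(F_{\Sigma},W_m)$ (equivalently, $(\varpi^m)_*[c]=0$ in $H^1(F_{\Sigma},W_1)$), this is essentially the paper's proof run in the opposite logical order. But the final step goes astray, and the gap is real.

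The correct way to close the argument is one sentence: under the identification $W_k\cong T/\varpi^k T$, the map $(\varpi^m)_*:H^1(F_{\Sigma},W_{m+1})\to H^1(F_{\Sigma},W_1)$ is just reduction mod $\varpi$ of the cocycle $c$, and that reduction \emph{is} the extension class of $\rho_0$, which is nonzero because $\rho_0$ is non-split. Having already shown $(\varpi^m)_*[c]=0$ (using $(W_{m+1}/W_1)^{G_{\Sigma}}=0$), you are done. Instead you conclude ``$c$ takes values in $\varpi M_{n_1,n_2}(\Oo/\varpi^{m+1})$, hence $\rho$ factors through $\GL_n(\Oo/\varpi^m)$'' --- but that inference is false: the diagonal blocks are $\tilde\rho_i \bmod \varpi^{m+1}$ and these do not descend to $\Oo/\varpi^m$, so $\rho$ does not factor. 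The proposed repair, ``iterating, $c$ must be trivial,'' also does not work: the bound you have is that $H^1_{\Sigma}(F,W_{m+1})$ is killed by $\varpi^m$, not that $H^1_{\Sigma}(F,W_m)$ is killed by $\varpi^{m-1}$, so there is no descent step to iterate. And ``$\rho\cong\rho_0\otimes(\Oo/\varpi^{m+1})$'' is not meaningful, since $\bfF$ is a quotient of $\Oo/\varpi^{m+1}$, not a subring; more to the point, if $c$ really were cohomologous to zero, then a member of the strict equivalence class of $\rho$ would be split mod $\varpi$, contradicting the non-split-ness of $\rho_0$ --- which is exactly the contradiction you should invoke directly and one step earlier.

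For comparison, the paper runs the implication in the other direction: it first shows, using $(W_{m+1}/W_1)^{G_{\Sigma}}=0$ and the non-split-ness of $\rho_0$, that $\mathcal E\notin H^1_{\Sigma}(F,W_{m+1})[\varpi^m]$, hence $H^1_{\Sigma}(F,W_{m+1})$ contains a copy of $\Oo/\varpi^{m+1}$; then Proposition \ref{functoriality} plus the hypothesis $\#H^1_{\Sigma}(F,W)\le\#\Oo/\varpi^m$ yields the contradiction. Either direction is fine, but your writeup omits the crucial use of non-split-ness of $\rho_0$, which is what makes the whole thing work. Everything else --- the reduction to the cocycle $c$ via Lemma \ref{formofu}, landing in the Selmer group via the induced finite-singular structure, the verification of $H^0(F_{\Sigma},W)=0$ from $W_1^{G_{\Sigma}}=\Hom_{G_{\Sigma}}(\rho_2,\rho_1)=0$, and the application of Proposition \ref{functoriality} --- matches the paper and is fine.
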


\begin{proof}
    Let $\rho_{m+1}$ be such a block-uppertriangular deformation. By Lemma \ref{formofu} $\rho_{m+1}$ must have the form $$\rho_{m+1}=\bmat \tilde
    \rho_1 \mod{\varpi^{m+1}}& b \\ & \tilde \rho_2 \mod{\varpi^{m+1}}\emat.$$ Since $\rho_{m+1}$ is crystalline it gives rise to an element
    $\mathcal{E}$ in $H^1_{\Sigma}(F, W_{m+1})$. We claim that $\mathcal{E} \notin H^1_{\Sigma}(F, W_{m+1})[\varpi^m]$. Consider the following
    diagram:

$$\xymatrix{ &  (W_{m+1}/W_1)^{G_{\Sigma}} \ar[d]\\ H^1(F_{\Sigma},W_{m+1}) \ar[r] \ar[rd]_{\varpi^m} & H^1(F_{\Sigma},W_1) \ar[d]\\ &  H^1(F_{\Sigma},W_{m+1})}$$

    The vertical sequence is induced from the exact sequence $0 \to W_1 \to W_{m+1} \to W_{m+1}/W_1 \to 0$, the horizontal from $0 \to W_m \to
    W_{m+1} \overset{\varpi^m}{\to} W_1 \to 0$. 

  Note that $W_n \cong T/\varpi^n$ by $x \mapsto \varpi^n x$. This isomorphism is $G_{\Sigma}$-equivariant since the action is $\Oo$-linear. This implies that $W_2/W_1 \cong T/\varpi T \cong W_1$ as $G_{\Sigma}$-modules.
By our assumption that $\rho_1$ and $\rho_2$ are irreducible and non-isomorphic we know that $\Hom(\rho_2,\rho_1)^{G_{\Sigma}}=0$, so we get
  $$W_1^{G_{\Sigma}}=(W_2/W_1)^{G_{\Sigma}}=0.$$

    Note that $(W_{m+1}/W_1)^{G_{\Sigma}}=0$ follows from $(W_2/W_1)^{G_{\Sigma}}=0$ since $W_{m+1}$ surjects onto
    $W_2$ under multiplication by $\varpi^{m-1}$. Therefore, if $\varpi^m \mathcal{E}=0$ then $\mE$ would have to lie in the kernel of the
    horizontal
    map. This map corresponds, however, under the isomorphism of $W_k \cong T/\varpi^k T$, to the morphism $$H^1(F_{\Sigma},T/\varpi^{m+1}T)
    \overset{\mod{\varpi}}{\to} H^1(F_{\Sigma},T/\varpi T).$$  Hence the image of $\mathcal{E}$ under the horizontal map corresponds to the non-split extension
    given by $\rho_0$. This proves the claim.

    By the structure theorem of finitely generated modules over the PID $\Oo$, the module $H^1_{\Sigma}(F, W_{m+1})$ must be isomorphic to a direct
    sum of modules of the form $\Oo/\varpi^r$. Since $\mE \notin H^1_{\Sigma}(F, W_{m+1})[\varpi^m]$, the module $H^1_{\Sigma}(F, W_{m+1})$ must
    have
    a submodule isomorphic to $\Oo/\varpi^{m+1}$.
    We claim that $W_1^{G_{\Sigma}}=0$ also implies $H^0(F_{\Sigma},W)=0$. For this consider $a \in W^{G_{\Sigma}}$. If $a\neq 0$, then there exists $n$ such that
$\varpi^n a=0$ but $\varpi^{n-1}a \neq 0$. Since the $G_{\Sigma}$-action is $\Oo$-linear, $a \varpi^{n-1}$ lies in $W_1^G=0$, so $a=0$, which proves the claim. By the claim and Proposition \ref{functoriality}, $H^1_{\Sigma}(F,
    W_{m+1})=H^1_{\Sigma}(F, W)[\varpi^{m+1}]$. By our assumption on the bound on $\# H^1_{\Sigma}(F, W)$ this contradicts the existence of
    $\rho_{m+1}$.
\end{proof}

\begin{rem} The existence of an $m$ as in Theorem \ref{boundons} follows essentially from (the $\varpi$-part of) the Bloch-Kato conjecture for the
module $\Hom_{\Oo}(\tilde{\rho}_{2}, \tilde{\rho}_{1})$ and its value should equal the $\varpi$-adic valuation of a special $L$-value associated with
this module divided by an appropriate period. See also section \ref{Main assumptions} to see how one can deal with primes $v \in \Sigma \setminus \Sigma_p$. \end{rem}

\subsection{Cyclicity of $R_{\Sigma}/I_{\rm re}$} \label{Cyclicity}

\begin{thm} \label{repr23} Let $R$ be a local complete Noetherian $\Oo$-algebra with residue field $\bfF$. If $T: R[G_{\Sigma}] \rightarrow R$ is a
pseudocharacter such that $\ov{T}$ is the trace of a $d$-dimensional absolutely irreducible representation, then there exists a unique (up to
isomorphism) representation $\rho_T: G_{\Sigma} \rightarrow \GL_d(R)$ such that $\tr \rho_T = T$. \end{thm}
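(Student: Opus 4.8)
The plan is to build the representation $\rho_T$ from the Cayley--Hamilton quotient of $(R[G_\Sigma],T)$ together with the GMA structure machinery of Bella\"iche--Chenevier, exploiting the fact that here $d = n_1 + n_2$ is replaced by a single absolutely irreducible residual trace (so the GMA is of type $(d)$, a trivial partition). First I would reduce to the Cayley--Hamilton algebra: let $S = R[G_\Sigma]/\ker T$, which by \cite{BellaicheChenevierbook} is a Cayley--Hamilton quotient of $(R[G_\Sigma],T)$, and it suffices to produce a representation $S \to M_d(R)$ with trace $T$, since composing with $G_\Sigma \to R[G_\Sigma]^\times \to S^\times$ gives $\rho_T$. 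Because $R$ is local with residue field $\bfF$ and $\ov T = T \bmod \fm_R$ is the trace of an absolutely irreducible $d$-dimensional representation $\ov\rho$ over $\bfF$, the residual Cayley--Hamilton algebra $\ov S = S/\fm_R S$ (or rather $S \otimes_R \bfF$ modulo its kernel) is simply $M_d(\bfF)$ by the theory of pseudocharacters over a field (a Cayley--Hamilton algebra with absolutely irreducible trace over a field is a full matrix algebra).

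Next I would lift idempotents. The key point is that $M_d(\bfF)$ contains a complete set of $d$ orthogonal primitive idempotents $\ov e_{11},\dots,\ov e_{dd}$ summing to $1$, together with units $\ov e_{ij}$ realizing the matrix-unit relations. Since $S \to \ov S \cong M_d(\bfF)$ has nilpotent (indeed, topologically nilpotent, as $R$ is complete local Noetherian) kernel, these idempotents and the matrix-unit system lift to $S$: idempotents lift along surjections with nil kernel by the standard argument (refined by completeness), and once $e_{11}$ is lifted one lifts the remaining matrix units using that $e_{11} S e_{11}$ is local with residue field $\bfF$. This equips $S$ with the structure of a generalized matrix algebra of type $(1,1,\dots,1)$ over $R$ in the sense of \cite{BellaicheChenevierbook} Definition 1.3.1, with all the ``off-diagonal'' modules $\mathcal A_{i,j}$ being $R$-submodules of $R$ containing a unit (the lift of $\ov e_{ij}$), hence equal to $R$. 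By \cite{BellaicheChenevierbook} the resulting GMA is then isomorphic as an $R$-algebra to the standard GMA $M_d(R)$, and the associated trace is exactly $T$; this yields the surjection $S \twoheadrightarrow M_d(R)$, hence $\rho_T$.

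Finally, uniqueness: if $\rho, \rho'\colon G_\Sigma \to \GL_d(R)$ both have trace $T$, then they have the same trace and, reducing mod $\fm_R$, the same absolutely irreducible residual representation $\ov\rho$. One then runs the standard argument that two lifts of an absolutely irreducible residual representation with equal traces are conjugate over $R$: working in $M_d(R)$, absolute irreducibility of $\ov\rho$ implies (by a Nakayama/lifting-of-idempotents argument, or directly via \cite{BellaicheChenevierbook} or \cite{Carayol} or \cite{Nyssen}) that the $R$-subalgebra generated by $\rho(G_\Sigma)$ is all of $M_d(R)$, and that the two representations are intertwined by an element of $\GL_d(R)$. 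I expect the main obstacle to be not any single step but the careful bookkeeping in invoking the GMA formalism of \cite{BellaicheChenevierbook} in the degenerate case of a one-block partition, and in particular checking that the topological nilpotence of $\ker(S \to M_d(\bfF))$ genuinely suffices to lift the full matrix-unit system compatibly; once that is in place the identification of the off-diagonal modules with $R$ is immediate from the presence of units, and the rest follows the cited results verbatim.
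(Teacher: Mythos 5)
The paper's own ``proof'' of Theorem \ref{repr23} is a one-line citation to Theorem 2.18 of Hida's book \cite{Hida00}; it does not reproduce the argument. What you have written is in essence a reconstruction of the Rouquier--Nyssen--Carayol proof of that result (which is also what underlies Hida's treatment), so you are proving the cited theorem rather than taking a different route. The outline is correct: $S=R[G_\Sigma]/\ker T$ is a finite $R$-module by Rouquier's theorem, its image modulo the radical is $M_d(\bfF)$ because $\ov T$ is the trace of an absolutely irreducible representation, a complete system of matrix units lifts by completeness, each $e_{ii}Se_{ii}$ is identified with $R$ via the degree-one pseudocharacter $T|_{e_{ii}Se_{ii}}$ (using that $T(e_{ii})=1$, an integer by [BC] Lemma 1.2.7, and that $\ker T=0$ in $S$), and then each $e_{ii}Se_{jj}$ is free of rank one over $R$ generated by $e_{ij}$, yielding $S\cong M_d(R)$; uniqueness is Carayol's lemma.

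Two small points of precision. First, the kernel of $S\twoheadrightarrow M_d(\bfF)$ is the Jacobson radical $J(S)$; it is not nilpotent in general. What makes idempotent lifting work is that $S$ is a finite module over the complete local ring $R$, hence $J(S)$-adically complete (with $J(S)^n\subset\fm_R S$ for some $n$ since $S/\fm_R S$ is Artinian), and idempotents lift along surjections of complete semiperfect rings. Your phrase ``nilpotent (indeed topologically nilpotent)'' should be replaced by this. Second, the off-diagonal pieces $\mathcal A_{i,j}=e_{ii}Se_{jj}$ are abstract $R$-modules, not a priori $R$-submodules of $R$; the correct statement is that $r\mapsto re_{ij}$ is an $R$-module isomorphism $R\xrightarrow{\sim}\mathcal A_{i,j}$ (surjectivity because for $x\in e_{ii}Se_{jj}$ one has $xe_{ji}\in e_{ii}Se_{ii}=Re_{ii}$ and hence $x=(xe_{ji})e_{ij}$; injectivity via $T(re_{ij}e_{ji})=r$). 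With these phrasings tightened up, your argument is a correct proof of the theorem the paper cites.
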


\begin{proof} This is Theorem 2.18 in \cite{Hida00}. \end{proof}

\begin{thm} \label{urbres} Let $(R, \fm_R, \bfF)$ be a local Artinian (or complete Hausdorff)
ring. Let $\sigma_1$, $\sigma_2$, and $\sigma$ be three
 representations of a topological group $G$ with coefficients in $R$ (with $\sigma$ having
image in $\GL_n(R)$).
Assume the following are true:
\begin{itemize}
\item $\sigma$ and $\sigma_1 \oplus \sigma_2$ have the same characteristic polynomials; \item The mod $\fm_R$-reductions $\ov{\sigma}_1$ and
    $\ov{\sigma}_2$ of $\sigma_1$ and $\sigma_2$ respectively are absolutely irreducible and non-isomorphic; \item The mod $\fm_R$-reduction
    $\ov{\sigma}$ of $\sigma$ is indecomposable and the subrepresentation of $\ov{\sigma}$ is isomorphic to $\ov{\sigma}_1$.
\end{itemize} Then there exists $g \in \GL_n(R)$ such that $$\sigma(h) = g
\bmat \sigma_1(h) & * \\ & \sigma_2(h)\emat g^{-1}$$ for all $h \in G$.
\end{thm}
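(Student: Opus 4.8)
The plan is to reconstruct the block upper-triangular form directly from the pseudocharacter data, using the theory of generalized matrix algebras developed in Section~\ref{Principality}. First I would set $T = \tr\sigma = \tr(\sigma_1\oplus\sigma_2)$ as a pseudocharacter on $R[G]$, and observe that the hypotheses of Section~\ref{Principality} are in force: $\ov{T} = \tr\ov\sigma_1 + \tr\ov\sigma_2$ with $\ov\sigma_1,\ov\sigma_2$ absolutely irreducible, non-isomorphic, and $\ov\sigma$ a non-semisimple (indecomposable) extension. Applying Lemma~\ref{data} to $(R[G], T)$, the Cayley--Hamilton quotient $S^T := R[G]/\ker T$ acquires the structure of a GMA of type $(n_1,n_2)$, so there is a canonical isomorphism
\[
S^T \cong \bmat M_{n_1}(\mA_{1,1}^T) & M_{n_1,n_2}(\mA_{1,2}^T)\\ M_{n_2,n_1}(\mA_{2,1}^T) & M_{n_2}(\mA_{2,2}^T)\emat,
\]
with $\mA_{i,i}^T \cong R$. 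The point of the hypothesis that $\ov\sigma$ has \emph{subrepresentation} $\ov\sigma_1$ (and not just a subquotient) is that it pins down which of the two off-diagonal corners must vanish modulo $\fm_R$: writing out the congruence, we should get $\ov{\mA_{2,1}^T} = 0$, i.e.\ $\mA_{2,1}^T \subseteq \fm_R\,\mA_{2,1}^T$, whence $\mA_{2,1}^T = 0$ by Nakayama (here one uses that $R$ is Artinian, or in the complete Hausdorff case that the relevant module is finitely generated, so Nakayama applies).

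Once $\mA_{2,1}^T = 0$, the displayed GMA is genuinely block upper-triangular, and the map $\psi_1^T\oplus\psi_2^T$ restricted to the diagonal blocks gives $R$-algebra homomorphisms to $M_{n_1}(R)$ and $M_{n_2}(R)$ whose traces are two pseudocharacters $T_1,T_2$ with $T = T_1 + T_2$ and $\ov{T_i} = \tr\ov\sigma_i$. By Theorem~\ref{repr23} (applied to each factor, since $\ov\sigma_i$ is absolutely irreducible) these pseudocharacters are the traces of representations $\rho_{T_i}: G \to \GL_{n_i}(R)$, and again by Theorem~\ref{repr23}'s uniqueness clause $\rho_{T_i} \cong \sigma_i$. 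So the composite $G \to S^T \to S^T$ lands in the block upper-triangular subalgebra, giving a representation $\rho'(h) = \bmat \sigma_1(h) & *\\ & \sigma_2(h)\emat$.

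It remains to show $\rho'$ is \emph{conjugate} to $\sigma$, i.e.\ to produce $g \in \GL_n(R)$ intertwining them. Both $\sigma$ and $\rho'$ are representations $G \to \GL_n(R)$ with the same trace $T$ (hence the same characteristic polynomials, $n!$ being invertible since $p\nmid n!$), and both factor through $S^T$ as $R$-algebra maps $S^T \to M_n(R)$. The standard argument here — compare Section~1.4 of \cite{BellaicheChenevierbook} — is that two such maps, once one knows the residual representations are conjugate (which holds: $\ov\sigma$ and $\ov{\rho'}$ are both the non-split extension of $\ov\sigma_2$ by $\ov\sigma_1$, and such an extension is unique up to isomorphism because $\ov\sigma$ has scalar centralizer), differ by conjugation by an element of $\GL_n(R)$ reducing to the identity; one builds $g$ by successive approximation up the filtration $\fm_R^k$, at each stage using $H^1$ of the residual adjoint being controlled by irreducibility of $\ov\sigma_1,\ov\sigma_2$ to lift the conjugating matrix, or alternatively invoking that any two liftings of a representation with scalar centralizer to a small extension are strictly equivalent. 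The main obstacle I anticipate is precisely this last step: extracting the conjugating matrix $g$ over $R$ rather than merely over $\bfF$ requires either the Artinian/complete-Hausdorff dévissage or a direct appeal to the GMA machinery (e.g.\ \cite{BellaicheChenevierbook} Lemma~1.4.3 and the discussion of how representations into $M_n$ factoring through a given GMA are classified), and care is needed because the off-diagonal block $*$ of $\rho'$ need not match that of $\sigma$ on the nose — only up to the conjugation we are constructing.
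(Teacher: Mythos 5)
The paper gives no argument of its own here: it simply cites Theorem~1 of Urban's article, whose proof is self-contained and predates the Bella\"iche--Chenevier GMA formalism you invoke. So your route is genuinely different; unfortunately, as written it breaks down at the key step.

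Your pivotal claim is that $\mA_{2,1}^T=0$ in $R[G]/\ker T$, from which you want to read off the upper-triangular shape of $\sigma$. This cannot work, because $\sigma$ does not factor through $R[G]/\ker T$: one has $\ker\sigma\subsetneq\ker T$. Indeed, granting the conclusion of the theorem for a moment, so that $\sigma$ is block upper-triangular, any $x\in R[G]$ with $\sigma(x)$ supported in the $(1,2)$-block already lies in $\ker T$ (for every $y$, $\sigma(x)\sigma(y)$ is strictly block upper-triangular and hence has trace $0$). Thus $\mA_{1,2}^T=\mA_{2,1}^T=0$ and $R[G]/\ker T\cong M_{n_1}(R)\times M_{n_2}(R)$: the GMA of $T$ alone has completely forgotten the extension, which is unsurprising since $T=\tr(\sigma_1\oplus\sigma_2)$ is also the trace of the split representation. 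The object you need is $R[G]/\ker\sigma\cong\sigma(R[G])\subseteq M_n(R)$ --- the ``$R^\rho$'' of Section~\ref{Principality}, on which $\sigma$ is faithful. Lift the idempotents there (as in Lemma~\ref{data}), conjugate $\sigma$ to be adapted (\cite{BellaicheChenevierbook} Lemma~1.3.7), and let $A_{1,2},A_{2,1}\subseteq R$ be the resulting ideals of the standard GMA $\sigma(R[G])$. Then: (i) $A_{1,2}=R$, by exactly the argument of Proposition~\ref{princ1}, because $\ov\sigma$ is a non-split extension with $\ov\sigma_1$ as the \emph{sub}representation; (ii) the reducibility ideal $I_T$ is $(0)$, since $T=\tr\sigma_1+\tr\sigma_2$ is already, over $R$ itself, a sum of two pseudocharacters lifting $\tr\ov\sigma_1$ and $\tr\ov\sigma_2$; and (iii) $A_{1,2}A_{2,1}$ is identified, up to a unit, with $I_T$, using Proposition~\ref{struct2} together with the compatibility in Lemma~\ref{data} between $\mA_{i,j}^{\rho}$ and $\mA_{i,j}^{T}$. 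Combining these forces $A_{2,1}=0$, so the adapted $\sigma$ is block upper-triangular with diagonal blocks having traces $\tr\sigma_1,\tr\sigma_2$; Theorem~\ref{repr23} then supplies a further block-diagonal conjugation making the diagonal blocks literally equal to $\sigma_1,\sigma_2$. This also dissolves the worry you raise at the end about matching the off-diagonal block of a separately built $\rho'$ to that of $\sigma$: by conjugating $\sigma$ itself rather than constructing a parallel representation, no such matching step is needed.
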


\begin{proof} This is Theorem 1 in \cite{Urban99}. \end{proof}

\begin{cor} \label{equi2} Let
$I\subset R'_{\Sigma}$ be an ideal such that $R'_{\Sigma}/I \in \textup{LCN}(E)$ and is an Artin ring. Then
$I$ contains the ideal of reducibility of $R'_{\Sigma}$  if and only if $\rho'_{\Sigma}$ mod $I$ is an upper-triangular deformation of $\rho_0$ to
$\GL_{n}(R'_{\Sigma}/I)$.
\end{cor}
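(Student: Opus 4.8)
The plan is to prove both implications of Corollary~\ref{equi2} by combining Proposition~\ref{struct2} (which identifies $I_T$ with $T(\mA_{1,2}^T\mA_{2,1}^T)$), the general GMA structure from Lemma~\ref{data}, and the reconstruction results Theorems~\ref{repr23} and~\ref{urbres}. Throughout, write $T'=\tr\rho'_\Sigma$, $I'_{\rm re}$ for its ideal of reducibility, and for an ideal $I$ as in the statement set $A:=R'_\Sigma/I$, which is Artinian, and $\bar T:=T'\bmod I$, $\bar\rho:=\rho'_\Sigma\bmod I$.

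First I would prove the ``only if'' direction. Assume $I\supseteq I'_{\rm re}$. Then $\bar T$ is a pseudocharacter on $A[G_\Sigma]$ whose reduction mod $\fm_A$ is $\tr\rho_1+\tr\rho_2$ and which, by definition of the reducibility ideal (the smallest ideal modulo which the trace splits), is itself a sum $T_1+T_2$ of pseudocharacters lifting $\tr\rho_i$. By Theorem~\ref{repr23} applied to each $T_i$ over the local complete ring $A$ — using that $\ov{\rho}_i$ is absolutely irreducible of dimension $n_i$ — there are representations $\rho_{i,A}:G_\Sigma\to\GL_{n_i}(A)$ with $\tr\rho_{i,A}=T_i$; these necessarily lift $\rho_i$. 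Now $\bar\rho$ and $\rho_{1,A}\oplus\rho_{2,A}$ have the same trace, hence (since $n!$ is invertible, cf.\ the standing hypothesis and $p\nmid n!$) the same characteristic polynomials; $\ov{\rho}_1,\ov{\rho}_2$ are absolutely irreducible and non-isomorphic; and the mod $\fm_A$ reduction of $\bar\rho$ is $\rho_0$, which is indecomposable with subrepresentation $\ov{\rho}_1$. Theorem~\ref{urbres} then yields $g\in\GL_n(A)$ conjugating $\bar\rho$ into block-upper-triangular form, i.e.\ $\rho'_\Sigma\bmod I$ is an upper-triangular deformation of $\rho_0$ in the sense of the definition in Section~\ref{Reducible deformations}.

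Next the ``if'' direction. Suppose $\rho'_\Sigma\bmod I$ is upper-triangular, so after replacing it within its strict equivalence class we may write $\bar\rho=\bsmat A_1 & B\\ 0 & A_2\esmat$ with $A_i:G_\Sigma\to\GL_{n_i}(A)$. By Lemma~\ref{formofu} (equivalently, directly from Assumption~\ref{mainass}(2)), $A_i$ is the reduction of $\tilde\rho_i$ via $\Oo\to A$, so in particular $\tr A_i$ is a genuine pseudocharacter on $A[G_\Sigma]$ reducing to $\tr\rho_i$ mod $\fm_A$. Hence $\bar T=\tr A_1+\tr A_2$ is a splitting of $\bar T$ into two pseudocharacters of the required residual type, which by the minimality in the definition of the reducibility ideal forces $I'_{\rm re}\subseteq I$ — since $I'_{\rm re}$ is the smallest ideal of $R'_\Sigma$ modulo which $T'$ so splits, and $\bar T$ is $T'$ read modulo $I$.

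I expect the main subtlety to be bookkeeping rather than a genuine obstacle: one must make sure the two notions of ``the trace splits as a sum of pseudocharacters of the right residual type'' used on the two sides match exactly the definition of $I_T$ in Definition~1.5.2 as quoted, and that ``upper-triangular deformation'' is used up to strict equivalence so that Theorem~\ref{urbres}'s conjugation-into-block-form output qualifies. A secondary point worth spelling out is that Theorem~\ref{urbres} requires $\ov{\bar\rho}$ \emph{indecomposable with prescribed sub}, not merely reducible; this is exactly the content of $\rho_0$ being a non-split extension of $\rho_2$ by $\rho_1$, which is part of the standing hypotheses, so no extra work is needed. Finally, one should note that $R'_\Sigma/I$ being an Artin ring (as assumed) is what lets us invoke Theorem~\ref{urbres} in its Artinian form; the complete-Hausdorff variant would also do, but is not needed here.
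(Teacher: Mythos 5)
Your proposal follows the paper's own proof almost exactly: the easy direction reads off the trace of a block-upper-triangular representation, and the hard direction applies Theorem~\ref{repr23} to reconstruct representations $\rho_{T_i}$ from the pseudocharacters $T_i$, compares characteristic polynomials via Brauer--Nesbitt (using $p\nmid n!$), and then invokes Theorem~\ref{urbres}. Two minor remarks. First, in the easy direction the appeal to Lemma~\ref{formofu} is unnecessary: for any block-upper-triangular form the diagonal blocks are representations of $G_\Sigma$ over $A=R'_\Sigma/I$, so their traces are already pseudocharacters reducing to $\tr\rho_1$ and $\tr\rho_2$, and the minimality in the definition of the reducibility ideal gives $I'_{\rm re}\subseteq I$ directly, without needing to know the precise form of the diagonal blocks.

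The second point is a genuine gap in your closing step. Theorem~\ref{urbres} produces some $g\in\GL_n(A)$ such that $g^{-1}\bar\rho g$ is block-upper-triangular, but such a $g$ need not lie in $1+M_n(\fm_A)$; hence $g^{-1}\bar\rho g$ is not automatically in the strict equivalence class of $\bar\rho$, and its reduction $\bar g^{-1}\rho_0\bar g$ is isomorphic to, but not equal to, $\rho_0$. So the claim that ``upper-triangular deformation is defined up to strict equivalence so Theorem~\ref{urbres}'s output qualifies'' does not hold as stated. The paper handles this by a further conjugation: because $\rho_1\not\cong\rho_2$, any $\bar h\in\GL_n(\bfF)$ intertwining the block-upper-triangular $\bar g^{-1}\rho_0\bar g$ with $\rho_0$ is itself block-upper-triangular, so it lifts to a block-upper-triangular $h\in\GL_n(A)$; then $h(g^{-1}\bar\rho g)h^{-1}$ is block-upper-triangular with reduction exactly $\rho_0$, and the scalar centralizer of $\rho_0$ (together with lifting units to $A$, the point of the paper's remark about surjectivity on unit groups) shows that $hg^{-1}$ differs from an element of $1+M_n(\fm_A)$ by a scalar, so that the result is strictly equivalent to $\bar\rho$. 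You should add this argument to turn ``isomorphic to a block-upper-triangular representation'' into ``upper-triangular \emph{deformation} of $\rho_0$.''
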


\begin{proof} If $\rho'_{\Sigma}$ mod $I$ is isomorphic to
an upper-triangular deformation of $\rho_0$ to $\GL_{n}(R'_{\Sigma}/I)$, then clearly $\tr \rho'_{\Sigma}$ mod $I$ is the sum of two traces
reducing to $\tr \rho_1 + \tr \rho_2$, so $I$ contains the ideal of reducibility. We will now prove the converse. Suppose $I$ contains the ideal of
reducibility. Then by definition $\tr \rho'_{\Sigma} = T_1 + T_2$ mod $I$ for two pseudocharacters $T_1, T_2$ such that $\ov{T}_i = \tr \rho_i$.
Since $\rho_i$ are absolutely  irreducible it follows from Theorem \ref{repr23} that there exist $\rho_{T_i}:R'_{\Sigma}/I[G_{\Sigma}] \rightarrow
R'_{\Sigma}/I$ such that $T_i = \tr \rho_{T_i}$ mod $I$. By \cite{BellaicheChenevierbook}, section 1.2.3 and the fact that $p \nmid n!$ one
has
$$\tr \rho'_{\Sigma} \pmod{I} = \tr \rho_{T_1} +\tr \rho_{T_2} = \tr (\rho_{T_1} \oplus \rho_{T_2}) \implies \chi_{\rho'_{\Sigma}\hspace{2pt} \textup{mod} \hspace{2pt}I} = \chi_{\rho_{T_1}\oplus
\rho_{T_2}},$$ where $\chi$ stands for the characteristic polynomial. By the Brauer-Nesbitt Theorem (or Theorem \ref{repr23} for $R=\bfF$) we
conclude that $\ov{\rho}_{T_i} \cong \rho_i$, so we can apply Theorem \ref{urbres} to get that $\rho'_{\Sigma}$ mod $I$ is isomorphic to a
block-upper-triangular representation, say $\sigma$. Using the fact that the map $(R'_{\Sigma})^{\times} \rightarrow (R'_{\Sigma}/I)^{\times}$ is
surjective we see that we can further conjugate $\sigma$ (over $R'_{\Sigma}/I$) to a block-upper-triangular \emph{deformation} of $\rho_0$.
\end{proof}

\begin{lemma} \label{x1x2} If $R$ is a local complete Noetherian
$\Oo$-algebra then it is a quotient of $\Oo[[X_1, X_2, \dots, X_s]]$.
\end{lemma}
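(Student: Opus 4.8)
The plan is to recognize this as the standard structure theorem for complete local Noetherian rings over a complete DVR, essentially the Cohen structure theorem in the equicharacteristic-0-ish (or mixed characteristic) setting with prescribed residue field. First I would recall that $R$ is a local complete Noetherian $\Oo$-algebra with maximal ideal $\fm_R$ and residue field $\bfF$, and that the $\Oo$-algebra structure map $\Oo \to R$ induces the identity on residue fields (this is part of the definition of $\textup{LCN}(E)$ used throughout). The key point is that $\fm_R$ is finitely generated as an ideal, since $R$ is Noetherian; pick generators $x_1, \dots, x_s$ of $\fm_R$, which we may take to lie in $\fm_R$ (indeed we can choose them to reduce to a basis of $\fm_R/(\fm_R^2 + \varpi R)$ together with, if needed, $\varpi$ itself, but any finite generating set works).

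Next I would define the $\Oo$-algebra homomorphism $\Oo[X_1, \dots, X_s] \to R$ sending $X_i \mapsto x_i$, and argue it extends to a continuous homomorphism $\Oo[[X_1, \dots, X_s]] \to R$ with respect to the $\fm$-adic topologies. For this I would use that $R$ is $\fm_R$-adically complete and separated (being complete local Noetherian), that the maximal ideal of $\Oo[[X_1,\dots,X_s]]$ is $(\varpi, X_1, \dots, X_s)$, and that this maximal ideal maps into $\fm_R$; hence high powers of the source maximal ideal map into high powers of $\fm_R$, so the map on polynomial rings is continuous and extends uniquely to the completion by the universal property of completion (or by explicitly summing the image of a power series, which converges in $R$ by completeness).

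Finally I would check surjectivity. Since the image contains $\Oo$ and the $x_i$, it contains $\Oo[x_1, \dots, x_s]$, whose image in $R/\fm_R^k$ is all of $R/\fm_R^k$ for every $k$ — this follows because $\fm_R = (x_1,\dots,x_s)$ together with $\varpi$ generating $\fm_R$, so monomials in the $x_i$ with $\Oo$-coefficients span $R/\fm_R^k$ over $\Oo$, hence (using that $\Oo \to R$ hits everything mod $\fm_R$) span it over $\bfF$ in each graded piece; more cleanly, the composite $\Oo[[X_i]] \to R \to R/\fm_R^k$ is surjective for each $k$ because the source surjects onto $(R/\fm_R^k)$ already at finite level, and then one passes to the inverse limit: $R \cong \varprojlim_k R/\fm_R^k$ and a compatible system of surjections from a complete ring whose image is dense is surjective. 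I expect the only mildly delicate point to be the verification that the map extends continuously to the power series ring and that the resulting image is genuinely all of $R$ (not merely dense); both are resolved by invoking completeness and separatedness of $R$, which hold since $R \in \textup{LCN}(E)$. This is entirely standard commutative algebra and I would keep the write-up to a few lines, citing the Cohen structure theorem or Matsumura if a reference is preferred.
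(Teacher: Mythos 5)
Your argument is correct and is the standard proof of this structure result. The paper itself does not give an argument — it simply cites Theorem 7.16a,b of Eisenbud — and your write-up reproduces the usual proof of that theorem: choose finitely many generators $x_1,\dots,x_s$ of $\fm_R$, send $X_i \mapsto x_i$, extend to the power series ring by continuity using completeness of $R$, and deduce surjectivity from the fact that the image contains $\Oo$ together with a generating set of $\fm_R$ (complete Nakayama / passage to the inverse limit over $R/\fm_R^k$).
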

\begin{proof} This is Theorem 7.16a,b of \cite{Eisenbud}. \end{proof}

\begin{prop} \label{comm1} Assume Assumption \ref{mainass} (1),
(2).  Then the structure map $\Oo \rightarrow R'_{\Sigma, \Oo}/I'_{\ture}$ is surjective.
 \end{prop}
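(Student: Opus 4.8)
The goal is to show $\Oo \to R'_{\Sigma}/I'_{\ture}$ is surjective, where $R'_{\Sigma}$ is the (not necessarily self-dual) universal crystalline deformation ring. The plan is to reduce this to the fact that $R'_{\Sigma}/I'_{\ture}$ is topologically generated over $\Oo$ by the coefficients of the reducible pseudocharacter, together with the two input hypotheses: the one-dimensionality of $H^1_{\Sigma}(F,\Hom_{\bfF}(\rho_2,\rho_1))$ (Assumption \ref{mainass}(1)) and $R_{i,\Sigma}=\Oo$ (Assumption \ref{mainass}(2)). Since $R'_{\Sigma}$ is a quotient of $\Oo[[X_1,\dots,X_s]]$ by Lemma \ref{x1x2}, it suffices to show that $\fm_{R'_{\Sigma}}/(\fm_{R'_{\Sigma}}^2 + \varpi R'_{\Sigma} + I'_{\ture})$ vanishes, i.e. that the mod-$\varpi$ cotangent space of $R'_{\Sigma}/I'_{\ture}$ is zero; by the usual Nakayama/completeness argument this forces the structure map to be surjective.

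**Key steps.** First I would identify $R'_{\Sigma}/(\fm^2,\varpi, I'_{\ture})$ with the representing object of the functor of upper-triangular (equivalently, reducible-trace) deformations of $\rho_0$ to $\bfF[x]/x^2$: by Corollary \ref{equi2} a deformation $\rho'_{\Sigma} \bmod J$ to an Artin quotient has $J \supseteq I'_{\ture}$ iff it is upper-triangular, so quotients of $R'_{\Sigma}$ that kill $I'_{\ture}$ and are killed by $(\varpi,\fm^2)$ correspond precisely to upper-triangular deformations to $\GL_n(\bfF[x]/x^2)$. Second, I would invoke Proposition \ref{nored}: under Assumption \ref{mainass}(1) and (2) there is no non-trivial upper-triangular crystalline deformation of $\rho_0$ to $\GL_n(\bfF[x]/x^2)$. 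Hence the only $\Oo$-algebra map $R'_{\Sigma}/I'_{\ture} \to \bfF[x]/x^2$ reducing to the tautological one mod $x$ is the trivial one, which says exactly that the relative cotangent space of $R'_{\Sigma}/I'_{\ture}$ over $\Oo$, reduced mod $\varpi$, is zero. Third, combine this with Lemma \ref{x1x2}: writing $R'_{\Sigma} = \Oo[[X_1,\dots,X_s]]/\fa$, the vanishing of that cotangent space means every $X_i$ maps into $\varpi R'_{\Sigma}/I'_{\ture} + \fm_{R'_{\Sigma}/I'_{\ture}}^2$; one then runs a standard successive-approximation argument using $\varpi$-adic and $\fm$-adic completeness to conclude $R'_{\Sigma}/I'_{\ture}$ is generated over $\Oo$ by no elements, i.e. the structure map is surjective.

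**Main obstacle.** The genuinely substantive content is already packaged in Proposition \ref{nored} (which in turn rests on the Selmer-group hypothesis \ref{mainass}(1) and the explicit $2\times 2$ cocycle computation with the involution-free conjugation), so the remaining work here is essentially bookkeeping. The step requiring the most care is the translation between ``no infinitesimal upper-triangular deformation'' and ``cotangent space of the quotient vanishes'': one must be sure that killing $I'_{\ture}$ and then reducing mod $(\varpi,\fm^2)$ genuinely lands one in the upper-triangular deformation functor to which Corollary \ref{equi2} and Proposition \ref{nored} apply — in particular that the Artinian hypothesis in Corollary \ref{equi2} is met (it is, since $\bfF[x]/x^2$ is Artinian) and that crystallinity is preserved under passage to this quotient (it is a deformation condition). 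Once that identification is clean, the Nakayama-type conclusion is automatic, so I do not expect a real difficulty there.
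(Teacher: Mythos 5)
Your proposal is correct and follows essentially the same route as the paper's proof: both reduce to the vanishing of the mod-$\varpi$ cotangent space of $R'_{\Sigma}/I'_{\ture}$ (equivalently, $S/\varpi S=\bfF$), establish this by combining Corollary \ref{equi2} (an Artinian quotient killing $I'_{\ture}$ gives an upper-triangular deformation) with Proposition \ref{nored} (no nontrivial upper-triangular deformation to $\bfF[x]/x^2$), and then conclude by the complete version of Nakayama's lemma. The paper phrases step two as a contradiction from a hypothetical surjection $S/\varpi S\twoheadrightarrow \bfF[X]/X^2$ obtained from Lemma \ref{x1x2}, while you phrase it directly in terms of $\Oo$-algebra maps to $\bfF[x]/x^2$, but these are the same argument.
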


Before we prove the proposition we will show that it implies the corresponding statement for $I_{\rm re}$ and $I^{\rm red}_{\rm re}$.
\begin{lemma} \label{redidfunct}
  Let  $$\xymatrix{A \ar@{>>}[rr]^{\varphi}\ar@{>>}[dr] && B \ar@{>>}[dl]\\ & \bfF &},$$  be a commutative diagram
of commutative $A$-algebras. Define $T_B$ via the commutative diagram  $$\xymatrix{A[G]\ar[r]^{T}\ar[d]_{\varphi} & A \ar[d]^{\varphi}\\ B[G]
\ar[r]^{T_B}& B}.$$ Then $\varphi$ induces a surjection $$A/I_T \twoheadrightarrow B/I_{T_B}.$$
\end{lemma}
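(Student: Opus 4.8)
The plan is to trace through the definitions of the two reducibility ideals and show that $\varphi$ carries $I_T$ into $I_{T_B}$, whence it descends to a surjection on the quotients. First I would recall that, by definition, $I_T$ is the smallest ideal $J$ of $A$ such that $T \bmod J$ decomposes as a sum of two pseudocharacters $T_1 + T_2$ with $T_i \equiv \tr\tau_i \pmod{\fm_A}$ (using the residue map $A \to \bfF$), and similarly $I_{T_B}$ is the smallest ideal of $B$ with the analogous property for $T_B$ (using the residue map $B \to \bfF$, which is compatible with that of $A$ via the diagram).

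The key step is to show $\varphi(I_T) \subseteq I_{T_B}$. Since $\varphi: A \to B$ is surjective, $\varphi(I_T)$ is an ideal of $B$, so it suffices to check that $T_B \bmod \varphi(I_T)$ splits as a sum of two pseudocharacters reducing correctly. Now $T_B = \varphi \circ T$ (extended to group algebras, i.e. $T_B(x) = \varphi(T(x))$ for the induced map $A[G] \to B[G]$), and modulo $I_T$ we have $T = T_1 + T_2$ on $A[G]$. Composing with the projection $A \to A/I_T \to B/\varphi(I_T)$ (the latter being well-defined since $\varphi(I_T)$ is exactly the image), we get that $T_B \bmod \varphi(I_T)$ equals $\bar T_1 + \bar T_2$, where $\bar T_i$ is the image of $T_i$; each $\bar T_i$ is still a pseudocharacter (pseudocharacters push forward along ring homomorphisms) and $\bar T_i \equiv \tr\tau_i \pmod{\fm_B}$ because the reduction maps are compatible. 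By minimality of $I_{T_B}$, this forces $I_{T_B} \subseteq \varphi(I_T)$; combined with the next observation we also get the reverse, but for the statement we only need one containment in the right direction, namely $\varphi(I_T) \supseteq I_{T_B}$ — wait, I need to be careful about direction here: to get a surjection $A/I_T \twoheadrightarrow B/I_{T_B}$ I need $\varphi(I_T) \subseteq I_{T_B}$, so that the composite $A \xrightarrow{\varphi} B \to B/I_{T_B}$ kills $I_T$. So the argument should instead go: $I_T$ is minimal among ideals $J$ of $A$ with the splitting property; the preimage $\varphi^{-1}(I_{T_B})$ is an ideal of $A$, and $T \bmod \varphi^{-1}(I_{T_B})$ splits because $T_B \bmod I_{T_B}$ does and $A/\varphi^{-1}(I_{T_B}) \hookrightarrow B/I_{T_B}$; hence $I_T \subseteq \varphi^{-1}(I_{T_B})$, i.e. $\varphi(I_T) \subseteq I_{T_B}$.

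Once $\varphi(I_T) \subseteq I_{T_B}$ is established, the composite $A \xrightarrow{\varphi} B \twoheadrightarrow B/I_{T_B}$ is surjective and annihilates $I_T$, hence factors through a surjection $A/I_T \twoheadrightarrow B/I_{T_B}$, which is the claim. The main (and really only) subtlety is the bookkeeping of \emph{which} reduction maps to $\bfF$ are in play and checking that the decomposition $T = T_1 + T_2$ pushes forward to a decomposition with the correct residual pieces — this is where the commutativity of the triangle $A \twoheadrightarrow B$, $A \twoheadrightarrow \bfF$, $B \twoheadrightarrow \bfF$ is used, guaranteeing $\bar T_i \bmod \fm_B = \tr\tau_i$. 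Everything else is a formal consequence of the universal (minimality) property defining the ideal of reducibility; no substantial difficulty is expected.
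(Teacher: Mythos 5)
Your corrected argument is essentially identical to the paper's proof: both reduce to showing $\varphi(I_T)\subseteq I_{T_B}$, both observe that $T$ modulo $\varphi^{-1}(I_{T_B})$ inherits a decomposition into pseudocharacters from $T_B$ modulo $I_{T_B}$, and both invoke minimality of $I_T$ to get $I_T\subseteq\varphi^{-1}(I_{T_B})$, hence $\varphi(I_T)\subseteq I_{T_B}$ by surjectivity of $\varphi$. One small point to tidy up: you write $A/\varphi^{-1}(I_{T_B})\hookrightarrow B/I_{T_B}$, but since $\varphi$ is surjective this map is in fact an isomorphism (as the paper states); the isomorphism (rather than a mere injection) is what guarantees that the two pseudocharacters on $B/I_{T_B}$ transport back to $A$-valued pseudocharacters on $A/\varphi^{-1}(I_{T_B})$ without further argument.
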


\begin{proof}  It is enough to show that
$\varphi(I_{T})\subset I_{T_B}$. Indeed, assuming this, $\varphi$ induces a well-defined map $A/I_{T} \rightarrow B/I_{T_B}$, which must be a
surjection since $\varphi$ is. Since $A/\varphi^{-1}(I_{T_B}) \cong B/I_{T_B}$, we see that $T$ modulo $\varphi^{-1}(I_{T_B})$ is a sum of
pseudocharacters, hence $\varphi^{-1}(I_{T_B})\supset I_{T}$. Since $\varphi$ is a surjection it follows that $I_{T_B}\supset \varphi(I_T)$.
\end{proof}

\begin{cor} \label{structmap} Assume Assumption \ref{mainass} (1),
(2).  Then the structure maps $\Oo \rightarrow R_{\Sigma}/I_{\ture}$ and $\Oo \rightarrow R_{\Sigma}^{\rm red}/I^{\rm red}_{\rm re}$ are surjective.
\end{cor}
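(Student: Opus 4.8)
The plan is to deduce Corollary \ref{structmap} from Proposition \ref{comm1} together with the functoriality of reducibility ideals recorded in Lemma \ref{redidfunct}. The key point is that both $R_{\Sigma}$ and $R_{\Sigma}^{\rm red}$ are canonical quotients of $R'_{\Sigma}$: the ring $R_{\Sigma}$ is the quotient of $R'_{\Sigma}$ by the ideal generated by $\{\tr\rho'_{\Sigma}(g)-\tr\rho'_{\Sigma}(\tau(g))\}$ (Proposition \ref{reprsd}), and $R_{\Sigma}^{\rm red}$ is the quotient of $R_{\Sigma}$ by its nilradical. Each of these quotient maps fits into a commutative triangle over $\bfF$, since all three rings lie in $\textup{LCN}(E)$ with residue field $\bfF$, so all the structure maps to $\bfF$ are compatible.

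First I would set up the two applications of Lemma \ref{redidfunct}. Apply the lemma with $A = R'_{\Sigma}$, $B = R_{\Sigma}$, and $\varphi$ the surjection of Proposition \ref{reprsd}; here $T = \tr\rho'_{\Sigma}$ and the induced pseudocharacter $T_B$ is exactly $\tr\rho_{\Sigma}$, since $\rho_{\Sigma}$ is the pushforward of $\rho'_{\Sigma}$ along $\varphi$. Thus $I_{T_B} = I_{\rm re}$ by definition, and the lemma gives a surjection $R'_{\Sigma}/I'_{\rm re} \twoheadrightarrow R_{\Sigma}/I_{\rm re}$. Then apply the lemma again with $A = R_{\Sigma}$, $B = R_{\Sigma}^{\rm red}$, and $\varphi$ the quotient by the nilradical; the induced pseudocharacter is $\tr\rho_{\Sigma}^{\rm red}$, whose reducibility ideal is $I_{\rm re}^{\rm red}$ by definition, giving a surjection $R_{\Sigma}/I_{\rm re} \twoheadrightarrow R_{\Sigma}^{\rm red}/I_{\rm re}^{\rm red}$. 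Composing, we get surjections
$$R'_{\Sigma}/I'_{\rm re} \twoheadrightarrow R_{\Sigma}/I_{\rm re} \twoheadrightarrow R_{\Sigma}^{\rm red}/I_{\rm re}^{\rm red}.$$

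To finish, I would observe that all three quotients receive a compatible $\Oo$-algebra structure map from $\Oo$, and that these structure maps commute with the displayed surjections (since the surjections are $\Oo$-algebra maps). By Proposition \ref{comm1}, the structure map $\Oo \to R'_{\Sigma}/I'_{\rm re}$ is surjective. Postcomposing a surjection with a surjection yields a surjection, so $\Oo \to R_{\Sigma}/I_{\rm re}$ and $\Oo \to R_{\Sigma}^{\rm red}/I_{\rm re}^{\rm red}$ are both surjective, which is the assertion of the corollary. The only mild subtlety — and the step I would be most careful about — is checking that the pseudocharacter $T_B$ induced through $\varphi$ in each application of Lemma \ref{redidfunct} really is $\tr\rho_{\Sigma}$ (respectively $\tr\rho_{\Sigma}^{\rm red}$), so that $I_{T_B}$ is the reducibility ideal we want; this is immediate from the fact that $\rho_{\Sigma}$ and $\rho_{\Sigma}^{\rm red}$ are by construction the pushforwards of $\rho'_{\Sigma}$ along the respective quotient maps, hence their traces are the pushforwards of $\tr\rho'_{\Sigma}$. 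No serious obstacle arises; the content is entirely in Proposition \ref{comm1}, and the corollary is a formal consequence.
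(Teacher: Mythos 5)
Your argument is correct and matches the paper's own proof, which likewise deduces the corollary from Proposition \ref{comm1} by applying Lemma \ref{redidfunct} with $A=R'_{\Sigma}$ and $B=R_{\Sigma}$ or $B=R_{\Sigma}^{\rm red}$. The only cosmetic difference is that you chain the two applications of the lemma through $R_{\Sigma}$ rather than applying it directly from $R'_{\Sigma}$ to $R_{\Sigma}^{\rm red}$; the substance is identical.
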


\begin{proof} This follows immediately from Proposition \ref{comm1}
 and Lemma \ref{redidfunct} where $A=R'_{\Sigma}$, $B=R_{\Sigma}$ or
$B=R_{\Sigma}^{\rm red}$. \end{proof}

\begin{proof} [Proof of Proposition \ref{comm1}] Write $S$ for $R'_{\Sigma}/I'_{\ture}$. Then $S$ is a
local complete ring. Moreover, by Lemma \ref{x1x2} we have that $S$ is a quotient of $\Oo[[X_1, \dots, X_s]]$, and hence $R'_{\Sigma}/\varpi
R'_{\Sigma}$ (and thus $S/\varpi  S$) is a quotient of $\bfF[[X_1, \dots, X_s]]$. We first claim that in fact $S/\varpi  S = \bfF$. Indeed, assume
otherwise, i.e., that $S/\varpi S=\bfF[[X_1, \dots, X_s]]/J$ and $s>0$, then $S/\varpi S$ admits a surjection, say $\phi$ onto
$\bfF[X]/X^2$,  i.e., there are at least two distinct
elements of $\Hom_{\Oo-\textup{alg}}(R'_{\Sigma}, \bfF[X]/X^2)$ - the map $R'_{\Sigma}\twoheadrightarrow \bfF \hookrightarrow \bfF[X]/X^2$ and the
surjection $R'_{\Sigma} \twoheadrightarrow \bfF[X]/X^2$ arising from $\phi$. By the definition of $R'_{\Sigma}$ there is a one-to-one correspondence
between the deformations to $\bfF[X]/X^2$ and elements of $\Hom_{\Oo-\textup{alg}}(R'_{\Sigma},\bfF[X]/X^2)$. The trivial element corresponds to the
trivial deformation to $\bfF[X]/X^2$, i.e., with image contained in $\GL_2(\bfF)$, which is clearly upper-triangular. However, the deformation
corresponding to the surjection must also be upper-triangular by Corollary \ref{equi2} since $\ker (R'_{\Sigma} \twoheadrightarrow S/\varpi
S\twoheadrightarrow \bfF[X]/X^2)$ contains $I'_{\ture}$ and $\bfF[X]/X^2$ is Artinian. But we know by Proposition \ref{nored} that $\rho_0$ does not
admit any non-trivial crystalline upper-triangular deformations to $\bfF[X]/X^2$. Hence we arrive at a contradiction. So, it must be the case
that $S/\varpi S=\bfF$.

Thus by the complete version of Nakayama's Lemma (\cite{Eisenbud}, Exercise 7.2) we know that $S$ is generated (as a $\Oo$-module) by one element.
\end{proof}

\begin{prop} \label{traces} The ring $R'_{\Sigma}$ is topologically generated as
an $\Oo$-algebra by the set $$S:=\{\tr\rho'_{\Sigma}(\Frob_v) \mid v \not\in \Sigma\}.$$ \end{prop}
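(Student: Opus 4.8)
The plan is to deduce this from the universal property of $R'_{\Sigma}$ together with the fact that a deformation to a complete local ring is determined by the traces of Frobenius elements. Let $R_0$ denote the closed $\Oo$-subalgebra of $R'_{\Sigma}$ topologically generated by $S=\{\tr\rho'_{\Sigma}(\Frob_v)\mid v\notin\Sigma\}$. First I would note that, since $\rho_0$ is absolutely irreducible up to semisimplification in a strong enough sense only after passing to the pseudocharacter — more precisely, since $\rho'_{\Sigma}$ is a deformation of $\rho_0$ whose residual semisimplification is $\rho_1\oplus\rho_2$ with $\rho_1,\rho_2$ absolutely irreducible and non-isomorphic — the trace $T':=\tr\rho'_{\Sigma}:R'_{\Sigma}[G_{\Sigma}]\to R'_{\Sigma}$ is a Cayley--Hamilton pseudocharacter, and by a standard Chebotarev/continuity argument $T'$ takes all its values in $R_0$. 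Indeed, $G_{\Sigma}$ acts continuously, the Frobenii $\Frob_v$ ($v\notin\Sigma$) are dense in $G_{\Sigma}$ by Chebotarev, and $T'$ is continuous, so $T'(G_{\Sigma})\subseteq R_0$; since a pseudocharacter is determined by its values on group elements (and $n!$ is invertible, so by Newton's identities all the ``partial traces'' appearing in the definition of a pseudocharacter are polynomials in the $T'(g)$), the whole pseudocharacter $T'$ is valued in $R_0$.

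Next I would invoke Theorem \ref{repr23}: the residual pseudocharacter is not irreducible, so that theorem does not apply directly to $T'$ on all of $R'_{\Sigma}$; instead I would argue via the GMA structure from Section \ref{Principality} (Lemma \ref{data}) applied to $R'_{\Sigma}$, or more simply as follows. Consider the restriction $T'|_{R_0}$, i.e. view $T'$ as a pseudocharacter $R_0[G_{\Sigma}]\to R_0$. Then the GMA machinery attaches to $(R_0,T')$ a Cayley--Hamilton quotient and, after choosing data of idempotents, a representation $\rho''$ of $G_{\Sigma}$ into a GMA over $R_0$ with trace $T'$, lifting $\rho_0$ (using that $R_0$ is local with residue field $\bfF$, the latter because $R_0$ contains $\Oo$ and surjects onto $\bfF$ compatibly). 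Concretely $\rho''$ is crystalline since being crystalline is detected on Artinian quotients and the construction is compatible with the reduction maps. By the universal property of $R'_{\Sigma}$, the deformation $\rho''$ to (a GMA over, but after conjugation $\GL_n$ of) $R_0$ corresponds to an $\Oo$-algebra map $R'_{\Sigma}\to R_0$; composing with the inclusion $R_0\hookrightarrow R'_{\Sigma}$ gives an endomorphism of $R'_{\Sigma}$ which, by uniqueness in the universal property, is the identity. Hence the inclusion $R_0\hookrightarrow R'_{\Sigma}$ is surjective, i.e. $R_0=R'_{\Sigma}$.

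The main obstacle I anticipate is the passage from the pseudocharacter $T'$ valued in $R_0$ to an actual representation valued in $R_0$ (or a ring Morita-equivalent to it) that one can feed into the universal property: since the residual representation is reducible, Theorem \ref{repr23} is not available, and one must instead run the generalized-matrix-algebra construction of Bellaïche--Chenevier over the possibly non-Noetherian ring $R_0$ — although $R_0$, being a closed subalgebra of the Noetherian complete local ring $R'_{\Sigma}$, is itself complete local with residue field $\bfF$, which is what is actually needed. A second, more routine point to check carefully is that the GMA-valued deformation so produced is genuinely crystalline and genuinely a deformation of $\rho_0$ (not merely of a conjugate), but this follows from the compatibility of the construction with the reduction $R_0\to\bfF$ and with passage to Artinian quotients, exactly as in Lemma \ref{data} and Corollary \ref{equi2}. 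Once this is in place, the uniqueness clause in the universal property of $R'_{\Sigma}$ forces $R_0=R'_{\Sigma}$ as above.
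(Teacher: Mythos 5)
Your proposal takes a genuinely different route from the paper, and it contains a real gap. The paper's proof never tries to lift the pseudocharacter to a representation valued in the trace subring $R_{\Sigma}^{\rm tr}$. Instead it introduces the ideal $I_0^{\rm tr}\subset R_{\Sigma}^{\rm tr}$ generated by the elements $\tr\rho'_{\Sigma}(\Frob_v)-\tr\tilde{\rho}_1(\Frob_v)-\tr\tilde{\rho}_2(\Frob_v)$, shows via Chebotarev, Corollary \ref{equi2} and Lemma \ref{formofu} that its extension $I_0$ to $R'_{\Sigma}$ equals $I'_{\ture}$, then invokes Proposition \ref{comm1} (which rests on Assumption \ref{mainass} through Proposition \ref{nored}) to conclude that $R_{\Sigma}^{\rm tr}/I_0^{\rm tr}\to R'_{\Sigma}/I'_{\ture}$ is surjective, and finishes by the complete version of Nakayama's lemma for $R'_{\Sigma}$ as an $R_{\Sigma}^{\rm tr}$-module. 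The entire argument happens at the level of ideals and quotients; no matrix-valued lift over $R_{\Sigma}^{\rm tr}$ is ever produced.

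The gap in your argument is exactly the step you flag as the ``main obstacle'' and then treat as resolved. The GMA over $R_0$ attached to $(R_0,T')$ has the shape $\bmat M_{n_1}(R_0) & M_{n_1,n_2}(\mA_{1,2}) \\ M_{n_2,n_1}(\mA_{2,1}) & M_{n_2}(R_0)\emat$ where $\mA_{1,2},\mA_{2,1}$ are a priori only $R_0$-modules (sitting inside $R'_{\Sigma}$, say), not ideals of $R_0$. The non-splitness of $\rho_0$ gives control of $\mA_{1,2}$, as in the proof of Proposition \ref{princ1}, but says nothing about $\mA_{2,1}$; and no anti-automorphism $\tau$ is available at the level of $R'_{\Sigma}$ (Proposition \ref{traces} concerns the ring \emph{before} the self-duality condition is imposed), so Theorem \ref{princi} cannot be used to tie $\mA_{2,1}$ to $\mA_{1,2}$. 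Hence there is no reason the GMA over $R_0$ should embed into $M_n(R_0)$, and the parenthetical ``after conjugation $\GL_n$ of $R_0$'' is unjustified. There is a second, independent problem with your concluding step: even granting a crystalline deformation $\rho''$ of $\rho_0$ to $\GL_n(R_0)$ with $\tr\rho''=T'$, the claim that the composite $R'_{\Sigma}\to R_0\hookrightarrow R'_{\Sigma}$ is the identity requires $\rho''$, viewed over $R'_{\Sigma}$, to be strictly equivalent to $\rho'_{\Sigma}$. In the residually reducible situation two deformations to the same ring can have the same trace without being strictly equivalent, so this does not follow from equality of traces; it would follow if the deformation functor were controlled by the pseudodeformation functor, which is essentially what one is trying to establish. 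The paper's route via $I_0=I'_{\ture}$, Proposition \ref{comm1} and Nakayama sidesteps both difficulties, at the price of using Assumption \ref{mainass}, which your sketch does not actually invoke and without which I do not see how to close the argument.
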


\begin{proof}
Let $R_{\Sigma}^{\rm tr}$ be the closed (and hence complete by \cite{Matsumura} Theorem 8.1) $\Oo$-subalgebra of $R'_{\Sigma}$ generated by the set
$S$. Let $I_0^{\rm tr}$ be the smallest closed ideal of $R_{\Sigma}^{\rm tr}$ containing the set $$T:= \{ \tr \rho'_{\Sigma}(\Frob_v) - \tr
\tilde{\rho}_1(\Frob_v) - \tr \tilde{\rho}_2(\Frob_v) \mid v \not\in \Sigma \}. $$ Note that $\tr \rho'_{\Sigma}(\Frob_v) - \tr
\tilde{\rho}_1(\Frob_v) - \tr \tilde{\rho}_2(\Frob_v) \equiv 0$ (mod $\varpi$) for $v \not\in \Sigma$, so $I_0^{\rm tr}\neq R_{\Sigma}^{\rm tr}$.
Also note that $I_0:= I_0^{\rm tr}R'_{\Sigma}$ is the smallest closed ideal of $R'_{\Sigma}$ containing $T$. We will now show that $I_0 = I'_{\rm
re}$. Indeed, by the Chebotarev density theorem we get $\tr\rho'_{\Sigma} = \tr\tilde{\rho}_1 + \tr\tilde{\rho}_2$ (mod $I_0$), hence $I_0 \supset
I'_{\rm re}$. On the other hand since $R'_{\Sigma}/I'_{\rm re}$ is complete Hausdorff, we can apply Corollary \ref{equi2} to the ideal $I'_{\rm re}$
to conclude that $\rho'_{\Sigma}$ (mod $I'_{\rm re}$) is an upper-triangular deformation of $\rho_0$ and thus by Lemma \ref{formofu} we must have
$\tr \rho'_{\Sigma} = \tr \tilde{\rho}_1 + \tr \tilde{\rho}_2$ (mod $I'_{\rm re}$). It follows that $I_0 \subset I'_{\rm re}$.

Note that since $\tr \tilde{\rho}_i$ is $\Oo$-valued, the $\Oo$-algebra structure map $\Oo \rightarrow R_{\Sigma}^{\rm tr}/I_0^{\rm tr}$ is
surjective, hence $R_{\Sigma}^{\rm tr}/(I_0^{\rm tr}+ \varpi R_{\Sigma}^{\rm tr}) = \bfF$.
 Thus in particular $\fm^{\rm tr}:= I_0^{\rm tr}+ \varpi R_{\Sigma}^{\rm
tr}$ is the maximal ideal of $R_{\Sigma}^{\rm tr}$. Moreover, the containment \be \label{incl23}R_{\Sigma}^{\rm tr} \hookrightarrow R'_{\Sigma}\ee
gives rise to an $\Oo$-algebra map \be \label{incl24} R_{\Sigma}^{\rm tr}/I_0^{\rm tr} \rightarrow R'_{\Sigma}/I_0,\ee which must be surjective since
the object on the right equals $R'_{\Sigma}/I'_{\rm re}$ by the above argument and $R'_{\Sigma}/I'_{\rm re}$ is generated by 1 as an $\Oo$-algebra by
Proposition \ref{comm1}. This map descends to
$$\bfF=R_{\Sigma}^{\rm tr}/\fm^{\rm tr} \rightarrow R'_{\Sigma}/\fm^{\rm
tr}R'_{\Sigma} = R'_{\Sigma}/(I_0 + \varpi R_{\Sigma}^{\rm tr})=R'_{\Sigma}/(I'_{\rm re}+\varpi R_{\Sigma}^{\rm tr})=\bfF,$$ which is an isomorphism
since (\ref{incl24}) was surjective. Note that the maps (\ref{incl23}) and (\ref{incl24}) are in fact $R_{\Sigma}^{\rm tr}$-algebra maps and since
$R_{\Sigma}^{\rm tr}$ is complete (which means complete with respect to $\fm^{\rm tr}$) we can apply the complete version of Nakayama's lemma to
conclude that $R_{\Sigma}^{\rm tr}= R'_{\Sigma}$.
\end{proof}

\begin{prop} \label{Ovarpis} Assume Assumption \ref{mainass} and $\#
H^1_{\Sigma}(F, \Hom_{\Oo}(\tilde{\rho}_{2}, \tilde{\rho}_{1})) \leq \#\Oo/\varpi^m$. Then $ R'_{\Sigma}/I'_{\ture}=\Oo/\varpi^s$ for some $0< s \leq
m$. The same conclusion is true for $ R_{\Sigma}/I_{\ture}$ and for $ R_{\Sigma}^{\rm red}/I_{\ture}^{\rm red}$. \end{prop}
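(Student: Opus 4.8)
The plan is to bootstrap from the cyclicity statement of Proposition \ref{comm1}, using the nonexistence result of Theorem \ref{boundons} together with the characterization of upper-triangularity modulo the reducibility ideal in Corollary \ref{equi2}.

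First I would note that by Proposition \ref{comm1} the $\Oo$-algebra structure map $\Oo \to R'_{\Sigma}/I'_{\ture}$ is surjective. Since $I'_{\ture} \subseteq \fm_{R'_{\Sigma}}$ (residually $\tr\rho'_{\Sigma} = \tr\rho_1 + \tr\rho_2$), the ring $R'_{\Sigma}/I'_{\ture}$ is a nonzero local ring with residue field $\bfF$ that is a quotient of the discrete valuation ring $\Oo$; hence it is isomorphic either to $\Oo$ itself or to $\Oo/\varpi^s$ for some integer $s \geq 1$. It therefore suffices to prove that $R'_{\Sigma}/I'_{\ture}$ does not admit a surjection onto $\Oo/\varpi^{m+1}$, as this simultaneously rules out the case $\Oo$ and forces $s \leq m$.

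So suppose, for contradiction, that such a surjection exists, and let $I \subseteq R'_{\Sigma}$ be the kernel of the composite $R'_{\Sigma} \twoheadrightarrow R'_{\Sigma}/I'_{\ture} \twoheadrightarrow \Oo/\varpi^{m+1}$. Then $I \supseteq I'_{\ture}$, and $R'_{\Sigma}/I \cong \Oo/\varpi^{m+1}$ lies in $\textup{LCN}(E)$ and is Artinian. By Corollary \ref{equi2}, $\rho'_{\Sigma} \bmod I$ is an upper-triangular deformation of $\rho_0$ to $\GL_n(\Oo/\varpi^{m+1})$; being a specialization of the universal crystalline deformation $\rho'_{\Sigma}$, it is moreover crystalline. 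But Theorem \ref{boundons}, whose hypothesis is exactly the bound $\# H^1_{\Sigma}(F, \Hom_{\Oo}(\tilde\rho_2,\tilde\rho_1) \otimes E/\Oo) \leq \#\Oo/\varpi^m$ assumed in the proposition, asserts that $\rho_0$ admits no upper-triangular crystalline deformation to $\GL_n(\Oo/\varpi^{m+1})$. This contradiction gives $R'_{\Sigma}/I'_{\ture} \cong \Oo/\varpi^s$ with $0 < s \leq m$.

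Finally, for the quotients $R_{\Sigma}/I_{\ture}$ and $R_{\Sigma}^{\rm red}/I_{\ture}^{\rm red}$ I would combine Corollary \ref{structmap} (which makes both of them nonzero cyclic quotients of $\Oo$, with residue field $\bfF$) with Lemma \ref{redidfunct} applied to the natural $\Oo$-algebra surjections $R'_{\Sigma} \twoheadrightarrow R_{\Sigma} \twoheadrightarrow R_{\Sigma}^{\rm red}$ (compatible with the maps to $\bfF$ and carrying $\tr\rho'_{\Sigma}$ to $\tr\rho_{\Sigma}$ and then to $\tr\rho_{\Sigma}^{\rm red}$): this produces surjections $\Oo/\varpi^s = R'_{\Sigma}/I'_{\ture} \twoheadrightarrow R_{\Sigma}/I_{\ture} \twoheadrightarrow R_{\Sigma}^{\rm red}/I_{\ture}^{\rm red}$, forcing each of the latter to be of the form $\Oo/\varpi^{s'}$ with $0 < s' \leq s \leq m$. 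I do not expect a genuine obstacle: the substantive inputs (Proposition \ref{comm1} and Theorem \ref{boundons}) are already established, and the only points needing care are that $\Oo/\varpi^{m+1} \in \textup{LCN}(E)$ is Artinian, so that Corollary \ref{equi2} applies, and that the specialization of $\rho'_{\Sigma}$ remains crystalline — both immediate from the definition of $R'_{\Sigma}$.
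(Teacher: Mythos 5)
Your proof is correct and follows the paper's argument essentially verbatim: cyclicity via Proposition \ref{comm1}, the bound $s\le m$ via Corollary \ref{equi2} applied to the Artinian quotient $\Oo/\varpi^{m+1}$ combined with Theorem \ref{boundons}, and the remaining two cases via Lemma \ref{redidfunct}. The only difference is that you spell out the intermediate steps (the surjection-onto-$\Oo/\varpi^{m+1}$ reformulation, the reason the specialization stays crystalline) that the paper leaves implicit.
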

\begin{proof}
By Proposition \ref{comm1} we have that $R'_{\Sigma}/I'_{\ture}=\Oo/\varpi^s$ for some $s \in \bfZ_+ \cup \{\infty\}$. But we must have $0<r\leq m$,
since by Corollary \ref{equi2} if $r>m$ or $r=\infty$, then there would be an upper-triangular crystalline deformation of $\rho_0$ to
$\Oo/\varpi^{m+1}$, which is impossible by Theorem \ref{boundons}. The last assertion of the Proposition follows from Lemma \ref{redidfunct}.
\end{proof}

\subsection{Some consequences of the principality of $I_{\rm re}$} \label{Consequences}

Below we list some consequences of principality of $I_{\rm re}$ in our context.

\begin{lemma} \label{x1} If $R$ is a local complete Noetherian $\Oo$-algebra and there exists $r \in R$ such that the structure map $\Oo \rightarrow
R/rR$ is surjective, then $R$ is a quotient of $\Oo[[X]]$. \end{lemma}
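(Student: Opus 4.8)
The plan is to realize $R$ as a continuous quotient of $\Oo[[X]]$ via the $\Oo$-algebra map sending $X$ to $r$. First I would record that we may assume $r\in\fm_R$, equivalently $rR\subsetneq R$; this holds in all our applications, where $r$ generates the proper ideal $I_{\ture}$. Because $R$ is a complete Noetherian local ring it is complete and separated for the $\fm_R$-adic topology (Krull's intersection theorem gives $\bigcap_N\fm_R^N=0$), and since $r\in\fm_R$ we have $r^N\in\fm_R^N$, so for every sequence $(a_N)_{N\ge0}$ in $\Oo$ the series $\sum_{N\ge0}a_Nr^N$ converges in $R$. This produces a continuous $\Oo$-algebra homomorphism $\phi\colon\Oo[[X]]\to R$ with $\phi(X)=r$, the continuity coming from $\phi\big((\varpi,X)^n\big)\subseteq\fm_R^n$.

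The main step is to show that $\phi$ is surjective. Iterating the hypothesis $R=\Oo\cdot1+rR$ — substituting it into the term $rR$ and repeating — yields, for every $N\ge1$,
$$R=\Oo\cdot1+\Oo r+\cdots+\Oo r^{N-1}+r^NR\subseteq\Oo[r]+\fm_R^N,$$
so the subring $\Oo[r]=\phi(\Oo[X])\subseteq\image\phi$ is dense in $R$ for the $\fm_R$-adic topology. It then suffices to know that $\image\phi$ is closed in $R$, and for this I would use compactness: the residue field $\bfF$ of $\Oo$ is finite, hence each $\Oo[[X]]/(\varpi,X)^n$ is finite and $\Oo[[X]]=\varprojlim_n\Oo[[X]]/(\varpi,X)^n$ is profinite, in particular compact; therefore its continuous image $\image\phi$ is compact, hence closed in the Hausdorff ring $R$. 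A closed dense subset is all of $R$, so $\phi$ is onto and $R\cong\Oo[[X]]/\ker\phi$ is a quotient of $\Oo[[X]]$.

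The one point requiring care is precisely this last deduction of surjectivity from density: one cannot directly invoke the complete version of Nakayama's lemma for the $\Oo[[X]]$-module $R/\image\phi$, since it is not evidently finitely generated, so the passage from dense image to surjectivity must go through compactness of $\Oo[[X]]$ as above — or, equivalently, through a successive-approximation argument lifting a given element of $R$ modulo $\fm_R^N$ to a Cauchy sequence in $\Oo[[X]]$ and passing to the limit. The remaining verifications (convergence of the substitution series, continuity of $\phi$, and the iteration identity) are routine.
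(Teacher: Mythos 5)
Your proof is correct, but it takes a genuinely different route from the paper's. The paper first observes that the hypothesis forces $R/(r,\varpi)=\bfF$, so $(r,\varpi)=\fm_R$; it then invokes the Cohen structure theorem (Eisenbud, Theorem~7.16) to produce a surjection $\Phi\colon\Oo[[X,Y]]\twoheadrightarrow R$ with $X\mapsto r$, $Y\mapsto\varpi$, and finally factors $\Phi$ through $\Oo[[X,Y]]\twoheadrightarrow\Oo[[X]]$, $Y\mapsto\varpi$, since $Y-\varpi\in\ker\Phi$. You instead build $\phi\colon\Oo[[X]]\to R$, $X\mapsto r$, by hand and establish surjectivity via density of $\Oo[r]$ plus compactness of $\Oo[[X]]$. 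Both arguments are valid; yours is more self-contained but uses the finiteness of $\bfF$ (needed for profiniteness of $\Oo[[X]]$), which the paper's citation-based proof does not, though that finiteness does hold throughout the paper.

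One comment on your closing caveat. You say one ``cannot directly invoke the complete version of Nakayama's lemma,'' but in fact one can, and it is the cleanest finish: once you note (as your iteration implicitly shows, and as the paper records explicitly) that $(r,\varpi)=\fm_R$, the $(\varpi,X)$-adic topology on $R$ viewed as an $\Oo[[X]]$-module coincides with the $\fm_R$-adic topology, for which $R$ is complete and separated. Since $R/(\varpi,X)R=R/\fm_R=\bfF$ is generated by $1$, the complete Nakayama lemma (Eisenbud, Exercise~7.2 --- the same reference the paper uses in Proposition~\ref{comm1}) applied to $R$ itself, rather than to the quotient $R/\operatorname{Im}\phi$ whose separatedness you rightly hesitate to assume, gives surjectivity of $\phi$ directly, with no appeal to compactness or to the finiteness of $\bfF$. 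Your successive-approximation alternative is essentially this Nakayama argument unwound, and is preferable to the compactness route since it is characteristic-free.
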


\begin{proof} Since $R/(r, \varpi) = \bfF$, the ideal $(r,\varpi) \subset R$ is maximal. Hence by Theorem 7.16 in \cite{Eisenbud} there exists an
$\Oo$-algebra map $\Phi: \Oo[[X,Y]] \twoheadrightarrow R$ sending $X$ to $r$ and $Y$ to $\varpi$. But this map factors through $\Psi: \Oo[[X,Y]]
\twoheadrightarrow \Oo[[X]]$ sending $X$ to $X$ and $Y$ to $\varpi$ (indeed, $\ker \Psi = (Y-\varpi)\Oo[[X,Y]] \subset \ker \Phi$). \end{proof}

\begin{prop} \label{Gorenstein} If $R^{\rm red}_{\Sigma}/I^{\rm red}_{\rm re} \neq \Oo$, then $R^{\rm red}_{\Sigma}$ is Gorenstein. \end{prop}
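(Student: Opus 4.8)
The plan is to combine the principality of $I^{\rm red}_{\rm re}$ (Proposition \ref{princi2}) with the surjectivity of the structure map $\Oo \to R^{\rm red}_{\Sigma}/I^{\rm red}_{\rm re}$ (Corollary \ref{structmap}) and the reducedness of $R^{\rm red}_{\Sigma}$: this should realize $R^{\rm red}_{\Sigma}$ as a one-dimensional Cohen--Macaulay ring possessing a regular element whose quotient is Gorenstein, whereupon one invokes the standard fact that the Gorenstein property passes between a Noetherian local ring and its quotient by a non-zero-divisor lying in the maximal ideal.

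First I would unwind the hypothesis. By Corollary \ref{structmap} the $\Oo$-algebra structure map $\Oo \to R^{\rm red}_{\Sigma}/I^{\rm red}_{\rm re}$ is surjective, so $R^{\rm red}_{\Sigma}/I^{\rm red}_{\rm re} \cong \Oo/J$ for an ideal $J \subseteq \Oo$; since $R^{\rm red}_{\Sigma}/I^{\rm red}_{\rm re}$ is a nonzero local ring (it has $\bfF$ as a quotient) and $\Oo$ is a discrete valuation ring, $J = (\varpi^s)$ for some $s$ with $1 \le s \le \infty$. The hypothesis $R^{\rm red}_{\Sigma}/I^{\rm red}_{\rm re} \neq \Oo$ forces $J \neq 0$, i.e. $s < \infty$, so $R^{\rm red}_{\Sigma}/I^{\rm red}_{\rm re} = \Oo/\varpi^s$ is a finite, hence Artinian, local ring; moreover it is Gorenstein, because its socle $\varpi^{s-1}\Oo/\varpi^s\Oo$ is one-dimensional over $\bfF$.

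Next, write $I^{\rm red}_{\rm re} = tR^{\rm red}_{\Sigma}$ using principality (Proposition \ref{princi2}), and note $t \in \fm_{R^{\rm red}_{\Sigma}}$. The central step is to show that $t$ is a non-zero-divisor, unless $R^{\rm red}_{\Sigma}$ is a field (in which case it is trivially Gorenstein). Here I would use that $R^{\rm red}_{\Sigma}$ is reduced and Noetherian, so that its set of zero-divisors is the union of its finitely many minimal primes. If $t$ belonged to a minimal prime $\fp$, then $R^{\rm red}_{\Sigma}/\fp$ would be a domain onto which $R^{\rm red}_{\Sigma}/I^{\rm red}_{\rm re} = \Oo/\varpi^s$ surjects, hence a finite domain, hence a field; so $\fp$ would be maximal, forcing $\fp = \fm_{R^{\rm red}_{\Sigma}}$ and thus $\dim R^{\rm red}_{\Sigma} = 0$, so that the reduced local ring $R^{\rm red}_{\Sigma}$ is a field. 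Thus either $R^{\rm red}_{\Sigma}$ is a field and we are done, or $t$ is a non-zero-divisor.

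Finally, in the remaining case $t$ is a non-zero-divisor lying in $\fm_{R^{\rm red}_{\Sigma}}$ and $R^{\rm red}_{\Sigma}/(t) = \Oo/\varpi^s$ is Artinian; hence $R^{\rm red}_{\Sigma}$ is Cohen--Macaulay of Krull dimension one with $t$ a regular system of parameters. Since $\Oo/\varpi^s$ is Gorenstein, the standard ascent/descent of the Gorenstein property along division by a regular element (see, e.g., \cite{Matsumura} or \cite{Eisenbud}) gives that $R^{\rm red}_{\Sigma}$ is Gorenstein. I expect the only genuine point to verify is the middle step --- that the principal generator of $I^{\rm red}_{\rm re}$ is a non-zero-divisor --- and it is precisely there that the reducedness of $R^{\rm red}_{\Sigma}$ (rather than of $R_{\Sigma}$ itself) together with the finiteness of $R^{\rm red}_{\Sigma}/I^{\rm red}_{\rm re}$ built into the hypothesis is used; in particular, no separate finiteness assumption of the kind appearing in Corollary \ref{nonzerodivisor} is needed here.
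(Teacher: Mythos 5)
Your proof is correct and reaches the statement by the same high-level strategy as the paper: realize $R^{\rm red}_{\Sigma}/I^{\rm red}_{\rm re}$ as a finite Gorenstein quotient, show the generator of $I^{\rm red}_{\rm re}$ is a non-zero-divisor in the maximal ideal, and ascend the Gorenstein property along that regular element (the paper cites Bass, Proposition 6.4 for this; you appeal to the standard ascent/descent fact, which amounts to the same thing).

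Where you genuinely diverge is the middle step. The paper gets the non-zero-divisor property from Corollary \ref{nonzerodivisor}, whose proof runs through Proposition \ref{kernel} (that $\ker\rho=\ker T$) and then through the generalized-matrix-algebra analysis of Bella\"iche--Chenevier (their Propositions 1.7.4 and 1.7.5), identifying an explicit generator $f_{1,2}f_{2,1}$ and showing it is regular. You instead observe directly that in a reduced Noetherian local ring the zero-divisors are the union of the minimal primes, and that any minimal prime containing the generator $t$ of $I^{\rm red}_{\rm re}$ would have finite-domain, hence field, quotient, forcing it to equal the maximal ideal and hence forcing $R^{\rm red}_{\Sigma}$ itself to be a field (which is Gorenstein anyway). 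This is a real simplification: it bypasses all the GMA machinery and uses only reducedness plus the finiteness of $R^{\rm red}_{\Sigma}/I^{\rm red}_{\rm re}$ that the hypothesis already provides. A small additional benefit is that your dichotomy cleanly covers the degenerate case $R^{\rm red}_{\Sigma}=\bfF$, which sits slightly awkwardly against the ``$A$ infinite'' hypothesis of Corollary \ref{nonzerodivisor} that the paper invokes without comment. The trade-off is that the paper's route, while heavier, is reusable in the proof of the complete-intersection statement and elsewhere in the section, whereas your argument is local to this proposition.
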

\begin{proof} First note that by Proposition \ref{comm1} our assumption
implies that $R_{\Sigma}^{\rm red}/I^{\rm red}_{\rm re}$ is finite.
Thus by Corollary \ref{nonzerodivisor} the ideal $I^{\rm red}_{\rm re}$ is generated by a non-zero divisor. Hence in
particular the maximal ideal of $R^{\rm red}_{\Sigma}$ contains a non-zerodivisor. Thus we can apply \cite{Bass63}, Proposition 6.4 to conclude that
$R^{\rm red}_{\Sigma}$ is Gorenstein. \end{proof}

\begin{prop} If $R^{\rm red}_{\Sigma}/I^{\rm red}_{\rm re} \neq \Oo$, then
$R^{\rm red}_{\Sigma}$ is a complete intersection. \end{prop}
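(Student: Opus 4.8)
The plan is to follow the pattern of the proof of Proposition \ref{Gorenstein}, with ``Gorenstein'' replaced by ``complete intersection''. First I would record that by Corollary \ref{structmap} the structure map $\Oo\to R^{\rm red}_{\Sigma}/I^{\rm red}_{\rm re}$ is surjective, so the hypothesis $R^{\rm red}_{\Sigma}/I^{\rm red}_{\rm re}\neq\Oo$ forces $R^{\rm red}_{\Sigma}/I^{\rm red}_{\rm re}\cong\Oo/\varpi^{s}$ for some integer $s\ge 1$, because $\Oo$ is a discrete valuation ring; in particular this quotient is finite. If $R^{\rm red}_{\Sigma}$ itself is finite, then, being reduced and local, it equals $\bfF\cong\Oo/\varpi$, which is a complete intersection and we are done; so from now on I would assume $R^{\rm red}_{\Sigma}$ is infinite.

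Next I would invoke Corollary \ref{nonzerodivisor}, whose hypotheses are satisfied here ($R^{\rm red}_{\Sigma}$ is reduced by construction, infinite by the previous step, has $\#R^{\rm red}_{\Sigma}/I^{\rm red}_{\rm re}<\infty$, and $\tau$ as in (\ref{self}) exists), to conclude that $I^{\rm red}_{\rm re}$ is principal and generated by a nonzerodivisor, say $I^{\rm red}_{\rm re}=(r)$ with $r$ a nonzerodivisor; moreover $r$ lies in the maximal ideal of $R^{\rm red}_{\Sigma}$ since $R^{\rm red}_{\Sigma}/(r)\cong\Oo/\varpi^{s}$ is a nonzero ring. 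Now $R^{\rm red}_{\Sigma}/(r)\cong\Oo/\varpi^{s}$ is itself a complete intersection, being the quotient of the regular local ring $\Oo$ by the single nonzerodivisor $\varpi^{s}$. I would then apply the standard fact that if $x$ is a nonzerodivisor in the maximal ideal of a Noetherian local ring $A$ and $A/xA$ is a complete intersection, then so is $A$ (see e.g.\ \cite{Matsumura}); with $A=R^{\rm red}_{\Sigma}$ and $x=r$ this yields that $R^{\rm red}_{\Sigma}$ is a complete intersection.

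The one point that needs care is the verification that Corollary \ref{nonzerodivisor} applies, i.e.\ that $R^{\rm red}_{\Sigma}$ is infinite, which is handled by the trivial finite case above; beyond this the argument is a direct appeal to results already in hand together with the deformation stability of complete intersections. If one prefers not to cite that last fact, an alternative is to use Lemma \ref{x1} with the principal ideal $I^{\rm red}_{\rm re}$ to write $R^{\rm red}_{\Sigma}=\Oo[[X]]/J$ over the two-dimensional regular local (hence factorial) ring $\Oo[[X]]$, to note that $\dim R^{\rm red}_{\Sigma}=1$ since $r$ is a nonzerodivisor with zero-dimensional quotient, so that $J$ has height one, and then to observe that a radical height-one ideal of $\Oo[[X]]$ has all its minimal primes of height one (the maximal ideal is excluded, as it properly contains a height-one prime over $J$), each such prime being principal in the factorial ring $\Oo[[X]]$ and the distinct ones pairwise coprime; hence $J=(f)$ is principal and $R^{\rm red}_{\Sigma}=\Oo[[X]]/(f)$ is a hypersurface, in particular a complete intersection.
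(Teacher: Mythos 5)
Your proof is correct, but its main argument takes a genuinely different route from the paper's. The paper presents $R^{\rm red}_{\Sigma}$ as $\Oo[[X]]/J$ via Lemma \ref{x1}, deduces $\dim R^{\rm red}_{\Sigma}=1$ (hence $\operatorname{codim} J=1$) from principality of $I^{\rm red}_{\rm re}$ together with finiteness of $R^{\rm red}_{\Sigma}/I^{\rm red}_{\rm re}$, and then invokes a criterion from \cite{Eisenbud} (Corollary 21.20). You instead use Corollary \ref{nonzerodivisor} to produce a nonzerodivisor $r\in\fm_{R^{\rm red}_{\Sigma}}$ generating $I^{\rm red}_{\rm re}$, note that $R^{\rm red}_{\Sigma}/(r)\cong\Oo/\varpi^{s}$ is a complete intersection, and appeal to the fact that the complete intersection property passes between a Noetherian local ring and its quotient by a regular sequence in the maximal ideal. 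That fact is Theorem 2.3.4(a) of Bruns--Herzog, \emph{Cohen--Macaulay Rings} (or a result of Avramov on small homomorphisms); I would cite it as such rather than vaguely pointing to \cite{Matsumura}. Your argument is shorter and avoids the Cohen presentation entirely, at the modest cost of quoting a slightly less elementary fact; your ``alternative'' paragraph is essentially the paper's route made explicit via the UFD structure of $\Oo[[X]]$ and the reducedness of $J$. Two small bonuses in your version: you explicitly dispose of the degenerate case $R^{\rm red}_{\Sigma}=\bfF$ before applying Corollary \ref{nonzerodivisor} (which requires the ring to be infinite) --- the paper's proof, like that of Proposition \ref{Gorenstein}, glosses over this --- and you record at the outset that $R^{\rm red}_{\Sigma}/I^{\rm red}_{\rm re}\cong\Oo/\varpi^{s}$ with $s\geq 1$, which is exactly the input both the regular-sequence argument and the paper's dimension count need.
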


\begin{proof} By Lemma \ref{x1} we know that $R^{\rm red}_{\Sigma}=\Oo[[X]]/J$. Note that $\textup{codim} (J) = \dim R^{\rm red}_{\Sigma}$ which
because $I^{\rm red}_{\rm re}$ is principal equals (cf. e.g. \cite{AtiyahMacdonald}, Corollary 11.18) $\dim R^{\rm red}_{\Sigma}/I^{\rm red}_{\rm re}
+ 1 = 1$ since $R_{\Sigma}/I_{\rm re}$ is finite. It follows from \cite{Eisenbud}, Corollary 21.20 that $R^{\rm red}_{\Sigma}$ is a complete
intersection.
\end{proof}

\section{A commutative algebra criterion} \label{criterion}

Let $R$ and $S$ denote complete local Noetherian $\Oo$-algebras with residue field $\bfF$. Suppose that $S$ is a finitely generated free
$\Oo$-module.

\begin{thm} \label{cri1} Suppose there exists a surjective $\Oo$-algebra
map $\phi: R\twoheadrightarrow S$ inducing identity on the residue fields  and $\pi \in R$ such that the following diagram \be
\label{cridiag}\xymatrix{R \ar[r]^{\phi} \ar[d] & S \ar[d]
\\ R/\pi R \ar[r] & S/\phi(\pi) S}\ee commutes.
Write $\phi_n$ for the map $\phi_n: R/\pi^n R \twoheadrightarrow S/\phi(\pi)^n S$.
 Assume $\# \phi(\pi)S/\phi(\pi)^2 S < \infty$.
Suppopse $\phi_1: R/\pi R \twoheadrightarrow S/\phi(\pi)S$ is an isomorphism.
\begin{itemize}
 \item If $ R/\pi R \cong \Oo/\varpi^r$ for some positive integer $r$, then $\phi$ is an isomorphism.
\item If $ R/\pi R \cong \Oo$ and the induced map $\pi R/\pi^2 R \twoheadrightarrow \phi(\pi)S/\phi(\pi)^2S$ is an isomorphism, then $\phi$ is
    an
    isomorphism.
\end{itemize}
 \end{thm}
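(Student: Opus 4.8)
The plan is to exploit the surjection $\phi$ together with the commutative square to show that $\ker\phi$ vanishes, by a Nakayama-type argument applied to the $\pi$-adic (resp.\ $\varpi$-adic) filtration. First I would set $J=\ker\phi$, so that $0\to J\to R\to S\to 0$ is exact. The key observation is that, because $S$ is a finite free $\Oo$-module, it is in particular $\varpi$-adically (and hence, via the commutative square, $\pi$-adically in the relevant sense) separated; and because $\phi_1:R/\pi R\xrightarrow{\sim}S/\phi(\pi)S$ is an isomorphism, we get $J\subseteq \pi R$, i.e.\ the first graded piece of $J$ in the $\pi$-adic filtration is zero. The engine of the proof is then to promote this to $J\subseteq \pi^n R$ for all $n$, i.e.\ to show that the associated graded map $\mathrm{gr}_\pi R\to \mathrm{gr}_{\phi(\pi)}S$ is injective in each degree, and then conclude $J=0$ by separatedness of $R$ (which is complete local Noetherian, hence $\fm_R$-adically separated, and $\pi\in\fm_R$).

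For the promotion step I would argue by induction on $n$, treating the two cases separately. In the case $R/\pi R\cong\Oo/\varpi^r$: here $R/\pi R$ is finite, and I claim $R$ is then a finite $\Oo$-module, so it is $\varpi$-adically separated; moreover $\pi^n R/\pi^{n+1}R$ is a quotient of $R/\pi R$ (multiplication by $\pi^n$ gives a surjection $R/\pi R\twoheadrightarrow \pi^n R/\pi^{n+1}R$), hence finite, and similarly on the $S$-side $\phi(\pi)^n S/\phi(\pi)^{n+1}S$ is a quotient of $S/\phi(\pi)S\cong R/\pi R$. The finiteness hypothesis $\#\phi(\pi)S/\phi(\pi)^2S<\infty$ together with $\phi_1$ an isomorphism forces $\#S/\phi(\pi)S=\#R/\pi R<\infty$. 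Then I would show each $\phi_n$ is an isomorphism: $\phi_n$ is a surjection of finite sets, and by comparing orders through the filtration — using that $\mathrm{gr}^k_\pi R$ surjects onto $\mathrm{gr}^k_{\phi(\pi)}S$ and that the total length of $S$ over the $\pi$-adic filtration is controlled — one gets that each $\mathrm{gr}^k$ map is an isomorphism, hence (by the five lemma applied to the filtration quotients, inducting up) $\phi_n$ is an isomorphism for all $n$; passing to the limit and using separatedness of $R$ gives $\phi$ an isomorphism. In the case $R/\pi R\cong\Oo$: the hypothesis directly hands us that $\mathrm{gr}^1_\pi R\to\mathrm{gr}^1_{\phi(\pi)}S$ is an isomorphism, i.e.\ $\pi R/\pi^2 R\cong\phi(\pi)S/\phi(\pi)^2 S$; since $\mathrm{gr}^1$ generates $\mathrm{gr}_\pi R$ as a ring over $\mathrm{gr}^0=R/\pi R$ (because $\pi^k R/\pi^{k+1}R$ is the $k$-th power of $\pi R/\pi^2 R$ inside the graded ring), surjectivity in degree $1$ plus the isomorphism in degrees $0$ and $1$ propagates to an isomorphism in every degree, and again one concludes $\phi$ is an isomorphism by separatedness.

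The main obstacle I anticipate is the bookkeeping in the first bullet: one must be careful that $R/\pi R$ being a finite ring does \emph{not} a priori make $R$ a finite $\Oo$-module (it makes $R/\pi R$ finite, but $\pi$ need not be $\varpi$ times a unit), so the separatedness of $R$ has to be extracted from completeness and the Noetherian hypothesis rather than from finiteness over $\Oo$; and the comparison of orders must be arranged so that the surjections $\mathrm{gr}^k_\pi R\twoheadrightarrow\mathrm{gr}^k_{\phi(\pi)}S$, which are quotients of the degree-$0$ isomorphism $R/\pi R\cong S/\phi(\pi)S$, are forced to be isomorphisms — this is where the finiteness of $\#\phi(\pi)S/\phi(\pi)^2S$ is genuinely used, to rule out the degree-$1$ piece (and hence all higher pieces) from dropping rank. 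The second bullet is cleaner because the degree-$1$ isomorphism is a hypothesis, so there the only point to check is that $\mathrm{gr}_\pi R$ is generated in degree one over its degree-zero part, which is automatic from $\pi^{k}R/\pi^{k+1}R$ being the image of $(\pi R/\pi^2 R)^{\otimes k}$.
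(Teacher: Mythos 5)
Your overall scaffolding is parallel to the paper's: reduce to showing the maps on the $\pi$-adic graded pieces $\mathrm{gr}^k_{\pi}R = \pi^k R/\pi^{k+1}R \to \phi(\pi)^k S/\phi(\pi)^{k+1}S$ are isomorphisms, then conclude from separatedness (the paper packages this as an inverse-limit/snake-lemma argument rather than as $\ker\phi\subseteq\bigcap_n\pi^n R=0$, but the content is the same). The gap is in the middle step. In the second bullet you assert that, because $\mathrm{gr}_{\pi}R$ is generated in degree one over degree zero, ``surjectivity in degree $1$ plus the isomorphism in degrees $0$ and $1$ propagates to an isomorphism in every degree.'' That implication is false. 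The graded map is surjective in every degree simply because $\phi$ is, and since $\pi^k R/\pi^{k+1}R$ is only a \emph{quotient} of the $k$-th power of $\pi R/\pi^2 R$ (not canonically isomorphic to it), an isomorphism in degree one puts no constraint on the kernel of the degree-$k$ map: the relations on the $R$-side and the $S$-side need not agree. (Compare $\bfF[[T]]/T^3\twoheadrightarrow \bfF[[T]]/T^2$ with $\pi=T$: iso in degrees $0,1$, not in degree $2$. This does not satisfy the theorem's hypotheses, but it does kill your abstract claim.) The same vagueness afflicts the first bullet (``the total length of $S$ over the $\pi$-adic filtration is controlled''), and there you additionally never explain how to obtain the degree-one isomorphism, which is not a hypothesis in that case.

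What actually powers the proof, and what you never invoke, is a structural fact about $S$: since $S$ is a finitely generated \emph{free} $\Oo$-module and $\#\phi(\pi)S/\phi(\pi)^2 S<\infty$, multiplication by $\phi(\pi)$ is \emph{injective} on $\phi(\pi)S$ (a rank count on free $\Oo$-modules; the paper cites Lemma~6.7 of \cite{BergerKlosin11} for this). Hence the graded pieces of $S$ stabilize: $\phi(\pi)^k S/\phi(\pi)^{k+1}S\cong\phi(\pi)S/\phi(\pi)^2S$ for all $k\geq 1$, so in particular they are all finite. On the $R$-side one only has surjections $\pi R/\pi^2 R\twoheadrightarrow\pi^k R/\pi^{k+1}R$ (multiplication by $\pi^{k-1}$), so $\#\pi^k R/\pi^{k+1}R\leq\#\pi R/\pi^2 R$. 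Once one knows $\pi R/\pi^2 R\cong\phi(\pi)S/\phi(\pi)^2S$ (hypothesis in the second bullet; in the first bullet it follows because every $\phi(\pi)^k S/\phi(\pi)^{k+1}S\cong S/\phi(\pi)S\cong\Oo/\varpi^r$ while $\pi^k R/\pi^{k+1}R$ is a quotient of $\Oo/\varpi^r$ surjecting onto it), the chain $\#\pi^k R/\pi^{k+1}R\leq\#\pi R/\pi^2 R=\#\phi(\pi)S/\phi(\pi)^2S=\#\phi(\pi)^kS/\phi(\pi)^{k+1}S\leq\#\pi^k R/\pi^{k+1}R$ forces equality and the graded map in degree $k$ to be an isomorphism. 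It is this counting argument, resting on the injectivity of multiplication by $\phi(\pi)$ on $\phi(\pi)S$, and not the degree-one generation of the graded ring, that you need to supply.
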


In the case when $R/\pi R =\Oo$, Theorem \ref{cri1} gives an alternative to the criterion of Wiles and Lenstra to prove $R=T$. Let us briefly recall
this criterion. Suppose we have the following commutative diagram of surjective $\Oo$-algebra maps \be \label{diagWL} \xymatrix{R \ar[r]^{\phi}
\ar[d]^{\pi_R} & S \ar[dl]^{\pi_S}\\ \Oo}\ee and for $A=R$ or $S$ set $\Phi_A:=(\ker \pi_A)/(\ker \pi_A)^2$ and $\eta_A= \pi_A(\Ann_A \ker \pi_A)$.
\begin{thm} [Wiles and Lenstra] $\# \Phi_R \leq \# \Oo/\eta_S$ if and only if $\phi$ is an isomorphism of complete intersections. \end{thm}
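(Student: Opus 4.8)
The plan is to sandwich every relevant $\Oo$-length between $\#\Phi_R$ and $\#\Oo/\eta_S$, identify the equality cases, and then upgrade an isomorphism of cotangent modules to an isomorphism of rings. Since $\Oo$ is a discrete valuation ring, cardinalities of finite $\Oo$-modules are just renamings of their lengths and are additive in short exact sequences, and I use this freely. I will also assume $\eta_S\neq 0$, as holds in the intended applications (where $S$ is a reduced Hecke algebra and $\eta_S$ is an Eisenstein-type congruence ideal), so that $\#\Oo/\eta_S<\infty$; without this the inequality $\#\Phi_R\leq\#\Oo/\eta_S$ carries no content and the forward implication genuinely fails.

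First I would record two elementary functoriality facts. Because $\phi$ is surjective and $\pi_S\circ\phi=\pi_R$, the map $\phi$ carries $\ker\pi_R$ onto $\ker\pi_S$ and $\Ann_R(\ker\pi_R)$ into $\Ann_S(\ker\pi_S)$; hence $\phi$ induces a surjection of $\Oo$-modules $\Phi_R\twoheadrightarrow\Phi_S$ and an inclusion of ideals $\eta_R\subseteq\eta_S$. The one nontrivial external input is the numerical lemma of Lenstra (from the appendix to Wiles' paper on Fermat's Last Theorem; see also the commutative-algebra arguments of Darmon--Diamond--Taylor): if $A$ is a complete Noetherian local $\Oo$-algebra with residue field $\bfF$ that is finite and free over $\Oo$ and carries a surjection $\pi_A\colon A\twoheadrightarrow\Oo$ with $\eta_A\neq 0$, then $\#\Oo/\eta_A\leq\#\Phi_A$, with equality if and only if $A$ is a complete intersection over $\Oo$. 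I would simply quote this; its proof passes through a minimal presentation $A=\Oo[[x_1,\dots,x_r]]/I$ and a computation of $\eta_A$ as the ideal generated by a Jacobian determinant, and is logically independent of the rest of the paper.

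Granting these, both implications are short. If $\phi$ is an isomorphism of complete intersections, identify $R$ with $S$ via $\phi$: then $\Phi_R=\Phi_S$ and $\eta_R=\eta_S$, and since $S$ is a complete intersection that is finite free over $\Oo$, Lenstra's lemma gives $\#\Phi_R=\#\Oo/\eta_S$, hence in particular $\#\Phi_R\leq\#\Oo/\eta_S$. Conversely, assume $\#\Phi_R\leq\#\Oo/\eta_S$. Applying the functoriality and Lenstra's lemma to $S$ (which is finite free over $\Oo$) yields
\[
\#\Oo/\eta_S\ \leq\ \#\Phi_S\ \leq\ \#\Phi_R\ \leq\ \#\Oo/\eta_S ,
\]
so all four quantities are finite and equal. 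From $\#\Phi_S=\#\Oo/\eta_S$ and the equality case of Lenstra's lemma, $S$ is a complete intersection over $\Oo$; from $\#\Phi_R=\#\Phi_S<\infty$ and the surjection $\Phi_R\twoheadrightarrow\Phi_S$, that surjection is an isomorphism $\Phi_R\xrightarrow{\ \sim\ }\Phi_S$.

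It remains to promote the cotangent isomorphism $\Phi_R\cong\Phi_S$ to $\ker\phi=0$, and I expect this to be the main obstacle. Since $\Phi_R$ is finite, $\ker\pi_R$ is generated by $r:=\dim_{\bfF}\Phi_R/\fm_R\Phi_R$ elements (Nakayama), so $R$ is a quotient of $P:=\Oo[[x_1,\dots,x_r]]$ by an ideal $I_R\subseteq(x_1,\dots,x_r)$; composing with $\phi$ presents $S=P/I_S$ with $I_R\subseteq I_S$, and since $\Phi_R\cong\Phi_S$ this presentation of $S$ is again minimal. Because $S$ is finite flat of relative dimension $0$ over $\Oo$ and a complete intersection over $\Oo$, the ideal $I_S$ has height $r$ and is generated by a regular sequence $f_1,\dots,f_r$ (using that $P$ is Cohen--Macaulay). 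The isomorphism $\Phi_R\cong\Phi_S$ says precisely that $I_R$ and $I_S$ have the same image in $(x_1,\dots,x_r)/(x_1,\dots,x_r)^2$; I would then follow Lenstra's argument, lifting the $f_i$ into $I_R$ and using the Cohen--Macaulay freeness of $P$ to conclude that no further relations are needed, so that $I_R=I_S$ and $\phi$ is an isomorphism; in particular $R\cong S$ is a complete intersection, finite free over $\Oo$. The delicate point is exactly this: the cotangent isomorphism by itself only forces $\ker\phi\subseteq(\ker\pi_R)^2$, and bridging the remaining gap genuinely uses the complete-intersection structure of $S$; rather than reconstruct the bridge I would cite Lenstra's lemma in a form that already incorporates this passage.
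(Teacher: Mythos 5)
The paper does not actually prove this statement: it is quoted as a classical external theorem of Wiles (appendix to his Fermat paper) and Lenstra, and in the paper it serves only as a foil for the authors' own criterion (Theorem \ref{cri1} and Proposition \ref{lenstraredone}), so there is no in-paper proof to compare yours against. Judged on its own terms, your outline is the standard argument from the literature (Lenstra's note, the Darmon--Diamond--Taylor exposition, or de Smit--Rubin--Schoof) and is correct in structure: the functoriality statements $\Phi_R\twoheadrightarrow\Phi_S$ and $\eta_R\subseteq\eta_S$ are right, the numerical lemma $\#\Oo/\eta_A\leq\#\Phi_A$ with equality exactly for complete intersections is indeed the essential external input, and the chain $\#\Oo/\eta_S\leq\#\Phi_S\leq\#\Phi_R\leq\#\Oo/\eta_S$ correctly extracts both that $S$ is a complete intersection and that $\Phi_R\to\Phi_S$ is an isomorphism. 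You are also right to flag $\eta_S\neq 0$ as a standing hypothesis and to identify the promotion from the cotangent isomorphism to $\ker\phi=0$ as the genuinely delicate step.

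Two small points on that last step. First, your phrase ``lifting the $f_i$ into $I_R$'' has the containment slightly backwards: since $I_R\subseteq I_S$ and the two ideals have the same image in $\fn/\fn^2$ (where $\fn=(x_1,\dots,x_r)\subset P$), one chooses $f_1,\dots,f_r\in I_R$ whose images span that common image, forms the intermediate complete intersection $P/(f_1,\dots,f_r)\twoheadrightarrow R\twoheadrightarrow S$, and shows the composite is an isomorphism (e.g.\ by comparing $\eta$ or Fitting ideals of the two complete intersections); $I_R=I_S$ then follows by sandwiching. Second, since you cite Lenstra's lemma ``in a form that already incorporates this passage,'' the proof is really a citation plus bookkeeping, which is entirely appropriate here --- the paper itself treats the result the same way, and nothing downstream in the paper depends on its proof, since Proposition \ref{lenstraredone} re-derives the needed consequence from the authors' Theorem \ref{cri1} under the additional hypothesis that $\ker\pi_R$ is principal.
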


\begin{prop} \label{lenstraredone} Suppose diagram (\ref{diagWL}) commutes. Suppose that $\ker \pi_R$ is a principal ideal of $R$ generated by some
$\pi \in R$ and suppose that $\# \Phi_R \leq \# \Oo/\eta_S$, then $\phi$ is an isomorphism. \end{prop}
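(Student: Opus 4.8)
The plan is to deduce the proposition from Theorem \ref{cri1}, applied with the given generator $\pi$ of $\ker\pi_R$. The first thing to do is to record the compatibilities forced by the commutative diagram (\ref{diagWL}). Since $\phi$ is surjective and $\pi_S\circ\phi=\pi_R$, one has $\phi(\pi)S=\phi(\ker\pi_R)=\ker\pi_S$, so $\pi_R$ and $\pi_S$ identify $R/\pi R$ and $S/\phi(\pi)S$ with $\Oo$ in a way compatible with $\phi$; in particular the induced map $\phi_1\colon R/\pi R\to S/\phi(\pi)S$ is an isomorphism, and (as $\pi_R$ is surjective) we are in the second case ``$R/\pi R\cong\Oo$'' of Theorem \ref{cri1}. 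It then remains to verify the two hypotheses of that case: that $\Phi_S=\phi(\pi)S/\phi(\pi)^2S$ is finite, and that the natural surjection $\Phi_R=\pi R/\pi^2R\twoheadrightarrow\Phi_S$ is an isomorphism.

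The key observation is that, $\ker\pi_R=(\pi)$ being principal, $\Phi_R=\ker\pi_R/(\ker\pi_R)^2$ is a cyclic $\Oo$-module, hence so is its quotient $\Phi_S$. By Nakayama this forces $\ker\pi_S$ to be a principal ideal of $S$; since then $\fm_S=(\ker\pi_S,\varpi)$ is generated by two elements, $S$ is a quotient of $\Oo[[X]]$ by a height-one ideal, and as $S$ is finite free over $\Oo$ a Weierstrass preparation argument shows this ideal is principal, i.e. $S\cong\Oo[X]/(g)$ for a distinguished polynomial $g$ with $g(0)=0$. For such an $S$ a direct computation with $g$ gives $\#\Phi_S=\#\Oo/\eta_S$ (this is the elementary ``complete intersection'' side of the Wiles--Lenstra dictionary, which for a hypersurface is explicit; alternatively one may invoke the standard inequality $\#\Phi_S\geq\#\Oo/\eta_S$ valid for any finite flat $\Oo$-algebra). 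Combining this with the hypothesis and the surjection $\Phi_R\twoheadrightarrow\Phi_S$ yields the chain
\[
\#\Phi_R\ \geq\ \#\Phi_S\ =\ \#\Oo/\eta_S\ \geq\ \#\Phi_R,
\]
so all three quantities coincide, and they are finite (the hypothesis being understood to entail $\#\Oo/\eta_S<\infty$, i.e. $\eta_S\neq 0$; the degenerate case $\eta_S=0$ does not occur in the applications). A surjection of finite $\Oo$-modules of equal cardinality is an isomorphism, so $\Phi_R\to\Phi_S$ is an isomorphism and $\Phi_S$ is finite, and Theorem \ref{cri1} then gives that $\phi$ is an isomorphism.

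The main obstacle is the middle equality $\#\Phi_S=\#\Oo/\eta_S$: once $S$ has been reduced to the hypersurface $\Oo[X]/(g)$, one must check that the $\Oo$-annihilator of $\Phi_S$ really coincides with the congruence ideal $\eta_S=\pi_S(\Ann_S\ker\pi_S)$, and it is precisely here that the flatness of $S$ over $\Oo$ (no $\varpi$-torsion interfering) is used. Everything else — the diagram chase making $\phi_1$ an isomorphism, the cyclicity of $\Phi_R$, and the passage from equality of orders to an isomorphism $\Phi_R\cong\Phi_S$ — is routine; the real content of the proposition is that principality of $\ker\pi_R$ upgrades the purely numerical hypothesis $\#\Phi_R\leq\#\Oo/\eta_S$ into the structural input $\Phi_R\xrightarrow{\ \sim\ }\Phi_S$ demanded by Theorem \ref{cri1}.
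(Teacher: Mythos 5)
Your proof is correct and arrives at the same endgame as the paper: you reduce to the second case of Theorem \ref{cri1} by showing that the surjection $\Phi_R\twoheadrightarrow\Phi_S$ is an isomorphism, using the numerical hypothesis sandwiched against the Wiles--Lenstra inequality $\#\Oo/\eta_S\leq\#\Phi_S$. The difference lies in how you obtain that inequality. The paper simply cites it (formula (5.2.3) of \cite{DDT}, valid for any finite flat $\Oo$-algebra $S$ with an augmentation $\pi_S$), whereas your primary route derives the sharper equality $\#\Phi_S=\#\Oo/\eta_S$ from scratch: principality of $\ker\pi_R$ forces $\Phi_R$, hence its quotient $\Phi_S$, to be $\Oo$-cyclic; Nakayama then makes $\ker\pi_S$ principal, so $\fm_S$ is two-generated, $S$ is a quotient of $\Oo[[X]]$, and finiteness and flatness of $S$ over $\Oo$ force $S\cong\Oo[X]/(g)$ for a distinguished $g$ with $g(0)=0$ (the step you label ``Weierstrass preparation'' is really Auslander--Buchsbaum: $\operatorname{pd}_{\Oo[[X]]}S=1$, so the defining ideal is free of rank one, hence principal). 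This buys you a self-contained argument and makes explicit what the Wiles--Lenstra statement only encodes implicitly, namely that the principality hypothesis already forces $S$ to be a hypersurface; but it is more work than is strictly needed, since, as you yourself observe, the inequality rather than the equality suffices, and then the hypersurface detour can be skipped entirely. You also correctly flag the tacit finiteness assumption ($\eta_S\neq 0$, equivalently $\Phi_S$ finite), which the paper leaves implicit even though Theorem \ref{cri1} lists $\#\phi(\pi)S/\phi(\pi)^2S<\infty$ among its hypotheses, so this really does need to be checked before invoking it.
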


\begin{proof} By our assumption we have \be \label{eq648}\# \pi R/ \pi^2 R
= \# \Phi_R \leq \#\Oo/\eta_S.\ee On the other hand the right-hand-side of (\ref{eq648}) is bounded from above by $\# \Phi_S$ (see e.g. formula
(5.2.3) in \cite{DDT}). However, note that since $\phi$ is surjective it follows that $\phi(\ker \pi_R) = \ker \pi_S$, hence
$\Phi_S=\phi(\pi)S/\phi(\pi)^2S$. Hence we can apply Theorem \ref{cri1} to conclude that $\phi$ is an ismorphism. \end{proof}

\begin{proof} [Proof of Theorem \ref{cri1}]
Consider the following commutative diagram with exact rows. \be \label{diag3} \xymatrix{0\ar[r]&\pi R/\pi^n R\ar[r]\ar[d]^{\alpha}& R/\pi^n R
\ar[r]\ar[d]^{\beta} & R/\pi R \ar[r]\ar[d]^{\phi_1} & 0 \\ 0 \ar[r] &\phi(\pi) S / \phi(\pi)^{n}S \ar[r] & S/\phi(\pi)^n S \ar[r] & S/\phi(\pi) S
\ar[r]& 0}\ee
 We will show that $R/\pi^n R
\cong S/\phi(\pi)^n S$ for all $n$. By (\ref{diag3}) and snake lemma it is enough to show that $\alpha$ is an isomorphism for all $n>1$.

Set $x=\phi(\pi)$. Note that $\alpha$ is clearly surjective, because $\phi$ is. On the other hand, the multiplication by $\pi$ (resp. by $x$) induces
surjective maps: $\pi^{k-1}R/\pi^k R \twoheadrightarrow \pi^k R/\pi^{k+1}R$ (resp. $x^{k-1}S/x^kS \twoheadrightarrow x^k S/ x^{k+1} S$). So, arguing
as in the proof of Proposition 6.9 in \cite{BergerKlosin11} we have $\#(\pi R/\pi^k R) \leq \#(\pi R/\pi^2 R)^{k-1}$ and $\#(x S/x^k R) =
\#(x S/x^2 S)^{k-1}$ because by Lemma 6.7 of \cite{BergerKlosin11} we get that the multiplication by $x$ is injective on $xS$ (apply this
lemma for $xS$ instead of $S$ - note that $xS$ being a submodule of a finitely generated torsion free $\Oo$-module is also finitely generated and
torsion-free). If $R/\pi R \cong \Oo$, then $\pi R/\pi^2 R \cong xS/x^2S$ by assumption. If $R/\pi R =\Oo/\varpi^r \Oo$ for some positive integer $r$
we deduce that $\pi R/\pi^2 R \cong xS/x^2S$ as in the proof of Proposition 6.9 in [loc.cit.] and (arguing inductively) in both cases we finally
obtain $\pi R/\pi^k R \cong xS/x^k S$, which is what we wanted. So,
$$\invlim_n R/\pi^n R \cong \invlim_n S/x^n S.$$

Now, consider the following commutative diagram with exact rows
$$\xymatrix{0 \ar[r] & R \ar[r]^{\iota} \ar[d]^{\phi} & \invlim_n R/\pi^n R \ar[r]\ar[d]^{\phi}_{\sim} & \coker \iota
\ar[d]\ar[r] & 0 \\ 0 \ar[r] & S \ar[r]^{\iota'} & \invlim_n S/\phi(\pi)^n S \ar[r] & \coker \iota' \ar[r] & 0}$$ where the maps $\iota$ and $\iota'$
are injective because $R$ (resp. $S$) are separated (with respect to the maximal ideals hence with respect to any non-unit ideals). The first
vertical map is surjective and the second is an isomorphism, hence by snake lemma the first vertical map is an isomorphism as well. \end{proof}

\section{$R=T$ theorems} \label{RTtheorems}

 Fix a (semi-simple) $p$-adic Galois representation $\rho_{\pi_0}: G_F \rightarrow \GL_n(E)$ which factors through $G_{\Sigma}$ and satisfies:
\be \label{semis} \ov{\rho}_{\pi_0}^{\rm ss} \cong \rho_1 \oplus \rho_2.\ee

\begin{prop} \label{genRibet} If $\rho_{\pi_0}$ is irreducible then there exists a lattice $\mL$ inside $E^{n}$ so that with respect to that lattice
the mod $\varpi$ reduction $\ov{\rho}_{\pi_0}$ of $\rho_{\pi_0}$ has the form $$\ov{\rho}_{\pi_0}=\bmat \rho_1 & * \\ 0 & \rho_2\emat$$ and is
non-semi-simple. \end{prop}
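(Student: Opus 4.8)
The plan is to run the classical Ribet-style lattice argument, adapted to the reducible-but-not-semisimple residual situation at hand. Since $\rho_{\pi_0}$ is irreducible while its semisimplified reduction is $\rho_1\oplus\rho_2$, we first fix \emph{some} $G_\Sigma$-stable $\Oo$-lattice $\mL_0\subset E^n$ and consider the reduction $\ov{\rho}_{\pi_0}$ with respect to it. By the Brauer--Nesbitt theorem its semisimplification is $\rho_1\oplus\rho_2$, so $\ov{\rho}_{\pi_0}$ has a Jordan--H\"older filtration whose graded pieces are $\rho_1$ and $\rho_2$ (each once, since $n_1+n_2=n$ and $\rho_i$ are absolutely irreducible and non-isomorphic). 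The two things to arrange are: (i) replace $\mL_0$ by a lattice for which the \emph{sub}module of the reduction is $\rho_1$ (equivalently, the quotient is $\rho_2$), so the reduction is genuinely block upper-triangular of the stated shape; and (ii) among such lattices find one for which the reduction is non-split.

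First I would handle the shape. Starting from the filtration $0\subset \ov{W}\subset \ov{\mL}_0/\varpi\ov{\mL}_0$ with $\ov{W}$ one of the two constituents, if $\ov{W}\cong\rho_1$ we are already in the desired form; if $\ov{W}\cong\rho_2$, I would pass to the sublattice $\mL_1=\pi^{-1}\ker(\mL_0\twoheadrightarrow \ov{\mL}_0/\varpi\ov{\mL}_0\twoheadrightarrow\rho_2)$ (the standard ``move down by the quotient constituent'' operation), whose reduction has $\rho_1$ as a sub and $\rho_2$ as a quotient. This is a routine manipulation of lattices in an irreducible representation. So without loss of generality $\ov{\rho}_{\pi_0}=\bsmat\rho_1 & * \\ 0 & \rho_2\esmat$.

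The substantive step is non-splitness. Suppose for contradiction that \emph{every} $G_\Sigma$-stable lattice giving the upper-triangular shape above has split reduction. The key point is that the set of homothety classes of $G_\Sigma$-stable lattices in $E^n$ is connected under the operation of passing to a maximal/minimal stable sublattice, and the constituent appearing as the quotient changes in a controlled way; if all the ``extreme'' lattices split, one can propagate this to show that $\rho_{\pi_0}\otimes E$ itself is reducible over $E$, contradicting irreducibility. Concretely, I would argue: pick a lattice realizing the shape; if its reduction is split then the full preimage of the $\rho_1$-constituent is a proper nonzero $G_\Sigma$-stable $\Oo$-submodule of $\mL$ whose reduction is $\rho_1$, and iterating the reduction-mod-$\varpi$ argument on the two resulting stable lattices (one with quotient $\rho_1$, one with sub $\rho_2$) forces, via a boundedness/finiteness argument on the lengths, an honest $E[G_\Sigma]$-stable subspace of $E^n$ of dimension $n_1$. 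This is the contradiction. I expect \textbf{this propagation/finiteness step to be the main obstacle}: one must be careful that the chain of lattices does not wander indefinitely, which is where the irreducibility of $\rho_{\pi_0}$ (equivalently: $E^n$ has no proper stable subspace, so the descending chains of stable lattices with prescribed graded pieces cannot stabilize while remaining split) is used decisively.

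Finally, having produced a lattice $\mL$ with $\ov{\rho}_{\pi_0}=\bsmat\rho_1 & *\\ 0 & \rho_2\esmat$ and $*$ giving a nonzero class in $\Ext^1_{\bfF[G_\Sigma]}(\rho_2,\rho_1)$, I would record that this is exactly the assertion of the proposition. I would not expect to need any of the crystallinity hypotheses for this step itself — those enter only when one wants the resulting extension class to lie in $H^1_\Sigma$, which is presumably addressed in the surrounding text rather than in this proposition.
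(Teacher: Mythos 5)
The paper's own proof is a one-line citation: this is a special case of Urban's generalized Ribet's lemma, \cite{Urban01} Theorem~1.1, specialized to the case where the coefficient ring $\mB$ is the discrete valuation ring $\Oo$. You have correctly identified that the content is a Ribet-style lattice argument — which is indeed the substance of the cited theorem — and your step (i), arranging the block-triangular shape by replacing a lattice with a suitable sublattice, is fine. But your step (ii), the non-splitness, is where the entire content of the result lives, and your sketch there has a genuine gap that you yourself flag: the ``propagation/finiteness'' argument is never actually given. It is not enough to gesture at ``boundedness on the lengths'' forcing an $E$-stable subspace; in $n>2$ the stable lattices no longer form a segment in a tree but a more intricate bounded convex subset of the Bruhat--Tits building, and the chain you build (alternating preimages of the $\rho_1$- and $\rho_2$-constituents) does not stay between $\mL$ and $\varpi\mL$, so the finiteness you need is precisely what has to be proved, not invoked. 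The assertion that ``if all extreme lattices split, then $\rho_{\pi_0}\otimes E$ is reducible'' is morally the right contradiction to aim for, but the mechanism producing the stable $E$-subspace from an infinite (or periodic) chain of split reductions is not spelled out, and it is exactly the technical heart of Urban's proof (and of Bellaïche--Chenevier's alternative route via the order $R=\Oo[\rho_{\pi_0}(G_\Sigma)]\subset M_n(E)$ and its GMA structure). There is also a minor slip of language: the ``preimage of the $\rho_1$-constituent'' is an $n$-dimensional lattice, so its reduction is not ``$\rho_1$'' — it is an $n$-dimensional $\bfF$-representation with $\rho_1$ appearing in a controlled position in the filtration; this matters when you try to iterate.

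Two concrete ways to close the gap: either cite Urban's Theorem~1.1 directly (as the paper does), or run the argument in the form given in Bellaïche--Chenevier, where irreducibility over $E$ gives $R\otimes E\cong M_n(E)$ by Burnside, and the non-split extension is extracted from the GMA structure of $R$ rather than by walking a chain of lattices. Your final observation — that crystallinity is not needed for this proposition and only enters when one wants the resulting class to land in $H^1_\Sigma$ — is correct and is consistent with the paper, which keeps those concerns separate (compare the proof of Proposition~\ref{genRibet2}).
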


\begin{proof} This is a special case of \cite{Urban01}, Theorem 1.1, where the ring $\mB$ in [loc.cit.] is a discrete valuation ring $=\Oo$.
\end{proof}

Set $$\rho_0:= \ov{\rho}_{\pi_0}.$$

Let $\Pi$ be the set of Galois representations $\rho_{\pi}: G_{\Sigma} \rightarrow \GL_n(E)$ (with $\rho_{\pi}$ semi-simple but not necessarily irreducible) for which there exists a
crystalline deformation $\rho'_{\pi}: G_{\Sigma} \rightarrow \GL_n(\Oo)$ of $\rho_0$ such that one has $$(\rho'_{\pi})^{\rm ss} \cong_{/E}
\rho_{\pi}.$$

\begin{rem} \label{conntohecke} Our choice of notation is motivated by potential applications of these results. In applications $\rho_{\pi_0}$ will
be the Galois representation attached to some automorphic representation $\pi_0$ and $\Pi$ will be (in one-to-one correspondence with) the subset of
($L$-packets of) automorphic representations $\pi$ whose associated Galois representation $\rho_{\pi}$ satisfies the above condition. \end{rem}
\begin{prop} \label{genRibet2} Assume Assumption \ref{mainass}(1). If $\rho: G_{\Sigma} \rightarrow \GL_n(E)$ is irreducible and crystalline and $\ov{\rho}^{\rm ss} = \ov{\rho}_0^{\rm ss}$, then $\rho \in \Pi$. \end{prop}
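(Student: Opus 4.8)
The plan is to manufacture, out of $\rho$, a $G_{\Sigma}$-stable lattice whose associated integral representation reduces mod $\varpi$ to $\rho_0$ itself; that integral representation will then be the crystalline deformation of $\rho_0$ witnessing $\rho\in\Pi$.

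First I would run the argument of Proposition \ref{genRibet} (that is, \cite{Urban01}, Theorem 1.1, with $\mB=\Oo$): since $\rho$ is irreducible and $\ov{\rho}^{\rm ss}\cong\rho_1\oplus\rho_2$, there is a $G_{\Sigma}$-stable lattice $\mL\subset E^n$ such that the reduction $\ov{\rho}_{\mL}$ of $\rho$ with respect to $\mL$ is a \emph{non-split} extension $0\to\rho_1\to\ov{\rho}_{\mL}\to\rho_2\to 0$. Write $\rho_{\mL}\colon G_{\Sigma}\to\GL_n(\Oo)$ for the integral representation afforded by $\mL$. Because $\rho$ is crystalline (hence short crystalline in the range under consideration), Fontaine--Laffaille theory shows that each $\varpi^k$-reduction of $\rho_{\mL}$ lies in the essential image of $\mathbf{G}$, so $\rho_{\mL}$ is crystalline in the sense used in the definition of $\Pi$; in particular $\ov{\rho}_{\mL}$ lies in the essential image of $\mathbf{G}$. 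Since that image is closed under subquotients and $\mathbf{G}$ is exact and fully faithful, the extension $0\to\rho_1\to\ov{\rho}_{\mL}\to\rho_2\to 0$ comes from an extension of filtered Dieudonn\'e modules, so its class $[\ov{\rho}_{\mL}]\in H^1(G_{\Sigma},\Hom_{\bfF}(\rho_2,\rho_1))$ lies in $H^1_f(F_v,\Hom_{\bfF}(\rho_2,\rho_1))$ for every $v\mid p$. As $H^1_{\Sigma}$ imposes no condition outside $\Sigma_p$, we get $[\ov{\rho}_{\mL}]\in H^1_{\Sigma}(F,\Hom_{\bfF}(\rho_2,\rho_1))$, and it is nonzero because $\ov{\rho}_{\mL}$ is non-split.

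Next I would invoke Assumption \ref{mainass}(1). The fixed representation $\rho_0=\bsmat\rho_1 & * \\ & \rho_2\esmat$ is crystalline and non-semisimple, hence defines a nonzero class $[\rho_0]$ in the \emph{same} group $H^1_{\Sigma}(F,\Hom_{\bfF}(\rho_2,\rho_1))$. Since this space is one-dimensional over $\bfF$, the classes $[\ov{\rho}_{\mL}]$ and $[\rho_0]$ differ by a scalar in $\bfF^{\times}$; and as $\rho_1,\rho_2$ are absolutely irreducible and non-isomorphic, two non-split extensions of $\rho_2$ by $\rho_1$ whose classes are proportional are isomorphic as $G_{\Sigma}$-modules (the pushout along multiplication by a scalar on $\rho_1$ gives an isomorphic underlying module). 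Therefore $\ov{\rho}_{\mL}\cong\rho_0$. Choosing $g\in\GL_n(\bfF)$ with $g\,\ov{\rho}_{\mL}\,g^{-1}=\rho_0$, lifting it to $\tilde g\in\GL_n(\Oo)$ (the reduction $\GL_n(\Oo)\to\GL_n(\bfF)$ being surjective), and setting $\rho'_{\pi}:=\tilde g\,\rho_{\mL}\,\tilde g^{-1}$, we obtain a crystalline representation $G_{\Sigma}\to\GL_n(\Oo)$ reducing mod $\varpi$ to $\rho_0$, i.e. a crystalline deformation of $\rho_0$. Since $\rho'_{\pi}\cong_{/E}\rho$ and $\rho$ is irreducible, $(\rho'_{\pi})^{\rm ss}\cong_{/E}\rho$, so $\rho\in\Pi$.

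The main obstacle is the step $\ov{\rho}_{\mL}\cong\rho_0$: a priori a good lattice only produces an extension of $\rho_2$ by $\rho_1$ of the right shape, and there could be several non-isomorphic crystalline such extensions unramified outside $\Sigma$. It is exactly Assumption \ref{mainass}(1) — the one-dimensionality of $H^1_{\Sigma}(F,\Hom_{\bfF}(\rho_2,\rho_1))$ — together with the verification that $[\ov{\rho}_{\mL}]$ genuinely lies in this Selmer group (crystallinity at the primes above $p$), that forces the reduction to be $\rho_0$ on the nose.
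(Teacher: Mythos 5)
Your proof is correct and follows essentially the same route as the paper's: apply Proposition \ref{genRibet} (Urban's theorem with $\mB=\Oo$) to get a lattice whose reduction is a non-split crystalline extension of $\rho_2$ by $\rho_1$, observe its class lies in $H^1_{\Sigma}(F,\Hom_{\bfF}(\rho_2,\rho_1))$ and is nonzero, and invoke Assumption \ref{mainass}(1) to force the reduction to be $\rho_0$ up to conjugation. The paper states this more tersely (in particular it leaves implicit the step that proportional nonzero classes yield isomorphic extensions and the final conjugation over $\Oo$), but the argument is the same.
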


\begin{proof} By Proposition \ref{genRibet} $\rho$ is $E$-isomorphic to a representation $\rho': G_{\Sigma} \rightarrow
\GL_n(\Oo)$ with $\ov{\rho}'$ upper-triangular and non-semi-simple. Since $\rho'$ is crystalline its reduction gives rise to a non-zero element inside $H^1_{\Sigma}(F, \Hom_{\bfF}(\rho_2, \rho_1))$ and by
Assumption \ref{mainass}(1) this group is one-dimensional. \end{proof}

\begin{rem} In contrast to Proposition \ref{genRibet2} if $\rho$ is reducible (and by assumption semi-simple) it is not always the case that $\rho \in \Pi$. For example Skinner and Wiles in \cite{SkinnerWiles99} studied a minimal (ordinary) deformation problem for residually reducible 2-dimensional Galois
representations. In [loc.cit] they assert the existence of an upper-triangular $\Sigma$-minimal deformation $\rho'$ of $\rho_0$ to $\GL_2(\Oo)$ based
on arguments from Kummer theory. The semi-simplification of this deformation is the Galois representation $\rho_{E_{2, \varphi}}$ associated to a
certain Eisenstein series $E_{2, \varphi}$ (see page 10523 in [loc.cit.] for a definition of $E_{2, \varphi}$), hence we take $\rho'_{E_{2,
\varphi}}=\rho'$. The difficulty is in showing the existence of a representation  whose semi-simplification agrees with $\rho_{E_{2, \varphi}}$, but
which reduces to $\rho_0$ hence is non-semi-simple (this is where the Kummer theory is used). In contrast to the case considered in
\cite{SkinnerWiles99}, the authors showed that in the case of 2-dimensional Galois representations over an imaginary quadratic field $F$ there is no
upper-triangular $\Sigma$-minimal deformation of $\rho_0$ to $\GL_2(\Oo)$ (\cite{BergerKlosin09}, Corollary 5.22). So, in particular if one considers
an Eisenstein series (say $\mE$) over $F$ then there is no representation $\rho'_{\mE}$ whose semi-simplification is isomorphic to $\rho_{\mE}$ and
for which one has $\ov{\rho}'_{\mE} = \rho_0$ and at the same time $\rho'_{\mE}$ is minimal.  \end{rem}

Let $\Pi$ be as above. Then one obtains an $\Oo$-algebra map $$R_{\Sigma} \rightarrow \prod_{\rho_{\pi} \in \Pi} \Oo.$$ We (suggestively) write
$\bfT_{\Sigma}$ for the image of this map and denote the resulting surjective $\Oo$-algebra map $R_{\Sigma} \twoheadrightarrow \bfT_{\Sigma}$ by
$\phi$.

\begin{thm} \label{cons1} Suppose the set $\Pi$ is finite. Assume Assumption \ref{mainass} (1) and (2).
Suppose there exists an anti-automorphism $\tau$ of $R_{\Sigma}[G_{\Sigma}]$ such that $\tr \rho_{\Sigma} \circ \tau = \tr \rho_{\Sigma}$ and $\tr
\rho_i \circ \tau = \tr \rho_i$ for $i=1,2$. In addition suppose that there exists a positive integer $m$ such that the following two ``numerical''
conditions are satisfied:
\begin{enumerate}
\item $\# H^1_{\Sigma}(F, \Hom_{\Oo}(\tilde{\rho}_2, \tilde{\rho}_1) \leq \#\Oo/\varpi^m$, \item $\#\bfT_{\Sigma}/\phi(I_{\rm re}) \geq
    \#\Oo/\varpi^m$.\end{enumerate} Then the map $\phi: R_{\Sigma} \twoheadrightarrow \bfT_{\Sigma}$ is an isomorphism.
\end{thm}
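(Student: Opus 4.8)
The plan is to deduce the theorem directly from the commutative algebra criterion Theorem~\ref{cri1}, applied with $R=R_{\Sigma}$, $S=\mathbf{T}_{\Sigma}$, the given surjection $\phi$, and $\pi$ a generator of the ideal of reducibility $I_{\rm re}$. The whole argument is then a matter of checking that the hypotheses of Theorem~\ref{cri1} hold, since the substantive content has already been packaged into the earlier sections.

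First I would record the structural facts that make $R_{\Sigma}$ and $\mathbf{T}_{\Sigma}$ eligible inputs. Since $\Pi$ is finite, $\mathbf{T}_{\Sigma}$ is by construction a sub-$\Oo$-algebra of $\prod_{\rho_\pi\in\Pi}\Oo$; being a torsion-free finitely generated module over the discrete valuation ring $\Oo$, it is therefore finitely generated and \emph{free}. As a quotient of $R_{\Sigma}$ it is also a complete local Noetherian $\Oo$-algebra with residue field $\bfF$, and $\phi$ induces the identity on residue fields. By Proposition~\ref{princi2} (whose hypotheses are met because an anti-automorphism $\tau$ as in~(\ref{self}) on $R_{\Sigma}[G_{\Sigma}]$ is assumed to exist), the ideal $I_{\rm re}$ is principal; fix $\pi\in R_{\Sigma}$ with $I_{\rm re}=\pi R_{\Sigma}$. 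Then $\phi(\pi)\mathbf{T}_{\Sigma}=\phi(I_{\rm re})$ because $\phi$ is surjective, and the square~(\ref{cridiag}) commutes since its vertical arrows are the canonical projections.

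Next I would verify the numerical hypotheses. Numerical condition~(1) is exactly the hypothesis of Proposition~\ref{Ovarpis}, which gives $R_{\Sigma}/I_{\rm re}\cong\Oo/\varpi^s$ for some $0<s\le m$; in particular $R_{\Sigma}/\pi R_{\Sigma}$ has the required form $\Oo/\varpi^r$ and is finite, hence so is its quotient $\mathbf{T}_{\Sigma}/\phi(I_{\rm re})$. Since multiplication by $\phi(\pi)$ yields a surjection $\mathbf{T}_{\Sigma}/\phi(\pi)\mathbf{T}_{\Sigma}\twoheadrightarrow\phi(\pi)\mathbf{T}_{\Sigma}/\phi(\pi)^2\mathbf{T}_{\Sigma}$, the group $\phi(\pi)\mathbf{T}_{\Sigma}/\phi(\pi)^2\mathbf{T}_{\Sigma}$ is finite as well, as Theorem~\ref{cri1} demands. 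It remains to see that $\phi_1\colon R_{\Sigma}/I_{\rm re}\twoheadrightarrow\mathbf{T}_{\Sigma}/\phi(I_{\rm re})$ is an isomorphism: it is surjective because $\phi$ is, so $\#\mathbf{T}_{\Sigma}/\phi(I_{\rm re})\le\#R_{\Sigma}/I_{\rm re}=\#\Oo/\varpi^s\le\#\Oo/\varpi^m$, while numerical condition~(2) gives $\#\mathbf{T}_{\Sigma}/\phi(I_{\rm re})\ge\#\Oo/\varpi^m$; thus all these cardinalities coincide (so in fact $s=m$), and a surjection of finite sets of equal cardinality is a bijection.

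With every hypothesis in place, the first bullet of Theorem~\ref{cri1} (the case $R/\pi R\cong\Oo/\varpi^r$) applies and gives that $\phi$ is an isomorphism, which is the assertion of the theorem. I do not expect a genuine obstacle here, as the proof is essentially bookkeeping; the only points requiring care are that $\mathbf{T}_{\Sigma}$ be genuinely free (not merely finitely generated) over $\Oo$, so that Theorem~\ref{cri1} is applicable, and that the two numerical conditions be matched precisely with the hypotheses of Propositions~\ref{Ovarpis} and~\ref{princi2} and with the cardinality comparison forcing $\phi_1$ to be an isomorphism — all of the real difficulty having been discharged in Section~\ref{Principality}, Section~\ref{Cyclicity}, and Theorem~\ref{cri1}.
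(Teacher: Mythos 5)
Your proposal is correct and matches the paper's own proof: both arguments invoke Proposition~\ref{princi2} (via the existence of $\tau$) for principality of $I_{\rm re}$, Proposition~\ref{Ovarpis} with condition~(1) to get $R_{\Sigma}/I_{\rm re}\cong\Oo/\varpi^s$ with $s\le m$, condition~(2) together with surjectivity to force $\phi_1$ to be an isomorphism, and then Theorem~\ref{cri1}(1). You have simply spelled out more of the routine verifications (freeness of $\bfT_{\Sigma}$ over $\Oo$, finiteness of $\phi(\pi)\bfT_{\Sigma}/\phi(\pi)^2\bfT_{\Sigma}$) than the paper does.
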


\begin{proof} This is just a summary of our arguments so far. The
existence of $\tau$ guarantees principality of the ideal of reducibility $I_{\rm re}$. Condition (1) in Theorem \ref{cons1} implies (by Proposition
\ref{Ovarpis}) that $\#R_{\Sigma}/ I_{\rm re} = \Oo/\varpi^s$ for $0< s \leq m$. This combined with condition (2) guarantees that $\phi$ descends to
an isomorphism $\phi_1: R_{\Sigma}/ I_{\rm re} \rightarrow \bfT_{\Sigma}/\phi(I_{\rm re})$. Hence by Theorem \ref{cri1}(1) we conclude that $\phi$ is
an isomorphism. \end{proof}

\begin{thm} \label{cons2} Suppose the set $\Pi$ is finite. Assume Assumption \ref{mainass}. Suppose there exists an anti-automorphism $\tau$ of
$R_{\Sigma}[G_{\Sigma}]$ such that $\tr \rho_{\Sigma} \circ \tau = \tr \rho_{\Sigma}$ and $\tr \rho_i \circ \tau = \tr \rho_i$ for $i=1,2$. In
addition suppose that \begin{enumerate} \item $\bfT_{\Sigma}/\phi(I_{\rm re}) = \Oo$, \item $\# I_{\rm re}/(I_{\rm re})^2 \leq \#(\phi(I_{\rm re})
\bfT_{\Sigma})/(\phi(I_{\rm
    re})\bfT_{\Sigma})^2$. \end{enumerate} Then the map $\phi: R_{\Sigma} \twoheadrightarrow \bfT_{\Sigma}$ is an isomorphism. \end{thm}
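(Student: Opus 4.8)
The plan is to deduce Theorem \ref{cons2} from the second case of Theorem \ref{cri1}, applied with $R=R_{\Sigma}$, $S=\bfT_{\Sigma}$, and $\pi$ a generator of $I_{\rm re}$; the latter is principal by Proposition \ref{princi2}, since the anti-automorphism $\tau$ is assumed to exist. The shape of the argument mirrors the proof of Theorem \ref{cons1}, with the ``$R/\pi R$ finite'' case of Theorem \ref{cri1} replaced by its ``$R/\pi R\cong\Oo$'' case. First I would verify the standing hypotheses of Section \ref{criterion}: since $\Pi$ is finite, $\bfT_{\Sigma}$ is a subring of $\prod_{\rho_{\pi}\in\Pi}\Oo$, hence a finitely generated torsion-free (and therefore free) $\Oo$-module lying in $\textup{LCN}(E)$; and $\phi$ is a surjective $\Oo$-algebra map inducing the identity on residue fields, so $\phi(I_{\rm re})=\phi(\pi)\bfT_{\Sigma}$ and the square (\ref{cridiag}) commutes automatically.

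Next I would show that $R_{\Sigma}/I_{\rm re}\cong\Oo$ and that $\phi$ descends to an isomorphism $\phi_1\colon R_{\Sigma}/I_{\rm re}\to\bfT_{\Sigma}/\phi(I_{\rm re})$. By Corollary \ref{structmap} the structure map $\Oo\to R_{\Sigma}/I_{\rm re}$ is surjective, so $R_{\Sigma}/I_{\rm re}\cong\Oo/\varpi^{s}$ for some $s\in\bfZ_{\ge 0}\cup\{\infty\}$; surjectivity of $\phi$ and hypothesis (1) then give a surjection $\Oo/\varpi^{s}\twoheadrightarrow\bfT_{\Sigma}/\phi(I_{\rm re})=\Oo$, which forces $s=\infty$. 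Consequently $\phi_1$ is a surjective $\Oo$-algebra endomorphism of $\Oo$, hence the identity, in particular an isomorphism.

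It remains to check the two further inputs to the second case of Theorem \ref{cri1}: that $\#\,\phi(\pi)\bfT_{\Sigma}/\phi(\pi)^{2}\bfT_{\Sigma}<\infty$, and that the induced surjection $\alpha\colon I_{\rm re}/I_{\rm re}^{2}=\pi R_{\Sigma}/\pi^{2}R_{\Sigma}\twoheadrightarrow\phi(\pi)\bfT_{\Sigma}/\phi(\pi)^{2}\bfT_{\Sigma}$ is an isomorphism. Writing $x=\phi(\pi)$, the exact sequence $0\to x\bfT_{\Sigma}\to\bfT_{\Sigma}\to\bfT_{\Sigma}/x\bfT_{\Sigma}\to 0$ splits because $\bfT_{\Sigma}/x\bfT_{\Sigma}\cong\Oo$ is free (hypothesis (1)), so $x\bfT_{\Sigma}$ is finitely generated free over $\Oo$; moreover multiplication by $x$ is injective on $x\bfT_{\Sigma}$ since $\bfT_{\Sigma}$ is reduced (if $x^{2}s=0$ then $(xs)^{2}=0$, so $xs$ is nilpotent, hence $0$) --- this is the use of Lemma 6.7 of \cite{BergerKlosin11} already present in the proof of Theorem \ref{cri1}. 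Hence $x^{2}\bfT_{\Sigma}=x\cdot x\bfT_{\Sigma}$ has finite index in $x\bfT_{\Sigma}$, giving the required finiteness; and then hypothesis (2) says the source of $\alpha$ has cardinality at most that of its finite target, so $\alpha$ is a bijection, i.e.\ an isomorphism of $\Oo$-modules. All hypotheses of Theorem \ref{cri1} being verified, I conclude that $\phi$ is an isomorphism.

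The one step that is not pure bookkeeping is the finiteness of $\phi(\pi)\bfT_{\Sigma}/\phi(\pi)^{2}\bfT_{\Sigma}$ (equivalently, that hypothesis (2) is not vacuous). This is where hypothesis (1) does the work: it forces $x=\phi(\pi)$ to be a zero-divisor in $\bfT_{\Sigma}$ (were it a non-zero-divisor, $x\bfT_{\Sigma}\cong\bfT_{\Sigma}$ would have full $\Oo$-rank and $\bfT_{\Sigma}/x\bfT_{\Sigma}$ would be finite rather than $\cong\Oo$), and reducedness of $\bfT_{\Sigma}$ then makes multiplication by $x$ injective after restriction to $x\bfT_{\Sigma}$. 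Beyond this I anticipate no obstacle; the rest is a transcription of the argument already used for Theorem \ref{cons1}.
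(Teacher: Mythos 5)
Your proof is correct and follows the same route as the paper's. The paper's own proof is essentially a one-line remark ("proved analogously to Theorem \ref{cons1} but uses Theorem \ref{cri1}(2)") plus the key observation that Corollary \ref{structmap} together with hypothesis (1) forces $R_{\Sigma}/I_{\rm re}=\bfT_{\Sigma}/\phi(I_{\rm re})=\Oo$, whence $\phi_1$ is an isomorphism; condition (2) then supplies the extra input to the second case of Theorem \ref{cri1}. You reproduce exactly this, and additionally spell out the bookkeeping the paper leaves tacit: that $\bfT_{\Sigma}$ is a finitely generated free $\Oo$-module in $\textup{LCN}(E)$, that the structure map $\Oo\to R_{\Sigma}/I_{\rm re}$ being surjective plus the surjection onto $\bfT_{\Sigma}/\phi(I_{\rm re})=\Oo$ forces $R_{\Sigma}/I_{\rm re}\cong\Oo$ rather than $\Oo/\varpi^{s}$ with $s<\infty$, and that $\#\,\phi(\pi)\bfT_{\Sigma}/\phi(\pi)^{2}\bfT_{\Sigma}<\infty$ so that condition (2) is not vacuous. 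The closing remark that hypothesis (1) forces $\phi(\pi)$ to be a zero-divisor in $\bfT_{\Sigma}$ is accurate but not used anywhere in the argument; it is harmless but could be dropped.
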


\begin{proof} This is proved analogously to Theorem \ref{cons2} but uses Theorem \ref{cri1}(2). Note that Corollary \ref{structmap} combined with
condition (1) of Theorem \ref{cons2} yields $ R_{\Sigma}/I_{\rm re} = \bfT_{\Sigma}/\phi(I_{\rm re}) = \Oo$, hence the map $\phi_1$ in Theorem
\ref{cri1} is an isomorphism. \end{proof}

\begin{rem} In applying Theorems \ref{cons1} and \ref{cons2} in practice one identifies $\bfT_{\Sigma}$ with a local complete Hecke algebra. Then
condition (2) may be a consequence of a lower bound on the order of $\bfT_{\Sigma}/J$, where $J$ could be the relevant congruence ideal (e.g.,
Eisenstein ideal - see section \ref{Examples1} or Yoshida ideal - see section \ref{Examples2}). See for example \cite{BergerKlosin09} and
\cite{BergerKlosin11}, where such a condition (which is a consequence of a result proved in \cite{B09}) is applied in the context of Theorem
\ref{cons1}.
On the other hand in \cite{SkinnerWiles99} one shows that the
condition needed to apply the criterion of Wiles and Lenstra is satisfied and this implies (cf. Proposition \ref{lenstraredone} and its proof) that
condition (2) in Theorem \ref{cons2} is satisfied. On the other hand condition (1) of Theorem \ref{cons1} seems to require (the $\varpi$-part of) the
Bloch-Kato conjecture for $\Hom_{\Oo}(\tilde{\rho}_2, \tilde{\rho}_1)$ and is in most cases when $\rho_1$ and $\rho_2$ are not characters currently
out of reach. Hence in this case Theorem \ref{cons1} should be viewed as a statement asserting that under certain assumptions, (the $\varpi$-part of)
the Bloch-Kato conjecture for $\Hom_{\Oo}(\tilde{\rho}_2, \tilde{\rho}_1)$ (which in principle controls extensions of $\tilde{\rho_2}$ by
$\tilde{\rho}_1$ hence \emph{reducible} deformations of $\rho_0$) implies an $R=T$-theorem (which asserts modularity of both the reducible and the
irreducible deformations of $\rho_0$). \end{rem}

\begin{rem} If an anti-automorphism $\tau$ in Theorems \ref{cons1} and \ref{cons2} does not exist, but instead one has $$\dim_{\bfF}H^1(G_{\Sigma},
\Hom_{\bfF}(\rho_1, \rho_2)) = \dim_{\bfF}H^1(G_{\Sigma}, \Hom_{\bfF}(\rho_2, \rho_1))=1,$$ then the conclusions of Theorems  \ref{cons1} and
\ref{cons2} still hold by Remark \ref{noinvolution}. \end{rem}

\section{2-dimensional Galois representations of an imaginary quadratic field - the crystalline case} \label{Examples1}

In this and in the next section we will describe how the method outlined in the preceding sections can be applied in concrete situations. We begin
with the case when $F$ is an imaginary quadratic field, $\rho_1$ and $\rho_2$ are characters. This is similar to the problem studied in
\cite{BergerKlosin11}, but covers the case of crystalline deformations (as opposed to ordinary minimal deformations considered in [loc.cit.]).
Because of this similarity with \cite{BergerKlosin11}, we will discuss only the aspects in which this case differs from the ordinary case and
will refer the reader to \cite{BergerKlosin09} and \cite{BergerKlosin11} for most details and definitions. In the next section we will study
another case, this time when the representations $\rho_1$ and $\rho_2$ are 2-dimensional and will consider an $R=T$ problem for residually reducible
$4$-dimensional Galois representations of $G_{\bfQ}$.

\subsection{The setup} Let $F$ be an imaginary
quadratic extension of $\bfQ$ of discriminant $d_F \neq 3,4$ and $p>3$ a rational prime which is unramified in $F$. We fix once and for all a prime
$\fp$ of $F$ lying over $(p)$.  As before, we fix for every prime $\fq$ embeddings $\ov{F} \hookrightarrow \ov{F}_{\fq} \hookrightarrow \bfC$ and write $D_{\fq}$ and $I_{\fq}$ for the corresponding decomposition and inertia subgroups. We assume that $p \nmid \# \Cl_F$ and that any prime $q \mid
d_F$ satisfies $q \not\equiv \pm 1 \pmod{p}.$

Let $\Sigma$ be a finite set of finite primes of $F$ containing all the primes lying over $p$. Let $\chi_0: G_{\Sigma} \rightarrow \bfF^{\times}$ be a Galois character and
$$\rho_0= \bmat 1 & * \\ & \chi_0 \emat : G_{\Sigma} \rightarrow \GL_2(\bfF)$$ be a non-semi-simple Galois
representation.

\subsection{Assumption \ref{mainass}}
We will now describe sufficient conditions under which Assumption \ref{mainass} is satisfied.

 Let $S_p$ be the set of primes of $F(\chi_0)$ lying over
$p$.  Write $M_{\chi_0}$ for $\prod_{\fq \in S_p} (1+ \fP_v)$ and $T_{\chi_0}$ for its torsion submodule. The quotient $M_{\chi_0}/T_{\chi_0}$ is a
free $\bfZ_p$-module of finite rank. Let $\ov{\mE}_{\chi_0}$ be the closure in $M_{\chi_0}/T_{\chi_0}$ of the image of $\mE_{\chi_0}$, the group of
units of the ring of integers of $F(\chi_0)$ which are congruent to 1 modulo every prime in $S_p$.

\begin{definition} \label{adm} We say that $\chi_0: G_{\Sigma} \rightarrow \bfF^{\times}$ is \emph{$\Sigma$-admissible} if it satisfies all of the
following conditions:
\begin{enumerate}
\item $\chi_0$ is ramified at $\fp$; \item if $\fq \in \Sigma$, then either $\chi_0$ is ramified at $\fq$ or $\chi_0(\Frob_{\fq}) \neq (\#
    k_{\fq})^{\pm 1}$ (as elements of $\bfF$);
\item if $\fq \in \Sigma$, then $\# k_{\fq} \not\equiv 1$ (mod $p$);
\item $\chi_0$ is anticyclotomic, i.e., $\chi_0(c \sigma c) = \chi_0(\sigma)^{-1}$ for every
    $\sigma \in G_{\Sigma}$ and $c$ the generator of $\Gal(F/\bfQ)$; \item the $\bfZ_p$-submodule $\ov{\mE}_{\chi_0} \subset
    M_{\chi_0}/T_{\chi_0}$ is saturated with respect to the ideal $p\bfZ_p$, \item The $\chi_0^{-1}$-eigenspace of the $p$-part of
    $\Cl_{F(\chi_0)}$ is trivial. \end{enumerate}
\end{definition}
\begin{rem} \label{invadm}
Note that $\chi_0$ is $\Sigma$-admissible if and only if $\chi_0^{-1}$ is (cf. Remark 3.3 in \cite{BergerKlosin09}). \end{rem}

\subsubsection{Assumption \ref{mainass}(1)}

Set $G=\Gal(F(\chi_0)/F)$. Let $L$ denote the maximal abelian extension of $F(\chi_0)$ unramified outside the set $\Sigma$ and such that $p$ annihilates $\Gal(L/F(\chi_0))$.
Then $V:= \Gal(L/F(\chi_0))$ is an $\bfF_p$-vector space endowed with an $\bfF_p$-linear action of $G$, and one has $$V \otimes_{\bfF_p} \ov{\bfF}_p
\cong \bigoplus_{ \varphi \in \Hom (G, \ov{\bfF}_p^{\times})} V^{\varphi},$$ where for a $\bfZ_p[G]$-module $N$ and an $\ov{\bfF}_p$-valued character
$\varphi$ of $G$, we write \be \label{eigen1} N^{\varphi} = \{n \in N \otimes_{\bfZ_p} \ov{\bfF}_p \mid \sigma n = \varphi (\sigma) n \hf \textup{for
every} \hs \sigma \in G\}.\ee Note that $V_0 \otimes_{\bfF_p} \ov{\bfF}_p$ is a direct summand of $V^{\chi_0^{-1}}$.

\begin{prop}\label{essuni} One has $\dim_{\ov{\bfF}_p} V^{\chi_0^{-1}} = 1$.
\end{prop}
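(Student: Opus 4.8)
The plan is to establish the inequalities $\dim_{\ov{\bfF}_p}V^{\chi_0^{-1}}\ge1$ and $\dim_{\ov{\bfF}_p}V^{\chi_0^{-1}}\le1$ separately. The lower bound is formal: the extension class of the non-semisimple $\rho_0=\bsmat 1&*\\&\chi_0\esmat$ is a nonzero class in $H^1(G_\Sigma,\bfF(\chi_0^{-1}))$, and since $p\nmid[F(\chi_0):F]$, inflation--restriction identifies $H^1(G_\Sigma,\bfF(\chi_0^{-1}))\otimes_\bfF\ov{\bfF}_p$ with the $\chi_0$-isotypic part of $\Hom(V,\ov{\bfF}_p)$ for the $\Gal(F(\chi_0)/F)$-conjugation action on $V$, i.e.\ with $\Hom(V^{\chi_0^{-1}},\ov{\bfF}_p)$; hence $V^{\chi_0^{-1}}\ne0$.

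For the upper bound I would follow the class field theory computation of \cite{BergerKlosin09, BergerKlosin11}, now allowing arbitrary ramification above $p$. Put $F'=F(\chi_0)$ and $G=\Gal(F'/F)$ (cyclic, of order dividing $p-1$). The class-field-theory description of $V$ expresses $V^{\chi_0^{-1}}$, up to torsion contributions whose $\chi_0^{-1}$-isotypic parts vanish, in terms of three pieces: the $\chi_0^{-1}$-part of $\Cl_{F'}\otimes\bfZ_p$; the $\chi_0^{-1}$-parts of the local unit groups at the primes $\fq\in\Sigma\setminus\Sigma_p$; and the $\chi_0^{-1}$-part of the cokernel of $\ov{\mE}_{\chi_0}\hookrightarrow M_{\chi_0}/T_{\chi_0}$. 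The first vanishes by Definition~\ref{adm}(6). The second vanishes by Definition~\ref{adm}(2): writing $\bigoplus_{v\mid\fq}U_v=\Ind_{D_\fq}^G U_{v_0}$ one checks $\Hom_{D_\fq}(\chi_0^{-1},U_{v_0}\otimes\ov{\bfF}_p)=0$, since either $\chi_0$ is ramified at $\fq$ (so $\chi_0|_{D_\fq}$ is nontrivial on inertia while $U_{v_0}\otimes\ov{\bfF}_p$ is unramified) or $\chi_0(\Frob_\fq)\neq(\#k_\fq)^{-1}$.

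It then remains to bound the $\chi_0^{-1}$-part of $\bigl((M_{\chi_0}/T_{\chi_0})/\ov{\mE}_{\chi_0}\bigr)\otimes\ov{\bfF}_p$ by one dimension. Definition~\ref{adm}(5) asserts that $\ov{\mE}_{\chi_0}$ is $p$-saturated in $M_{\chi_0}/T_{\chi_0}$, so the quotient is $\bfZ_p$-torsion-free and its $\chi_0^{-1}$-part over $\ov{\bfF}_p$ has dimension
\begin{equation*}
\dim_{\ov{\bfQ}_p}\bigl((M_{\chi_0}/T_{\chi_0})\otimes_{\bfZ_p}\ov{\bfQ}_p\bigr)^{\chi_0^{-1}}\;-\;\dim_{\ov{\bfQ}_p}\bigl(\ov{\mE}_{\chi_0}\otimes_{\bfZ_p}\ov{\bfQ}_p\bigr)^{\chi_0^{-1}}.
\end{equation*}
I would compute these via representation theory of $G$. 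For the first: since $F_\fp/\bfQ_p$ is unramified, the local normal basis theorem gives $(1+\fP_v)\otimes\bfQ_p\cong F'_v\cong\bfQ_p[D_\fp]^{[F_\fp:\bfQ_p]}$ as $D_\fp$-modules, hence $M_{\chi_0}\otimes\bfQ_p\cong\bigoplus_{\fp\mid p}\Ind_{D_\fp}^G\bfQ_p[D_\fp]^{[F_\fp:\bfQ_p]}\cong\bfQ_p[G]^{[F:\bfQ]}=\bfQ_p[G]^{2}$, so the $\chi_0^{-1}$-multiplicity is $2$. For the second: Dirichlet's unit theorem, together with the fact that $G$ acts simply transitively on the $\#G$ archimedean places of the totally imaginary field $F'$, gives $\mathcal{O}_{F'}^\times\otimes\bfQ_p\cong\bfQ_p[G]\ominus\bfQ_p$, and by Leopoldt (known for $F'$, abelian over the imaginary quadratic field $F$) the same holds for $\ov{\mE}_{\chi_0}\otimes\bfQ_p$; since $\chi_0\ne1$ by Definition~\ref{adm}(1), the $\chi_0^{-1}$-multiplicity is $1$. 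Subtracting gives $2-1=1$, which together with the lower bound proves the proposition. (Essentially the same count can also be run cohomologically, via the global Euler characteristic formula over $F$ and the vanishing of $H^2(G_\Sigma,\bfF(\chi_0^{-1}))$.)

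The step I expect to be the main obstacle is the bookkeeping behind the last paragraph: decomposing $(1+\fP_v)\otimes\bfQ_p$ and $\mathcal{O}_{F'}^\times\otimes\bfQ_p$ as $\bfQ_p[G]$-modules while correctly tracking the splitting of $p$ in $F$ and the decomposition groups $D_\fp$, and --- the genuinely delicate point --- verifying that reduction modulo $p$ does not enlarge the $\chi_0^{-1}$-part of the cokernel, which is exactly the role of the saturation hypothesis Definition~\ref{adm}(5). Some care is also needed in deducing the vanishings required for primes of $F'$ from Definition~\ref{adm}(2)--(3), which are phrased for primes of $F$.
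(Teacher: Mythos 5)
Your argument is essentially the same as the paper's, which simply refers to Theorem~3.5 of \cite{BergerKlosin09} and only spells out the changes needed for inert $p$. Both proofs run the standard class-field-theory computation: the $\chi_0^{-1}$-eigenspace of $V=\Gal(L/F(\chi_0))$ is broken into contributions from the $\chi_0^{-1}$-part of $\Cl_{F(\chi_0)}$ (killed by Definition~\ref{adm}(6)), the local units at primes of $\Sigma\setminus\Sigma_p$ (killed by Definition~\ref{adm}(2)--(3)), and the cokernel of $\ov{\mE}_{\chi_0}\hookrightarrow M_{\chi_0}/T_{\chi_0}$, whose $\chi_0^{-1}$-part is computed by a rational multiplicity count $2-1=1$ and then descended to $\ov{\bfF}_p$ via the saturation hypothesis Definition~\ref{adm}(5). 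The only cosmetic difference is the step producing the $\bfQ_p[G]$-module structure of the local $1$-units above $p$: you invoke the local normal basis theorem for $F(\chi_0)_v/F_{\fp}$ directly, while the paper reaches the same conclusion by applying the $p$-adic logarithm on $\fP_v^{p+2}$ to get a $D_v$-equivariant isomorphism onto $1+\fP_v^{p+2}$; these are interchangeable. The separate lower-bound argument you supply (the nonzero extension class of $\rho_0$) is logically redundant once the exact count $0+0+1$ has been established, but it is a harmless sanity check and in fact is the mechanism by which the paper later deduces Corollary~\ref{satisfy1}.
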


\begin{proof} If $p$ is a split prime this assertion has been proved in \cite{BergerKlosin09} (see Theorem 3.5). For an inert $p$ the proof is essentially the same, so let us just point out how to reconcile some of the issues that arise in the inert case (for notation we refer the reader to the proof of Thereom 3.5 in [loc.cit]). In particular as opposed to the split case, in the inert case one gets that for every $\psi \in G^{\vee}$,
$$\dim_{\ov{\bfF}_p} (M/T)^{\psi} = 2.$$
For this one can argue as follows: Since the ramification index
of $p$ in $F(\chi_0)$ is no greater than $p^2-1$, the $p$-adic logarithm gives a $D_v$-equivariant isomorphism $\fP_v^{p+2} \cong 1+\fP_v^{p+2}$ for
every $v \mid p$. This followed by the injection $1+\fP_v^{p+2}\hookrightarrow 1+\fP_v$ yields an isomorphism of $G$-modules $\bigoplus_{v\mid
p}\fP_v^{p+2}\otimes \ov{\bfQ}_p \cong (M/T)\otimes \ov{\bfQ}_p$. It is not difficult to see that $$\prod_{\fq \in S_p} \fP_v^{p+2} \otimes
\ov{\bfQ}_p \cong \bigoplus_{\phi \in \Gal(F(\chi_0)/\bfQ)^{\vee}} \ov{\bfQ}_p(\phi)\cong \bigoplus_{\phi \in G^{\vee}} \ov{\bfQ}_p(\phi)\oplus
\ov{\bfQ}_p(\phi),$$ where $\ov{\bfQ}_p(\phi)$ denotes the one-dimensional $\ov{\bfQ}_p$-vector space on which $G$ (or $\Gal(F(\chi_0)/\bfQ)$) acts
via $\phi$. The claim follows easily from this. However, since we now only have one prime of $F$ lying over $p$, this still gives us (as in the split case) that $$((M/T)\otimes \ov{\bfF}_p)/(\ov{\mE}
\otimes \ov{\bfF}_p) \cong \ov{\bfF}_p(\mathbf{1}) \oplus \ov{\bfF}_p(\mathbf{1}) \oplus \bigoplus_{\psi \in G^{\vee}\setminus \{\mathbf{1}\}}
\ov{\bfF}_p(\psi).$$ Since $\chi_0 \neq \mathbf{1}$  we are done.
\end{proof}

As in the proof of Corollary 3.7 in \cite{BergerKlosin09} Proposition \ref{essuni} implies that the space $H^1(G_{\Sigma},\ov{\bfF}_p(\chi_0^{-1}))$
is one-dimensional and hence we obtain the following corollary (note that $\rho_0$ itself is crystalline, so the extension it gives rise to lies
in the Selmer group).

\begin{cor} \label{satisfy1} The pair $(1, \chi_0)$ for a $\Sigma$-admissible character $\chi_0$ satisfies Assumption \ref{mainass}(1). \end{cor}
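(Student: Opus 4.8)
The plan is to identify the relevant coefficient module and then squeeze the Selmer group between the extension class of $\rho_0$ and the full Galois cohomology group. Here $\rho_1=\mathbf 1$ and $\rho_2=\chi_0$, so $\Hom_{\bfF}(\rho_2,\rho_1)=\bfF(\chi_0^{-1})$ as a $G_{\Sigma}$-module, and Assumption \ref{mainass}(1) is precisely the statement $\dim_{\bfF}H^1_{\Sigma}(F,\bfF(\chi_0^{-1}))=1$. Recall that $H^1_{\Sigma}$ imposes the crystalline condition only at the primes of $F$ above $p$ and \emph{no} condition at the remaining primes of $\Sigma$; in particular $H^1_{\Sigma}(F,\bfF(\chi_0^{-1}))$ is an $\bfF$-subspace of $H^1(G_{\Sigma},\bfF(\chi_0^{-1}))$.

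First I would prove the upper bound $\dim_{\bfF}H^1(G_{\Sigma},\bfF(\chi_0^{-1}))\le 1$; as the discussion preceding the corollary indicates, this is the computation in the proof of Corollary 3.7 of \cite{BergerKlosin09}, now powered by Proposition \ref{essuni}. Concretely, $\chi_0$ takes values in $\bfF^{\times}$, which has order prime to $p$, so $G=\Gal(F(\chi_0)/F)$ has prime-to-$p$ order and $H^i(G,-)$ vanishes for $i>0$ on $p$-torsion modules. Inflation--restriction along $F(\chi_0)/F$ then yields a canonical isomorphism $H^1(G_{\Sigma},\ov{\bfF}_p(\chi_0^{-1}))\cong\Hom_G(V,\ov{\bfF}_p(\chi_0^{-1}))$, where $V=\Gal(L/F(\chi_0))$ is the module of Proposition \ref{essuni}; decomposing $V\otimes_{\bfF_p}\ov{\bfF}_p$ into $G$-eigenspaces identifies the right-hand side with $\Hom_{\ov{\bfF}_p}(V^{\chi_0^{-1}},\ov{\bfF}_p)$, which is one-dimensional by Proposition \ref{essuni}. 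Since $E$ (hence $\bfF$) is taken sufficiently large, extension of scalars gives $H^1(G_{\Sigma},\bfF(\chi_0^{-1}))\otimes_{\bfF}\ov{\bfF}_p\cong H^1(G_{\Sigma},\ov{\bfF}_p(\chi_0^{-1}))$, so $\dim_{\bfF}H^1(G_{\Sigma},\bfF(\chi_0^{-1}))=1$ as well, and a fortiori $\dim_{\bfF}H^1_{\Sigma}(F,\bfF(\chi_0^{-1}))\le 1$.

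For the matching lower bound, the non-split extension $\rho_0$ of $\chi_0$ by $\mathbf 1$ defines a nonzero class $c\in\Ext^1_{\bfF[G_{\Sigma}]}(\chi_0,\mathbf 1)=H^1(G_{\Sigma},\bfF(\chi_0^{-1}))$, and because $\rho_0$ is crystalline at the primes above $p$ the class $c$ satisfies the defining local conditions of $H^1_{\Sigma}$, so $c\in H^1_{\Sigma}(F,\bfF(\chi_0^{-1}))$. Hence this Selmer group is nonzero, and together with the upper bound it is exactly one-dimensional, i.e. Assumption \ref{mainass}(1) holds for the pair $(1,\chi_0)$.

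I do not expect a genuine obstacle here: the arithmetic input is entirely concentrated in Proposition \ref{essuni} (and, behind it, in \cite{BergerKlosin09}). The only steps needing a modicum of care in the write-up are the descent from $\ov{\bfF}_p$-coefficients to $\bfF$-coefficients and the remark that crystallinity of $\rho_0$ is exactly what is required to land its extension class inside $H^1_{\Sigma}$ rather than merely inside $H^1(G_{\Sigma},-)$.
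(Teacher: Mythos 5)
Your argument is correct and is the same one the paper gives: the upper bound $\dim H^1(G_{\Sigma},\ov{\bfF}_p(\chi_0^{-1}))=1$ comes from Proposition \ref{essuni} via the inflation--restriction computation of Corollary 3.7 in \cite{BergerKlosin09} (exactly the steps you spell out), and the lower bound comes from observing that $\rho_0$ is non-split and crystalline, so its class is a nonzero element of $H^1_{\Sigma}(F,\bfF(\chi_0^{-1}))$. You have merely unpacked the citation and the descent from $\ov{\bfF}_p$ to $\bfF$, both of which the paper leaves implicit.
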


\subsubsection{Assumption \ref{mainass}(2)} Write $\rho_i$ for the character $1$ or $\chi_0$.
\begin{prop} \label{noredi} There does not exist any non-trivial
crystalline deformation of $\rho_i$ to $\GL_1(\bfF[x]/x^2)$.
 \end{prop}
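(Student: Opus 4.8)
The plan is to reinterpret the infinitesimal crystalline deformations of the character $\rho_i$ as classes in the Selmer group $H^1_\Sigma(F,\bfF)$ (with trivial $G_\Sigma$-action on $\bfF$) and then to show that this group is zero; its vanishing was already announced in Section \ref{Main assumptions}. Concretely: since $\rho_i$ is $1$-dimensional, $\Hom_\bfF(\rho_i,\rho_i)=\bfF$ with trivial action, and every deformation of $\rho_i$ to $\GL_1(\bfF[x]/x^2)$ has the form $\rho_i\cdot(1+xc)$, where the condition that this be a homomorphism is exactly that $c\colon G_\Sigma\to\bfF$ be a homomorphism, i.e.\ $c\in H^1(G_\Sigma,\bfF)$ (strict equivalence is trivial here, $\GL_1$ being commutative). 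Because the essential image of the Fontaine--Laffaille functor $\mathbf{G}$ is closed under subquotients and $\rho_i$ is a subquotient of $\rho_i\cdot(1+xc)$, the deformation $\rho_i\cdot(1+xc)$ is crystalline precisely when $\rho_i$ is crystalline and $c|_{D_\fq}\in H^1_f(F_\fq,\bfF)$ for all $\fq\mid p$ (no condition being imposed at the remaining primes of $\Sigma$). Hence the set of crystalline deformations of $\rho_i$ to $\bfF[x]/x^2$ is either empty or a torsor under $H^1_\Sigma(F,\bfF)$ with the trivial one as base point, and it suffices to prove $H^1_\Sigma(F,\bfF)=0$.

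For the trivial module the crystalline local condition at $\fq\mid p$ reduces to the unramified one. Indeed, applying Corollary \ref{induced} with $W=E/\Oo$ carrying the trivial action, one gets $H^1_f(F_\fq,\bfF)\cong H^1_f(F_\fq,E/\Oo)[\varpi]$, since $H^0(F_\fq,E/\Oo)/\varpi=0$ by divisibility of $E/\Oo$; and $H^1_f(F_\fq,E/\Oo)=H^1_{\rm ur}(F_\fq,E/\Oo)$ because $H^1_f(F_\fq,E)=H^1_{\rm ur}(F_\fq,E)$ for the trivial representation $E$ (an elementary computation). Thus $H^1_f(F_\fq,\bfF)=H^1_{\rm ur}(F_\fq,\bfF)$, so any class in $H^1_\Sigma(F,\bfF)$, viewed as a homomorphism $G_F\to\bfF$, is unramified outside $S:=\Sigma\setminus\Sigma_p$.

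It remains to see that there is no nonzero homomorphism $G_F\to\bfF$ unramified outside $S$. By class field theory such homomorphisms are $\bfF$-dual to the $p$-torsion of the ray class group $\Cl_F^{S}$ of $F$ of conductor supported on $S$, which is an extension of $\Cl_F$ by a quotient of $\prod_{v\in S}\mathcal{O}_{F_v}^{\times}$. Now $p\nmid\#\Cl_F$ by hypothesis, $\mathcal{O}_F^{\times}=\{\pm1\}$ has trivial $p$-part as $p>3$ and $d_F\neq 3,4$, and for each $v\in S$ (so $v\nmid p$) the group $\mathcal{O}_{F_v}^{\times}$ is the product of a cyclic group of order $\#k_v-1$ with a pro-$\ell$ group for $\ell\neq p$, while $\#k_v\not\equiv 1\pmod p$ by Definition \ref{adm}(3). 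Hence $\Cl_F^{S}$ has trivial $p$-part, $H^1_\Sigma(F,\bfF)=0$, and the proposition follows. The only mildly delicate point in this argument is the identification $H^1_f(F_\fq,\bfF)=H^1_{\rm ur}(F_\fq,\bfF)$ at the primes over $p$; granting that, the conclusion is a routine class field theory computation.
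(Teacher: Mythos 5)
Your proof is correct, and the global part of the argument (reduce to a homomorphism $G_\Sigma\to\bfF$ unramified everywhere, then use $p\nmid\#\Cl_F$; kill ramification at $v\in\Sigma\setminus\Sigma_p$ using $\#k_v\not\equiv 1\pmod p$ from Definition \ref{adm}(3)) is exactly the paper's strategy. Where you genuinely diverge is the local step at $\fq\mid p$. The paper twists by $\rho_i^{-1}$ and then invokes a direct Fontaine--Laffaille argument (Lemma \ref{crisunr}, citing Remark~6.13 of \cite{FontaineLaffaille82}) to conclude that a crystalline extension of the trivial $\bfF$-character by itself must be unramified. You instead use the Selmer-group machinery the paper itself sets up in Section~\ref{section functoriality}: Corollary \ref{induced} gives $H^1_f(F_\fq,\bfF)\cong H^1_f(F_\fq,E/\Oo)[\varpi]$ (divisibility of $H^0(F_\fq,E/\Oo)$ kills the first term), and then you identify $H^1_f(F_\fq,E/\Oo)$ with $H^1_{\rm ur}(F_\fq,E/\Oo)$ using the standard computation $H^1_f(F_\fq,E)=H^1_{\rm ur}(F_\fq,E)$ for the trivial representation. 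This buys you a proof of the local unramifiedness that is internal to the cohomological framework of the paper and avoids a citation to a specific remark in \cite{FontaineLaffaille82}; the paper's route is shorter if one is willing to quote Fontaine--Laffaille directly. Your framing of the deformations as a torsor under $H^1_\Sigma(F,\bfF)$ is also a cleaner way of packaging the untwisting-by-$\rho_i$ step that the paper performs by hand, and the ray class group computation you carry out is a spelled-out version of what the paper dismisses as ``follows easily.'' One small caveat: both you and the paper silently pass over the fact that tensoring with $\rho_i^{-1}$ (equivalently, identifying crystalline self-extensions of $\rho_i$ with $H^1_f(F_\fq,\bfF)$) requires the Hodge--Tate weights to stay in the Fontaine--Laffaille range; since you are only matching the paper's level of detail here, this is not a gap in your argument relative to the reference.
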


\begin{proof} Let $\rho: G_\Sigma \rightarrow \GL_1(\bfF[x]/x^2)$ be a crystalline deformation of $\rho_i$. Then since $\rho_i^{-1}$ is also crystalline we can without loss of generality assume that $\rho$ has the form $\rho= 1+ x \alpha$  for $\alpha:
G_\Sigma \rightarrow \bfF^+$ a group homomorphism (here $\bfF^+$ denotes the additive group of $\bfF$).

Let $\fq$ be a prime of $F$ and consider the restriction of $\alpha$ to $I_{\fq}$. If $\fq \in \Sigma$, $\fq \nmid p$ then $\#k_v \not\equiv 1$ mod
$p$ by Definition \ref{adm}(3), and thus one must have (by local class field theory) that $\alpha (I_{\fq})=0$. Thus $\alpha$ can only be ramified at the primes lying over $p$. The proposition thus follows
easily from the following lemma and the assumption that $p \nmid \# \Cl_F$. \end{proof}

\begin{lemma} \label{crisunr} A $p$-power order crystalline character $\psi: G_{\Sigma}
\rightarrow (\bfF[x]/x^2)^{\times}$
 must be unramified at primes lying above $p$.
\end{lemma}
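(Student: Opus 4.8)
The plan is to rewrite the assertion as a statement about a local finite--singular condition at the primes above $p$, and then to identify the crystalline structure with the unramified one for trivial coefficients.

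First I would normalise $\psi$. Since $\bfF^{\times}$ has order prime to $p$ while $(\bfF[x]/x^2)^{\times}=\bfF^{\times}\times(1+x\bfF)$ with $1+x\bfF$ annihilated by $p$ and isomorphic to $(\bfF,+)$ via $1+ax\mapsto a$, a character of $p$-power order must take values in $1+x\bfF$; thus $\psi=1+x\alpha$ for a continuous homomorphism $\alpha\colon G_{\Sigma}\to\bfF^{+}$, that is, a class $[\alpha]\in H^1(G_{\Sigma},\bfF)$ with trivial coefficients. Writing $\bfF[x]/x^2$ as the $\bfF$-vector space $\bfF\cdot 1\oplus\bfF\cdot x$ exhibits $\psi$ as the extension $0\to\bfF x\to\bfF[x]/x^2\to\bfF\to 0$ of trivial $G_{\Sigma}$-modules with class $[\alpha]$, and $\psi$ is unramified at a prime $v\mid p$ exactly when $\alpha|_{I_{F_v}}=0$, i.e.\ when $\mathrm{res}_v[\alpha]\in H^1_{\mathrm{ur}}(F_v,\bfF)$.

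Next I would use the crystalline hypothesis one prime at a time. Fix $v\mid p$. As $\bfF[x]/x^2$ is already Artinian, $\psi$ being crystalline means (definition at the end of Section~\ref{section functoriality}) that $\bfF[x]/x^2$, as a $G_{F_v}$-module, lies in the essential image of the Fontaine--Laffaille functor $\mathbf{G}$; since the trivial module $\bfF$ does as well and the essential image is closed under subquotients, the displayed extension then lies, by the very definition of $H^1_f$, in $H^1_f(F_v,\bfF)$. So it suffices to prove $H^1_f(F_v,\bfF)=H^1_{\mathrm{ur}}(F_v,\bfF)$ inside $H^1(F_v,\bfF)$ for every $v\mid p$.

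For this equality I would argue as follows. The inclusion $H^1_{\mathrm{ur}}(F_v,\bfF)\subseteq H^1_f(F_v,\bfF)$ is automatic: an unramified $\bfF[G_{F_v}]$-module is the reduction mod $\varpi$ of an unramified $\Oo[G_{F_v}]$-lattice, which is a lattice in an unramified (hence short crystalline, being of Hodge--Tate weight $0$) $\bfQ_p$-representation, so it lies in the essential image of $\mathbf{G}$. For the reverse I would compare dimensions: $\mathrm{Frob}_v$ acts trivially on $\bfF$, so $\dim_{\bfF}H^1_{\mathrm{ur}}(F_v,\bfF)=1$, while Corollary~\ref{induced} applied to the trivial module $W=E/\Oo$ with $n=1$ gives $H^1_f(F_v,\bfF)\cong H^1_f(F_v,E/\Oo)[\varpi]$ (using $H^0(F_v,E/\Oo)/\varpi=0$), and $H^1_f(F_v,E/\Oo)$ is the image of $H^1_f(F_v,E)=H^1_{\mathrm{ur}}(F_v,E)\cong E$ (the standard Bloch--Kato computation for the trivial representation, where $D_{\mathrm{dR}}/\mathrm{Fil}^0=0$) in $H^1(F_v,E/\Oo)$, namely the line of unramified classes, which is isomorphic to $E/\Oo$; hence $H^1_f(F_v,\bfF)\cong(E/\Oo)[\varpi]\cong\bfF$ is one-dimensional too, and the two subgroups coincide. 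Then $\mathrm{res}_v[\alpha]$ is unramified for every $v\mid p$, which is the claim. I expect the substantive point — essentially the only one — to be this last identification $H^1_f(F_v,\bfF)=H^1_{\mathrm{ur}}(F_v,\bfF)$; an alternative route is to observe that the filtered Dieudonn\'e module attached to a crystalline character valued in $\bfF[x]/x^2$ is free of rank one over $\bfF[x]/x^2$, hence has a single filtration jump in $\{0,\dots,p-2\}$, and that a nonzero jump would force a nontrivial power of the mod $p$ cyclotomic character to divide $\psi|_{G_{F_v}}$, which is impossible since $\psi$ has $p$-power order and $\bar\chi_{\mathrm{cyc}}|_{G_{F_v}}$ is surjective onto $\bfF_p^{\times}$ ($F_v/\bfQ_p$ being unramified); so the jump is $0$ and $\psi|_{G_{F_v}}$ is unramified.
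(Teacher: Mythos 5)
Your proof is correct, and it follows a genuinely different route from the paper. The paper reduces, exactly as you do, to showing that a crystalline extension of $\bfF$ by $\bfF$ over $G_{F_{\fq}}$ (with $\fq\mid p$) is unramified, but then disposes of this with a one-line citation of Remark 6.13 in \cite{FontaineLaffaille82}, claiming the extension is necessarily \emph{split}. You instead identify the crystalline local condition $H^1_f(F_{\fq},\bfF)$ with the unramified condition $H^1_{\rm ur}(F_{\fq},\bfF)$ by exhibiting both inclusions: the easy one $H^1_{\rm ur}\subseteq H^1_f$ (an unramified mod-$\varpi$ module is a quotient of a lattice in an unramified, hence short-crystalline, representation) and then equality by a dimension count that uses the paper's own Corollary \ref{induced} applied to $W=E/\Oo$ together with the standard Bloch--Kato computation $H^1_f(F_{\fq},E)=H^1_{\rm ur}(F_{\fq},E)$. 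This buys you two things. First, your argument is self-contained in terms of the machinery the paper has already set up (the Fontaine--Laffaille functor $\mathbf{G}$, closure of its essential image under subquotients, and Corollary \ref{induced}), rather than appealing to a remark buried in Fontaine--Laffaille. Second, and more importantly, your conclusion is sharper in the right direction: the extension is \emph{unramified}, not split. Indeed, the paper's stronger claim that the extension is split appears to be too strong --- the reduction of the obvious unramified $\Oo$-lattice extension of $\Oo$ by $\Oo$ gives a non-split but crystalline extension of $\bfF$ by $\bfF$ --- whereas ``unramified'' is both correct and exactly what the lemma requires. Your concluding aside via the single filtration jump of a rank-one filtered Dieudonn\'e module over $\bfF[x]/x^2$ and the order of $\bar\chi_{\rm cyc}$ is a reasonable heuristic but is sketchier than the main argument (one would want to be careful that the filtration steps are required to be $\Oo$-direct summands, not $\bfF[x]/x^2$-summands); the $H^1_f=H^1_{\rm ur}$ computation is the cleaner route.
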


\begin{proof} Since a character as above can be thought of as a $2$-dimensional representation $\rho: G_{\Sigma} \rightarrow
\GL_2(\bfF)$ of the form $$\rho(\sigma) = \bmat 1 & 0 \\ \alpha & 1 \emat,$$ it is enough to show that for $\fq$ lying over $p$ a crystalline
extension of the trivial one-dimensional $\bfF$-representation of $G_{F_{\fq}}$ by itself must be unramified at primes lying over $p$. However, such
an extension is necessarily split by Remark 6.13, p.589 of \cite{FontaineLaffaille82}. \end{proof}

\begin{cor}\label{satisfy2} The pair $(1, \chi_0)$ for a $\Sigma$-admissible character $\chi_0$ satisfies Assumption \ref{mainass}(2). \end{cor}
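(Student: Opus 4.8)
The plan is to verify the two conditions which, by the reformulation of Assumption \ref{mainass}(2) recorded in Section \ref{Main assumptions}, are equivalent to $R_{1,\Sigma}=R_{2,\Sigma}=\Oo$ for the pair $(\rho_1,\rho_2)=(1,\chi_0)$: that $H^1_\Sigma(F,W_{i,i}[\varpi])=0$ for $i=1,2$, and that each of the characters $1$ and $\chi_0$ admits a crystalline lift to $\GL_1(\Oo)$.

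For the first condition I would observe that since $\rho_1,\rho_2$ are characters we have $\Hom_\Oo(\tilde\rho_i,\tilde\rho_i)=\Oo$ with trivial $G_\Sigma$-action, so $W_{i,i}[\varpi]\cong\bfF$ with trivial action and the requirement becomes $H^1_\Sigma(F,\bfF)=0$ in both cases. This is exactly Proposition \ref{noredi}: a nonzero class in $H^1_\Sigma(F,\bfF)$ is the same datum as a nontrivial crystalline deformation of $\rho_i$ to $\GL_1(\bfF[x]/x^2)$, and that proposition excludes these. (Concretely, such a class cuts out a $\bfZ/p$-extension of $F$ unramified outside $\Sigma$ and crystalline above $p$; by Lemma \ref{crisunr} it is unramified above $p$; by Definition \ref{adm}(3), together with the fact that a tamely ramified $\bfZ/p$-extension at $\fq\nmid p$ forces $\#k_\fq\equiv 1\pmod p$, it is unramified at the remaining primes of $\Sigma$, hence everywhere unramified, hence trivial because $p\nmid\#\Cl_F$.)

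For the second condition the trivial character lifts to the trivial character of $G_\Sigma$, which is everywhere unramified and therefore crystalline. The substance of the corollary is the construction of a crystalline lift $\tilde\chi_0\colon G_\Sigma\to\Oo^\times$ of $\chi_0$, and here I would argue as in \cite{BergerKlosin09}. Since $\rho_0$, and hence its quotient $\chi_0$, is crystalline, $\chi_0|_{D_\fp}$ lies in the essential image of $\mathbf{G}$, so $\chi_0|_{I_\fp}$ is the reduction of the inertia part of a genuine crystalline character of $G_{F_\fp}$ (a product of fundamental characters with exponents in the Fontaine--Laffaille range). Using that $\chi_0$ is anticyclotomic, that $p\nmid\#\Cl_F$, and the remaining conditions of Definition \ref{adm}, one exhibits an algebraic Hecke character of $F$ of the corresponding infinity type, with conductor prime to $p$, whose associated Galois character is crystalline at the primes above $p$ and reduces to $\chi_0$; equivalently, one corrects the Teichm\"uller lift of $\chi_0$ by a suitable crystalline character arising from the anticyclotomic tower of $F$. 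Once both conditions hold --- the tangent space vanishes and a crystalline lift to $\GL_1(\Oo)$ exists --- standard deformation theory gives $R_{2,\Sigma}=\Oo$ (and trivially $R_{1,\Sigma}=\Oo$), which is Assumption \ref{mainass}(2).

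The main obstacle is precisely the construction of the crystalline lift of $\chi_0$: the naive Teichm\"uller lift is of finite order prime to $p$ and ramified at $\fp$, hence not crystalline there, so one genuinely needs the CM / Hecke-character (Lubin--Tate) input; the delicate points are getting the reduction to equal $\chi_0$ on the nose --- not merely to match it on inertia at $\fp$ --- and handling the compatibility conditions at the archimedean place and at the units of $\mathcal{O}_F$, which is where $\Sigma$-admissibility enters, exactly as in \cite{BergerKlosin09}.
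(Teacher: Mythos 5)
Your handling of the tangent-space condition is exactly the paper's route: Proposition~\ref{noredi} (via Lemma~\ref{crisunr} at $\fp$, Definition~\ref{adm}(3) at $\fq\in\Sigma$ with $\fq\nmid p$, and $p\nmid\#\Cl_F$) is precisely what kills $H^1_{\Sigma}(F,\bfF)$, and this is all the paper actually proves before stating the corollary. Where you diverge is in the second half of the corollary: you spend the bulk of the write-up on constructing a crystalline lift $\tilde\chi_0$, treating that as the ``substance,'' whereas the paper offers no such construction. In the paper the lift is supplied by fiat: in Section~\ref{Bounding the Selmer group} the character $\chi_0$ is specialised to $\ov{\Psi}$ with $\Psi=\phi_\fp\epsilon$ a crystalline Galois character coming from a Hecke character, so $\Psi$ itself is the lift and Assumption~\ref{mainass}(2) is a hypothesis about $\Psi$ rather than something re-derived. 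Your observation that the Teichm\"uller lift is \emph{not} crystalline at $\fp$ (being ramified of finite order prime to $p$) is correct and is a genuine subtlety worth recording; your CM/Hecke-character sketch of how to repair this is in the right spirit, but as you yourself concede it does not settle the delicate matching of the reduction and the global class-field-theoretic compatibilities, so it is a plan rather than a proof. In short: for the Selmer/tangent-space half you reproduce the paper's argument; for the lift half you are solving a problem the paper deliberately avoids by choosing $\chi_0=\ov\Psi$, and that part of your argument, while raising a legitimate point, is incomplete and would need to be carried through (or, more economically, one should simply adopt the paper's convention that the crystalline lift $\Psi$ is part of the data).
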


\subsection{Bounding the Selmer group} \label{Bounding the Selmer group} From now on we will make a particular choice of $\chi_0$ and $\Sigma$. Let
$\phi_1$, $\phi_2$ be two Hecke characters of infinity types $z$ and $z^{-1}$ respectively, and set $\phi=\phi_1/\phi_2$. Let $\phi_{\fp}$ denote the
$\fp$-adic Galois character corresponding to $\phi$. Set $\Psi:= \phi_{\fp} \epsilon$ and $\chi_0=\ov{\Psi}$. Assume that $\Sigma$ contains all the
primes dividing $M_1M_2M_1^cM_2^c\textup{disc}_Fp$, where $M_i$ denotes the conductor of $\phi_i$.

 Let $L^{\rm int}(0,\phi)$ be the special $L$-value attached to $\phi$ as in \cite{BergerKlosin09}. Write $W$ for $\Hom_{\Oo}(\Psi, 1)\otimes
 E/\Oo$.

 \begin{conj} \label{BK1} $\# H^1_f(F, W) \leq \#
\Oo/\varpi^m$, where $m=\val_{\varpi}(L^{\tuint}(0, \phi))$.\end{conj}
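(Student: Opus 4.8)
The plan is to reduce Conjecture~\ref{BK1} to the Iwasawa main conjecture for the imaginary quadratic field $F$ together with a descent argument, along the lines of the known derivations of the Bloch--Kato conjecture for Hecke characters over imaginary quadratic fields from Rubin's theorem. First I would rewrite the coefficient module: since $\Psi$ is a character, $W=\Hom_{\Oo}(\Psi,1)\otimes E/\Oo$ equals $E/\Oo(\Psi^{-1})$ with $\Psi^{-1}=\phi_{\fp}^{-1}\epsilon^{-1}$, so $H^1_f(F,W)$ is the Bloch--Kato Selmer group of the $\fp$-adic Galois character attached to a cyclotomic twist of the algebraic Hecke character $\phi^{-1}$. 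Since $\rho_0$, hence $\chi_0=\ov{\Psi}$, is ramified at $\fp$ and nontrivial, one gets $H^0(F,W)=0$ by the argument used in the proof of Theorem~\ref{boundons}. I would then check that the crystalline local condition $H^1_f(F_{\fp},W)$ (defined via Fontaine--Laffaille theory \cite{FontaineLaffaille82} in Section~\ref{section functoriality}) coincides with the local condition imposed at $\fp$ by the elliptic-unit Selmer structure of Iwasawa theory, i.e.\ a comparison of the crystalline/Bloch--Kato and the Greenberg ``ordinary'' conditions for a character of this infinity type. For $v\in\Sigma\setminus\Sigma_p$ the local conditions defining $H^1_f(F,W)$ are the minimally ramified ones, and the hypotheses of Definition~\ref{adm}(2),(3) are exactly what Lemma~\ref{Tamagawa} requires, so these places contribute nothing extra.

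Next I would pass to Iwasawa theory over the relevant $\bfZ_p$-power extension $F_\infty/F$ attached to the infinity type of $\phi$ (the anticyclotomic line, in view of Definition~\ref{adm}(4), or the full two-variable $\bfZ_p^2$-extension), and study the Pontryagin dual $\mathcal{X}_\infty$ of the corresponding Selmer group over the Iwasawa algebra $\Lambda$. By Rubin's work --- the main conjecture for imaginary quadratic fields, proved via the Euler system of elliptic units; see \cite{Rubin00} --- the characteristic ideal $\mathrm{char}_\Lambda(\mathcal{X}_\infty)$ divides the elliptic-unit $p$-adic $L$-function $\mathcal{L}_p(\phi)$. A control theorem identifying $H^1_f(F,W)$ with the appropriate coinvariants of $\mathcal{X}_\infty$ then bounds $\#H^1_f(F,W)$ from above by the $\varpi$-valuation of the value of $\mathcal{L}_p(\phi)$ at the trivial character. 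Finally, the interpolation property of $\mathcal{L}_p$ --- which is exactly why $L^{\mathrm{int}}(0,\phi)$ is defined in \cite{BergerKlosin09} with its particular period and $\Gamma$-factor normalization --- should identify this value, up to a $\varpi$-adic unit, with $L^{\mathrm{int}}(0,\phi)$, yielding $\#H^1_f(F,W)\le\#\Oo/\varpi^m$ with $m=\val_{\varpi}L^{\mathrm{int}}(0,\phi)$.

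The hardest part will be the combination of the descent step with the period normalization. The specialization estimate is only an inequality if the control theorem carries no error term, which requires the absence of exceptional (``trivial'') zeros: one needs $H^0(F,W)=0$ (granted above from $\chi_0$ being ramified at $\fp$, together with $p\nmid\#\Cl_F$) and that $\Psi^{-1}$ is genuinely ramified locally at $\fp$ (again Definition~\ref{adm}(1)), and one must ensure the trivial character lies in, or can be reached within, the interpolation range of $\mathcal{L}_p$. The genuinely delicate point is matching the $p$-adic CM period occurring in the elliptic-unit $p$-adic $L$-function with the complex period built into $L^{\mathrm{int}}(0,\phi)$ up to a $\varpi$-adic unit --- a comparison of periods of Katz/de~Shalit type --- and it is this normalization issue, together with the wish to have the statement for all admissible $\Sigma$, that makes it prudent to record Conjecture~\ref{BK1} as a conjecture rather than a theorem. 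A secondary technical obstacle is the inert-$p$ case, where one works over the $\bfZ_{p^2}$-extension and needs the adaptations of Rubin's arguments already signalled in the proof of Proposition~\ref{essuni}.
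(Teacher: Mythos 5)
This statement is recorded in the paper as a \emph{conjecture}, and the paper does not prove it; the only commentary is the remark immediately following, which sketches a derivation in the special case $\phi^{-1}=\psi^2$ with $\psi$ a Hecke character of a CM elliptic curve, via Rubin's main conjecture \cite{Rubin91}, the symmetry $H^1_f(F,W)\cong H^1_f(F,W^c)$, and Proposition~4.4.3 and Corollary~4.3.4 of \cite{Deepreprint} together with the functional equation of $L(s,\phi)$. Your proposal is the same strategy in outline --- elliptic-unit Euler system, Iwasawa main conjecture, control theorem, interpolation and period comparison --- but aims at general $\phi$ and so must face the period-matching problem directly rather than importing it from \cite{Deepreprint}. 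Your list of obstacles (the Katz/de~Shalit $p$-adic CM period versus the complex period built into the normalization of $L^{\textup{int}}(0,\phi)$; the need to rule out exceptional zeros, hence $H^0(F,W)=0$ and $\chi_0$ ramified at $\fp$; the inert-$p$ modifications already signaled in the proof of Proposition~\ref{essuni}) is precisely what stands between this sketch and a theorem, and is why the authors state it as a conjecture. One gap worth flagging in your outline: you assert without argument that the crystalline/Bloch--Kato local condition at $\fp$ agrees with the local condition coming from the elliptic-unit Selmer structure over the Iwasawa tower. This equivalence is a genuine step --- it is part of what \cite{Deepreprint} supplies in the CM-elliptic-curve case --- and does not follow from Fontaine--Laffaille theory alone, so it should not be waved through.
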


\begin{rem} Conjecture \ref{BK1} can in many
cases be deduced from the Main conjecture proven by Rubin \cite{Rubin91}. If
$\phi^{-1}=\psi^2$ for $\psi$ a Hecke character associated to a CM elliptic
curve, then one can argue as follows. By Proposition 4.4.3 in \cite{Deepreprint} and using that $H^1_f(F, W) \cong H^1_f(F, W^c)$, we have
$\# H^1_f(F, W)= \#H^1_f(F,E/\Oo(\phi_{\fp}^{-1}))$.  Thus we can use Corollary 4.3.4 in
\cite{Deepreprint} which together with the functional equation satisfied by $L(0,\phi)$ implies the desired inequality. \end{rem}

\begin{cor} \label{Eulerfact}  Assume that $\chi_0$ is $\Sigma$-admissible and that Conjecture \ref{BK1} holds for $\phi$. Then $\# H^1_{\Sigma}(F,W) \leq \#
\Oo/\varpi^m$, where $m=\val_{\varpi}(L^{\tuint}(0, \phi))$.\end{cor}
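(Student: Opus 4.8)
The plan is to identify the Selmer group $H^1_{\Sigma}(F,W)$ with the Bloch--Kato Selmer group $H^1_f(F,W)$ and then quote Conjecture \ref{BK1} directly. Unwinding the definitions, from $\rho_1=1$ and $\rho_2=\chi_0=\ov{\Psi}$ we get $\tilde\rho_1=1$, $\tilde\rho_2=\Psi$, so as a $G_{\Sigma}$-module $W=\Hom_{\Oo}(\Psi,1)\otimes E/\Oo\cong(E/\Oo)(\Psi^{-1})$; put $V=E(\Psi^{-1})$, $T=\Oo(\Psi^{-1})$, so that $V^*=E(\Psi\epsilon)$ and $T^*=\Oo(\Psi\epsilon)$ with $\epsilon$ the $p$-adic cyclotomic character. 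Since $F$ is imaginary quadratic every archimedean place is complex, so the local condition at infinity in the definition (\ref{BKsel}) of $H^1_f(F,W)$ is vacuous, and the only difference between $H^1_f(F,W)$ and $H^1_{\Sigma}(F,W)$ lies in the minimally ramified local conditions imposed at the primes $v\in\Sigma$ with $v\nmid p$. Hence it is enough to verify the hypotheses of Lemma \ref{Tamagawa}, namely $P_v(V^*,1)\in\Oo^*$ and $\Tam_v^0(T^*)=1$ for every such $v$, and then invoke Conjecture \ref{BK1}.

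I would check these by a short local computation using the $\Sigma$-admissibility of $\chi_0$, splitting on the ramification of $\Psi$ at $v$. If $\Psi$ is ramified at $v$ then (as $\epsilon$ is unramified away from $p$) $(V^*)^{I_v}=0$, so $P_v(V^*,X)=1$; if $\Psi$, hence also $\chi_0$, is unramified at $v$, then $P_v(V^*,1)=1-\Psi(\Frob_v)(\# k_v)^{\pm 1}$, whose reduction mod $\varpi$ is $1-\chi_0(\Frob_v)(\# k_v)^{\pm 1}$, nonzero by Definition \ref{adm}(2). This gives $P_v(V^*,1)\in\Oo^*$, whence $H^0(F_v,V^*)=H^1_f(F_v,V^*)=0$ and, via Remark \ref{tamagawaremark} and the formula for the Tamagawa factor, $\Tam_v^0(T^*)=\#(W^*)^{G_v}$ with $W^*=(E/\Oo)(\Psi\epsilon)$. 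Finally $(W^*)^{G_v}=0$: when $\Psi$ is unramified at $v$, $\Frob_v$ acts on $W^*=E/\Oo$ by the unit $\Psi(\Frob_v)(\# k_v)^{\pm 1}$, and $\Psi(\Frob_v)(\# k_v)^{\pm 1}-1$ is a unit (again Definition \ref{adm}(2)), so it has no nonzero fixed vector; when $\Psi$ is ramified at $v$, $(W^*)^{I_v}$ is a finite cyclic $\Oo$-module whose $\varpi$-torsion is $(W^*[\varpi])^{I_v}=\bfF(\chi_0\bar\epsilon)^{I_v}$, which either vanishes (if $\chi_0$ is ramified at $v$) or on which $\Frob_v$ acts by a scalar $\neq1$ in $\bfF$ (if $\chi_0$ is unramified at $v$, by Definition \ref{adm}(2)), so again $(W^*)^{G_v}=0$. (Condition (3) of Definition \ref{adm} and $p\nmid\#\Cl_F$ are what is needed to verify Assumption \ref{mainass} itself, cf.\ Corollaries \ref{satisfy1} and \ref{satisfy2}, rather than for this reduction.)

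Granting the hypotheses of Lemma \ref{Tamagawa}, that lemma gives $H^1_{\Sigma}(F,W)=H^1_f(F,W)$, and Conjecture \ref{BK1} then yields $\#H^1_{\Sigma}(F,W)=\#H^1_f(F,W)\le\#\Oo/\varpi^m$ with $m=\val_{\varpi}(L^{\tuint}(0,\phi))$, which is the assertion. The only delicate point is the local analysis at primes $v\in\Sigma\setminus\Sigma_p$ at which $\Psi$ is (possibly wildly) ramified while $\chi_0$ is not: there one must be careful that the possibly nontrivial $I_v$-invariants of $W^*$ carry the expected scalar $\Frob_v$-action, so that Definition \ref{adm}(2) can be applied; all the remaining steps are bookkeeping.
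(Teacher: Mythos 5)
Your proposal is correct and follows essentially the same route as the paper: identify $H^1_{\Sigma}(F,W)$ with $H^1_f(F,W)$ by verifying the two hypotheses of Lemma~\ref{Tamagawa} at each $v\in\Sigma\setminus\Sigma_p$, then quote Conjecture~\ref{BK1}. The one cosmetic difference is how the Tamagawa condition is handled: you compute $(W^*)^{G_v}=0$ directly (including a careful treatment of the case ``$\Psi$ ramified but $\chi_0$ unramified''), whereas the paper observes that, since $\Psi$ is $\Oo^{\times}$-valued and $\Sigma$-admissibility condition~(3) forbids $p$-power ramification at $v\nmid p$, the invariants $(W^*)^{I_v}$ are all of $W^*$ or zero --- hence divisible, which already forces ${\rm Tam}_v^0(T^*)=1$ by Remark~\ref{tamagawaremark}; in particular the ``delicate'' case you flag never actually occurs.
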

\begin{proof} Let $v \in \Sigma \setminus \Sigma_p$. First note that since $\Psi$ is $\Oo^{\times}$-valued one must have $W^{I_v}=W$ or $W^{I_v}=0$. So, in particular $W^{I_v}$ is divisible. Hence by Remark \ref{tamagawaremark}, we get that $\Tam^0_v(T^*)=1$, so by Lemma \ref{Tamagawa} it is enough to show that $P_v(V^*,1)\in \Oo^{\times}$. Let $\Sigma_{\rm un}$ be the subset of $\Sigma\setminus \Sigma_p$ consisting of those primes $v$ for which $\chi_0$ is unramified. If $v \not\in \Sigma_{\rm un}$, then this Euler factor is 1. Otherwise one has $$P_v(V^*,1)^{-1} = 1-\Psi\epsilon(\Frob_v)  \equiv 1-\chi_0(\Frob_v)\cdot \# k_v \pmod{\varpi}.$$ Because $\chi_0$ is $\Sigma$-admissible (cf. Definition \ref{adm}(2)) we are done by Conjecture \ref{BK1}.  \end{proof}

From now on assume that $\chi_0$ is $\Sigma$-admissible and that Conjecture \ref{BK1} holds for $\phi$.
Let $R_{\Sigma}$ denote the crystalline universal deformation ring of $\rho_0$ and $I_{\rm re}$ its ideal of reducibility.

\begin{cor} \label{satisfy3} One has $R_{\Sigma}/I_{\rm re} = \Oo/\varpi^s$ for some $0<s\leq m$, with $m$ as above. \end{cor}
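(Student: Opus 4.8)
The plan is to obtain this as a direct application of Proposition \ref{Ovarpis}, so essentially all the work lies in verifying that its hypotheses have by now been assembled. First I would record that, under the standing assumptions that $\chi_0$ is $\Sigma$-admissible and that Conjecture \ref{BK1} holds for $\phi$, the pair $(1,\chi_0)$ together with the chosen set $\Sigma$ satisfies Assumption \ref{mainass}: part (1) is Corollary \ref{satisfy1} and part (2) is Corollary \ref{satisfy2}. By Assumption \ref{mainass}(2) the unique crystalline lifts are $\tilde{\rho}_1 = 1$ and $\tilde{\rho}_2 = \Psi = \phi_{\fp}\epsilon$, so that $\Hom_{\Oo}(\tilde{\rho}_2,\tilde{\rho}_1)\otimes E/\Oo$ is exactly the module $W$ of Section \ref{Bounding the Selmer group}.

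Next I would feed in the Selmer bound. Corollary \ref{Eulerfact} (which uses $\Sigma$-admissibility of $\chi_0$ together with Conjecture \ref{BK1} for $\phi$, and which in its proof disposes of the places $v \in \Sigma \setminus \Sigma_p$ via the divisibility of $W^{I_v}$ and the admissibility condition on $\chi_0(\Frob_v)$) gives $\# H^1_{\Sigma}(F, W) \le \#\Oo/\varpi^m$ with $m = \val_{\varpi}(L^{\tuint}(0,\phi))$. This is precisely the numerical input required by Proposition \ref{Ovarpis} (equivalently, the hypothesis of Theorem \ref{boundons}). Applying Proposition \ref{Ovarpis} then yields $R_{\Sigma}/I_{\rm re} \cong \Oo/\varpi^s$ for some $0 < s \le m$, which is the assertion (the same conclusion holds for $R'_{\Sigma}/I'_{\rm re}$ and $R^{\rm red}_{\Sigma}/I^{\rm red}_{\rm re}$, but only the displayed form is needed below).

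I do not expect a genuine obstacle here: the corollary is a bookkeeping consequence of results already in place. The one step deserving a moment's attention is the identification of the module $W$ bounded in Corollary \ref{Eulerfact} with the module $\Hom_{\Oo}(\tilde{\rho}_2,\tilde{\rho}_1)\otimes E/\Oo$ controlling upper-triangular deformations in Theorem \ref{boundons} and Proposition \ref{Ovarpis}; this comes down to the equalities $\tilde{\rho}_1 = 1$ and $\tilde{\rho}_2 = \Psi$, which follow from the uniqueness of the lifts in Assumption \ref{mainass}(2) and the definition $\chi_0 = \ov{\Psi}$.
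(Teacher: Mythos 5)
Your argument is correct and follows the paper's own route: both deduce the statement from Proposition \ref{Ovarpis}. You are in fact a bit more careful than the paper's one-line citation of Conjecture \ref{BK1} and Proposition \ref{Ovarpis}, since you explicitly interpose Corollary \ref{Eulerfact} to convert the Bloch-Kato bound on $H^1_f(F,W)$ into the needed bound on $H^1_{\Sigma}(F,W)$, and you verify the identification $W = \Hom_{\Oo}(\tilde{\rho}_2,\tilde{\rho}_1)\otimes E/\Oo$ via $\tilde{\rho}_1=1$, $\tilde{\rho}_2=\Psi$.
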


\begin{proof} This follows from Conjecture \ref{BK1} and Proposition \ref{Ovarpis}. \end{proof}

\begin{rem} Note that this proof of Corollary \ref{satisfy3} differs from (and is simpler than) the proof of Theorem 5.12 in \cite{BergerKlosin09} in that we do not need to relate the Selmer groups to Galois groups. This is so because the proof of Theorem \ref{boundons} interprets (which is perhaps more natural) upper-triangular deformations directly as cohomology classes in the Selmer group. \end{rem}

\subsection{Modularity of crystalline residually reducible 2-dimensional Galois representations over $F$}

\begin{rem} \label{symmetry2} By Remark \ref{invadm} and Corollary \ref{satisfy1} one also has $\dim_{\bfF}H^1(G_{\Sigma},\ov{\bfF}_p(\chi_0))=1$,
hence by Remark \ref{noinvolution}, the ideal of reducibility $I_{\rm re} \subset R_{\Sigma}$ is principal. \end{rem}

From now on assume that $\phi$ is unramified or that we are in the situation of Theorem 4.4 of \cite{BergerKlosin09}. Let $\bfT_{\Sigma}$ denote the
Hecke algebra defined in section 4 of \cite{BergerKlosin09}, except we do not restrict to the ordinary part.  Conjecture 5 of
\cite{B09} asserted
that the Galois representation $\rho_{\pi}$ attached to an automorphic representation $\pi$ over $F$ is crystalline if $\pi$ is unramified at $p$. This has now been proven in many cases by A. Jorza \cite{Jorza10}.  When it is satisfied we obtain by universality a canonical map $\psi: R_{\Sigma} \twoheadrightarrow \bfT_{\Sigma}$ and the set $\Pi$ in section \ref{RTtheorems} can be
identified with the set $\Pi_{\Sigma}$ from section 4.2 of \cite{BergerKlosin09}. By Theorem 14 of \cite{B09} condition (2) of Theorem \ref{cons1} is satisfied
with $m$ as in Conjecture \ref{BK1}. Hence gathering all this, we can apply Theorem \ref{cons1} (using Remark \ref{symmetry2} instead of the existence
of $\tau$) to deduce the following modularity result.

\begin{thm} \label{imquad} The map  $\psi: R_{\Sigma} \twoheadrightarrow \bfT_{\Sigma}$ is an isomorphism. \end{thm}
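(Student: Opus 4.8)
The plan is to obtain the theorem as a direct application of the $R=T$ criterion of Theorem \ref{cons1}, in the variant that replaces the existence of an anti-automorphism $\tau$ by the symmetric one-dimensionality of the relevant $H^1$-groups (as permitted by the final remark of Section \ref{RTtheorems} via Remark \ref{noinvolution}). So the bulk of the argument is bookkeeping: one checks that every hypothesis of Theorem \ref{cons1} has already been verified, in the preceding subsections, for the pair $(1,\chi_0)$, the residual representation $\rho_0$, the Hecke algebra $\bfT_{\Sigma}$, and the canonical surjection $\psi: R_{\Sigma}\twoheadrightarrow\bfT_{\Sigma}$.

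First I would collect the structural hypotheses. Since $\chi_0$ is assumed $\Sigma$-admissible, Corollary \ref{satisfy1} supplies Assumption \ref{mainass}(1) and Corollary \ref{satisfy2} supplies Assumption \ref{mainass}(2), so $R_{\Sigma}$ is defined and is governed by the results of the previous sections. Granting the crystallinity of the Galois representations attached to automorphic representations over $F$ that are unramified at $p$ (Jorza \cite{Jorza10}), the set $\Pi$ of Section \ref{RTtheorems} is in bijection with the finite set $\Pi_{\Sigma}$ of \cite{BergerKlosin09}, Section 4.2; in particular $\Pi$ is finite, $\psi$ is defined, and $\bfT_{\Sigma}$, being an $\Oo$-submodule of $\prod_{\rho_\pi\in\Pi}\Oo$, is a finitely generated free $\Oo$-module as required in Section \ref{criterion}. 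Although no $\tau$ as in (\ref{self}) exists here, Remark \ref{symmetry2} (combining Remark \ref{invadm}, that $\chi_0$ is $\Sigma$-admissible if and only if $\chi_0^{-1}$ is, with Corollary \ref{satisfy1}) gives $\dim_{\bfF}H^1(G_{\Sigma},\ov{\bfF}_p(\chi_0))=\dim_{\bfF}H^1(G_{\Sigma},\ov{\bfF}_p(\chi_0^{-1}))=1$, hence by Remark \ref{noinvolution} the principality of the ideal of reducibility $I_{\rm re}\subset R_{\Sigma}$ — the only consequence of $\tau$ actually used in Theorem \ref{cons1}.

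Next I would check the two numerical conditions with a common exponent $m=\val_{\varpi}(L^{\tuint}(0,\phi))$. Condition (1), namely $\#H^1_{\Sigma}(F,W)\leq\#\Oo/\varpi^m$ with $W=\Hom_{\Oo}(\Psi,1)\otimes E/\Oo$, is exactly Corollary \ref{Eulerfact}; it rests on Conjecture \ref{BK1} (assumed) together with the $\Sigma$-admissibility of $\chi_0$, which is used to kill the local terms at $v\in\Sigma\setminus\Sigma_p$ via Lemma \ref{Tamagawa} and Remark \ref{tamagawaremark}. Via Corollary \ref{satisfy3} this also forces $R_{\Sigma}/I_{\rm re}\cong\Oo/\varpi^s$ for some $0<s\leq m$. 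Condition (2), $\#\bfT_{\Sigma}/\psi(I_{\rm re})\geq\#\Oo/\varpi^m$, is provided by Theorem 14 of \cite{B09}, which bounds the relevant congruence (Eisenstein) quotient of $\bfT_{\Sigma}$ from below by the $\varpi$-valuation of the same $L$-value $L^{\tuint}(0,\phi)$. Putting these together, $\psi$ descends to an isomorphism $R_{\Sigma}/I_{\rm re}\xrightarrow{\sim}\bfT_{\Sigma}/\psi(I_{\rm re})$, and feeding $\psi$, a generator $\pi$ of $I_{\rm re}$, and this isomorphism into Theorem \ref{cri1}(1) yields that $\psi$ itself is an isomorphism.

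The assembly above is routine; the weight of the proof is carried by the cited ingredients. I expect the genuine obstacle — and the reason the anticyclotomic condition on $\chi_0$ is imposed — to be the principality of $I_{\rm re}$ in the absence of a self-duality involution, which proceeds through the Bella\"iche--Chenevier--Calegari argument recalled in Remark \ref{noinvolution}. The other serious input, the upper bound $\#H^1_{\Sigma}(F,W)\leq\#\Oo/\varpi^m$, is precisely Conjecture \ref{BK1} and is therefore taken as a hypothesis; removing it would require (the $\varpi$-part of) the relevant Bloch--Kato / Iwasawa main conjecture, so the statement, like Theorem \ref{cons1} itself, remains conditional on that (and on \cite{Jorza10}).
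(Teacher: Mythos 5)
Your proposal is correct and follows essentially the same route as the paper: verify Assumption \ref{mainass}(1),(2) via Corollaries \ref{satisfy1}--\ref{satisfy2}, obtain finiteness of $\Pi$ (hence the map $\psi$) from Jorza's crystallinity result, replace the involution $\tau$ by the symmetric one-dimensionality of the $H^1$'s (Remark \ref{symmetry2} feeding into Remark \ref{noinvolution}), supply condition (1) of Theorem \ref{cons1} from Conjecture \ref{BK1} via Corollary \ref{Eulerfact}, supply condition (2) from Theorem 14 of \cite{B09}, and then invoke Theorem \ref{cons1}. The paper's own proof is exactly this assembly stated in compressed form, so the two arguments coincide.
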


From this one easily has the following modularity theorem.
\begin{thm} \label{modimquad} Let $F$, $p$ and $\Sigma$ be as above. Let $\phi$ be an unramified Hecke character of infinity type $z^2$ and let $\chi_0=\ov{\phi_{\fp} \epsilon}$. Assume $\chi_0$ is $\Sigma$-minimal and that Conjecture \ref{BK1} holds $\phi$. Let $\rho: G_{\Sigma} \rightarrow \GL_2(E)$ be an irreducible continuous Galois representation and suppose that $\ov{\rho}^{\rm ss} \cong 1 \oplus \chi_0$. If $\rho$ is crystalline at the primes of $F$ lying over $p$ then (a twist of) $\rho$ is modular. \end{thm}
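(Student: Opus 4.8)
The plan is to read the statement off the $R=T$ isomorphism of Theorem \ref{imquad} together with the structural results on $\Pi$ from Section \ref{RTtheorems}.

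First I would attach to $\rho$ a characteristic-zero point of $R_{\Sigma}$. Since $\rho$ is irreducible, crystalline, and satisfies $\ov{\rho}^{\rm ss}\cong 1\oplus\chi_0=\ov{\rho}_0^{\rm ss}$, and since Corollary \ref{satisfy1} supplies Assumption \ref{mainass}(1), Proposition \ref{genRibet2} applies and yields $\rho\in\Pi$. Concretely (cf.\ Proposition \ref{genRibet}) one chooses a $G_{\Sigma}$-stable lattice $\mL\subset E^2$ for which the associated integral representation $\rho'\colon G_{\Sigma}\to\GL_2(\Oo)$ has $\ov{\rho}'$ upper-triangular and non-semisimple; the one-dimensionality of $H^1_{\Sigma}(F,\ov{\bfF}_p(\chi_0^{-1}))$ forces the extension class of $\ov{\rho}'$ to agree, up to scaling, with that of $\rho_0$, so $\ov{\rho}'\cong\rho_0$. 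As $\rho'$ is a lattice inside the crystalline $\rho$ it is itself crystalline, hence a crystalline $\Oo$-deformation of $\rho_0$, and therefore is classified by an $\Oo$-algebra homomorphism $x\colon R_{\Sigma}\to\Oo$.

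Next I would transport $x$ to the Hecke side and identify the resulting point as automorphic. By Theorem \ref{imquad} the surjection $\psi\colon R_{\Sigma}\twoheadrightarrow\bfT_{\Sigma}$ is an isomorphism, so $x$ factors uniquely as $x=\lambda\circ\psi$ with $\lambda\colon\bfT_{\Sigma}\to\Oo$. Because $\bfT_{\Sigma}$ is the Hecke algebra of Section 4 of \cite{BergerKlosin09} --- reduced, finite flat over $\Oo$, and embedded in $\prod_{\pi\in\Pi_{\Sigma}}\Oo$ via the systems of Hecke eigenvalues, $E$ having been chosen large enough that these systems are $\Oo$-valued --- the homomorphism $\lambda$ is the eigensystem of a single automorphic representation $\pi\in\Pi_{\Sigma}$, with attached Galois representation $\rho_{\pi}$. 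Unwinding the construction of $\psi$, the composite $R_{\Sigma}\xrightarrow{\psi}\bfT_{\Sigma}\xrightarrow{\lambda}\Oo$ sends $\tr\rho_{\Sigma}(\Frob_v)$ to $\tr\rho_{\pi}(\Frob_v)$ for every $v\notin\Sigma$, while $x$ sends the same element to $\tr\rho'(\Frob_v)=\tr\rho(\Frob_v)$; since $x=\lambda\circ\psi$ these agree, and Chebotarev density together with the Brauer--Nesbitt theorem then gives $\rho_{\pi}^{\rm ss}\cong\rho^{\rm ss}=\rho$. This is exactly the assertion that a twist of $\rho$ is modular, the twist accounting for the usual normalization discrepancy for Galois representations attached to automorphic forms over an imaginary quadratic field.

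Given Theorem \ref{imquad}, this deduction is essentially formal. The points I expect to require the most care are the lattice bookkeeping in the first step --- ensuring the reduction is precisely $\rho_0$, which is where Corollary \ref{satisfy1} enters --- and verifying that $\bfT_{\Sigma}$ really has the stated structural properties and that its $\Oo$-points correspond to automorphic representations, both of which are provided by \cite{BergerKlosin09} via the attachment of Galois representations to automorphic forms over $F$ unramified at $p$. The genuinely difficult input, however, is Theorem \ref{imquad} itself, i.e.\ the interplay of principality of $I_{\rm re}$, the cyclicity of $R_{\Sigma}/I_{\rm re}$, Conjecture \ref{BK1}, and the numerical lower bound from \cite{B09}.
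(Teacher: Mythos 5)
Your proof is correct and is exactly the argument the paper has in mind: the authors simply say Theorem \ref{modimquad} "easily" follows from Theorem \ref{imquad}, and your write-up supplies the standard deduction — choose a lattice via Proposition \ref{genRibet}, use crystallinity and Assumption \ref{mainass}(1) (via Corollary \ref{satisfy1}) to force the reduction to be $\rho_0$, obtain an $\Oo$-point of $R_{\Sigma}$, transport through the isomorphism $\psi$ to an eigensystem of $\bfT_{\Sigma}$, and conclude by Chebotarev plus Brauer--Nesbitt.
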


\section{4-dimensional Galois representations of $\bfQ$ - Yoshida lifts} \label{Examples2}

In this section we apply our methods to study the deformation ring of a 4-dimensional residually reducible Galois representation of $G_{\bfQ}$.
\subsection{Setup} Let $S_n(N)$ denote the space of (elliptic) cusp forms of weight $n$ and level $N$. Assume that $p>k\geq 4$ is even and that $N$ is
a square-free positive integer with $p \nmid N$. We will also assume that all primes $l \mid N$ satisfy $l \not\equiv 1$ (mod $p$). Let $f\in S_2(N)$ and $g \in S_k(N)$ be two eigenforms whose residual (mod $p$) Galois representations are absolutely
irreducible and mutually non-isomorphic. For a positive integer $n$ write $S^S_n(N)$ for the space of Siegel modular forms $\phi$ which are cuspidal
and satisfy $$\det(CZ+D)^{-n}\phi((AZ+B)(CZ+D)^{-1})=\phi(Z) \quad \textup{for} \quad \bmat A&B \\ C&D \emat \in \Sp_4(\bfZ); C\equiv 0 \pmod{N}.$$
Here $Z$ is in the Siegel upper-half space.

\begin{thm} [Yoshida]\label{Yoshidathm} There exists a $\bfC$-linear map $$Y:S_2(N)\otimes S_k(N) \rightarrow S^S_{k/2+1}(N)$$ such that $$L_{\rm
spin}(s, Y(f\otimes g)) = L(s-k/2+1, f)L(s,g)$$ up to the Euler factors at the primes dividing $N$. In particular the lift $Y(f\otimes g)$ is
a Hecke eigenform for primes away from $N$. \end{thm}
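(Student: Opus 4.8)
The plan is to construct $Y$ as a theta lift for the dual pair $(\mathrm{GO}(V),\GSp_4)$. Fix a non-trivial additive character $\psi$ of $\bfQ\backslash\bfA$ and let $V=M_2(\bfQ)$ equipped with the determinant quadratic form, a four-dimensional space of signature $(2,2)$. By the exceptional isogeny $\mathrm{GSO}(V)\cong(\GL_2\times\GL_2)/\GL_1$, with $\GL_1$ embedded anti-diagonally into the centres, so that a cuspidal automorphic representation of $\mathrm{GSO}(V)(\bfA)$ is, under this identification, an external tensor product $\sigma_1\boxtimes\sigma_2$ of cuspidal automorphic representations of $\GL_2(\bfA)$ sharing a central character. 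Given eigenforms $f\in S_2(N)$ and $g\in S_k(N)$, let $\sigma_f,\sigma_g$ be the associated cuspidal automorphic representations of $\GL_2(\bfA)$; after replacing $\sigma_f$ by $\sigma_f\otimes|\cdot|^{k/2-1}$ --- the normalisation twist that equalises the central characters and that is dictated by the Hodge--Tate weights of the expected Galois representation $\rho_f(k/2-1)\oplus\rho_g$ --- the pair extends to an automorphic representation of $\mathrm{GO}(V)(\bfA)$, and I would set $Y(f\otimes g)$ equal to its global theta lift to $\GSp_4(\bfA)$ against the Weil representation $\omega_\psi$ and a decomposable Schwartz--Bruhat function $\varphi=\bigotimes_v\varphi_v$ on $V(\bfA)^2$, re-expressed classically as a Siegel form. (Equivalently one may run Yoshida's original construction with theta series built from pairs of automorphic forms on a definite quaternion algebra together with the Jacquet--Langlands transfers of $f$ and $g$.)

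First I would pin down the local data. At the archimedean place I would take $\varphi_\infty$ to be the Gaussian attached to a suitable majorant of $V(\bfR)$ times a pluriharmonic polynomial of the appropriate bidegree; a standard calculation in the oscillator representation (archimedean Howe duality, in the form used by Roberts for Yoshida lifts) then identifies the archimedean theta lift of the discrete series of weights $2$ and $k$ with the scalar holomorphic discrete series of $\GSp_4(\bfR)$ of weight $k/2+1$, which is precisely the archimedean type of $S^S_{k/2+1}(N)$; this is where the hypotheses ``$k$ even'' and $k\ge 4$ (so that $k/2+1\ge 3$) enter. At a finite prime $\ell$ I would take $\varphi_\ell$ to be the characteristic function of a suitably scaled lattice in $V(\bfQ_\ell)$ adapted to the level (the full matrix ring for $\ell\nmid N$, an Eichler-type order for $\ell\mid N$); invariance of the resulting form under $\Gamma_0^{(2)}(N)=\{\gamma\in\Sp_4(\bfZ):C(\gamma)\equiv 0\pmod N\}$ then reduces to a local computation of the theta lift of the unramified (resp.\ Steinberg) representation of $\GL_2(\bfQ_\ell)$ together with bookkeeping of conductors.

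Next I would prove $Y(f\otimes g)$ is cuspidal and non-zero. Cuspidality of the theta lift to $\GSp_4$ of a cuspidal representation of $\mathrm{GO}(V)$ follows from the standard constant-term computation (no smaller parabolic term survives because $\sigma_f,\sigma_g$ are cuspidal on $\GL_2$). For non-vanishing I would invoke the criterion for the $\mathrm{GO}(2,2)\to\GSp_4$ correspondence: the global theta lift of $\sigma_1\boxtimes\sigma_2$ is non-zero provided all local theta lifts are non-zero and $\sigma_1\not\cong\sigma_2$ --- and here $\sigma_f\not\cong\sigma_g$ because $f$ and $g$ have different weights, so the Yoshida lift does not degenerate. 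Alternatively one computes the Petersson norm of the lift by the Rallis inner product formula and identifies it with a product of non-central (hence non-zero) $L$-values of $f$ and $g$ and explicit local zeta integrals, which simultaneously verifies that the specific $\varphi$ chosen above yields a non-zero form of the correct level. I expect this step --- controlling every local theta lift at once and ruling out accidental vanishing or a drop of level --- to be the main obstacle.

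Finally, the spin $L$-function identity. For all but finitely many $\ell$ it is an unramified computation: by Howe duality the Satake parameter of $\theta(\sigma_f\boxtimes\sigma_g)_\ell$ in $\GSp_4(\bfC)$ is $\{\alpha_\ell\ell^{k/2-1},\,\beta_\ell\ell^{k/2-1},\,\gamma_\ell,\,\delta_\ell\}$, where $\{\alpha_\ell,\beta_\ell\}$ and $\{\gamma_\ell,\delta_\ell\}$ are the Satake parameters of $f$ and $g$; this quadruple does lie in $\GSp_4(\bfC)$ since $(\alpha_\ell\ell^{k/2-1})(\beta_\ell\ell^{k/2-1})=\ell^{k-1}=\gamma_\ell\delta_\ell$, and the associated local spin Euler factor is $L_\ell(s-k/2+1,f)\,L_\ell(s,g)$. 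Conceptually this is the $L$-group statement that the theta correspondence intertwines the embedding $\GL_2(\bfC)\times_{\GL_1}\GL_2(\bfC)\hookrightarrow\GSp_4(\bfC)$ with the restriction of the spin representation to the product of the two standard representations, the factor $\ell^{k/2-1}$ being exactly the normalisation twist applied to $\sigma_f$ above. Matching the bad Euler factors and the conductor at the primes dividing $N$ is a finite local check that the statement does not require (``up to the Euler factors at the primes dividing $N$'').
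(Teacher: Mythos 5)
The paper gives no proof of this statement; it is labelled ``Theorem (Yoshida)'' and is being imported as background from Yoshida's 1980 construction (with its modern reformulation via Roberts, Gan--Takeda, B\"ocherer--Schulze-Pillot, etc.). So there is no argument of the authors to compare yours against, and you are effectively being asked to reconstruct Yoshida's theorem.

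Your outline is recognisably the right circle of ideas --- theta lifting for a $\mathrm{GO}(V)\times\GSp_4$ dual pair, the accidental isogeny $\mathrm{GSO}(V)\sim(\GL_2\times\GL_2)/\GL_1$, the central-character/twist bookkeeping, the unramified Satake computation giving $L_{\rm spin}=L(s-k/2+1,f)\,L(s,g)$ --- and the Satake matching $(\alpha_\ell\ell^{k/2-1})(\beta_\ell\ell^{k/2-1})=\ell^{k-1}=\gamma_\ell\delta_\ell$ is correct. However, your primary construction has a genuine archimedean error. You take $V=M_2(\bfQ)$ with the determinant form, so $V(\bfR)$ has signature $(2,2)$, and you claim the archimedean theta lift of the pair of holomorphic discrete series of weights $2$ and $k$ lands in the \emph{holomorphic} (scalar, weight $k/2+1$) discrete series of $\GSp_4(\bfR)$. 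It does not: for the split pair $(\mathrm{O}(2,2),\Sp_4)$ the theta lift of a discrete-series representation of $\mathrm{O}(2,2)(\bfR)$ lands in the \emph{large} (generic, Whittaker) member of the discrete-series $L$-packet of $\Sp_4(\bfR)$, not the holomorphic one. This is exactly the dichotomy in Yoshida/Saito--Kurokawa style packets: the two archimedean members are realised as theta lifts from the two inner forms of the rank-$4$ orthogonal group, and the holomorphic member comes from the \emph{anisotropic} form. Consequently $Y(f\otimes g)$ built from $V=M_2$ is not a classical holomorphic element of $S^S_{k/2+1}(N)$, and the theorem as stated fails for that $V$.

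To repair this you must use what you relegate to a parenthetical: take $V=D$ a \emph{definite} quaternion algebra over $\bfQ$ (ramified at $\infty$ and at an odd number of primes dividing $N$), so $V(\bfR)$ has signature $(4,0)$ and the theta kernel is itself holomorphic in the Siegel variable. The input data are then the Jacquet--Langlands transfers of $f$ and $g$ to $D^\times$, and the lift (suitably summed over ideal classes of an Eichler order of level $N$ in $D$) is a scalar holomorphic Siegel form of weight $k/2+1$ on $\Gamma_0^{(2)}(N)$. This also explains why a positive level $N$ is needed: $f$ and $g$ must be discrete series at some finite prime for the transfer to $D^\times$ to exist, and the square-freeness of $N$ is used in pinning down the level and in the non-vanishing argument (B\"ocherer--Schulze-Pillot). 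Your non-vanishing discussion and the cuspidality criterion ($\sigma_f\not\cong\sigma_g$, automatic here since the weights differ) and your Hecke/L-function identification at unramified primes carry over essentially unchanged to the definite case; those parts of the sketch are fine.
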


Let $\Sigma$ denote the finite set of finite places of $\bfQ$ consisting of $p$ and the primes dividing $N$. For a Siegel cuspidal eigenform $\phi$
(away from $\Sigma$) denote by $\rho_{\phi}: G_{\Sigma} \rightarrow \GL_4(E)$ the Galois representation attached to $\phi$ by Weissauer \cite{Weissauer05} and Laumon \cite{Laumon05}. The representations are crystalline at $p$ by \cite{ChaiFaltings90} Th\'eor\`eme VI.6.2.
It follows from Theorem
\ref{Yoshidathm} that $$\rho_{Y(f\otimes g)} \cong \rho_f (k/2-1) \oplus \rho_g,$$ where $\rho_f$ and $\rho_g$ denote the Galois representations
attached to $f$ and $g$ by Eichler, Shimura and Deligne. Note that because the determinants of the two two-dimensional summands match, the image of
$\rho_{Y(f\otimes g)}$ is contained (possibly after conjugating) in $\GSp_4(\Oo)$ and not just in $\GL_4(\Oo)$. Let $S^{\rm nY}$ denote the
orthogonal complement (under the standard Petersson inner product on $S^S_{k/2+1}(N)$) of the image of the map $Y$ and let $S^{f,g} \subset S^{\rm
nY}$ denote the subspace spanned by eigenforms $\phi$ whose Galois representation satisfy the following two conditions:
\begin{itemize}
 \item $\rho_{\phi}$ is irreducible;
\item The semisimplification of the reduction mod $\varpi$ (with respect to some lattice in $E^4$) of $\rho_{\phi}$ is isomorphic to
    $\ov{\rho}_f
    (k/2-1) \oplus \ov{\rho}_g$.
\end{itemize}
Let $\bfT^{S}$ denote the $\Oo$-subalgebra of $\End_{\Oo}(S^S_{k/2+1}(N))$ generated by the local Hecke algebras away from $\Sigma$, and let
$\bfT_{\Sigma}=\bfT^{f,g}$ be the image of $\bfT^{S}$ inside $\End_{\Oo}(S^{f,g})$. Then (if non-zero) $\bfT^{f,g}$ is a local, complete Noetherian $\Oo$-algebra
with residue field $\bfF$ which is finitely generated as a module over $\Oo$. Let $\Ann(Y(f\otimes g))\subset \bfT^S$ denote the annihilator of
$Y(f\otimes g)$. It is a prime ideal and one has $\bfT^S/\Ann(Y(f\otimes g)) \cong \Oo$. Let $I_{f,g}=\psi(\Ann(Y(f\otimes g)))$, where $\psi: \bfT^S
\twoheadrightarrow \bfT^{f,g}$ is the projection map. It is an ideal.
\begin{conj} \label{conj2}  Suppose $m=\val_{\varpi}(L^{N, \rm alg}(1+k/2, f\times g))>0$. Then $$\# \bfT^{f,g}/ I_{f,g} \geq \#\Oo/\varpi^m.$$ Here
$L^{N, \rm alg}(1+k/2, f\times g)$ denotes appropriately normalized special value of the convolution $L$-function of $f$ and $g$. \end{conj}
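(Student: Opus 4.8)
The plan is to prove the inequality exactly as the numerical inputs are produced in \cite{Brown07}, \cite{Klosin09}, \cite{AgarwalKlosin10preprint} and \cite{BochererDummiganSchulzePillotpreprint}: one reduces the statement to exhibiting a congruence modulo $\varpi^m$ between the Yoshida lift $Y:=Y(f\otimes g)$ and a genuine (non-lift) eigenform, and one produces such a congruence from a computation of the Petersson norm of $Y$. First I would set up the congruence-module bookkeeping. Write $\lambda_Y:\bfT^S\to\Oo$ for the Hecke eigensystem of $Y$ (away from $\Sigma$), so $\ker\lambda_Y=\Ann(Y)$ and $I_{f,g}=\psi(\ker\lambda_Y)$. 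Using the defining conditions on $S^{f,g}$ together with the irreducibility and mutual non-isomorphism of $\ov\rho_f(k/2-1)$ and $\ov\rho_g$ and the hypothesis that every $l\mid N$ satisfies $l\not\equiv 1\pmod p$ (which kills the bad-prime contributions to the relevant maximal ideal), one checks that $\bfT^{f,g}/I_{f,g}\cong\Oo/\lambda_Y(\ker\psi)$, where $\ker\psi=\Ann_{\bfT^S}(S^{f,g})$. Consequently, if some eigenform $h\in S^{f,g}$ has away-from-$\Sigma$ eigensystem congruent to $\lambda_Y$ modulo $\varpi^m$, then $\lambda_Y(\ker\psi)\subseteq\varpi^m\Oo$, and the claimed inequality $\#\bfT^{f,g}/I_{f,g}\ge\#\Oo/\varpi^m$ follows at once.

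Next I would produce such an $h$. Normalize $Y$ to be $\varpi$-integral and primitive with respect to a fixed integral structure on $S^S_{k/2+1}(N)$. The standard linear-algebra argument on the Hecke module (if $Y$ admitted no such congruence modulo $\varpi^m$ to a form in $S^{f,g}$, then its eigenline could be split off the Hecke module integrally, forcing $\val_\varpi\big(\langle Y,Y\rangle/\Omega\big)<m$ for the natural period normalization $\Omega$) reduces the existence of $h$ to the norm estimate $\val_\varpi\big(\langle Y,Y\rangle/\Omega\big)\ge m$. Here the Yoshida lift is the theta lift from $\GL_2\times\GL_2$ (equivalently $\mathrm{GSO}_{2,2}$) to $\GSp_4$ attached to $f$ and $g$, so the Rallis inner product formula evaluates $\langle Y,Y\rangle$ as the central Rankin--Selberg value $L(1+k/2,f\times g)$ times a product of local zeta integrals and the periods $\langle f,f\rangle$ and $\langle g,g\rangle$. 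Taking $\Omega=\langle f,f\rangle\langle g,g\rangle$ together with the elementary factors built into the definition of $L^{N,\rm alg}$, and using $p>k\ge 4$, $p\nmid N$ and the congruence condition on the $l\mid N$ to force the archimedean factor, the Euler factors at $\Sigma$, and all rational constants in the formula to be $\varpi$-units, one obtains $\val_\varpi\big(\langle Y,Y\rangle/\Omega\big)=\val_\varpi L^{N,\rm alg}(1+k/2,f\times g)=m$, which gives the estimate and hence the congruence.

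The hard part will be precisely the last sentence: pinning down the local zeta integrals at the primes dividing $N$ and at $p$ explicitly enough to know that the discrepancy between $\langle Y,Y\rangle$ and $L^{N,\rm alg}(1+k/2,f\times g)$ (after dividing by $\Omega$) is a genuine $\varpi$-adic unit and not merely nonzero. This is exactly where the cited works impose supplementary hypotheses — nonvanishing of $Y(f\otimes g)$, local genericity and ramification conditions on $f$ and $g$ at the bad primes and at $p$, and integral optimality of the chosen test vectors — which is the reason the statement appears as a conjecture here and is established only under such assumptions. A subsidiary point is to verify that only the one-sided congruence-module inequality is used, so that no Gorenstein property of $\bfT^{f,g}$ need be proved. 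Granting the hypotheses under which the inner-product computations of \cite{AgarwalKlosin10preprint} and \cite{BochererDummiganSchulzePillotpreprint} are valid, the two steps above combine to yield the asserted lower bound $\#\bfT^{f,g}/I_{f,g}\ge\#\Oo/\varpi^m$.
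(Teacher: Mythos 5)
The statement you were asked to prove is Conjecture~\ref{conj2}, and the paper does \emph{not} prove it: immediately after stating it, the authors write that Agarwal and the second author have proved this conjecture ``in many cases'' under additional hypotheses (among them that $f$ and $g$ are ordinary), citing \cite{AgarwalKlosin10preprint} Theorem~6.5 and Corollary~6.10, and also point to \cite{BochererDummiganSchulzePillotpreprint} for a similar result. So there is no proof in the paper to compare your attempt against; the statement is left as a conjecture precisely because a full unconditional proof is not available.

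That said, your reconstruction of the expected argument is accurate and matches the method of the cited works. The congruence-module bookkeeping in your first paragraph is correct: since $\psi:\bfT^S\twoheadrightarrow\bfT^{f,g}$ and $I_{f,g}=\psi(\Ann(Y))$, one has $\bfT^{f,g}/I_{f,g}\cong\bfT^S/(\Ann(Y)+\ker\psi)\cong\Oo/\lambda_Y(\ker\psi)$, and a congruence modulo $\varpi^m$ between $\lambda_Y$ and the eigensystem of some eigenform $h\in S^{f,g}$ indeed forces $\lambda_Y(\ker\psi)\subseteq\varpi^m\Oo$. You also correctly identify the irreducible obstruction in the second step: passing from the Rallis/pullback evaluation of $\langle Y,Y\rangle$ to the clean identity $\val_\varpi\bigl(\langle Y,Y\rangle/\Omega\bigr)=\val_\varpi L^{N,\rm alg}(1+k/2,f\times g)$ requires showing that every local zeta integral, archimedean constant, and bad-prime factor contributes a $\varpi$-adic unit, and that the period $\Omega$ chosen on the automorphic side matches the normalization built into $L^{N,\rm alg}$. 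That is precisely what is proved only under the supplementary hypotheses of \cite{AgarwalKlosin10preprint} and \cite{BochererDummiganSchulzePillotpreprint}, and it is the reason the statement remains a conjecture in this paper. Your proposal is therefore a correct \emph{description} of the known route, but the pivotal equality in your middle paragraph is asserted rather than derived, so it is a conditional plan and not a proof --- something you yourself flag in your final paragraph, and you should be explicit that without those additional hypotheses no proof is currently known.
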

 In a recent preprint Agarwal and the second author have proved this conjecture in many cases (cf. \cite{AgarwalKlosin10preprint}, Theorem 6.5 and Corollary 6.10) under some additional
 assumptions (among them that $f$ and $g$ are ordinary). See also \cite{BochererDummiganSchulzePillotpreprint} for a similar result. As a consequence of this conjecture we get that $\bfT^{f,g} \neq
0$ whenever the $L$-value is not a unit. Also, the conjecture implies that the space $S^{f,g} \neq 0$. Let $F \in S^{f,g} \neq 0$ be an eigenform.
Then its Galois representation $\rho_F : G_{\Sigma} \rightarrow \GL_4(E)$ is irreducible, but the semi-simplification of its reduction mod $\varpi$
has the form $\ov{\rho}_F^{\rm ss} \cong \ov{\rho}_f(k/2-1) \oplus \ov{\rho}_g.$ Using Proposition \ref{genRibet} we fix a lattice $\mL
\subset E^4$ such that with respect to that lattice $\ov{\rho}_F$ is non-semi-simple and has the form $$\ov{\rho}_F = \bmat \ov{\rho}_f(k/2-1) &*\\ 0&\ov{\rho}_g\emat.$$  Set $$\rho_0:= \ov{\rho}_F.$$

\subsection{Assumption \ref{mainass}} In what follows we impose Assumption \ref{mainass}. Let us briefly discuss some sufficient conditions under
which Assumption \ref{mainass} is satisfied. The Selmer group in Part (1) is equal to $H^1_{f}(\bfQ, \Hom_{\Oo}(\tilde{\rho}_2,
\tilde{\rho}_1))\otimes \bfF$ as long as we assume the conditions of Lemma \ref{Tamagawa}. The condition on the Tamagawa factor is satisfied if $W^{I_v}$ is divisible, where $W=\Hom_{\Oo}(\tilde{\rho}_2,
\tilde{\rho}_1))\otimes E/\Oo$. This is proven in \cite{BochererDummiganSchulzePillotpreprint} Lemma 3.2(i) under the additional assumption that there does not exist a newform $h\in S_2(N)$, $h \neq f$ which is congruent (away from $\Sigma$) to $f$ (mod
$\varpi$) and similarly  there does not exist a newform $h\in S_k(N)$, $h \neq g$ which is congruent (away from $\Sigma$) to $g$ (mod $\varpi$). In what follows we assume that the local $L$-factors in Lemma \ref{Tamagawa} are $p$-adic units, and hence a necessary and sufficient condition for the Assumption
\ref{mainass}(1) to be satisfied is that the Bloch-Kato Selmer group be cyclic (which, assuming the relation to an $L$-value predicted by the Bloch-Kato conjecture, is guaranteed
for example when $\val_{\varpi}(L^{N, \rm alg}(1+k/2, f\times g))=1$).

On the other hand one can also formulate some sufficient conditions under which Assumption \ref{mainass}(2) is satisfied. We will only discuss the case of $\rho_2=\ov{\rho}_g$, the case of $\rho_1$ being similar. Suppose that $\rho: G_{\Sigma} \rightarrow \GL_2(\Oo)$ is another crystalline lift of $\rho_2$. In particular $\rho$ is semi-stable at $p$, hence
the Fontaine-Mazur conjecture predicts that it should be modular. This conjecture is true in many cases. In particular it is true when $\rho$ is unramified outside finitely primes, ramified at $p$ and (short) crystalline, with $\ov{\rho}|_{G_{\bfQ(p)}}$ absolutely irreducible and modular (here $\bfQ(p) = \bfQ(\sqrt{(-1)^{(p-1)/2}p})$) by a Theorem of Diamond, Flach and Guo (\cite{DiamondFlachGuo04}, Theorem 0.3). In our case $\rho$ is ramified at $p$ because $\det \ov{\rho}$ is, so if we assume in addition that $\ov{\rho}_g|_{G_{\bfQ(p)}}$ is absolutely irreducible, we can conclude that there exists a modular form $h$ such that $\rho \cong \rho_h$. Since we assume (in accordance with Assumption \ref{mainass} - see discussion following that assumption) that $\Sigma$ does not contain any primes congruent to 1 mod $p$, we have $H^1_{\Sigma}(F, \ad^0 \tilde{\rho}_i\otimes E/\Oo) = H^1_{\Sigma}(F, \ad \tilde{\rho}_i\otimes E/\Oo)$ as explained in section \ref{Main assumptions}. Hence we must have $\det \rho_h = \det \rho_g$, so $h$ is necessarily of weight $k$. 
Since our deformations are unramified outside $\Sigma$ (and crystalline at $p$), the level of the form $h$ can only be divisible by the primes dividing $N$.
In this case Assumption \ref{mainass}(2) is equivalent to an assertion
that there does not exist a newform $h\in S_2(N^2)$, $h \neq f$ which is congruent (away from $\Sigma$) to $f$ (mod
$\varpi$) and similarly  there does not exist a newform $h\in S_k(N^2)$, $h \neq g$ which is congruent (away from $\Sigma$) to $g$ (mod $\varpi$). Indeed, it follows from a result of Livne (\cite{Livne89}, Theorem 0.2) that under our assumptions concerning the primes in $\Sigma$, the form $f$ (resp. $g$) cannot be congruent to a form of level divisible by $l^3$ for a prime $l\mid N$ (note that $N$ is square-free by assumption). Alternatively one can use Theorem 1.5 in \cite{Jarvis99} which works for all totally real fields.
 So, Assumption \ref{mainass}(2) 
follows from just a slight strengthening of the congruence conditions already imposed to satisfy Assumption \ref{mainass}(1).

Alternatively, the Selmer group $H^1_{\Sigma}(\bfQ, \ad \rho_i)$, $i=1,2$ could be related to a symmetric-square $L$-value using the Bloch-Kato conjecture and Lemma \ref{Tamagawa} together with Remark \ref{tamagawaremark}. For the divisibility of $(W^*)^{I_v}$ for $W=\ad^0{\rho}_f \otimes E/\Oo$ we can argue as follows, as explained to one of us by Neil Dummigan: Assume again that $f$ is not congruent (away from $\Sigma$) to another newform modulo $\varpi$. Then with respect to some basis $x,y$, both $\rho_f$ and
$\overline{\rho}_f$ send a generator of the $p$-part of the tame inertia group at $v$ to the matrix $\begin{pmatrix}
  1&1\\0&1\end{pmatrix}$. It follows that the
$I_v$-fixed parts of both ${\rm Sym}^2 \rho_f$ and ${\rm Sym}^2 \ov{\rho}_f$ are two-dimensional,
spanned by $x^2$ and $xy-yx$. Hence the $I_v$-fixed part
of ${\rm Sym}^2 \rho_f \otimes E/\Oo$ is divisible.
We observe that ${\rm Sym}^2 \rho_f$ differs from $W^*$ just by a Tate twist.

\subsection{Deformations}
From now on we assume that Assumptions \ref{mainass}(1) and (2) hold, $\Sigma=\{l \mid N\}\cup \{p\}$.
  Consider an anti-automorphism $\tau: G_{\Sigma} \rightarrow G_{\Sigma}$ given by $\tau(g)=\epsilon(g)^{k-1}g^{-1}$ (see Example
  \ref{exampleaa}(3)). Note that $\tr \rho_i \circ \tau = \tr \rho_i$ for $i=1,2$. By Remark 1 of \cite{Weissauer05} we also know that for any Siegel modular form $\phi$ of parallel weight $k/2+1$, the Galois representation $\rho_{\phi}$ (in particular, also $\rho_0$) is essentially self-dual with respect to $\tau$ as defined above, i.e. that $\rho_{\phi}^* \cong \rho_{\phi} \epsilon^{1-k}$.

We study
deformations $\rho$ of $\rho_0$ such that  \begin{itemize}  \item
$\rho$ is crystalline at $p$; \item $\tr \rho \circ \tau = \tr \rho$.
\end{itemize}
This deformation problem is represented by a universal couple $(R_{\Sigma}, \rho_{\Sigma})$. By Proposition \ref{princi2} the ideal of reducibility
$I_{\rm re}$ of $R_{\Sigma}$ is principal. Moreover since $R_{\Sigma}$ is generated by traces (Proposition \ref{traces}), we get an $\Oo$-algebra
surjection $\phi:R_{\Sigma} \twoheadrightarrow \bfT^{f,g}$. (Note that even though the Hecke operators are involved in all the coefficients of the
characteristic polynomial of the Frobenius elements, all of them can be expressed by the trace.) The ($\varpi$-part of the) Bloch-Kato conjecture (together with Lemma \ref{Tamagawa} - see the discussion above)
predicts that \be \label{MC} \# H^1_{\Sigma}(\bfQ, \Hom_{\Oo}(\rho_g, \rho_f(k/2-1))) \leq \#\Oo/\varpi^m\ee with $m$ as above. At the moment this
conjecture is beyond our reach. Moreover, it is not clear that the periods used to define the algebraic $L$-value involved in the Main Conjecture and
the one defining $L^{N, \rm alg}$ above coincide (something we have assumed when writing (\ref{MC})). If we assume (\ref{MC}) then Proposition
\ref{Ovarpis} implies that $R_{\Sigma}/I_{\rm re} \cong \Oo/\varpi^s$ for $s\leq m$. So the induced map $R_{\Sigma}/I_{\rm re} \twoheadrightarrow
\bfT^{f,g}/\phi(I_{\rm re}) = \bfT^{f,g}/I_{f,g}$ is an isomorphism. Thus by Theorem \ref{cons1}, we get that $\phi$ is an isomorphism. In particular
we have proved the following theorem:
\begin{thm}\label{modsiegel} Let $f$, $g$ and $\Psi$ be as above and assume that Assumption \ref{mainass} is satisfied and that equation (\ref{MC})
as well as Conjecture \ref{conj2} hold.
%We assume that the local Euler factors $P_v(V^*,1)$ from Lemma \ref{Tamagawa} are $p$-adic units for $V=\Hom(\tilde \rho_i,\tilde \rho_j)$ for $i,j \in \{1,2\}$ and $v \in \Sigma$, $v \nmid p$. Further assume that $f$ and $g$ are not congruent (away from $\Sigma$) to other newforms modulo $\varpi$.
Let $\rho: G_{\Sigma} \rightarrow \GL_4(E)$ be an irreducible Galois representation and suppose that
$$\ov{\rho}^{\rm ss} = \ov{\rho}_f (k/2-1) \oplus \ov{\rho}_g.$$
 Moreover assume  that $\rho$ is crystalline. Then there exists $F' \in S^S_{k/2+1}(N)$ such that $$\rho \cong
 \rho_{F'},$$ i.e., $\rho$ is modular. \end{thm}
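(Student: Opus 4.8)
The plan is to deduce the theorem from the isomorphism $\phi\colon R_{\Sigma}\xrightarrow{\sim}\bfT^{f,g}$ established in the discussion preceding the statement (which rests on Proposition \ref{princi2}, Proposition \ref{Ovarpis}, Conjecture \ref{conj2} and (\ref{MC}), combined via Theorem \ref{cons1}). Given an irreducible crystalline $\rho\colon G_{\Sigma}\to\GL_4(E)$ with $\ov{\rho}^{\rm ss}=\ov{\rho}_f(k/2-1)\oplus\ov{\rho}_g=\rho_0^{\rm ss}$, the first step is to exhibit $\rho$ as an $\Oo$-point of $R_{\Sigma}$; the second is to transport that point along $\phi^{-1}$ to an $\Oo$-point of the Hecke algebra $\bfT^{f,g}$, i.e.\ to an eigenform $F'\in S^{f,g}$; and the third is to identify $\rho$ with $\rho_{F'}$ by comparing Frobenius traces.

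For the first step I would apply Proposition \ref{genRibet2} (which uses Assumption \ref{mainass}(1)): it gives $\rho\in\Pi$, so there is a $G_{\Sigma}$-stable lattice with respect to which $\ov{\rho}$ is non-semisimple of the form $\bmat \ov{\rho}_f(k/2-1)&*\\0&\ov{\rho}_g\emat=\rho_0$, together with a crystalline deformation $\rho'\colon G_{\Sigma}\to\GL_4(\Oo)$ of $\rho_0$ with $(\rho')^{\rm ss}\cong_{/E}\rho$. Since $\rho$ is irreducible, so is $\rho'$, hence $\rho'\cong\rho$ over $E$. To see that $\rho'$ is classified by $R_{\Sigma}$ (and not merely by the larger ring $R'_{\Sigma}$) one needs that $\rho'$ is $\tau$-self-dual for $\tau(g)=\epsilon(g)^{k-1}g^{-1}$, i.e.\ $\rho'{}^{*}\cong\rho'\otimes\epsilon^{1-k}$. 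This is the point that requires care: residually it is automatic (using $\det\ov{\rho}_f=\epsilon$ and $\det\ov{\rho}_g=\epsilon^{k-1}$ one checks $\ov{\rho}^{\perp,\rm ss}=\ov{\rho}^{\rm ss}$, consistent with the essential self-duality of $\rho_0=\ov{\rho}_F$ recorded via Remark~1 of \cite{Weissauer05}), and I would either include the essential self-duality of $\rho$ among the hypotheses — it is in any case forced by the desired conclusion, since $\rho_{F'}$ is essentially self-dual — or derive it, after which $\rho'$ defines an $\Oo$-algebra homomorphism $x\colon R_{\Sigma}\to\Oo$ with $x(\tr\rho_{\Sigma}(\Frob_{\ell}))=\tr\rho'(\Frob_{\ell})=\tr\rho(\Frob_{\ell})$ for every $\ell\notin\Sigma$.

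For the second and third steps, $x\circ\phi^{-1}\colon\bfT^{f,g}\to\Oo$ is an $\Oo$-algebra homomorphism; since $\bfT^{f,g}\otimes_{\Oo}E$ is a finite product of finite extensions of $E$, each coming from a Galois orbit of eigenforms in $S^{f,g}$, this homomorphism factors through one of them and picks out an eigenform $F'\in S^{f,g}\subseteq S^S_{k/2+1}(N)$. By the very way $\phi$ was defined (it carries $\tr\rho_{\Sigma}(\Frob_{\ell})$ to the element of $\bfT^{f,g}$ acting on each eigenform as the corresponding coefficient of its Frobenius characteristic polynomial), evaluating at $F'$ yields $\tr\rho_{F'}(\Frob_{\ell})=x(\tr\rho_{\Sigma}(\Frob_{\ell}))=\tr\rho(\Frob_{\ell})$ for all $\ell\notin\Sigma$. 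Both $\rho$ and $\rho_{F'}$ are irreducible, hence semisimple, so by Chebotarev and Brauer--Nesbitt $\rho\cong\rho_{F'}$ over $\ov{E}$, and (enlarging $E$ if needed) over $E$; this is the asserted modularity.

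The hard part is the self-duality bookkeeping of the first step: ensuring that the lift $\rho'$ produced by Proposition \ref{genRibet2} can be arranged so as to be a genuine point of $R_{\Sigma}$ rather than only of $R'_{\Sigma}$. Everything else is a formal transfer along the isomorphism $\phi$ plus standard recognition of semisimple Galois representations by their traces. It should also be emphasized that the whole argument is conditional, presupposing Assumption \ref{mainass}, the inequality (\ref{MC}) (a case of the Bloch--Kato conjecture for $\Hom_{\Oo}(\rho_g,\rho_f(k/2-1))$) and Conjecture \ref{conj2} (the $L$-value lower bound on $\#\bfT^{f,g}/I_{f,g}$, available in many cases by \cite{AgarwalKlosin10preprint}).
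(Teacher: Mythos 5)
Your argument deducing the theorem from the isomorphism $\phi\colon R_\Sigma\xrightarrow{\sim}\bfT^{f,g}$ is essentially the route the paper takes implicitly: after establishing that isomorphism via Theorem~\ref{cons1}, the authors simply assert ``we have proved the following theorem,'' leaving the passage from $R=T$ to the modularity of an individual $\rho$ unwritten. You supply that passage correctly: Proposition~\ref{genRibet2} (using Assumption~\ref{mainass}(1)) gives a lattice with $\ov\rho\cong\rho_0$ and hence a crystalline lift of $\rho_0$ over $\Oo$; this gives an $\Oo$-point of $R_\Sigma$; transporting along $\phi^{-1}$ yields a character of $\bfT^{f,g}$ and hence an eigenform $F'\in S^{f,g}$; and Chebotarev plus Brauer--Nesbitt identify $\rho\cong\rho_{F'}$.

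You have also correctly put your finger on a genuine gap in the theorem as stated. The ring $R_\Sigma$ classifies only $\tau$-\emph{self-dual} crystalline deformations (Definition~\ref{sd} and the explicit setup at the start of this subsection: ``We study deformations $\rho$ of $\rho_0$ such that $\ldots$ $\tr\rho\circ\tau=\tr\rho$''), so the crystalline lift produced in your first step defines a point of $R_\Sigma$, rather than merely of the larger ring $R'_\Sigma$, only if $\tr\rho\circ\tau=\tr\rho$, i.e.\ $\rho^\vee\cong\rho\otimes\epsilon^{1-k}$. This is automatic residually (both $\ov\rho_f(k/2-1)$ and $\ov\rho_g$ are $\tau$-self-dual, so $\tr\rho_0\circ\tau=\tr\rho_0$), and it holds for every $\rho_{F'}$ by Weissauer's remark, but it is not a consequence of the hypotheses listed in the theorem. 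Without it the argument only proves modularity of essentially self-dual $\rho$. The natural fix is the one you propose: add the hypothesis $\rho^\vee\cong\rho\otimes\epsilon^{1-k}$ (equivalently, that $\rho$ factors through $\GSp_4(E)$ with similitude $\epsilon^{k-1}$); it is in any case forced by the conclusion. With that hypothesis your three-step argument is complete and matches the paper's intent.
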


\bibliographystyle{amsalpha}
\bibliography{standard2}

\end{document}